\theoremstyle{plain}
\newtheorem{theorem}{Theorem}[section]
\newtheorem{lemma}[theorem]{Lemma}
\theoremstyle{definition}
\newtheorem{definition}[theorem]{Definition}
\newtheorem{assumption}[theorem]{Assumption}
\theoremstyle{remark}
\newtheorem{remark}[theorem]{Remark}
\numberwithin{equation}{section}
\begin{document}

\title[Navier--Stokes equations in a curved thin domain, Part II]{Navier--Stokes equations in a curved thin domain, Part II: global existence of a strong solution}

\author[T.-H. Miura]{Tatsu-Hiko Miura}
\address{Department of Mathematics, Kyoto University, Kitashirakawa Oiwake-cho, Sakyo-ku, Kyoto 606-8502, Japan}
\email{t.miura@math.kyoto-u.ac.jp}

\subjclass[2010]{Primary: 35Q30, 76D03, 76D05; Secondary: 76A20}

\keywords{Navier--Stokes equations, curved thin domain, slip boundary conditions, strong solution, global existence}

\begin{abstract}
  We consider the Navier--Stokes equations in a three-dimensional curved thin domain around a given closed surface under Navier's slip boundary conditions.
  When the thickness of the thin domain is sufficiently small, we establish the global existence of a strong solution for large data.
  We also show several estimates for the strong solution with constants explicitly depending on the thickness of the thin domain.
  The proofs of these results are based on a standard energy method and a good product estimate for the convection and viscous terms following from a detailed study of average operators in the thin direction.
  We use the average operators to decompose a three-dimensional vector field on the thin domain into the almost two-dimensional average part and the residual part, and derive good estimates for them which play an important role in the proof of the product estimate.
\end{abstract}

\maketitle

\section{Introduction} \label{S:Intro}
\subsection{Problem and main results} \label{SS:Int_Pro}
This paper is the second part of a three-part series including~\cites{Miu_NSCTD_01,Miu_NSCTD_03} of the study of the Navier--Stokes equations in a three-dimensional curved thin domain.
Let $\Gamma$ be a closed surface in $\mathbb{R}^3$ with unit outward normal vector field $n$.
Also, let $g_0$ and $g_1$ be functions on $\Gamma$ satisfying
\begin{align*}
  g := g_1-g_0 \geq c \quad\text{on}\quad \Gamma
\end{align*}
with some constant $c>0$.
For a sufficiently small $\varepsilon\in(0,1]$ we define a curved thin domain $\Omega_\varepsilon$ in $\mathbb{R}^3$ with small thickness of order $\varepsilon$ by
\begin{align} \label{E:Def_CTD}
  \Omega_\varepsilon := \{y+rn(y) \mid y\in\Gamma,\, \varepsilon g_0(y) < r < \varepsilon g_1(y)\}
\end{align}
and consider the Navier--Stokes equations with Navier's slip boundary conditions
\begin{align} \label{E:NS_CTD}
  \left\{
  \begin{alignedat}{3}
    \partial_tu^\varepsilon+(u^\varepsilon\cdot\nabla)u^\varepsilon-\nu\Delta u^\varepsilon+\nabla p^\varepsilon &= f^\varepsilon &\quad &\text{in} &\quad &\Omega_\varepsilon\times(0,\infty), \\
    \mathrm{div}\,u^\varepsilon &= 0 &\quad &\text{in} &\quad &\Omega_\varepsilon\times(0,\infty), \\
    u^\varepsilon \cdot n_\varepsilon &= 0 &\quad &\text{on} &\quad &\Gamma_\varepsilon\times(0,\infty), \\
    [\sigma(u^\varepsilon,p^\varepsilon)]_{\mathrm{tan}}+\gamma_\varepsilon u^\varepsilon &= 0 &\quad &\text{on} &\quad &\Gamma_\varepsilon\times(0,\infty), \\
    u^\varepsilon|_{t=0} &= u_0^\varepsilon &\quad &\text{in} &\quad &\Omega_\varepsilon.
  \end{alignedat}
  \right.
\end{align}
Here $\nu>0$ is the viscosity coefficient independent of $\varepsilon$.
Also, $\Gamma_\varepsilon$ is the boundary of $\Omega_\varepsilon$ with unit outward normal vector field $n_\varepsilon$ that consists of the inner and outer boundaries $\Gamma_\varepsilon^0$ and $\Gamma_\varepsilon^1$ of the form
\begin{align*}
  \Gamma_\varepsilon^i := \{y+\varepsilon g_i(y)n(y) \mid y\in\Gamma\}, \quad i=0,1,
\end{align*}
and $\gamma_\varepsilon\geq0$ is the friction coefficient on $\Gamma_\varepsilon$ given by
\begin{align} \label{E:Def_Fric}
  \gamma_\varepsilon := \gamma_\varepsilon^i \quad\text{on}\quad \Gamma_\varepsilon^i,\, i=0,1
\end{align}
with nonnegative constants $\gamma_\varepsilon^0$ and $\gamma_\varepsilon^1$ depending on $\varepsilon$.
We denote by $I_3$ and $n_\varepsilon\otimes n_\varepsilon$ the $3\times 3$ identity matrix and the tensor product of $n_\varepsilon$ with itself, write
\begin{align*}
  D(u^\varepsilon) := \frac{\nabla u^\varepsilon+(\nabla u^\varepsilon)^T}{2}, \quad P_\varepsilon:=I_3-n_\varepsilon\otimes n_\varepsilon
\end{align*}
for the strain rate tensor and the orthogonal projection onto the tangent plane of $\Gamma_\varepsilon$, and define the stress tensor and the tangential component of the stress vector on $\Gamma_\varepsilon$ by
\begin{align*}
  \sigma(u^\varepsilon,p^\varepsilon) := 2\nu D(u^\varepsilon)-p^\varepsilon I_3, \quad [\sigma(u^\varepsilon,p^\varepsilon)n_\varepsilon]_{\mathrm{tan}} := P_\varepsilon[\sigma(u^\varepsilon,p^\varepsilon)n_\varepsilon].
\end{align*}
Note that $[\sigma(u^\varepsilon,p^\varepsilon)n_\varepsilon]_{\mathrm{tan}}=2\nu P_\varepsilon D(u^\varepsilon)n_\varepsilon$ is independent of the pressure $p^\varepsilon$ and thus the slip boundary conditions are of the form
\begin{align} \label{E:Slip_Intro}
  u^\varepsilon\cdot n_\varepsilon = 0, \quad 2\nu P_\varepsilon D(u^\varepsilon)n_\varepsilon+\gamma_\varepsilon u^\varepsilon = 0 \quad\text{on}\quad \Gamma_\varepsilon.
\end{align}
We refer to \eqref{E:Slip_Intro} as the slip boundary conditions in the sequel.
The fluid subject to \eqref{E:Slip_Intro} slips on the boundary with velocity proportional to the tangential component of the stress vector.
Such conditions were introduced by Navier~\cite{Na1823} and are seen as an appropriate model for flows with free boundaries and for flows past chemically reacting walls (see~\cite{Ve87}).
They also appear in the study of the atmosphere and ocean dynamics~\cites{LiTeWa92a,LiTeWa92b,LiTeWa95} and in the homogenization of the no-slip boundary condition on a rough boundary~\cites{Hi16,JaMi01}.

The purpose of this paper is to establish the global-in-time existence of a strong solution $u^\varepsilon$ to \eqref{E:NS_CTD} for large data $u_0^\varepsilon$ and $f^\varepsilon$ in the sense that
\begin{align*}
  \|u_0^\varepsilon\|_{H^1(\Omega_\varepsilon)},\, \|f^\varepsilon\|_{L^\infty(0,\infty;L^2(\Omega_\varepsilon))} = O(\varepsilon^{-1/2})
\end{align*}
when $\varepsilon$ is sufficiently small (see Theorem~\ref{T:GE} for the precise statement).
Our result generalizes the existence results of~\cites{Ho10,HoSe10,IfRaSe07} for the Navier--Stokes equations in flat thin domains under the slip boundary conditions (see Remark~\ref{R:GE_Prev}).
We also derive several estimates for $u^\varepsilon$ with constants explicitly depending on $\varepsilon$ (see Theorem~\ref{T:UE}).
Those estimates are essential for the study of the thin-film limit for \eqref{E:NS_CTD} carried out in the last part~\cite{Miu_NSCTD_03} of our study.
We prove in~\cite{Miu_NSCTD_03} that the average in the thin direction of $u^\varepsilon$ converges on $\Gamma$ as $\varepsilon\to0$ and characterize its limit as a solution to limit equations on $\Gamma$.
Moreover, we observe that the limit equations agree with the Navier--Stokes equations on a Riemannian manifold introduced in~\cites{EbMa70,MitYa02,Ta92} if the thickness of $\Omega_\varepsilon$ is $\varepsilon$ (i.e. $g\equiv1$) and we impose the perfect slip boundary conditions \eqref{E:Slip_Intro} with $\gamma_\varepsilon=0$.
We note that the last paper~\cite{Miu_NSCTD_03} gives the first result on a rigorous derivation of the surface Navier--Stokes equations on a general closed surface in $\mathbb{R}^3$ by the thin-film limit.

\subsection{Idea of the proof} \label{SS:Int_Ide}
To prove the global existence of a strong solution to \eqref{E:NS_CTD} we argue by a standard energy method as in the case of flat thin domains studied in~\cites{Ho10,HoSe10,IfRaSe07}.
In the first part~\cite{Miu_NSCTD_01} of our study we investigated the Stokes operator $A_\varepsilon$ for $\Omega_\varepsilon$ under the slip boundary conditions \eqref{E:Slip_Intro} and proved the uniform norm equivalence
\begin{align} \label{E:NE_Intro}
  c^{-1}\|u\|_{H^k(\Omega_\varepsilon)} \leq \|A_\varepsilon^{k/2}u\|_{L^2(\Omega_\varepsilon)} \leq c\|u\|_{H^k(\Omega_\varepsilon)}, \quad u\in D(A_\varepsilon^{k/2}),\, k=1,2
\end{align}
and the uniform difference estimate for $A_\varepsilon$ and $-\nu\Delta$ of the form
\begin{align} \label{E:Diff_Intro}
  \|A_\varepsilon u+\nu\Delta u\|_{L^2(\Omega_\varepsilon)} \leq c\|u\|_{H^1(\Omega_\varepsilon)}, \quad u\in D(A_\varepsilon)
\end{align}
with a constant independent of $\varepsilon$ (see Section~\ref{S:St_Op}).
Using these estimates and average operators in the thin direction introduced and studied in Section~\ref{S:Ave}, we show that the $L^2(\Omega_\varepsilon)$-norm of $A_\varepsilon^{1/2}u^\varepsilon$ for a strong solution $u^\varepsilon$ is bounded uniformly in time.
A key point is to apply the estimate for the trilinear term
\begin{multline} \label{E:Tri_Intro}
  \left|\bigl((u\cdot\nabla)u,A_\varepsilon u\bigr)_{L^2(\Omega_\varepsilon)}\right| \leq \left(\frac{1}{4}+d_1\varepsilon^{1/2}\|A_\varepsilon^{1/2}u\|_{L^2(\Omega_\varepsilon)}\right)\|A_\varepsilon u\|_{L^2(\Omega_\varepsilon)}^2 \\
  +d_2\left(\|u\|_{L^2(\Omega_\varepsilon)}^2\|A_\varepsilon^{1/2}u\|_{L^2(\Omega_\varepsilon)}^4+\varepsilon^{-1}\|u\|_{L^2(\Omega_\varepsilon)}^2\|A_\varepsilon^{1/2}u\|_{L^2(\Omega_\varepsilon)}^2\right)
\end{multline}
for $u\in D(A_\varepsilon)$ with constants $d_1,d_2>0$ independent of $\varepsilon$ (see Lemma~\ref{L:Tri_Est_A}).
Based on this estimate and the uniform Gronwall inequality (see Lemma~\ref{L:Uni_Gronwall}) we prove
\begin{align*}
  d_1\varepsilon^{1/2}\|A_\varepsilon^{1/2}u^\varepsilon(t)\|_{L^2(\Omega_\varepsilon)} < \frac{1}{4}
\end{align*}
for all $t\in[0,\infty)$ by contradiction, which implies the global existence of the strong solution $u^\varepsilon$ (see Section~\ref{S:GE} for details).
The proof of \eqref{E:Tri_Intro} relies on \eqref{E:NE_Intro}, \eqref{E:Diff_Intro}, and a good decomposition of $u$ into the average and residual parts.
Using the average operators and an extension of a vector field on $\Gamma$ to $\Omega_\varepsilon$ satisfying the impermeable boundary condition, i.e. the first condition of \eqref{E:Slip_Intro}, we decompose $u$ into the almost two-dimensional average part $u^a$ and the residual part $u^r$ satisfying the impermeable boundary condition, and show good estimates for them separately.
For the average part we derive the product estimate
\begin{align} \label{E:Prod_Intro}
  \bigl\|\,|u^a|\,\varphi\bigr\|_{L^2(\Omega_\varepsilon)} \leq c\varepsilon^{-1/2}\|\varphi\|_{L^2(\Omega_\varepsilon)}^{1/2}\|\varphi\|_{H^1(\Omega_\varepsilon)}^{1/2}\|u\|_{L^2(\Omega_\varepsilon)}^{1/2}\|u\|_{H^1(\Omega_\varepsilon)}^{1/2}
\end{align}
for $\varphi\in H^1(\Omega_\varepsilon)$ and a similar estimate for $\nabla u^a$ (see Lemma~\ref{L:Prod_Ua}).
Usually, this kind of estimate is valid only for a two-dimensional domain due to the dependence of the Gagliardo--Nirenberg inequality on the dimension of a domain.
In our case, however, since $\Omega_\varepsilon$ is close to the two-dimensional surface $\Gamma$, we can show a product estimate for a function on $\Omega_\varepsilon$ and that on $\Gamma$ similar to a two-dimensional one that implies \eqref{E:Prod_Intro} (see Lemma~\ref{L:Prod}).
For the residual part we prove the $L^\infty(\Omega_\varepsilon)$-estimate
\begin{align} \label{E:UrInf_Intro}
  \|u^r\|_{L^\infty(\Omega_\varepsilon)} \leq c\left(\varepsilon^{1/2}\|u\|_{H^2(\Omega_\varepsilon)}+\|u\|_{L^2(\Omega_\varepsilon)}^{1/2}\|u\|_{H^2(\Omega_\varepsilon)}^{1/2}\right)
\end{align}
in Lemma~\ref{L:Linf_Ur}.
This estimate follows from an anisotropic Agmon inequality on $\Omega_\varepsilon$ (see Lemma~\ref{L:Agmon}) and Poincar\'{e} type inequalities for $u^r$ and $\nabla u^r$ (see Lemmas~\ref{L:Po_Ur} and~\ref{L:Po_Grad_Ur}).
Here the impermeable boundary condition on $u^r$ plays a fundamental role in the proof of the Poincar\'{e} type inequality for $u^r$.
Also, we use the divergence-free and slip boundary conditions on the original vector field $u$ to estimate $\nabla u^r$.
In the proof of \eqref{E:Tri_Intro} we use the relations
\begin{align*}
  (u\cdot\nabla)u = \mathrm{curl}\,u\times u+\frac{1}{2}\nabla(|u|^2), \quad (\nabla(|u|^2),A_\varepsilon u)_{L^2(\Omega_\varepsilon)} = 0
\end{align*}
to split $\bigl((u\cdot\nabla)u,A_\varepsilon u\bigr)_{L^2(\Omega_\varepsilon)}=J_1+J_2+J_3$ into
\begin{align*}
  J_1 &= (\mathrm{curl}\,u\times u^r,A_\varepsilon u)_{L^2(\Omega_\varepsilon)}, \\
  J_2 &= (\mathrm{curl}\,u\times u^a,A_\varepsilon u+\nu\Delta u)_{L^2(\Omega_\varepsilon)}, \\
  J_3 &= (\mathrm{curl}\,u\times u^a,-\nu\Delta u)_{L^2(\Omega_\varepsilon)}
\end{align*}
and estimate $J_1$, $J_2$, and $J_3$ by using \eqref{E:NE_Intro}, \eqref{E:Diff_Intro}, \eqref{E:Prod_Intro}, \eqref{E:UrInf_Intro}, and other inequalities given in Sections~\ref{S:Tool}--\ref{S:Ave}.
Here the estimates for $J_1$ and $J_2$ are straightforward, but the estimate for $J_3$ is complicated and requires long calculations (see Section~\ref{S:Tri}).

\subsection{Literature overview} \label{SS:Int_Lit}
The Navier--Stokes equations in thin domains have been studied for a long time.
When a thin domain in $\mathbb{R}^3$ has a very small thickness, it can be seen as almost two-dimensional and we naturally expect to show the global existence of a strong solution to the Navier--Stokes equations in such a thin domain for large data.
Raugel and Sell~\cite{RaSe93} first studied this problem for a thin product domain $Q\times(0,\varepsilon)$ in $\mathbb{R}^3$ with a rectangle $Q$ and a sufficiently small $\varepsilon>0$ under the purely periodic or mixed Dirichlet-periodic boundary conditions.
Their approach was based on a scaling method developed by Hale and Raugel~\cites{HaRa92a,HaRa92b} in the study of damped wave and reaction-diffusion equations in thin domains: they established the global existence of a strong solution by dilating the thin domain $Q\times(0,\varepsilon)$ and analyzing scaled equations in the fixed domain $Q\times(0,1)$ as a perturbation of the two-dimensional Navier--Stokes equations.
The result of~\cite{RaSe93} was generalized by Temam and Ziane~\cite{TeZi96} to a thin product domain $\omega\times(0,\varepsilon)$ in $\mathbb{R}^3$ around a bounded domain $\omega$ in $\mathbb{R}^2$ under combinations of the Dirichlet, periodic, and Hodge boundary conditions.
Using an average operator in the thin direction instead of the scaling method, they worked in the actual thin domain to get the global existence of a strong solution.
Under suitable boundary conditions they also established the convergence as $\varepsilon\to0$ of the average in the thin direction of the strong solution to a solution of the two-dimensional Navier--Stokes equations in $\omega$.
We refer to~\cites{Hu07,If99,IfRa01,MoTeZi97,Mo99} and the references cited therein for further generalizations and improvements on the results of~\cites{RaSe93,TeZi96}.

The above cited papers studied the Navier--Stokes equations in flat thin domains which have flat top and bottom boundaries and shrink to domains in $\mathbb{R}^2$ as $\varepsilon\to0$.
However, it is important for applications to consider nonflat thin domains since they appear in many physical problems (see~\cite{Ra95} for examples of nonflat thin domains).
Temam and Ziane~\cite{TeZi97} first generalized the shape of a thin domain in the study of the Navier--Stokes equations.
They considered a thin spherical shell
\begin{align*}
  \{x\in\mathbb{R}^3 \mid a < |x| < a+\varepsilon a\}, \quad a > 0
\end{align*}
under the Hodge boundary conditions and proved the global existence of a strong solution and the convergence of its average towards a solution to limit equations on a sphere as $\varepsilon\to0$.
Also, a flat thin domain with nonflat top and bottom boundaries
\begin{align*}
  \{(x',x_3)\in\mathbb{R}^3 \mid x'\in(0,1)^2,\, \varepsilon g_0(x') < x_3 < \varepsilon g_1(x')\}, \quad g_0,g_1\colon(0,1)^2\to\mathbb{R}
\end{align*}
was studied by Iftimie, Raugel, and Sell~\cite{IfRaSe07} (with $g_0\equiv0$), Hoang~\cites{Ho10}, and Hoang and Sell~\cite{HoSe10}.
Under the laterally periodic and vertically slip boundary conditions, they proved the global existence of a strong solution by using average operators.
The authors of~\cite{IfRaSe07} also compared a solution of the original equations with that of limit equations in $(0,1)^2$.
We refer to~\cite{Ho13} for the study of two-phase flows in a flat thin domain with nonflat top and bottom boundaries.

In the series of this paper and~\cites{Miu_NSCTD_01,Miu_NSCTD_03} we deal with the curved thin domain $\Omega_\varepsilon$ of the form \eqref{E:Def_CTD}.
Our thin domain has a nonconstant thickness like the flat thin domain studied in~\cites{Ho10,HoSe10,IfRaSe07}.
Moreover, its limit set $\Gamma$ is a general closed surface in $\mathbb{R}^3$ including a sphere considered in~\cite{TeZi97} which may have nonconstant curvatures.
There are several works on the asymptotic behavior of eigenvalues of the Laplace operator on a curved thin domain around a hypersurface (see e.g.~\cites{JiKu16,Kr14,Sch96,Yac18}).
Also, curved thin domains around lower dimensional manifolds were considered in the study of reaction-diffusion equations~\cites{PrRiRy02,PrRy03,Yan90}.
However, the Navier--Stokes equations in a curved thin domain in $\mathbb{R}^3$ around a general closed surface have not been studied due to difficulties in analyzing vector fields on and surface quantities of the boundary of the curved thin domain which has a complicated geometry.
Our study is aimed at giving new methods for overcoming such difficulties.

\subsection{Organization of this paper} \label{SS:Int_Org}
The rest of this paper is organized as follows.
In Section~\ref{S:Main} we present the main results of this paper on the global existence and estimates of a strong solution to \eqref{E:NS_CTD}.
Section~\ref{S:Pre} provides notations and basic results on a closed surface and a curved thin domain.
Fundamental tools for the analysis of vector fields on the curved thin domain are given in Section~\ref{S:Tool}.
In Section~\ref{S:St_Op} we summarize the main results of our first paper~\cite{Miu_NSCTD_01} on the Stokes operator for the curved thin domain under the slip boundary conditions.
Section~\ref{S:Ave} is devoted to a detailed study of average operators in the thin direction.
The main purpose of that section is to give a good decomposition of a vector field on the curved thin domain into the average and residual parts with useful estimates.
In Section~\ref{S:Tri} we show the good estimate \eqref{E:Tri_Intro} for the trilinear term by using the estimates for the Stokes and average operators given in Sections~\ref{S:St_Op} and~\ref{S:Ave}.
Based on that estimate we prove the main results of this paper in Section~\ref{S:GE}.
In Appendix~\ref{S:Ap_Vec} we fix notations on vectors and matrices.
Appendix~\ref{S:Ap_Proofs} provides the proofs of lemmas given in Section~\ref{SS:Tool_Sob} and Lemma~\ref{L:Tan_Curl_Ua}.

Most results of this paper were obtained in the doctoral thesis of the author~\cite{Miu_DT}.
However, we add the new condition (A3) in Assumption~\ref{Assump_2} to consider some curved thin domains excluded in~\cite{Miu_DT} by showing new results on a uniform Korn inequality and the axial symmetry of a curved thin domain in the first part~\cite{Miu_NSCTD_01} of our study.
In particular, we can deal with the thin spherical shell
\begin{align*}
  \Omega_\varepsilon = \{x\in\mathbb{R}^3 \mid 1 < |x| < 1+\varepsilon\}
\end{align*}
under the perfect slip boundary conditions \eqref{E:Slip_Intro} with $\gamma_\varepsilon=0$ in this paper which was not considered in~\cite{Miu_DT}.
This kind of curved thin domain was studied by Temam and Ziane~\cite{TeZi97} under different boundary conditions (see Remark~\ref{R:Ex_Assump}).

\section{Main results} \label{S:Main}
In this section we fix some notations and make assumptions, and state the main results of this paper (see also Section~\ref{S:Pre} for notations).
The proofs of theorems in this section are presented in Section~\ref{S:GE}.

Let $\Gamma$ be a closed (i.e. compact and without boundary), connected, and oriented surface in $\mathbb{R}^3$ with unit outward normal vector field $n$.
We assume that $\Gamma$ is of class $C^5$ and the functions $g_0,g_1\in C^4(\Gamma)$ satisfy
\begin{align} \label{E:G_Inf}
  g := g_1-g_0 \geq c \quad\text{on}\quad \Gamma
\end{align}
with some constant $c>0$.
Note that we do not assume $g_0\leq0$ or $g_1\geq0$ on $\Gamma$.
For a sufficiently small $\varepsilon\in(0,1]$ we define the curved thin domain $\Omega_\varepsilon$ in $\mathbb{R}^3$ by \eqref{E:Def_CTD} and the standard $L^2$-solenoidal space on $\Omega_\varepsilon$ by
\begin{align*}
  L_\sigma^2(\Omega_\varepsilon) := \{u\in L^2(\Omega_\varepsilon)^3 \mid \text{$\mathrm{div}\,u=0$ in $\Omega_\varepsilon$, $u\cdot n_\varepsilon=0$ on $\Gamma_\varepsilon$}\}.
\end{align*}
To prove the global existence of a strong solution to \eqref{E:NS_CTD} we consider an abstract formulation for \eqref{E:NS_CTD} in an appropriate function space on $\Omega_\varepsilon$.
For this purpose, we fix notations and make assumptions as follows.

Let $\mathcal{R}$ be the space of all infinitesimal rigid displacements of $\mathbb{R}^3$ whose restrictions on $\Gamma$ are tangential, i.e.
\begin{align} \label{E:Def_R}
  \mathcal{R} := \{w(x)=a\times x+b,\,x\in\mathbb{R}^3 \mid a,b\in\mathbb{R}^3,\,\text{$w|_\Gamma\cdot n=0$ on $\Gamma$}\}.
\end{align}
Clearly, the space $\mathcal{R}$ is of finite dimension.
It describes the axial symmetry of the closed surface $\Gamma$, i.e. $\mathcal{R}\neq\{0\}$ if and only if $\Gamma$ is invariant under a rotation by any angle around some line (see~\cite{Miu_NSCTD_01}*{Lemma~E.1}).
Let $\nabla_\Gamma$ be the tangential gradient operator on $\Gamma$ (see Section~\ref{SS:Pre_Surf} for the definition) and
\begin{align} \label{E:Def_Rg}
  \begin{aligned}
    \mathcal{R}_i &:= \{w\in\mathcal{R} \mid \text{$w|_\Gamma\cdot\nabla_\Gamma g_i=0$ on $\Gamma$}\}, \quad i=0,1, \\
    \mathcal{R}_g &:= \{w\in\mathcal{R} \mid \text{$w|_\Gamma\cdot\nabla_\Gamma g=0$ on $\Gamma$}\} \quad (g=g_1-g_0).
  \end{aligned}
\end{align}
By definition, $\mathcal{R}_0\cap\mathcal{R}_1\subset\mathcal{R}_g$.
These spaces are related to the uniform axial symmetry and asymmetry of $\Omega_\varepsilon$: if $\mathcal{R}_0\cap\mathcal{R}_1\neq\{0\}$ then $\Omega_\varepsilon$ is axially symmetric around the same line for all $\varepsilon\in(0,1]$, while it is not axially symmetric around any line for all $\varepsilon>0$ sufficiently small if $\mathcal{R}_g=\{0\}$ (see~\cite{Miu_NSCTD_01}*{Lemmas~E.6 and~E.7}).

Next let $P:=I_3-n\otimes n$ be the orthogonal projection onto the tangent plane of $\Gamma$.
For a vector field $v\colon\Gamma\to\mathbb{R}^3$ we set
\begin{align*}
  (\nabla_\Gamma v)_S := \frac{\nabla_\Gamma v+(\nabla_\Gamma v)^T}{2}, \quad D_\Gamma(v) := P(\nabla_\Gamma v)_SP \quad\text{on}\quad \Gamma
\end{align*}
and define function spaces for tangential vector fields on $\Gamma$ by
\begin{align} \label{E:Def_Kil}
  \begin{aligned}
    \mathcal{K}(\Gamma) &:= \{v \in H^1(\Gamma)^3 \mid \text{$v\cdot n=0$, $D_\Gamma(v)=0$ on $\Gamma$}\}, \\
    \mathcal{K}_g(\Gamma) &:= \{v\in\mathcal{K}(\Gamma) \mid \text{$v\cdot\nabla_\Gamma g=0$ on $\Gamma$}\}.
  \end{aligned}
\end{align}
If $v\in\mathcal{K}(\Gamma)$, then for all tangential vector fields $X$ and $Y$ on $\Gamma$ we have
\begin{align*}
  \overline{\nabla}_Xv\cdot Y+X\cdot\overline{\nabla}_Yv = 0 \quad\text{on}\quad \Gamma,
\end{align*}
where $\overline{\nabla}_Xv:=P(X\cdot\nabla_\Gamma)v$ is the covariant derivative of $v$ along $X$.
Such a vector field is known as a Killing vector field on $\Gamma$ that generates a one-parameter group of isometries of $\Gamma$ (see~\cites{Jo11,Pe06} for details).
By direct calculations we see that
\begin{align*}
  \mathcal{R}|_\Gamma := \{w|_\Gamma \mid w\in\mathcal{R}\} \subset \mathcal{K}(\Gamma).
\end{align*}
The sets $\mathcal{K}(\Gamma)$ and $\mathcal{R}|_\Gamma$ represent the intrinsic and extrinsic infinitesimal symmetry of $\Gamma$, respectively.
If $\Gamma$ is axially symmetric then $\mathcal{R}|_\Gamma=\mathcal{K}(\Gamma)$ (see~\cite{Miu_NSCTD_01}*{Lemma~E.3}).
The same relation is valid if $\Gamma$ is closed and convex by the rigidity theorem of Cohn-Vossen (see~\cite{Sp79}).
However, it is not known whether $\mathcal{R}|_\Gamma$ agrees with $\mathcal{K}(\Gamma)$ for closed but nonconvex and not axially symmetric surfaces.

In the rest of this section and Sections~\ref{S:St_Op},~\ref{S:Tri}, and~\ref{S:GE} we make the following assumptions on the friction coefficients $\gamma_\varepsilon^0$ and $\gamma_\varepsilon^1$ appearing in \eqref{E:Def_Fric}, the closed surface $\Gamma$, and the functions $g_0$ and $g_1$.

\begin{assumption} \label{Assump_1}
  There exists a constant $c>0$ such that
  \begin{align} \label{E:Fric_Upper}
    \gamma_\varepsilon^0 \leq c\varepsilon, \quad \gamma_\varepsilon^1 \leq c\varepsilon
  \end{align}
  for all $\varepsilon\in(0,1]$.
\end{assumption}

\begin{assumption} \label{Assump_2}
  Either of the following conditions is satisfied:
  \begin{itemize}
    \item[(A1)] There exists a constant $c>0$ such that
    \begin{align*}
      \gamma_\varepsilon^0 \geq c\varepsilon \quad\text{for all}\quad \varepsilon\in(0,1] \quad\text{or}\quad \gamma_\varepsilon^1 \geq c\varepsilon \quad\text{for all}\quad \varepsilon\in(0,1].
    \end{align*}
    \item[(A2)] The space $\mathcal{K}_g(\Gamma)$ contains only a trivial vector field, i.e. $\mathcal{K}_g(\Gamma)=\{0\}$.
    \item[(A3)] The relations
    \begin{align*}
      \mathcal{R}_g=\mathcal{R}_0\cap\mathcal{R}_1, \quad \mathcal{R}_g|_\Gamma:=\{w|_\Gamma\mid w\in\mathcal{R}_g\}=\mathcal{K}_g(\Gamma)
    \end{align*}
    hold and $\gamma_\varepsilon^0=\gamma_\varepsilon^1=0$ for all $\varepsilon\in(0,1]$.
  \end{itemize}
\end{assumption}

\begin{remark} \label{R:Ex_Assump}
  The condition (A2) or (A3) is satisfied in the following examples:
  \begin{itemize}
    \item It is known (see e.g.~\cite{Sh_18pre}*{Proposition~2.2}) that $\mathcal{K}(\Gamma)=\{0\}$, i.e. there exists no nontrivial Killing vector field on $\Gamma$ if the genus of $\Gamma$ is greater than one.
    In this case, the condition (A2) is satisfied for any $g=g_1-g_0$.
    \item The condition (A3) is satisfied when $\Omega_\varepsilon$ is a thin spherical shell
    \begin{align*}
      \Omega_\varepsilon = \{x\in\mathbb{R}^3 \mid 1<|x|<1+\varepsilon\} \quad (\Gamma = S^2,\, g_0 \equiv 0,\, g_1 \equiv 1)
    \end{align*}
    around the unit sphere $S^2$ in $\mathbb{R}^3$ and the perfect slip boundary conditions
    \begin{align} \label{E:Per_Slip}
      u\cdot n_\varepsilon = 0, \quad 2\nu P_\varepsilon D(u)n_\varepsilon = 0 \quad\text{on}\quad \Gamma_\varepsilon
    \end{align}
    are imposed.
    The Navier--Stokes equations in this kind of thin domain were studied by Temam and Ziane~\cite{TeZi97} under the Hodge boundary conditions
    \begin{align} \label{E:Hodge_BC}
      u\cdot n_\varepsilon = 0, \quad \mathrm{curl}\,u\times n_\varepsilon = 0 \quad\text{on}\quad \Gamma_\varepsilon.
    \end{align}
    Note that the perfect slip boundary conditions \eqref{E:Per_Slip} are different from the Hodge boundary conditions \eqref{E:Hodge_BC} unless the boundary is flat.
    Indeed, if $u$ satisfies $u\cdot n_\varepsilon=0$ on $\Gamma_\varepsilon$, then (see~\cite{MitMon09}*{Section~2} and~\cite{Miu_NSCTD_01}*{Lemma~B.10})
    \begin{align*}
      2P_\varepsilon D(u)n_\varepsilon-\mathrm{curl}\,u\times n_\varepsilon = 2W_\varepsilon u \quad\text{on}\quad \Gamma_\varepsilon,
    \end{align*}
    where $W_\varepsilon$ is the Weingarten map (or the shape operator) of the boundary $\Gamma_\varepsilon$ (see Section~\ref{SS:Pre_CTD}) that represents the curvatures of $\Gamma_\varepsilon$.
  \end{itemize}
  For further discussions on Assumption~\ref{Assump_2} we refer to~\cite{Miu_NSCTD_01}*{Remarks~2.9 and~2.10}.
\end{remark}

Under Assumptions~\ref{Assump_1} and~\ref{Assump_2} we define subspaces of $L^2(\Omega_\varepsilon)^3$ and $H^1(\Omega_\varepsilon)^3$ by
\begin{align} \label{E:Def_Heps}
  \begin{aligned}
    \mathcal{H}_\varepsilon &:=
    \begin{cases}
      L_\sigma^2(\Omega_\varepsilon) &\text{if the condition (A1) or (A2) is satisfied}, \\
      L_\sigma^2(\Omega_\varepsilon)\cap\mathcal{R}_g^\perp &\text{if the condition (A3) is satisfied},
  \end{cases} \\
  \mathcal{V}_\varepsilon &:= \mathcal{H}_\varepsilon\cap H^1(\Omega_\varepsilon)^3,
    \end{aligned}
\end{align}
where $\mathcal{R}_g^\perp$ is the orthogonal complement of $\mathcal{R}_g$ in $L^2(\Omega_\varepsilon)^3$.
Here we consider vector fields in $\mathcal{R}_g$, which are defined on $\mathbb{R}^3$, in $L^2(\Omega_\varepsilon)^3$ just by restricting them on $\overline{\Omega}_\varepsilon$.
It is shown in~\cite{Miu_NSCTD_01}*{Lemma~E.8} that $\mathcal{R}_0\cap\mathcal{R}_1\subset L_\sigma^2(\Omega_\varepsilon)$.
Thus $\mathcal{R}_g\subset L_\sigma^2(\Omega_\varepsilon)$ under the condition (A3).
Moreover, $\mathcal{H}_\varepsilon$ and $\mathcal{V}_\varepsilon$ are closed in $L^2(\Omega_\varepsilon)^3$ and $H^1(\Omega_\varepsilon)^3$.
We denote by $\mathbb{P}_\varepsilon$ the orthogonal projection from $L^2(\Omega_\varepsilon)^3$ onto $\mathcal{H}_\varepsilon$.

As we will see below, the function space $\mathcal{H}_\varepsilon$ defined by \eqref{E:Def_Heps} gives an appropriate abstract formulation for \eqref{E:NS_CTD}.
The bilinear form for the Stokes problem in $\Omega_\varepsilon$ under the slip boundary conditions is of the form
\begin{align*}
  a_\varepsilon(u_1,u_2) := 2\nu\bigl(D(u_1),D(u_2)\bigr)_{L^2(\Omega_\varepsilon)}+\sum_{i=0,1}\gamma_\varepsilon^i(u_1,u_2)_{L^2(\Gamma_\varepsilon^i)}
\end{align*}
for $u_1,u_2\in H^1(\Omega_\varepsilon)^3$ (see Section~\ref{S:St_Op}).
In the first part~\cite{Miu_NSCTD_01} of our study we proved that $a_\varepsilon$ is bounded and coercive on $\mathcal{V}_\varepsilon$ uniformly in $\varepsilon$.

\begin{lemma}[{\cite{Miu_NSCTD_01}*{Theorem~2.4}}] \label{L:Uni_aeps}
  Under Assumptions~\ref{Assump_1} and~\ref{Assump_2}, there exist constants $\varepsilon_0\in(0,1]$ and $c>0$ such that
  \begin{align} \label{E:Uni_aeps}
    c^{-1}\|u\|_{H^1(\Omega_\varepsilon)}^2 \leq a_\varepsilon(u,u) \leq c\|u\|_{H^1(\Omega_\varepsilon)}^2
  \end{align}
  for all $\varepsilon\in(0,\varepsilon_0]$ and $u\in\mathcal{V}_\varepsilon$.
\end{lemma}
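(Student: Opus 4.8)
The plan is to prove the two inequalities in \eqref{E:Uni_aeps} separately, the upper bound being routine and the lower bound (coercivity) carrying all the difficulty.

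For the \emph{upper bound} I would bound the volume term by $2\nu\|\nabla u\|_{L^2(\Omega_\varepsilon)}^2$ using the pointwise estimate $|D(u)|\le|\nabla u|$, and control the boundary terms with a trace inequality on the thin domain that is uniform in $\varepsilon$, of the form $\|u\|_{L^2(\Gamma_\varepsilon^i)}^2\le C\bigl(\varepsilon^{-1}\|u\|_{L^2(\Omega_\varepsilon)}^2+\|u\|_{L^2(\Omega_\varepsilon)}\|\nabla u\|_{L^2(\Omega_\varepsilon)}\bigr)$, obtained by localizing, straightening the normal variable, and integrating across a fibre of length $O(\varepsilon)$. Together with $\gamma_\varepsilon^i\le c\varepsilon$ from Assumption~\ref{Assump_1} and Young's inequality this gives $\sum_{i=0,1}\gamma_\varepsilon^i\|u\|_{L^2(\Gamma_\varepsilon^i)}^2\le C\|u\|_{H^1(\Omega_\varepsilon)}^2$, hence the right-hand inequality of \eqref{E:Uni_aeps}.

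For \emph{coercivity} I would proceed in two steps. First, a uniform Korn inequality with a lower-order term: there is $C>0$ independent of $\varepsilon$ with $\|u\|_{H^1(\Omega_\varepsilon)}^2\le C\bigl(\|D(u)\|_{L^2(\Omega_\varepsilon)}^2+\|u\|_{L^2(\Omega_\varepsilon)}^2\bigr)$ for all $u\in H^1(\Omega_\varepsilon)^3$ with $u\cdot n_\varepsilon=0$ on $\Gamma_\varepsilon$. To make the constant uniform in $\varepsilon$ I would cover $\Gamma$ by finitely many coordinate patches and, on each, perform the anisotropic change of variables $r=\varepsilon(g_0+\rho g)$ with $\rho\in(0,1)$, which turns the corresponding piece of $\Omega_\varepsilon$ into a fixed reference slab; the Euclidean symmetric gradient then becomes a tensor whose normal--normal and normal--tangential entries carry the same factor $\varepsilon^{-1}$ that appears in $\partial_r u$, so the classical Korn inequality on the fixed slab, combined with careful bookkeeping of these matching powers of $\varepsilon$ (and with the condition $u\cdot n_\varepsilon=0$ used to glue the patches through the partition of unity), yields the claim. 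Second, I would absorb the term $\|u\|_{L^2(\Omega_\varepsilon)}^2$ using that $u$ ranges over $\mathcal{V}_\varepsilon$, arguing by contradiction: if the left-hand inequality of \eqref{E:Uni_aeps} fails uniformly, take $\varepsilon_k\to\varepsilon_*\in[0,1]$ and $u_k\in\mathcal{V}_{\varepsilon_k}$ with $\|u_k\|_{H^1(\Omega_{\varepsilon_k})}=1$ and $a_{\varepsilon_k}(u_k,u_k)\to0$, rescale to the reference slab, extract a weak limit $v$, and use the Korn-with-lower-order-term estimate to see that $v$ has vanishing strain. When $\varepsilon_*>0$ this is a standard compactness argument on an essentially fixed domain; when $\varepsilon_*=0$ the limit $v$ lives on $\Gamma$, is a tangential Killing field, and the divergence-free constraint together with the thin-film structure forces $v\cdot\nabla_\Gamma g=0$, so $v\in\mathcal{K}_g(\Gamma)$, while under the condition (A3) it is the restriction of an element of $\mathcal{R}_g$. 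Assumption~\ref{Assump_2} then closes the argument: under (A1) the friction term is $\gtrsim\varepsilon_k\|u_k\|_{L^2(\Gamma_{\varepsilon_k}^i)}^2$ and a Poincar\'{e} inequality with boundary term excludes a nonzero limit; under (A2) the limit vanishes since $\mathcal{K}_g(\Gamma)=\{0\}$; under (A3) it is excluded by the orthogonality to $\mathcal{R}_g$ built into $\mathcal{H}_\varepsilon$. Strong convergence (Rellich on the fixed slab) then contradicts $\|u_k\|_{H^1(\Omega_{\varepsilon_k})}=1$.

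The step I expect to be the main obstacle is making the Korn constant genuinely independent of $\varepsilon$: since $\Omega_\varepsilon$ collapses onto $\Gamma$ one cannot simply invoke Korn's inequality on a fixed domain, and the anisotropic rescaling must be organized so that every factor $\varepsilon^{-1}$ produced by differentiating in the normal direction is matched by one inside $D(u)$, while the curvature of $\Gamma$ and the variable thickness $g$ generate genuine cross terms that have to be absorbed. The second delicate point is the $\varepsilon_*=0$ branch of the compactness argument, where one has to identify the limiting kernel with the correct space of tangential Killing fields on $\Gamma$ and connect it precisely to $\mathcal{K}_g(\Gamma)$ and $\mathcal{R}_g$ so that Assumption~\ref{Assump_2} applies; this is presumably why the three alternatives in that assumption are phrased the way they are.
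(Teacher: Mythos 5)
You should first note that this paper contains no proof of Lemma~\ref{L:Uni_aeps} at all: it is imported verbatim from the first part of the series (\cite{Miu_NSCTD_01}*{Theorem~2.4}), and Remark~\ref{R:Assump} together with Section~\ref{SS:Int_Org} makes clear that the uniform Korn inequality and the analysis of axial symmetry on which it rests are carried out there. So there is no in-paper argument to compare with; what one can say is that your outline is consistent with the strategy this paper attributes to Part~I. The upper bound is indeed routine: $|D(u)|\le|\nabla u|$, the uniform trace inequality (available in this paper as \eqref{E:Poin_Bo} with $p=2$), and \eqref{E:Fric_Upper} give the right-hand side of \eqref{E:Uni_aeps}. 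The coercivity is exactly a Korn-type inequality with $\varepsilon$-independent constant on the constrained space $\mathcal{V}_\varepsilon$, and the three alternatives (A1)--(A3) are there precisely to exclude the kernel consisting of rigid motions tangent to both boundaries and of constrained Killing fields in $\mathcal{K}_g(\Gamma)$, which is the role you assign to them.

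That said, your sketch names rather than proves the decisive steps, and these are exactly where the content of \cite{Miu_NSCTD_01} lies: making the Korn constant uniform as $\Omega_\varepsilon$ collapses onto $\Gamma$ (with curvature and variable thickness producing cross terms that must be absorbed), and identifying the limiting kernel in the $\varepsilon\to0$ branch. Two points in that branch need repair or at least more care. First, the final contradiction should be with the lower bound on $\|u_k\|_{L^2(\Omega_{\varepsilon_k})}$ that follows from the Korn inequality with lower-order term, $\|D(u_k)\|_{L^2(\Omega_{\varepsilon_k})}\to0$, and the normalization $\|u_k\|_{H^1(\Omega_{\varepsilon_k})}=1$; as written, ``Rellich contradicts $\|u_k\|_{H^1}=1$'' is not a valid conclusion, since strong $L^2$ convergence to zero says nothing about the $H^1$ norm. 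Second, the identification of the limit: the relation $v\cdot\nabla_\Gamma g=0$ does not follow from ``the thin-film structure'' alone but from the averaged divergence-free identity obtained from $\mathrm{div}\,u_k=0$ and the impermeability at both boundaries (in this paper this is Lemma~\ref{L:Ave_Div}), combined with $\mathrm{div}_\Gamma v=0$ for a Killing field; and under (A3) you must pass the orthogonality $u_k\perp\mathcal{R}_g$ in $L^2(\Omega_{\varepsilon_k})$ to the limit, which yields a $g$-weighted orthogonality on $\Gamma$ and uses both hypotheses of (A3), namely $\mathcal{R}_g=\mathcal{R}_0\cap\mathcal{R}_1\subset L_\sigma^2(\Omega_\varepsilon)$ (so that the test fields are admissible) and $\mathcal{R}_g|_\Gamma=\mathcal{K}_g(\Gamma)$ (so that orthogonality kills the whole limiting kernel). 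With these repairs your plan is a legitimate route to the lemma, parallel to, though necessarily less quantitative than, the proof given in Part~I.
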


By Lemma~\ref{L:Uni_aeps} and the Lax--Milgram theory we see that $a_\varepsilon$ induces a bounded linear operator $A_\varepsilon$ from $\mathcal{V}_\varepsilon$ into its dual space $\mathcal{V}_\varepsilon'$.
We consider $A_\varepsilon$ as an unbounded operator on $\mathcal{H}_\varepsilon$ with domain $D(A_\varepsilon)$ and call it the Stokes operator for $\Omega_\varepsilon$ under the slip boundary conditions or simply the Stokes operator on $\mathcal{H}_\varepsilon$.
In Section~\ref{S:St_Op} we summarize the fundamental results on $A_\varepsilon$ established in~\cite{Miu_NSCTD_01}.

\begin{remark} \label{R:Assump}
  We impose Assumptions~\ref{Assump_1} and~\ref{Assump_2} in Sections~\ref{S:St_Op},~\ref{S:Tri}, and~\ref{S:GE} to employ the Stokes operator $A_\varepsilon$, but we do not use these assumptions explicitly in this paper except for the inequalities \eqref{E:Fric_Upper}.
  Also, the $C^5$-regularity of $\Gamma$ and the $C^4$-regularity of $g_0$ and $g_1$ on $\Gamma$ are not used explicitly in this paper since they are required just for the proof of the uniform norm equivalence for $A_\varepsilon$ (see Lemma~\ref{L:Stokes_H2}) given in the first part of our study (see~\cite{Miu_NSCTD_01}*{Section~6}).
\end{remark}

Now let us present the main results of this paper.
With the above notations we consider the abstract formulation for \eqref{E:NS_CTD} in $\mathcal{H}_\varepsilon$:
\begin{align} \label{E:NS_Abst}
  \partial_tu^\varepsilon+A_\varepsilon u^\varepsilon+\mathbb{P}_\varepsilon(u^\varepsilon\cdot\nabla)u^\varepsilon = \mathbb{P}_\varepsilon f^\varepsilon \quad\text{on}\quad (0,\infty), \quad u^\varepsilon|_{t=0} = u_0^\varepsilon.
\end{align}
We refer to~\cites{BoFa13,CoFo88,So01,Te79} and the references cited therein for the study of the abstract evolution equation \eqref{E:NS_Abst}.
For a vector field $u$ on $\Omega_\varepsilon$ we define the average of $u$ in the thin direction and its tangential component by
\begin{align*}
  Mu(y) := \frac{1}{\varepsilon g(y)}\int_{\varepsilon g_0(y)}^{\varepsilon g_1(y)} u(y+rn(y))\,dr, \quad M_\tau u(y) := P(y)Mu(y)
\end{align*}
for $y\in\Gamma$ (see Section~\ref{S:Ave}).
Also, for the Hilbert space
\begin{align*}
  H^1(\Gamma,T\Gamma) := \{v\in H^1(\Gamma)^3 \mid \text{$v\cdot n=0$ on $\Gamma$}\}
\end{align*}
we denote by $H^{-1}(\Gamma,T\Gamma)$ its dual space (see Section~\ref{SS:Pre_Surf}).
First we give the global existence of a strong solution to \eqref{E:NS_CTD} for large data.

\begin{theorem} \label{T:GE}
  Under Assumptions~\ref{Assump_1} and~\ref{Assump_2}, let $\varepsilon_0$ be the constant given in Lemma~\ref{L:Uni_aeps}.
  There exists a constant $c_0\in(0,1]$ such that the following statement holds: for each $\varepsilon\in(0,\varepsilon_0]$ suppose that the given data
  \begin{align*}
    u_0^\varepsilon\in\mathcal{V}_\varepsilon, \quad f^\varepsilon\in L^\infty(0,\infty;L^2(\Omega_\varepsilon)^3)
  \end{align*}
  satisfy
  \begin{multline} \label{E:GE_Data}
    \|u_0^\varepsilon\|_{H^1(\Omega_\varepsilon)}^2+\|\mathbb{P}_\varepsilon f^\varepsilon\|_{L^\infty(0,\infty;L^2(\Omega_\varepsilon))}^2 \\
    +\|M_\tau u_0^\varepsilon\|_{L^2(\Gamma)}^2+\|M_\tau\mathbb{P}_\varepsilon f^\varepsilon\|_{L^\infty(0,T;H^{-1}(\Gamma,T\Gamma))}^2 \leq c_0\varepsilon^{-1}.
  \end{multline}
  If the condition (A3) of Assumption~\ref{Assump_2} is imposed, suppose further that $f^\varepsilon(t)\in\mathcal{R}_g^\perp$ for a.a. $t\in(0,\infty)$.
  Then there exists a global-in-time strong solution
  \begin{align*}
    u^\varepsilon \in C([0,\infty);\mathcal{V}_\varepsilon)\cap L_{loc}^2([0,\infty);D(A_\varepsilon))\cap H_{loc}^1([0,\infty);\mathcal{H}_\varepsilon)
  \end{align*}
  to the Navier--Stokes equations \eqref{E:NS_CTD}.
\end{theorem}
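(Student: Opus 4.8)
The plan is to construct the strong solution by the Galerkin method and then obtain the global-in-time bound via the trilinear estimate \eqref{E:Tri_Intro} and the uniform Gronwall inequality. First I would establish local-in-time existence: using the spectral basis of $A_\varepsilon$ (which exists because $A_\varepsilon$ has compact resolvent on $\mathcal{H}_\varepsilon$, again from Lemma~\ref{L:Uni_aeps} and the compact embedding $\mathcal{V}_\varepsilon\hookrightarrow\mathcal{H}_\varepsilon$), I build finite-dimensional approximations $u_m^\varepsilon$, test the equation \eqref{E:NS_Abst} against $A_\varepsilon u_m^\varepsilon$, and use \eqref{E:Tri_Intro} together with the norm equivalence \eqref{E:NE_Intro} to get a differential inequality of the form $\tfrac{d}{dt}\|A_\varepsilon^{1/2}u_m^\varepsilon\|_{L^2}^2 \leq \Phi(\|A_\varepsilon^{1/2}u_m^\varepsilon\|_{L^2}^2) + C\|\mathbb{P}_\varepsilon f^\varepsilon\|_{L^2}^2$ on the set where $d_1\varepsilon^{1/2}\|A_\varepsilon^{1/2}u_m^\varepsilon\|_{L^2}<1/4$. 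This yields a uniform (in $m$) local bound, hence a time $T^\varepsilon>0$ and, after passing to the limit in $m$ via standard compactness (Aubin--Lions), a local strong solution $u^\varepsilon\in C([0,T^\varepsilon];\mathcal{V}_\varepsilon)\cap L^2(0,T^\varepsilon;D(A_\varepsilon))\cap H^1(0,T^\varepsilon;\mathcal{H}_\varepsilon)$; uniqueness follows from the standard energy difference argument using \eqref{E:Tri_Intro}.

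The heart of the matter is the global-in-time a priori bound, which I would prove by a continuation/contradiction argument as outlined in Subsection~\ref{SS:Int_Ide}. Suppose $[0,T_{\max})$ is the maximal existence interval with $T_{\max}<\infty$. I first derive the basic energy estimate: testing \eqref{E:NS_Abst} against $u^\varepsilon$ and using the coercivity \eqref{E:Uni_aeps} gives control of $\|u^\varepsilon(t)\|_{L^2}^2$ and $\int_0^t\|u^\varepsilon\|_{H^1}^2$ in terms of $\|u_0^\varepsilon\|_{L^2}^2$ and $\|\mathbb{P}_\varepsilon f^\varepsilon\|_{L^2}^2$. The delicate additional ingredient, and the reason for the extra hypotheses $\|M_\tau u_0^\varepsilon\|_{L^2(\Gamma)}$ and $\|M_\tau\mathbb{P}_\varepsilon f^\varepsilon\|_{H^{-1}(\Gamma,T\Gamma)}$ in \eqref{E:GE_Data}, is an improved $O(1)$ rather than $O(\varepsilon^{-1})$ bound on $\|u^\varepsilon(t)\|_{L^2}^2$: projecting the equation onto the almost-two-dimensional average part via $M_\tau$ produces a surface equation for which a two-dimensional-type energy estimate (using Lemma~\ref{L:Prod} / the product estimate \eqref{E:Prod_Intro}) gives a bound on $\|M_\tau u^\varepsilon(t)\|_{L^2(\Gamma)}$, while the residual part is controlled by Poincar\'e inequalities; combining these, one gets $\varepsilon^{-1}\|u^\varepsilon(t)\|_{L^2}^2\|A_\varepsilon^{1/2}u^\varepsilon(t)\|_{L^2}^2$ small enough to run the Gronwall machinery.

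Next I plug the energy bounds into \eqref{E:Tri_Intro} and obtain, on the time interval where $d_1\varepsilon^{1/2}\|A_\varepsilon^{1/2}u^\varepsilon(t)\|_{L^2}<1/4$, a differential inequality
\begin{align*}
  \frac{d}{dt}\|A_\varepsilon^{1/2}u^\varepsilon\|_{L^2}^2+\frac{\nu}{2}\|A_\varepsilon u^\varepsilon\|_{L^2}^2 \leq a_1\|A_\varepsilon^{1/2}u^\varepsilon\|_{L^2}^2+a_2
\end{align*}
with coefficients $a_1,a_2$ controlled by $\|u_0^\varepsilon\|_{L^2}$, $\|\mathbb{P}_\varepsilon f^\varepsilon\|_{L^2}$ and the surface data, all of order $O(1)$ after the improved estimate above. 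Applying the uniform Gronwall inequality Lemma~\ref{L:Uni_Gronwall} (after first integrating the basic energy inequality to get the needed time-integral bound on $\|A_\varepsilon^{1/2}u^\varepsilon\|_{L^2}^2$) yields a bound on $\|A_\varepsilon^{1/2}u^\varepsilon(t)\|_{L^2}^2$ uniform in $t$, and by choosing $c_0$ in \eqref{E:GE_Data} small enough this bound forces $d_1\varepsilon^{1/2}\|A_\varepsilon^{1/2}u^\varepsilon(t)\|_{L^2}<1/4$ for all $t\in[0,T_{\max})$. This is the contradiction: the $D(A_\varepsilon)$-norm of $u^\varepsilon$ cannot blow up as $t\uparrow T_{\max}$, so by the local existence theorem the solution extends beyond $T_{\max}$, hence $T_{\max}=\infty$. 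Finally, the regularity class $C([0,\infty);\mathcal{V}_\varepsilon)\cap L_{loc}^2([0,\infty);D(A_\varepsilon))\cap H_{loc}^1([0,\infty);\mathcal{H}_\varepsilon)$ follows from the uniform bounds on each compact subinterval together with the equation \eqref{E:NS_Abst} itself (which gives $\partial_t u^\varepsilon\in L^2_{loc}$ from the bounds on $A_\varepsilon u^\varepsilon$ and on $\mathbb{P}_\varepsilon(u^\varepsilon\cdot\nabla)u^\varepsilon$) and a standard interpolation argument for the continuity into $\mathcal{V}_\varepsilon$. The main obstacle I anticipate is the bookkeeping of all the $\varepsilon$-powers so that the final smallness condition is exactly \eqref{E:GE_Data}: in particular, isolating the average part of $u^\varepsilon$ on $\Gamma$ and proving the $O(1)$ bound on $\|u^\varepsilon(t)\|_{L^2}^2$ without losing a factor of $\varepsilon^{-1}$ is the technically subtle point, and it is there that the slip (rather than no-slip) boundary condition and the decomposition into $u^a$ and $u^r$ are essential.
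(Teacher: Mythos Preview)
Your overall architecture is correct: local existence, then a contradiction argument on the first time $T$ at which $\varepsilon^{1/2}\|A_\varepsilon^{1/2}u^\varepsilon(T)\|_{L^2}$ reaches $1/(4d_1)$, using the trilinear estimate \eqref{E:Tri_Intro} together with the uniform Gronwall inequality. The paper does exactly this, invoking a standard local-existence theorem (Theorem~\ref{T:LE}) rather than redoing Galerkin, but that is a cosmetic difference.

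The substantive divergence is in how you propose to obtain the $O(1)$ bound on $\|u^\varepsilon(t)\|_{L^2(\Omega_\varepsilon)}^2$. You suggest projecting the equation onto $\Gamma$ via $M_\tau$, deriving a surface evolution equation, and running a two-dimensional energy argument on $\|M_\tau u^\varepsilon\|_{L^2(\Gamma)}$. This could perhaps be made to work, but it is considerably harder than necessary: the commutators of $M_\tau$ with $A_\varepsilon$ and with the nonlinearity would have to be controlled, and Lemma~\ref{L:Prod} is not a surface estimate but a mixed $\Omega_\varepsilon$--$\Gamma$ product estimate. The paper's route is much simpler and stays entirely in $\Omega_\varepsilon$. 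One tests \eqref{E:NS_Abst} against $u^\varepsilon$; the nonlinear term vanishes as usual. The only place the surface data enter is in estimating the forcing term, which is split as
\[
(\mathbb{P}_\varepsilon f^\varepsilon,u^\varepsilon)_{L^2(\Omega_\varepsilon)}=(\mathbb{P}_\varepsilon f^\varepsilon,u^\varepsilon-\overline{M_\tau u^\varepsilon})_{L^2(\Omega_\varepsilon)}+(\mathbb{P}_\varepsilon f^\varepsilon,\overline{M_\tau u^\varepsilon})_{L^2(\Omega_\varepsilon)}.
\]
The first piece is bounded by $c\varepsilon\|\mathbb{P}_\varepsilon f^\varepsilon\|_{L^2}\|u^\varepsilon\|_{H^1}$ via the Poincar\'e-type inequality \eqref{E:AveT_Diff_Dom}. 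For the second, one moves $M_\tau$ to the other factor using the almost-symmetry \eqref{E:AveT_Inner} and then invokes the duality estimate of Lemma~\ref{L:Con_Dual} to get $c\varepsilon^{1/2}\|M_\tau\mathbb{P}_\varepsilon f^\varepsilon\|_{H^{-1}(\Gamma,T\Gamma)}\|u^\varepsilon\|_{H^1}$. After Young's inequality this yields
\[
\frac{d}{dt}\|u^\varepsilon\|_{L^2}^2+\|A_\varepsilon^{1/2}u^\varepsilon\|_{L^2}^2\leq c\bigl(\varepsilon^2\|\mathbb{P}_\varepsilon f^\varepsilon\|_{L^2}^2+\varepsilon\|M_\tau\mathbb{P}_\varepsilon f^\varepsilon\|_{H^{-1}}^2\bigr),
\]
and the right-hand side is $O(c_0)$ under \eqref{E:GE_Data}. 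The hypothesis on $\|M_\tau u_0^\varepsilon\|_{L^2(\Gamma)}$ enters only through the initial-data estimate $\|u_0^\varepsilon\|_{L^2}^2\leq c\varepsilon^2\|u_0^\varepsilon\|_{H^1}^2+c\varepsilon\|M_\tau u_0^\varepsilon\|_{L^2(\Gamma)}^2$, again via \eqref{E:AveT_Diff_Dom} and \eqref{E:Con_Lp}. So no surface PDE is ever written down.

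One further correction: in your differential inequality for $\|A_\varepsilon^{1/2}u^\varepsilon\|_{L^2}^2$ the coefficient $a_1$ is \emph{not} $O(1)$ pointwise in time. From \eqref{E:Tri_Est_A} one gets $\xi(t)=2d_2\|u^\varepsilon(t)\|_{L^2}^2\|A_\varepsilon^{1/2}u^\varepsilon(t)\|_{L^2}^2$, which under the contradiction hypothesis is only $O(c_0\varepsilon^{-1})$. What \emph{is} $O(1)$ is $\int_{t-1}^t\xi(s)\,ds$, thanks to the $L^2$ energy estimate above; this is precisely why Lemma~\ref{L:Uni_Gronwall} is needed rather than ordinary Gronwall. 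The paper handles $t\in(0,1]$ by direct integration and $t\geq 1$ by the uniform Gronwall inequality, then closes the contradiction by choosing $c_0$ so that the resulting bound is strictly below $C_1^2\varepsilon^{-1}$.
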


We also establish several estimates for a strong solution to \eqref{E:NS_CTD} with constants explicitly depending on $\varepsilon$, which are fundamental for the study of the thin-film limit for \eqref{E:NS_CTD} carried out in the last part~\cite{Miu_NSCTD_03} of our study.

\begin{theorem} \label{T:UE}
  Under Assumptions~\ref{Assump_1} and~\ref{Assump_2}, let $\varepsilon_0$ be the constant given in Lemma~\ref{L:Uni_aeps}.
  Also, let $c_1$, $c_2$, $\alpha$, and $\beta$ be positive constants.
  Then there exists a constant $\varepsilon_1\in(0,\varepsilon_0]$ such that the following statement holds: for $\varepsilon\in(0,\varepsilon_1]$ suppose that the given data
  \begin{align*}
    u_0^\varepsilon\in \mathcal{V}_\varepsilon, \quad f^\varepsilon\in L^\infty(0,\infty;L^2(\Omega_\varepsilon)^3)
  \end{align*}
  satisfy
  \begin{align} \label{E:UE_Data}
    \begin{aligned}
      \|u_0^\varepsilon\|_{H^1(\Omega_\varepsilon)}^2+\|\mathbb{P}_\varepsilon f^\varepsilon\|_{L^\infty(0,\infty;L^2(\Omega_\varepsilon))}^2 &\leq c_1\varepsilon^{-1+\alpha}, \\
      \|M_\tau u_0^\varepsilon\|_{L^2(\Gamma)}^2+\|M_\tau\mathbb{P}_\varepsilon f^\varepsilon\|_{L^\infty(0,\infty;H^{-1}(\Gamma,T\Gamma))}^2 &\leq c_2\varepsilon^{-1+\beta}.
    \end{aligned}
  \end{align}
  If the condition (A3) of Assumption~\ref{Assump_2} is imposed, suppose further that $f^\varepsilon(t)\in\mathcal{R}_g^\perp$ for a.a. $t\in(0,\infty)$.
  Then there exists a global-in-time strong solution
  \begin{align*}
    u^\varepsilon \in C([0,\infty);\mathcal{V}_\varepsilon)\cap L_{loc}^2([0,\infty);D(A_\varepsilon))\cap H_{loc}^1([0,\infty);\mathcal{H}_\varepsilon)
  \end{align*}
  to \eqref{E:NS_CTD}.
  Moreover, there exists a constant $c>0$ independent of $\varepsilon$ and $u^\varepsilon$ such that
  \begin{align} \label{E:UE_L2}
    \begin{aligned}
      \|u^\varepsilon(t)\|_{L^2(\Omega_\varepsilon)}^2 &\leq c(\varepsilon^{1+\alpha}+\varepsilon^\beta), \\
      \int_0^t\|u^\varepsilon(s)\|_{H^1(\Omega_\varepsilon)}^2\,ds &\leq c(\varepsilon^{1+\alpha}+\varepsilon^\beta)(1+t)
    \end{aligned}
  \end{align}
  for all $t\geq0$ and
  \begin{align} \label{E:UE_H1}
    \begin{aligned}
      \|u^\varepsilon(t)\|_{H^1(\Omega_\varepsilon)}^2 &\leq c(\varepsilon^{-1+\alpha}+\varepsilon^{-1+\beta}), \\
      \int_0^t\|u^\varepsilon(s)\|_{H^2(\Omega_\varepsilon)}^2\,ds &\leq c(\varepsilon^{-1+\alpha}+\varepsilon^{-1+\beta})(1+t)
    \end{aligned}
  \end{align}
  for all $t\geq 0$.
\end{theorem}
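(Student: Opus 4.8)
The plan is to derive the global existence from Theorem~\ref{T:GE} by shrinking $\varepsilon$, and then to obtain \eqref{E:UE_L2} and \eqref{E:UE_H1} from two coupled energy estimates---one in $L^2(\Omega_\varepsilon)$ and one in $D(A_\varepsilon^{1/2})$---with careful tracking of the powers of $\varepsilon$. First I would fix $\varepsilon_1\in(0,\varepsilon_0]$ so small that $c_1\varepsilon_1^{\alpha}+c_2\varepsilon_1^{\beta}\leq c_0$, where $c_0$ is the constant of Theorem~\ref{T:GE}. Then for $\varepsilon\in(0,\varepsilon_1]$ the assumption \eqref{E:UE_Data} implies, term by term, that the left-hand side of \eqref{E:GE_Data} is bounded by $(c_1\varepsilon^{\alpha}+c_2\varepsilon^{\beta})\varepsilon^{-1}\leq c_0\varepsilon^{-1}$, so Theorem~\ref{T:GE}---with the extra hypothesis $f^\varepsilon(t)\in\mathcal{R}_g^\perp$ carried over verbatim under~(A3)---yields the global strong solution $u^\varepsilon$ in the asserted class. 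The proof of Theorem~\ref{T:GE} moreover supplies the pointwise bound $d_1\varepsilon^{1/2}\|A_\varepsilon^{1/2}u^\varepsilon(t)\|_{L^2(\Omega_\varepsilon)}<\tfrac14$ for all $t\geq0$, which I would use below.

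For \eqref{E:UE_L2} I would test \eqref{E:NS_Abst} with $u^\varepsilon$. The trilinear term vanishes because $\mathrm{div}\,u^\varepsilon=0$ and $u^\varepsilon\cdot n_\varepsilon=0$, and $a_\varepsilon(u^\varepsilon,u^\varepsilon)\geq c^{-1}\|u^\varepsilon\|_{H^1(\Omega_\varepsilon)}^{2}$ by Lemma~\ref{L:Uni_aeps}. The crucial point is to estimate $(\mathbb{P}_\varepsilon f^\varepsilon,u^\varepsilon)_{L^2(\Omega_\varepsilon)}$ not directly but through the decomposition $u^\varepsilon=u^{\varepsilon,a}+u^{\varepsilon,r}$ of Section~\ref{S:Ave}: pairing $\mathbb{P}_\varepsilon f^\varepsilon$ with $u^{\varepsilon,r}$ and using the Poincar\'e inequality for the residual part (Lemma~\ref{L:Po_Ur}, which gives $\|u^{\varepsilon,r}\|_{L^2(\Omega_\varepsilon)}\leq c\varepsilon\|u^\varepsilon\|_{H^1(\Omega_\varepsilon)}$) produces a contribution $\leq c\varepsilon^{2}\|\mathbb{P}_\varepsilon f^\varepsilon\|_{L^2(\Omega_\varepsilon)}^{2}+\tfrac14 c^{-1}\|u^\varepsilon\|_{H^1(\Omega_\varepsilon)}^{2}$, while pairing with $u^{\varepsilon,a}$ and passing to $\Gamma$ via the average operator produces $\leq c\varepsilon\|M_\tau\mathbb{P}_\varepsilon f^\varepsilon\|_{H^{-1}(\Gamma,T\Gamma)}\|M_\tau u^\varepsilon\|_{H^1(\Gamma)}\leq c\varepsilon\|M_\tau\mathbb{P}_\varepsilon f^\varepsilon\|_{H^{-1}(\Gamma,T\Gamma)}^{2}+\tfrac14 c^{-1}\|u^\varepsilon\|_{H^1(\Omega_\varepsilon)}^{2}$, where I use the norm relation $\|M_\tau u^\varepsilon\|_{H^1(\Gamma)}\leq c\varepsilon^{-1/2}\|u^\varepsilon\|_{H^1(\Omega_\varepsilon)}$ from Section~\ref{S:Ave}. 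Absorbing the $H^1$-terms and using \eqref{E:UE_Data} gives
\[
\frac{d}{dt}\|u^\varepsilon\|_{L^2(\Omega_\varepsilon)}^{2}+c^{-1}\|u^\varepsilon\|_{H^1(\Omega_\varepsilon)}^{2}\leq c(\varepsilon^{1+\alpha}+\varepsilon^{\beta}).
\]
Since $\|u^\varepsilon\|_{L^2(\Omega_\varepsilon)}\leq\|u^\varepsilon\|_{H^1(\Omega_\varepsilon)}$, Gronwall's inequality together with the same decomposition applied to the datum (which yields $\|u_0^\varepsilon\|_{L^2(\Omega_\varepsilon)}^{2}\leq c\varepsilon^{2}\|u_0^\varepsilon\|_{H^1(\Omega_\varepsilon)}^{2}+c\varepsilon\|M_\tau u_0^\varepsilon\|_{L^2(\Gamma)}^{2}\leq c(\varepsilon^{1+\alpha}+\varepsilon^{\beta})$) gives the first line of \eqref{E:UE_L2}, and integrating the displayed inequality in time gives the second.

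For \eqref{E:UE_H1} I would test \eqref{E:NS_Abst} with $A_\varepsilon u^\varepsilon$, obtaining $\tfrac12\frac{d}{dt}\|A_\varepsilon^{1/2}u^\varepsilon\|_{L^2(\Omega_\varepsilon)}^{2}+\|A_\varepsilon u^\varepsilon\|_{L^2(\Omega_\varepsilon)}^{2}=(\mathbb{P}_\varepsilon f^\varepsilon,A_\varepsilon u^\varepsilon)_{L^2(\Omega_\varepsilon)}-\bigl((u^\varepsilon\cdot\nabla)u^\varepsilon,A_\varepsilon u^\varepsilon\bigr)_{L^2(\Omega_\varepsilon)}$. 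Estimating the last term by \eqref{E:Tri_Intro}, using $d_1\varepsilon^{1/2}\|A_\varepsilon^{1/2}u^\varepsilon\|_{L^2(\Omega_\varepsilon)}<\tfrac14$ so that its leading coefficient is $<\tfrac12$, bounding $(\mathbb{P}_\varepsilon f^\varepsilon,A_\varepsilon u^\varepsilon)_{L^2(\Omega_\varepsilon)}\leq\tfrac14\|A_\varepsilon u^\varepsilon\|_{L^2(\Omega_\varepsilon)}^{2}+\|\mathbb{P}_\varepsilon f^\varepsilon\|_{L^2(\Omega_\varepsilon)}^{2}$, and absorbing, I arrive, with $y:=\|A_\varepsilon^{1/2}u^\varepsilon\|_{L^2(\Omega_\varepsilon)}^{2}$, at
\[
y'+\tfrac12\|A_\varepsilon u^\varepsilon\|_{L^2(\Omega_\varepsilon)}^{2}\leq 2\|\mathbb{P}_\varepsilon f^\varepsilon\|_{L^2(\Omega_\varepsilon)}^{2}+2d_2\|u^\varepsilon\|_{L^2(\Omega_\varepsilon)}^{2}y^{2}+2d_2\varepsilon^{-1}\|u^\varepsilon\|_{L^2(\Omega_\varepsilon)}^{2}y.
\]
Now I would apply the uniform Gronwall inequality (Lemma~\ref{L:Uni_Gronwall}) \emph{over intervals of length $\varepsilon$}: on such an interval $\int g\,ds$ picks up a factor $\varepsilon$ that cancels the $\varepsilon^{-1}$ in the last term, so by \eqref{E:UE_L2} the exponent stays bounded as $\varepsilon\to0$, while $\varepsilon^{-1}\int y\,ds\leq c\varepsilon^{-1}\int\|u^\varepsilon\|_{H^1(\Omega_\varepsilon)}^{2}\,ds\leq c\varepsilon^{-1}(\varepsilon^{1+\alpha}+\varepsilon^{\beta})$ and $\int h\,ds\leq c\varepsilon\|\mathbb{P}_\varepsilon f^\varepsilon\|_{L^\infty(0,\infty;L^2(\Omega_\varepsilon))}^{2}\leq c\varepsilon^{\alpha}$ yield $\|A_\varepsilon^{1/2}u^\varepsilon(t)\|_{L^2(\Omega_\varepsilon)}^{2}\leq c(\varepsilon^{-1+\alpha}+\varepsilon^{-1+\beta})$ (using $\varepsilon^{\alpha}\leq\varepsilon^{-1+\alpha}$); the slab $[0,\varepsilon]$ is handled by ordinary Gronwall from $y(0)\leq c\|u_0^\varepsilon\|_{H^1(\Omega_\varepsilon)}^{2}\leq c\varepsilon^{-1+\alpha}$. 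The norm equivalence \eqref{E:NE_Intro} converts this into the $H^1$-bound, and integrating the displayed inequality in time while using $\|A_\varepsilon u^\varepsilon\|_{L^2(\Omega_\varepsilon)}\geq c^{-1}\|u^\varepsilon\|_{H^2(\Omega_\varepsilon)}$ gives the $H^2$-in-time bound, once one checks that every $\varepsilon$-power produced by the nonlinear terms is dominated by $(\varepsilon^{-1+\alpha}+\varepsilon^{-1+\beta})(1+t)$.

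The main obstacle is exactly the $\varepsilon$-bookkeeping in the last two steps. A crude energy estimate gives only $\|u^\varepsilon\|_{L^2(\Omega_\varepsilon)}^{2}=O(\varepsilon^{-1+\alpha})$, far above the claimed $O(\varepsilon^{1+\alpha}+\varepsilon^{\beta})$, so one must genuinely exploit that the forcing and the datum, after the $u^a/u^r$ split, enter only through the $\varepsilon$-gaining Poincar\'e estimate for $u^r$ and the $H^{-1}(\Gamma)$--$H^1(\Gamma)$ duality for $u^a$; this rests on the average-operator estimates of Section~\ref{S:Ave}. In the $H^1$-step the nonlinear term is controlled only through the product estimate \eqref{E:Tri_Intro}, whose proof (Section~\ref{S:Tri}) is itself the long calculation built on \eqref{E:Prod_Intro}, \eqref{E:UrInf_Intro}, the anisotropic Agmon and Poincar\'e inequalities, and the decomposition $u=u^a+u^r$---the delicate point already flagged for $J_3$ in Section~\ref{SS:Int_Ide}. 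Finally, the factor $\varepsilon^{-1}$ in \eqref{E:Tri_Intro} would blow up the Gronwall exponent on a fixed time interval, so the uniform Gronwall inequality must be run on $\varepsilon$-length intervals, exactly as in the production of the bound $d_1\varepsilon^{1/2}\|A_\varepsilon^{1/2}u^\varepsilon\|_{L^2(\Omega_\varepsilon)}<\tfrac14$ in Theorem~\ref{T:GE}.
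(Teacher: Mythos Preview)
Your approach is correct and follows the paper's structure: reduce global existence to Theorem~\ref{T:GE} by shrinking $\varepsilon$, obtain \eqref{E:UE_L2} from the $L^2$ energy identity by splitting $(\mathbb{P}_\varepsilon f^\varepsilon,u^\varepsilon)$ through the averaging operators, and obtain \eqref{E:UE_H1} from the $D(A_\varepsilon^{1/2})$ energy identity together with the trilinear estimate \eqref{E:Tri_Est_A} and the uniform Gronwall inequality.

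One point of divergence: the paper does \emph{not} run the uniform Gronwall inequality on $\varepsilon$-length intervals, neither here nor in the proof of Theorem~\ref{T:GE}; it uses intervals of length~$1$. The reason this works is a different grouping of terms in the differential inequality for $y=\|A_\varepsilon^{1/2}u^\varepsilon\|_{L^2(\Omega_\varepsilon)}^2$: the paper sets $\xi=2d_2\|u^\varepsilon\|_{L^2(\Omega_\varepsilon)}^2y$ (from the $y^2$ term) and puts the $\varepsilon^{-1}$ piece into the additive source $\zeta$. Since $\|u^\varepsilon\|_{L^2(\Omega_\varepsilon)}^2\leq c$ and, by the already established $L^2$ step, $\int_{t-1}^t y\,ds\leq c(\varepsilon^{1+\alpha}+\varepsilon^\beta)$, the exponent $\int_{t-1}^t\xi\,ds$ is bounded by an $\varepsilon$-independent constant, while $\int_{t-1}^t\zeta\,ds\leq c(\varepsilon^{-1+\alpha}+\varepsilon^{-1+\beta})$ furnishes the claimed rate directly. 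Your scheme of putting $2d_2\varepsilon^{-1}\|u^\varepsilon\|_{L^2(\Omega_\varepsilon)}^2$ into the multiplicative coefficient and compensating with $\varepsilon$-length intervals also works, but the paper's grouping avoids this extra rescaling and makes the subsequent time-integrated $H^2$ bound (integrate \eqref{Pf_GE:H1_InEst2} over $(0,t)$) a one-line computation.
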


\begin{remark} \label{R:Ext_Orth}
  The assumption $f^\varepsilon(t)\in\mathcal{R}_g^\perp$ for a.a. $t\in(0,\infty)$ under the condition (A3) is necessary in order to recover the momentum equations of the original problem \eqref{E:NS_CTD} properly from the abstract evolution equation \eqref{E:NS_Abst}.
  When the condition (A3) is imposed, Theorem~\ref{T:GE} a priori provides a global solution
  \begin{align*}
    u^\varepsilon \in C([0,\infty);\mathcal{V}_\varepsilon)\cap L_{loc}^2([0,\infty);D(A_\varepsilon))\cap H_{loc}^1([0,\infty);\mathcal{H}_\varepsilon)
  \end{align*}
  to the abstract evolution equation \eqref{E:NS_Abst} in $\mathcal{H}_\varepsilon=L_\sigma^2(\Omega_\varepsilon)\cap\mathcal{R}_g^\perp$.
  The function space $\mathcal{R}_g$ is of finite dimension and thus closed in $L^2(\Omega_\varepsilon)^3$.
  Moreover, under the condition (A3) we see that $\mathcal{R}_g=\mathcal{R}_0\cap\mathcal{R}_1$ is contained in $L_\sigma^2(\Omega_\varepsilon)$ by~\cite{Miu_NSCTD_01}*{Lemma~E.8}.
  Hence we have the orthogonal decomposition
  \begin{align*}
    L^2(\Omega_\varepsilon)^3 = \mathcal{H}_\varepsilon\oplus\mathcal{R}_g\oplus G^2(\Omega_\varepsilon) \quad (L_\sigma^2(\Omega_\varepsilon) = \mathcal{H}_\varepsilon\oplus\mathcal{R}_g)
  \end{align*}
  with $G^2(\Omega_\varepsilon)=\{\nabla p \mid p\in H^1(\Omega_\varepsilon)\}$.
  By this decomposition we find that the partial differential equations in $L^2(\Omega_\varepsilon)^3$ recovered from \eqref{E:NS_Abst} in $\mathcal{H}_\varepsilon$ are of the form
  \begin{align} \label{E:NS_Abst_L2}
    \partial_tu^\varepsilon(t)-\nu\Delta u^\varepsilon(t)+[(u^\varepsilon\cdot\nabla)u^\varepsilon](t)+\nabla p^\varepsilon(t)+w(t) = f^\varepsilon(t)
  \end{align}
  for a.a. $t\in(0,\infty)$ with $\nabla p^\varepsilon(t)\in G^2(\Omega_\varepsilon)$ and $w(t)\in\mathcal{R}_g$.
  These equations contain the additional vector field $w(t)$, but we can show $w(t)=0$ under the assumption $f^\varepsilon(t)\in\mathcal{R}_g^\perp$.
  Indeed, we take the $L^2(\Omega_\varepsilon)$-inner product of \eqref{E:NS_Abst_L2} with $w$ (here and hereafter we fix and suppress $t$) and use the fact that $\partial_tu^\varepsilon\in\mathcal{H}_\varepsilon$, $\nabla p^\varepsilon\in G^2(\Omega_\varepsilon)$, and $f^\varepsilon$ are orthogonal to $w\in\mathcal{R}_g$ to obtain
    \begin{align*}
    \|w\|_{L^2(\Omega_\varepsilon)}^2 = \nu(\Delta u^\varepsilon,w)_{L^2(\Omega_\varepsilon)}-\bigl((u^\varepsilon\cdot\nabla)u^\varepsilon,w\bigr)_{L^2(\Omega_\varepsilon)}.
  \end{align*}
  Noting that we impose the perfect slip boundary conditions \eqref{E:Per_Slip} on $u^\varepsilon$ under the condition (A3), we carry out integration by parts (see \eqref{E:IbP_St}) and use $\mathrm{div}\,u^\varepsilon=0$ in $\Omega_\varepsilon$ and $w\cdot n_\varepsilon=0$ on $\Gamma_\varepsilon$ (note that $w\in\mathcal{R}_g=\mathcal{R}_0\cap\mathcal{R}_1\subset L_\sigma^2(\Omega_\varepsilon)$) to get
  \begin{align*}
    (\Delta u^\varepsilon,w)_{L^2(\Omega_\varepsilon)} = -2\bigl(D(u^\varepsilon),D(w)\bigl)_{L^2(\Omega_\varepsilon)}.
  \end{align*}
  Moreover, since $w\in\mathcal{R}_g$ is of the form $w(x,t)=a(t)\times x+b(t)$ for $x\in\mathbb{R}^3$ with $a(t),b(t)\in\mathbb{R}^3$ independent of $x$, it follows that $D(w)=0$ in $\mathbb{R}^3$ and thus the above equality yields $(\Delta u^\varepsilon,w)_{L^2(\Omega_\varepsilon)}=0$.
  Also, by integration by parts and $u^\varepsilon\in L_\sigma^2(\Omega_\varepsilon)$,
  \begin{align*}
    \bigl((u^\varepsilon\cdot\nabla)u^\varepsilon,w\bigr)_{L^2(\Omega_\varepsilon)} = -(u^\varepsilon,(u^\varepsilon\cdot\nabla)w)_{L^2(\Omega_\varepsilon)} = -(u^\varepsilon,a\times u^\varepsilon)_{L^2(\Omega_\varepsilon)} = 0.
  \end{align*}
  Thus $\|w\|_{L^2(\Omega_\varepsilon)}^2=0$, i.e. $w=0$ in \eqref{E:NS_Abst_L2} and the momentum equations of \eqref{E:NS_CTD} are properly recovered from the abstract evolution equation \eqref{E:NS_Abst}.
\end{remark}

\begin{remark} \label{R:GE_Prev}
  Formally, when $\Gamma=\mathbb{T}^2$ is the flat torus, the function spaces given by \eqref{E:Def_Rg} and \eqref{E:Def_Kil} are of the form
  \begin{align*}
    \mathcal{R}_i &= \{(a_1,a_2,0)^T\in\mathbb{R}^2\times\{0\} \mid \text{$a_1\partial_1g_i+a_2\partial_2g_i=0$ on $\mathbb{T}^2$}\}, \quad i=0,1, \\
    \mathcal{R}_g &= \mathcal{K}_g(\Gamma) = \{(a_1,a_2,0)^T\in\mathbb{R}^2\times\{0\} \mid \text{$a_1\partial_1g+a_2\partial_2g=0$ on $\mathbb{T}^2$}\}.
  \end{align*}
  These function spaces appear in the study of the Navier--Stokes equations in flat thin domains around the flat torus~\cites{Ho10,HoSe10,IfRaSe07}.
  In~\cite{Ho10} and~\cites{HoSe10,IfRaSe07} the global existence of a strong solution was established under the conditions (A2) and (A3), respectively, and assumptions on the data $u_0^\varepsilon$ and $f^\varepsilon$ similar to those in Theorem~\ref{T:GE}.
  Therefore, Theorem~\ref{T:GE} generalizes the existence results of~\cites{Ho10,HoSe10,IfRaSe07} to the curved thin domain $\Omega_\varepsilon$ around the general closed surface $\Gamma$.
\end{remark}

\section{Preliminaries} \label{S:Pre}
We fix notations and give basic properties of a closed surface and a curved thin domain.
The proofs of lemmas in this section are given in the first part~\cite{Miu_NSCTD_01} of our study, so we omit them in this paper.

In what follows, we fix a coordinate system of $\mathbb{R}^3$ and write $x_i$, $i=1,2,3$ for the $i$-th component of a point $x\in\mathbb{R}^3$ under this coordinate system.
Also, we denote by $c$ a general positive constant independent of the parameter $\varepsilon$.
Notations on vectors and matrices used in this paper are presented in Appendix~\ref{S:Ap_Vec}.

\subsection{Closed surface} \label{SS:Pre_Surf}
Let $\Gamma$ be a two-dimensional closed, connected, and oriented surface in $\mathbb{R}^3$ of class $C^5$.
We write $n$ and $d$ for the unit outward normal vector field of $\Gamma$ and the signed distance function from $\Gamma$ increasing in the direction of $n$.
Also, by $\kappa_1$ and $\kappa_2$ we denote the principal curvatures of $\Gamma$.
By the $C^5$-regularity of $\Gamma$ we have $n\in C^4(\Gamma)^3$ and $\kappa_1,\kappa_2\in C^3(\Gamma)$, and $\kappa_1$ and $\kappa_2$ are bounded on the compact set $\Gamma$.
Hence there exists a tubular neighborhood
\begin{align*}
  N := \{x\in\mathbb{R}^3 \mid \mathrm{dist}(x,\Gamma) < \delta\}, \quad \delta > 0
\end{align*}
of $\Gamma$ such that for each $x\in N$ we have
\begin{align} \label{E:Nor_Coord}
  x = \pi(x)+d(x)n(\pi(x)), \quad \nabla d(x) = n(\pi(x)).
\end{align}
with a unique point $\pi(x)\in\Gamma$, and $d$ and $\pi$ are of class $C^5$ and $C^4$ on $\overline{N}$, respectively (see~\cite{GiTr01}*{Section~14.6}).
Also, by the boundedness of $\kappa_1$ and $\kappa_2$ we have
\begin{align} \label{E:Curv_Bound}
  c^{-1} \leq 1-r\kappa_i(y) \leq c \quad\text{for all}\quad y\in\Gamma,\,r\in(-\delta,\delta),\,i=1,2
\end{align}
if we take $\delta>0$ sufficiently small.

Let us give differential operators on $\Gamma$ and surface quantities of $\Gamma$.
We define the orthogonal projections onto the tangent plane and the normal direction of $\Gamma$ by
\begin{align*}
  P(y) := I_3-n(y)\otimes n(y), \quad Q(y) := n(y)\otimes n(y), \quad y\in\Gamma.
\end{align*}
They are of class $C^4$ on $\Gamma$ and satisfy $|P|=2$, $|Q|=1$, and
\begin{gather*}
  I_3 = P+Q, \quad PQ = QP = 0, \quad P^T = P^2 = P, \quad Q^T = Q^2 = Q, \\
  |a|^2 = |Pa|^2+|Qa|^2, \quad |Pa| \leq |a|, \quad Pa\cdot n = 0, \quad a\in\mathbb{R}^3
\end{gather*}
on $\Gamma$.
In the sequel we frequently use these relations (sometimes without mention).
For $\eta\in C^1(\Gamma)$ we define its tangential gradient and tangential derivatives by
\begin{align} \label{E:Def_TGr}
  \nabla_\Gamma\eta(y) := P(y)\nabla\tilde{\eta}(y), \quad \underline{D}_i\eta(y) := \sum_{j=1}^3P_{ij}(y)\partial_j\tilde{\eta}(y), \quad y\in\Gamma,\,i=1,2,3
\end{align}
so that $\nabla_\Gamma\eta=(\underline{D}_1\eta,\underline{D}_2\eta,\underline{D}_3\eta)^T$, where $\tilde{\eta}$ is a $C^1$-extension of $\eta$ to $N$ with $\tilde{\eta}|_\Gamma=\eta$.
By the definition of $\nabla_\Gamma\eta$ we immediately get
\begin{align} \label{E:P_TGr}
  P\nabla_\Gamma\eta = \nabla_\Gamma\eta, \quad n\cdot\nabla_\Gamma\eta = 0 \quad\text{on}\quad \Gamma.
\end{align}
Note that $\nabla_\Gamma\eta$ defined by \eqref{E:Def_TGr} agrees with the gradient on a Riemannian manifold expressed under a local coordinate system (see~\cite{Miu_NSCTD_01}*{Lemma~B.2}).
Hence the values of $\nabla_\Gamma\eta$ and $\underline{D}_i\eta$ are independent of the choice of an extension $\tilde{\eta}$.
In particular, the constant extension $\bar{\eta}:=\eta\circ\pi$ of $\eta$ in the normal direction of $\Gamma$ satisfies
\begin{align} \label{E:ConDer_Surf}
  \nabla\bar{\eta}(y) = \nabla_\Gamma\eta(y), \quad \partial_i\bar{\eta}(y) = \underline{D}_i\eta(y), \quad y\in\Gamma,\,i=1,2,3
\end{align}
since $\nabla\pi(y)=P(y)$ for $y\in\Gamma$ by \eqref{E:Nor_Coord} and $d(y)=0$.
Hereafter the notation $\bar{\eta}$ with an overline always means the constant extension of a function $\eta$ on $\Gamma$ in the normal direction of $\Gamma$.
When $\eta\in C^2(\Gamma)$ we denote its tangential Hessian matrix by
\begin{align*}
  \nabla_\Gamma^2\eta := (\underline{D}_i\underline{D}_j\eta)_{i,j} \quad\text{on}\quad \Gamma.
\end{align*}
For a (not necessarily tangential) vector field $v=(v_1,v_2,v_3)^T\in C^1(\Gamma)^3$ we define the tangential gradient matrix and the surface divergence of $v$ by
\begin{align*}
  \nabla_\Gamma v :=
  \begin{pmatrix}
    \underline{D}_1v_1 & \underline{D}_1v_2 & \underline{D}_1v_3 \\
    \underline{D}_2v_1 & \underline{D}_2v_2 & \underline{D}_2v_3 \\
    \underline{D}_3v_1 & \underline{D}_3v_2 & \underline{D}_3v_3
  \end{pmatrix}, \quad
  \mathrm{div}_\Gamma v := \mathrm{tr}[\nabla_\Gamma v] = \sum_{i=1}^3\underline{D}_iv_i \quad\text{on}\quad \Gamma.
\end{align*}
Then $\nabla_\Gamma v=P\nabla\tilde{v}$ on $\Gamma$ for any $C^1$-extension $\tilde{v}$ of $v$ to $N$ with $\tilde{v}|_\Gamma=v$.
We also set
\begin{align*}
  |\nabla_\Gamma^2v|^2 := \sum_{i,j,k=1}^3|\underline{D}_i\underline{D}_jv_k|^2 \quad\text{on}\quad \Gamma
\end{align*}
when $v\in C^2(\Gamma)^3$.
Let
\begin{align} \label{E:Def_WH}
  W := -\nabla_\Gamma n, \quad H := \mathrm{tr}[W] = -\mathrm{div}_\Gamma n \quad\text{on}\quad \Gamma.
\end{align}
We call $W$ and $H$ the Weingarten map of $\Gamma$ and (twice) the mean curvature of $\Gamma$, respectively.
They are of class $C^3$ on $\Gamma$ by the $C^5$-regularity of $\Gamma$.
Moreover, since
\begin{align*}
  W = -\nabla\bar{n} = -\nabla^2d, \quad Wn = -(\nabla_\Gamma n)n = -\frac{1}{2}\nabla_\Gamma(|n|^2) = 0 \quad\text{on}\quad \Gamma
\end{align*}
by \eqref{E:Nor_Coord}, \eqref{E:ConDer_Surf}, and $|n|=1$ on $\Gamma$, the Weingarten map $W$ is symmetric and satisfies
\begin{align} \label{E:Form_W}
  WP = PW = W \quad\text{on}\quad \Gamma.
\end{align}
The eigenvalues of $W$ are zero and the principal curvatures $\kappa_1$ and $\kappa_2$ (see e.g.~\cites{GiTr01,Lee18}).
By this fact, \eqref{E:Curv_Bound}, and \eqref{E:Form_W} we have the following lemma.

\begin{lemma}[{\cite{Miu_NSCTD_01}*{Lemma~3.3}}] \label{L:Wein}
  The matrix
  \begin{align*}
    I_3-d(x)\overline{W}(x) = I_3-rW(y)
  \end{align*}
  is invertible for all $x=y+rn(y)\in N$ with $y\in\Gamma$ and $r\in(-\delta,\delta)$.
  Moreover,
  \begin{align} \label{E:WReso_P}
    \{I_3-rW(y)\}^{-1}P(y) = P(y)\{I_3-rW(y)\}^{-1}
  \end{align}
  for all $y\in\Gamma$ and $r\in(-\delta,\delta)$ and there exists a constant $c>0$ such that
  \begin{gather}
    c^{-1}|a| \leq \bigl|\{I_3-rW(y)\}^ka\bigr| \leq c|a|, \quad k=\pm1, \label{E:Wein_Bound} \\
    \bigl|I_3-\{I_3-rW(y)\}^{-1}\bigr| \leq c|r| \label{E:Wein_Diff}
  \end{gather}
  for all $y\in\Gamma$, $r\in(-\delta,\delta)$, and $a\in\mathbb{R}^3$.
\end{lemma}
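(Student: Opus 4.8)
The plan is to diagonalize the symmetric matrix $W(y)$ and reduce every assertion to the scalar bounds \eqref{E:Curv_Bound} on $1-r\kappa_i(y)$. First I would fix $y\in\Gamma$ and $r\in(-\delta,\delta)$ and choose an orthonormal basis $\{e_1(y),e_2(y),n(y)\}$ of $\mathbb{R}^3$ consisting of eigenvectors of $W(y)$; this is possible since $W(y)$ is symmetric with $W(y)n(y)=0$, and the corresponding eigenvalues are $\kappa_1(y)$, $\kappa_2(y)$, $0$. In this basis $I_3-rW(y)$ is the diagonal matrix with entries $1-r\kappa_1(y)$, $1-r\kappa_2(y)$, $1$. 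By \eqref{E:Curv_Bound} these are all bounded below by $\min\{c^{-1},1\}>0$, so $I_3-rW(y)$ is invertible, and $\{I_3-rW(y)\}^{-1}$ is diagonal in the same basis with entries $(1-r\kappa_1(y))^{-1}$, $(1-r\kappa_2(y))^{-1}$, $1$.

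Next, for \eqref{E:WReso_P} I would argue directly from $WP=PW=W$ on $\Gamma$: this gives $(I_3-rW)P=P-rW=P(I_3-rW)$, and multiplying on both sides by $\{I_3-rW\}^{-1}$ yields $P\{I_3-rW\}^{-1}=\{I_3-rW\}^{-1}P$. Equivalently, since $e_1(y),e_2(y)$ span the tangent plane, $P(y)$ is also diagonal in the above basis and therefore commutes with $\{I_3-rW(y)\}^{\pm1}$.

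For the two-sided bound \eqref{E:Wein_Bound}, I would expand $a=\alpha_1e_1(y)+\alpha_2e_2(y)+\alpha_3n(y)$, so that $|a|^2=\alpha_1^2+\alpha_2^2+\alpha_3^2$ and $\{I_3-rW(y)\}^{\pm1}a$ has components $(1-r\kappa_i(y))^{\pm1}\alpha_i$ for $i=1,2$ and $\alpha_3$ in the $n(y)$-direction. Since \eqref{E:Curv_Bound} and the boundedness of $\kappa_1,\kappa_2$ give $c^{-1}\le(1-r\kappa_i(y))^{\pm1}\le c$ (after enlarging $c$ if necessary so that $c\ge1$), summing the squares of the components gives $c^{-2}|a|^2\le|\{I_3-rW(y)\}^{\pm1}a|^2\le c^2|a|^2$, which is \eqref{E:Wein_Bound} up to relabeling $c$; the $k=-1$ case alternatively follows from the $k=+1$ case by the substitution $a\mapsto\{I_3-rW(y)\}^{-1}a$. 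Finally, \eqref{E:Wein_Diff} comes from the identity
\begin{align*}
  I_3-\{I_3-rW(y)\}^{-1} = \{I_3-rW(y)\}^{-1}\bigl(\{I_3-rW(y)\}-I_3\bigr) = -r\{I_3-rW(y)\}^{-1}W(y),
\end{align*}
together with the boundedness of $W(y)$ on $\Gamma$ and the already-established bound on $\{I_3-rW(y)\}^{-1}$; alternatively one reads off directly from the eigenbasis that $I_3-\{I_3-rW(y)\}^{-1}$ is diagonal with entries $-r\kappa_i(y)/(1-r\kappa_i(y))$ and $0$, each of absolute value at most $c|r|$ by \eqref{E:Curv_Bound}.

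The computation is essentially elementary linear algebra, so there is no serious obstacle; the only point that needs a moment's care is that all the constants $c$ are \emph{uniform in $y\in\Gamma$} (and in $r\in(-\delta,\delta)$), which is guaranteed because $\kappa_1,\kappa_2$ are continuous on the compact surface $\Gamma$ and hence bounded — precisely what is recorded in \eqref{E:Curv_Bound}.
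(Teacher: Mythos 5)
Your proof is correct and follows essentially the argument the paper intends: it omits the proof (referring to Part~I), but the sentence preceding the lemma indicates exactly your route, namely the spectral fact that $W$ has eigenvalues $0,\kappa_1,\kappa_2$, combined with \eqref{E:Curv_Bound} and \eqref{E:Form_W}. Your diagonalization in the eigenbasis $\{e_1,e_2,n\}$, the commutation via $WP=PW=W$, and the identity $I_3-\{I_3-rW\}^{-1}=-r\{I_3-rW\}^{-1}W$ are all sound, and you correctly note that uniformity of the constants in $y$ comes from the compactness of $\Gamma$.
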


Moreover, the following relations hold by \eqref{E:Nor_Coord}, \eqref{E:P_TGr}, \eqref{E:ConDer_Surf}, and \eqref{E:WReso_P}--\eqref{E:Wein_Diff}.

\begin{lemma}[{\cite{Miu_NSCTD_01}*{Lemma~3.4}}] \label{L:Pi_Der}
  For all $x\in N$ we have
  \begin{align} \label{E:Pi_Der}
    \nabla\pi(x) &= \left\{I_3-d(x)\overline{W}(x)\right\}^{-1}\overline{P}(x).
  \end{align}
  Let $\eta\in C^1(\Gamma)$.
  Then its constant extension $\bar{\eta}=\eta\circ\pi$ satisfies
  \begin{align} \label{E:ConDer_Dom}
    \nabla\bar{\eta}(x) = \left\{I_3-d(x)\overline{W}(x)\right\}^{-1}\overline{\nabla_\Gamma\eta}(x),\quad x\in N
  \end{align}
  and there exists a constant $c>0$ independent of $\eta$ such that
  \begin{gather}
    c^{-1}\left|\overline{\nabla_\Gamma\eta}(x)\right| \leq |\nabla\bar{\eta}(x)| \leq c\left|\overline{\nabla_\Gamma\eta}(x)\right|, \label{E:ConDer_Bound} \\
    \left|\nabla\bar{\eta}(x)-\overline{\nabla_\Gamma\eta}(x)\right| \leq c\left|d(x)\overline{\nabla_\Gamma\eta}(x)\right| \label{E:ConDer_Diff}
  \end{gather}
  for all $x\in N$.
  Moreover, for $\eta\in C^2(\Gamma)$ we have
  \begin{align} \label{E:Con_Hess}
    |\nabla^2\bar{\eta}(x)| \leq c\left(\left|\overline{\nabla_\Gamma\eta}(x)\right|+\left|\overline{\nabla_\Gamma^2\eta}(x)\right|\right), \quad x\in N.
  \end{align}
\end{lemma}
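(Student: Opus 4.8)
The plan is to obtain each assertion by differentiating the decomposition \eqref{E:Nor_Coord} and invoking the invertibility statement of Lemma~\ref{L:Wein}. First I would establish \eqref{E:Pi_Der}. Starting from the identity $x=\pi(x)+d(x)n(\pi(x))$ valid on $N$, I differentiate both sides in $x$. Using the chain rule together with $\nabla d(x)=n(\pi(x))=\bar n(x)$ from \eqref{E:Nor_Coord} and the fact that $\nabla(\bar n)=\overline{\nabla_\Gamma n}=-\overline{W}$ (recall $W=-\nabla_\Gamma n$ and note that the constant extension of $\nabla_\Gamma n$ satisfies this via \eqref{E:ConDer_Dom} applied componentwise, or more directly $\overline W=-\nabla^2 d$ as recorded just before Lemma~\ref{L:Wein}), one gets $I_3=\nabla\pi(x)+\bar n(x)\otimes\bar n(x)-d(x)\overline W(x)\nabla\pi(x)$, i.e. $\{I_3-d(x)\overline W(x)\}\nabla\pi(x)=I_3-\overline Q(x)=\overline P(x)$. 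Since $I_3-d(x)\overline W(x)$ is invertible by Lemma~\ref{L:Wein}, multiplying on the left by its inverse yields \eqref{E:Pi_Der}. Here one should be slightly careful that $\nabla\pi$ is the Jacobian matrix and the product $\overline W(x)\nabla\pi(x)$ is read correctly; using $\overline W\,\overline P=\overline P\,\overline W=\overline W$ from \eqref{E:Form_W} and \eqref{E:WReso_P} keeps the bookkeeping clean, and justifies rewriting the right-hand side as $\{I_3-d\overline W\}^{-1}\overline P$ rather than $\overline P\{I_3-d\overline W\}^{-1}$ interchangeably.

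Next, for \eqref{E:ConDer_Dom} I would write $\bar\eta=\eta\circ\pi$ and apply the chain rule: $\nabla\bar\eta(x)=(\nabla\pi(x))^T\overline{\nabla_\Gamma\eta}(x)$, using that $\nabla_\Gamma\eta$ is the intrinsic gradient of $\eta$ on $\Gamma$ (so that the derivative of $\eta$ at $\pi(x)$ in a tangent direction is represented by $\nabla_\Gamma\eta(\pi(x))$, and the full Euclidean gradient of the composition is $(\nabla\pi)^T$ applied to it; the normal component is killed because $\nabla_\Gamma\eta\cdot n=0$ by \eqref{E:P_TGr}). Substituting \eqref{E:Pi_Der} and transposing, and using that $\overline P$ and $\{I_3-d\overline W\}^{-1}$ are symmetric (the Weingarten map is symmetric, as noted before Lemma~\ref{L:Wein}, and they commute by \eqref{E:WReso_P}), gives $\nabla\bar\eta(x)=\overline P(x)\{I_3-d(x)\overline W(x)\}^{-1}\overline{\nabla_\Gamma\eta}(x)=\{I_3-d(x)\overline W(x)\}^{-1}\overline P(x)\,\overline{\nabla_\Gamma\eta}(x)=\{I_3-d(x)\overline W(x)\}^{-1}\overline{\nabla_\Gamma\eta}(x)$, where in the last step $\overline P\,\overline{\nabla_\Gamma\eta}=\overline{\nabla_\Gamma\eta}$ again by \eqref{E:P_TGr}. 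This is \eqref{E:ConDer_Dom}.

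The bounds \eqref{E:ConDer_Bound} and \eqref{E:ConDer_Diff} then follow immediately from \eqref{E:ConDer_Dom} by applying the norm equivalence \eqref{E:Wein_Bound} with $k=\pm1$ (for the two-sided bound \eqref{E:ConDer_Bound}) and the difference estimate \eqref{E:Wein_Diff} together with $|d(x)|<\delta$ on $N$ (write $\nabla\bar\eta-\overline{\nabla_\Gamma\eta}=(\{I_3-d\overline W\}^{-1}-I_3)\overline{\nabla_\Gamma\eta}$ and use $|\{I_3-rW\}^{-1}-I_3|\le c|r|$, noting $d(x)=r$ along the normal fiber over $\pi(x)$). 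Finally, for the Hessian bound \eqref{E:Con_Hess} with $\eta\in C^2(\Gamma)$, I would differentiate \eqref{E:ConDer_Dom} once more in $x$. The derivative hits three factors: the matrix $\{I_3-d\overline W\}^{-1}$, whose $x$-derivative is controlled using $\nabla d=\bar n$ bounded, $\overline W\in C^3(\Gamma)$ bounded with bounded derivatives (so $\nabla\overline W$ is bounded, using \eqref{E:ConDer_Bound} on the entries of $W$), and the standard formula for the derivative of a matrix inverse together with \eqref{E:Wein_Bound}; and the factor $\overline{\nabla_\Gamma\eta}$, whose $x$-derivative is again controlled by \eqref{E:ConDer_Bound}-type estimates applied to the components of $\nabla_\Gamma\eta$, producing a term of size $c\,|\overline{\nabla_\Gamma^2\eta}|$ plus lower-order contributions of size $c\,|\overline{\nabla_\Gamma\eta}|$ (the latter coming from the curvature terms that appear when one differentiates a tangential gradient, i.e. from $\nabla_\Gamma$ not commuting with itself on $\Gamma$). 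Collecting these gives $|\nabla^2\bar\eta(x)|\le c(|\overline{\nabla_\Gamma\eta}(x)|+|\overline{\nabla_\Gamma^2\eta}(x)|)$.

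The computations are all routine once \eqref{E:Pi_Der} is in hand; the only place demanding a little care is the bookkeeping of transposes and the commutation \eqref{E:WReso_P} in passing between $\overline P\{I_3-d\overline W\}^{-1}$ and $\{I_3-d\overline W\}^{-1}\overline P$, and — for \eqref{E:Con_Hess} — correctly accounting for the curvature terms that arise from differentiating the tangential gradient, which is why the bound must include $|\overline{\nabla_\Gamma\eta}|$ and not only $|\overline{\nabla_\Gamma^2\eta}|$. I expect this last point to be the main (mild) obstacle.
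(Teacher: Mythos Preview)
The paper does not actually give a proof of this lemma: it is quoted from Part~I of the series (\cite{Miu_NSCTD_01}*{Lemma~3.4}), and the paper explicitly says ``the proofs of lemmas in this section are given in the first part~\cite{Miu_NSCTD_01} of our study, so we omit them in this paper.'' So there is no in-paper proof to compare against. Your argument---differentiate the normal decomposition $x=\pi(x)+d(x)\bar n(x)$, solve the resulting linear equation for $\nabla\pi$ using the invertibility of $I_3-d\overline W$ from Lemma~\ref{L:Wein}, then read off \eqref{E:ConDer_Dom}--\eqref{E:Con_Hess} from \eqref{E:Wein_Bound}--\eqref{E:Wein_Diff}---is the standard route (cf.\ \cite{GiTr01}*{Section~14.6}) and is correct.

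One small slip to clean up: you write ``$\nabla(\bar n)=\overline{\nabla_\Gamma n}=-\overline W$,'' but this equality holds only on $\Gamma$ (that is exactly \eqref{E:ConDer_Surf}), not on all of $N$; on $N$ one has instead $\nabla\bar n(x)=-\{I_3-d(x)\overline W(x)\}^{-1}\overline W(x)$, as recorded in \eqref{E:Nor_Grad}. What you actually need in the differentiation step is the chain rule for $\bar n=n\circ\pi$: the inner derivative at $\pi(x)\in\Gamma$ is $\nabla_\Gamma n(\pi(x))=-W(\pi(x))$, so in the paper's convention $[\nabla u]_{ij}=\partial_i u_j$ one gets $\nabla\bar n=\nabla\pi\cdot\overline{\nabla_\Gamma n}=-\nabla\pi\,\overline W$, hence $\nabla\pi(I_3-d\overline W)=\overline P$ (your version has the factors on the other side, but as you note this is harmless by \eqref{E:WReso_P} and the symmetry of $W$). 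With that correction the derivation of \eqref{E:Pi_Der} is clean, and everything you wrote for \eqref{E:ConDer_Dom}--\eqref{E:Con_Hess} goes through as stated.
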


It follows from $n\in C^4(\Gamma)^3$, $W=-\nabla_\Gamma n$ on $\Gamma$, and Lemma~\ref{L:Pi_Der} that, for $x\in N$,
\begin{gather} \
  \nabla\bar{n}(x) = -\left\{I_3-d(x)\overline{W}(x)\right\}^{-1}\overline{W}(x), \label{E:Nor_Grad} \\
  |\nabla\bar{n}(x)| \leq c, \quad |\nabla^2\bar{n}(x)| \leq c. \label{E:NorG_Bound}
\end{gather}
Next we define the Sobolev spaces on $\Gamma$.
For $\eta,\xi\in C^1(\Gamma)$ and $i=1,2,3$ we have an integration by parts formula (see~\cite{Miu_NSCTD_01}*{Lemma~3.5})
\begin{align*}
  \int_\Gamma(\eta\underline{D}_i\xi+\xi\underline{D}_i\eta)\,d\mathcal{H}^2 = -\int_\Gamma \eta\xi Hn_i\,d\mathcal{H}^2,
\end{align*}
where $\mathcal{H}^2$ stands for the two-dimensional Hausdorff measure.
Based on this formula, for $p\in[1,\infty]$ and $i=1,2,3$ we say that $\eta\in L^p(\Gamma)$ has the $i$-th weak tangential derivative if there exists $\eta_i\in L^p(\Gamma)$ such that
\begin{align*}
  \int_\Gamma \eta_i\xi\,d\mathcal{H}^2 = -\int_\Gamma \eta(\underline{D}_i\xi+\xi Hn_i)\,d\mathcal{H}^2
\end{align*}
for all $\xi\in C^1(\Gamma)$.
We denote this $\eta_i$ by $\underline{D}_i\eta$ and define the Sobolev space
\begin{align*}
  W^{1,p}(\Gamma) &:= \{\eta \in L^p(\Gamma) \mid \text{$\underline{D}_i\eta\in L^p(\Gamma)$ for all $i=1,2,3$}\}, \\
  \|\eta\|_{W^{1,p}(\Gamma)} &:=
  \begin{cases}
    \left(\|\eta\|_{L^p(\Gamma)}^p+\|\nabla_\Gamma\eta\|_{L^p(\Gamma)}^p\right)^{1/p} &\text{if}\quad p\in[1,\infty), \\
    \|\eta\|_{L^\infty(\Gamma)}+\|\nabla_\Gamma\eta\|_{L^\infty(\Gamma)} &\text{if}\quad p=\infty.
  \end{cases}
\end{align*}
Here $\nabla_\Gamma\eta:=(\underline{D}_1\eta,\underline{D}_2\eta,\underline{D}_3\eta)^T$ is the weak tangential gradient of $\eta\in W^{1,p}(\Gamma)$.
We also define the second order Sobolev space
\begin{align*}
  W^{2,p}(\Gamma) &:= \{\eta \in W^{1,p}(\Gamma) \mid \text{$\underline{D}_i\underline{D}_j\eta\in L^p(\Gamma)$ for all $i,j=1,2,3$}\}, \\
  \|\eta\|_{W^{2,p}(\Gamma)} &:=
  \begin{cases}
    \left(\|\eta\|_{W^{1,p}(\Gamma)}^p+\|\nabla_\Gamma^2\eta\|_{L^p(\Gamma)}^p\right)^{1/p} &\text{if}\quad p\in[1,\infty), \\
    \|\eta\|_{W^{1,\infty}(\Gamma)}+\|\nabla_\Gamma^2\eta\|_{L^\infty(\Gamma)} &\text{if}\quad p=\infty,
  \end{cases}
\end{align*}
where $\nabla_\Gamma^2\eta:=(\underline{D}_i\underline{D}_j\eta)_{i,j}$ for $\eta\in W^{2,p}(\Gamma)$, and write
\begin{align*}
  W^{0,p}(\Gamma) := L^p(\Gamma), \quad H^m(\Gamma) := W^{m,2}(\Gamma), \quad p\in[1,\infty], \, m=0,1,2.
\end{align*}
Note that $W^{m,p}(\Gamma)$ is a Banach space.
Also, $C^2(\Gamma)$ is dense in $W^{m,p}(\Gamma)$ if $p\neq\infty$ (see~\cite{Miu_NSCTD_01}*{Lemma~3.6}).
For a function space $\mathcal{X}(\Gamma)$ such as $C^m(\Gamma)$ and $W^{m,p}(\Gamma)$, let
\begin{align*}
  \mathcal{X}(\Gamma,T\Gamma) := \{v\in\mathcal{X}(\Gamma)^3 \mid \text{$v\cdot n=0$ on $\Gamma$}\}.
\end{align*}
It is the space of all tangential vector fields on $\Gamma$ whose components belong to $\mathcal{X}(\Gamma)$.
Note that $W^{m,p}(\Gamma,T\Gamma)$ is a closed subspace of $W^{m,p}(\Gamma)^3$.
Also, $C^1(\Gamma,T\Gamma)$ is dense in $L^2(\Gamma,T\Gamma)$ and $H^1(\Gamma,T\Gamma)$ (see~\cite{Miu_NSCTD_01}*{Lemma~3.7}).
We denote by $H^{-1}(\Gamma,T\Gamma)$ the dual space of $H^1(\Gamma,T\Gamma)$ and by $[\cdot,\cdot]_{T\Gamma}$ the duality product between $H^{-1}(\Gamma,T\Gamma)$ and $H^1(\Gamma,T\Gamma)$.
By setting
\begin{align*}
  [v,w]_{T\Gamma} := (v,w)_{L^2(\Gamma)}, \quad v\in L^2(\Gamma,T\Gamma),\, w\in H^1(\Gamma,T\Gamma)
\end{align*}
we consider vector fields in $L^2(\Gamma,T\Gamma)$ as elements of $H^{-1}(\Gamma,T\Gamma)$.

\subsection{Curved thin domain} \label{SS:Pre_CTD}
Let $g_0$ and $g_1$ be $C^4$ functions on $\Gamma$ such that the difference $g:=g_1-g_0$ satisfies \eqref{E:G_Inf}.
For $\varepsilon\in(0,1]$ we define a curved thin domain $\Omega_\varepsilon$ in $\mathbb{R}^3$ by \eqref{E:Def_CTD}, i.e.
\begin{align*}
  \Omega_\varepsilon := \{y+rn(y) \mid y\in\Gamma,\,\varepsilon g_0(y) < r < \varepsilon g_1(y)\}.
\end{align*}
The inner and outer boundaries $\Gamma_\varepsilon^0$ and $\Gamma_\varepsilon^1$ of $\Omega_\varepsilon$ are given by
\begin{align*}
  \Gamma_\varepsilon^i := \{y+\varepsilon g_i(y)n(y) \mid y\in\Gamma\}, \quad i=0,1
\end{align*}
and the whole boundary of $\Omega_\varepsilon$ is denoted by $\Gamma_\varepsilon:=\Gamma_\varepsilon^0\cup\Gamma_\varepsilon^1$.
Let $N$ be the tubular neighborhood of $\Gamma$ of radius $\delta>0$ given in Section~\ref{SS:Pre_Surf}.
Since $g_0$ and $g_1$ are bounded on $\Gamma$, there exists $\tilde{\varepsilon}\in(0,1]$ such that $\tilde{\varepsilon}|g_i|<\delta$ on $\Gamma$ for $i=0,1$.
Replacing $g_i$ with $\tilde{\varepsilon}g_i$ for $i=0,1$ we may assume $\tilde{\varepsilon}=1$.
Then for all $\varepsilon\in(0,1]$ we have $\overline{\Omega}_\varepsilon\subset N$ and we can apply the lemmas given in Section~\ref{SS:Pre_Surf} in $\overline{\Omega}_\varepsilon$.

Let $\tau_\varepsilon^i$ and $n_\varepsilon^i$ be vector fields on $\Gamma$ given by
\begin{align}
  \tau_\varepsilon^i(y) &:= \{I_3-\varepsilon g_i(y)W(y)\}^{-1}\nabla_\Gamma g_i(y), \label{E:Def_NB_Aux}\\
  n_\varepsilon^i(y) &:= (-1)^{i+1}\frac{n(y)-\varepsilon\tau_\varepsilon^i(y)}{\sqrt{1+\varepsilon^2|\tau_\varepsilon^i(y)|^2}} \label{E:Def_NB}
\end{align}
for $y\in\Gamma$ and $i=0,1$.
By \eqref{E:P_TGr} and \eqref{E:WReso_P}, $\tau_\varepsilon^i$ is tangential on $\Gamma$.

\begin{lemma}[{\cite{Miu_NSCTD_01}*{Lemma~3.8}}] \label{L:NB_Aux}
  For all $y\in\Gamma$, $i=0,1$, and $k,l=1,2,3$ we have
  \begin{gather}
    |\tau_\varepsilon^i(y)| \leq c, \quad |\underline{D}_k\tau_\varepsilon^i(y)| \leq c, \quad |\underline{D}_l\underline{D}_k\tau_\varepsilon^i(y)| \leq c, \label{E:Tau_Bound} \\
    |\tau_\varepsilon^i(y)-\nabla_\Gamma g_i(y)| \leq c\varepsilon, \quad |\nabla_\Gamma\tau_\varepsilon^i(y)-\nabla_\Gamma^2g_i(y)| \leq c\varepsilon \label{E:Tau_Diff}
  \end{gather}
  with a constant $c>0$ independent of $\varepsilon$.
\end{lemma}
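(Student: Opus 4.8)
The plan is to treat $\tau_\varepsilon^i=\{I_3-\varepsilon g_iW\}^{-1}\nabla_\Gamma g_i$ as the product of the uniformly invertible matrix field $A_\varepsilon^i:=\{I_3-\varepsilon g_iW\}^{-1}$ with the fixed vector field $\nabla_\Gamma g_i$, to differentiate through the inverse via the Leibniz rule and the elementary identity $\underline{D}_k(A^{-1})=-A^{-1}(\underline{D}_kA)A^{-1}$, and to read off all the estimates from Lemma~\ref{L:Wein} together with the uniform (in $\varepsilon$) boundedness on the compact surface $\Gamma$ of $g_0,g_1\in C^4(\Gamma)$, $W\in C^3(\Gamma)$, and their tangential derivatives.

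First, after the rescaling made in Section~\ref{SS:Pre_CTD} we have $|g_i|<\delta$ on $\Gamma$, hence $\varepsilon g_i(y)\in(-\delta,\delta)$ for every $y\in\Gamma$ and $\varepsilon\in(0,1]$, so Lemma~\ref{L:Wein} applies with $r=\varepsilon g_i(y)$. Thus $A_\varepsilon^i(y)$ is well defined, \eqref{E:Wein_Bound} gives $|A_\varepsilon^i(y)|\le c$ (all matrix norms being equivalent), and \eqref{E:Wein_Diff} gives $|A_\varepsilon^i(y)-I_3|\le c\varepsilon|g_i(y)|\le c\varepsilon$. From these, the first bound in \eqref{E:Tau_Bound}, $|\tau_\varepsilon^i|\le|A_\varepsilon^i|\,|\nabla_\Gamma g_i|\le c$, and the first estimate in \eqref{E:Tau_Diff}, $|\tau_\varepsilon^i-\nabla_\Gamma g_i|=|(A_\varepsilon^i-I_3)\nabla_\Gamma g_i|\le c\varepsilon$, are immediate.

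Next, for the derivative bounds in \eqref{E:Tau_Bound} I differentiate the identity $(I_3-\varepsilon g_iW)A_\varepsilon^i=I_3$. Since $\underline{D}_k$ obeys the Leibniz rule, this yields $\underline{D}_kA_\varepsilon^i=\varepsilon A_\varepsilon^i\,\underline{D}_k(g_iW)\,A_\varepsilon^i$, so $|\underline{D}_kA_\varepsilon^i|\le c\varepsilon\le c$ using $|A_\varepsilon^i|\le c$ and the uniform bounds on $g_i$, $W$, and their first tangential derivatives; differentiating once more and using in addition the uniform bounds on the second tangential derivatives of $g_i$ and $W$ gives $|\underline{D}_l\underline{D}_kA_\varepsilon^i|\le c\varepsilon\le c$. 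Expanding $\underline{D}_k\tau_\varepsilon^i$ and $\underline{D}_l\underline{D}_k\tau_\varepsilon^i$ for $\tau_\varepsilon^i=A_\varepsilon^i\nabla_\Gamma g_i$ by the Leibniz rule, each resulting term is a product of factors drawn from $A_\varepsilon^i$, $\underline{D}_kA_\varepsilon^i$, $\underline{D}_l\underline{D}_kA_\varepsilon^i$ and the tangential derivatives of $g_i$ up to third order, all bounded on $\Gamma$ uniformly in $\varepsilon$ (here $g_i\in C^4(\Gamma)$ and $W\in C^3(\Gamma)$ are used); this gives the remaining two bounds in \eqref{E:Tau_Bound}.

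Finally, for the second estimate in \eqref{E:Tau_Diff} I observe that $\nabla_\Gamma\tau_\varepsilon^i-\nabla_\Gamma^2g_i=\nabla_\Gamma(\tau_\varepsilon^i-\nabla_\Gamma g_i)$, so it suffices to bound $\underline{D}_k(\tau_\varepsilon^i-\nabla_\Gamma g_i)=(\underline{D}_kA_\varepsilon^i)\nabla_\Gamma g_i+(A_\varepsilon^i-I_3)\,\underline{D}_k\nabla_\Gamma g_i$. Here both $\underline{D}_kA_\varepsilon^i$ (by the computation above) and $A_\varepsilon^i-I_3$ (by \eqref{E:Wein_Diff}) carry an explicit factor $\varepsilon$, while $\nabla_\Gamma g_i$ and its first tangential derivatives are uniformly bounded, so $|\underline{D}_k(\tau_\varepsilon^i-\nabla_\Gamma g_i)|\le c\varepsilon$, which is the claim. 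There is no genuine obstacle in this lemma; the only point requiring care is that the perturbation $I_3-\varepsilon g_iW$ of the identity, together with all of its tangential derivatives, is $O(\varepsilon)$, so that differentiation never destroys the $\varepsilon$-smallness needed for \eqref{E:Tau_Diff}.
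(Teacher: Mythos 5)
Your proof is correct and follows essentially the route the paper intends: the lemma is quoted from Part I (\cite{Miu_NSCTD_01}*{Lemma~3.8}) with the proof omitted here, and the natural argument there is exactly yours, namely Lemma~\ref{L:Wein} applied with $r=\varepsilon g_i(y)$ (legitimate after the rescaling ensuring $|g_i|<\delta$), differentiation of the inverse via the Leibniz rule for $\underline{D}_k$, and the uniform bounds on $g_i\in C^4(\Gamma)$, $W\in C^3(\Gamma)$ and their tangential derivatives. In particular your observation that every tangential derivative of $I_3-\varepsilon g_iW-I_3$ carries a factor $\varepsilon$, so that \eqref{E:Tau_Diff} survives one differentiation, is the key point and is handled correctly.
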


Let $n_\varepsilon$ be the unit outward normal vector field of $\Gamma_\varepsilon$.
It satisfies
\begin{align} \label{E:Nor_Bo}
  n_\varepsilon(x) = \bar{n}_\varepsilon^i(x), \quad x\in\Gamma_\varepsilon^i,\,i=0,1,
\end{align}
where $\bar{n}_\varepsilon^i=n_\varepsilon^i\circ\pi$ is the constant extension of $n_\varepsilon^i$ (see~\cite{Miu_NSCTD_01}*{Lemma~3.9}).
We define the tangential gradient and the tangential derivatives of $\varphi\in C^1(\Gamma_\varepsilon)$ by
\begin{align*}
  \nabla_{\Gamma_\varepsilon}\varphi := P_\varepsilon\nabla\tilde{\varphi}, \quad \underline{D}_i^\varepsilon\varphi := \sum_{j=1}^3[P_\varepsilon]_{ij}\partial_j\tilde{\varphi} \quad\text{on}\quad \Gamma_\varepsilon,\, i=1,2,3,
\end{align*}
where $P_\varepsilon:=I_3-n_\varepsilon\otimes n_\varepsilon$ and $\tilde{\varphi}$ is any $C^1$-extension of $\varphi$ to an open neighborhood of $\Gamma_\varepsilon$ with $\tilde{\varphi}|_{\Gamma_\varepsilon}=\varphi$.
For $u=(u_1,u_2,u_3)^T\in C^1(\Gamma_\varepsilon)^3$ we set
\begin{align*}
  \nabla_{\Gamma_\varepsilon}u :=
  \begin{pmatrix}
    \underline{D}_1^\varepsilon u_1 & \underline{D}_1^\varepsilon u_2 & \underline{D}_1^\varepsilon u_3 \\
    \underline{D}_2^\varepsilon u_1 & \underline{D}_2^\varepsilon u_2 & \underline{D}_2^\varepsilon u_3 \\
    \underline{D}_3^\varepsilon u_1 & \underline{D}_3^\varepsilon u_2 & \underline{D}_3^\varepsilon u_3
  \end{pmatrix} \quad\text{on}\quad \Gamma_\varepsilon.
\end{align*}
Then we have
\begin{align} \label{E:TGr_Bo}
  \nabla_{\Gamma_\varepsilon}u = P_\varepsilon\nabla\tilde{u} \quad\text{on}\quad \Gamma_\varepsilon
\end{align}
for any $C^1$-extension $\tilde{u}$ of $u$ to an open neighborhood of $\Gamma_\varepsilon$ with $\tilde{u}|_{\Gamma_\varepsilon}=u$.
We also define the Weingarten map of $\Gamma_\varepsilon$ by
\begin{align*}
  W_\varepsilon := -\nabla_{\Gamma_\varepsilon}n_\varepsilon \quad\text{on}\quad \Gamma_\varepsilon
\end{align*}
and the Sobolev spaces on $\Gamma_\varepsilon$ as in Section~\ref{SS:Pre_Surf}.
By \eqref{E:Nor_Bo} the functions $n_\varepsilon$, $P_\varepsilon$, and $W_\varepsilon$ on $\Gamma_\varepsilon$ can be compared with the constant extensions of $n$, $P$, and $W$.

\begin{lemma}[{\cite{Miu_NSCTD_01}*{Lemma~3.10}}] \label{L:Comp_Nor}
  For $x\in\Gamma_\varepsilon^i$, $i=0,1$ we have
  \begin{align}
    \left|n_\varepsilon(x)-(-1)^{i+1}\left\{\bar{n}(x)-\varepsilon\overline{\nabla_\Gamma g_i}(x)\right\}\right| &\leq c\varepsilon^2, \label{E:Comp_N} \\
    \left|P_\varepsilon(x)-\overline{P}(x)\right| &\leq c\varepsilon, \label{E:Comp_P} \\
    \left|W_\varepsilon(x)-(-1)^{i+1}\overline{W}(x)\right| &\leq c\varepsilon, \label{E:Comp_W}
  \end{align}
  with a constant $c>0$ independent of $\varepsilon$.
\end{lemma}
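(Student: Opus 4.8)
The plan is to prove the three estimates in the order stated, in each case reducing to a pointwise bound on $\Gamma$ by means of \eqref{E:Nor_Bo}, which lets us replace $n_\varepsilon$ on $\Gamma_\varepsilon^i$ by the constant extension of the explicit vector field $n_\varepsilon^i$ given in \eqref{E:Def_NB}.

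For \eqref{E:Comp_N}, since $|\tau_\varepsilon^i|\le c$ on $\Gamma$ by \eqref{E:Tau_Bound}, the normalization factor in \eqref{E:Def_NB} satisfies $(1+\varepsilon^2|\tau_\varepsilon^i|^2)^{-1/2}=1+O(\varepsilon^2)$, and $|n-\varepsilon\tau_\varepsilon^i|\le c$; hence $n_\varepsilon^i=(-1)^{i+1}(n-\varepsilon\tau_\varepsilon^i)+O(\varepsilon^2)$ on $\Gamma$, and replacing $\tau_\varepsilon^i$ by $\nabla_\Gamma g_i$ in the $\varepsilon$-order term costs a further $O(\varepsilon^2)$ by the first bound in \eqref{E:Tau_Diff}. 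Taking the constant extension, which preserves uniform bounds, and invoking \eqref{E:Nor_Bo} gives \eqref{E:Comp_N}. Estimate \eqref{E:Comp_P} then follows quickly: on $\Gamma_\varepsilon^i$ one has $P_\varepsilon-\overline{P}=\overline{n}\otimes\overline{n}-n_\varepsilon\otimes n_\varepsilon$, and the weaker consequence $|n_\varepsilon-(-1)^{i+1}\overline{n}|\le c\varepsilon$ of \eqref{E:Comp_N}, together with $|n_\varepsilon|=|\overline{n}|=1$ and bilinearity of the tensor product, yields the bound $c\varepsilon$.

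The substantial work is \eqref{E:Comp_W}. Using \eqref{E:Nor_Bo} I would take $\bar n_\varepsilon^i$ itself as the extension of $n_\varepsilon$ near $\Gamma_\varepsilon^i$ in the definition $W_\varepsilon=-\nabla_{\Gamma_\varepsilon}n_\varepsilon$, so that by \eqref{E:TGr_Bo} one has $W_\varepsilon=-P_\varepsilon\nabla\bar n_\varepsilon^i$ on $\Gamma_\varepsilon^i$. Applying \eqref{E:ConDer_Dom} componentwise to $n_\varepsilon^i$, which extends to the matrix identity $\nabla\bar v=\{I_3-d\overline{W}\}^{-1}\overline{\nabla_\Gamma v}$, and noting $d=\varepsilon\,\overline{g_i}$ on $\Gamma_\varepsilon^i$ so that $\{I_3-d\overline{W}\}^{-1}=I_3+O(\varepsilon)$ there by \eqref{E:Wein_Diff}, together with $P_\varepsilon=\overline{P}+O(\varepsilon)$ from \eqref{E:Comp_P}, reduces matters to showing $\nabla_\Gamma n_\varepsilon^i=-(-1)^{i+1}W+O(\varepsilon)$ on $\Gamma$. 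For this I would differentiate the quotient in \eqref{E:Def_NB}: the tangential gradient of $(1+\varepsilon^2|\tau_\varepsilon^i|^2)^{-1/2}$ is $O(\varepsilon^2)$ because $|\tau_\varepsilon^i|$ and $|\nabla_\Gamma\tau_\varepsilon^i|$ are bounded by \eqref{E:Tau_Bound} and $|n-\varepsilon\tau_\varepsilon^i|\le c$, so by the Leibniz rule only the term $(1+\varepsilon^2|\tau_\varepsilon^i|^2)^{-1/2}\nabla_\Gamma(n-\varepsilon\tau_\varepsilon^i)$ survives modulo $O(\varepsilon^2)$; expanding that factor as $1+O(\varepsilon^2)$ and using the boundedness of $\nabla_\Gamma\tau_\varepsilon^i$ and $\nabla_\Gamma n=-W$ gives the claim. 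Combining the three approximations, controlling error propagation by the uniform bounds $|P_\varepsilon|=2$, $|\{I_3-d\overline{W}\}^{-1}|\le c$ from \eqref{E:Wein_Bound}, and $|\overline{\nabla_\Gamma n_\varepsilon^i}|\le c$, and finally using $\overline{P}\,\overline{W}=\overline{PW}=\overline{W}$ from \eqref{E:Form_W}, I obtain $W_\varepsilon=(-1)^{i+1}\overline{W}+O(\varepsilon)$ on $\Gamma_\varepsilon^i$, which is \eqref{E:Comp_W}.

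The main obstacle I anticipate is the bookkeeping in this last step: one must carefully separate the genuine $O(\varepsilon)$ contributions from the $O(\varepsilon^2)$ ones when differentiating the normalized field $n_\varepsilon^i$, and justify the passage from the scalar identity \eqref{E:ConDer_Dom} to its matrix form. Once these points are settled, everything reduces to a routine application of the uniform estimates already recorded in Lemmas~\ref{L:Wein},~\ref{L:Pi_Der}, and~\ref{L:NB_Aux}.
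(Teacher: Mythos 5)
Your proposal is correct, and all the ingredients you invoke (\eqref{E:Nor_Bo}, \eqref{E:Tau_Bound}--\eqref{E:Tau_Diff}, \eqref{E:TGr_Bo}, the columnwise extension of \eqref{E:ConDer_Dom} to vector fields, \eqref{E:Wein_Bound}--\eqref{E:Wein_Diff}, and \eqref{E:Form_W}) are exactly the tools recorded here for this purpose; note that this paper deliberately omits the proof, deferring to Lemma~3.10 of the first part~\cite{Miu_NSCTD_01}, and your argument is the natural one carried out there. The only point worth making explicit is the one you already flag: when computing $W_\varepsilon$ on $\Gamma_\varepsilon^i$ via the extension $\bar{n}_\varepsilon^i$, one uses that the tangential gradient on $\Gamma_\varepsilon^i$ depends only on the values of the field on that boundary component (the two components being disjoint), which is the same observation the paper uses in the proof of Lemma~\ref{L:Poin_Nor}.
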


Since $\nabla_\Gamma g_i$ is bounded on $\Gamma$ by $g_i\in C^4(\Gamma)$, it follows from \eqref{E:Comp_N} that
\begin{align} \label{E:Comp_N_Re}
  |n_\varepsilon(x)-(-1)^{i+1}\bar{n}(x)| \leq c\varepsilon
\end{align}
for all $x\in\Gamma_\varepsilon^i$, $i=0,1$.
In the sequel we use \eqref{E:Comp_N_Re} instead of \eqref{E:Comp_N}.

Let us give a change of variables formula for an integral over $\Omega_\varepsilon$.
For functions $\varphi$ on $\Omega_\varepsilon$ and $\eta$ on $\Gamma_\varepsilon^i$, $i=0,1$ we use the notations
\begin{alignat}{2}
  \varphi^\sharp(y,r) &:= \varphi(y+rn(y)), &\quad &y\in\Gamma,\,r\in(\varepsilon g_0(y),\varepsilon g_1(y)), \label{E:Pull_Dom} \\
  \eta_i^\sharp(y) &:= \eta(y+\varepsilon g_i(y)n(y)), &\quad &y\in\Gamma. \label{E:Pull_Bo}
\end{alignat}
We define a function $J=J(y,r)$ for $y\in\Gamma$ and $r\in(-\delta,\delta)$ by
\begin{align} \label{E:Def_Jac}
  J(y,r) := \mathrm{det}[I_3-rW(y)] = \{1-r\kappa_1(y)\}\{1-r\kappa_2(y)\}.
\end{align}
Then it follows from \eqref{E:Curv_Bound} and $\kappa_1,\kappa_2\in C^3(\Gamma)$ that
\begin{align} \label{E:Jac_Bound_02}
  c^{-1} \leq J(y,r) \leq c, \quad |\nabla_\Gamma J(y,r)| \leq c, \quad \left|\frac{\partial J}{\partial r}(y,r)\right| \leq c
\end{align}
for all $y\in\Gamma$ and $r\in(-\delta,\delta)$, where $\nabla_\Gamma J$ is the tangential gradient of $J$ with respect to $y\in\Gamma$.
By the boundedness of $\kappa_1$, $\kappa_2$, $g_0$, and $g_1$ on $\Gamma$ we also have
\begin{align} \label{E:Jac_Diff_02}
  |J(y,r)-1| \leq c\varepsilon \quad\text{for all}\quad y\in\Gamma,\,r\in[\varepsilon g_0(y),\varepsilon g_1(y)].
\end{align}
For a function $\varphi$ on $\Omega_\varepsilon$ the change of variables formula
\begin{align} \label{E:CoV_Dom}
  \int_{\Omega_\varepsilon}\varphi(x)\,dx = \int_\Gamma\int_{\varepsilon g_0(y)}^{\varepsilon g_1(y)}\varphi(y+rn(y))J(y,r)\,dr\,d\mathcal{H}^2(y)
\end{align}
holds (see e.g.~\cite{GiTr01}*{Section~14.6}).
By \eqref{E:Jac_Bound_02} and \eqref{E:CoV_Dom} we observe that
\begin{align} \label{E:CoV_Equiv}
  c^{-1}\|\varphi\|_{L^p(\Omega_\varepsilon)}^p \leq \int_\Gamma\int_{\varepsilon g_0(y)}^{\varepsilon g_1(y)}|\varphi^\sharp(y,r)|^p\,dr\,d\mathcal{H}^2(y) \leq c\|\varphi\|_{L^p(\Omega_\varepsilon)}^p
\end{align}
for $\varphi\in L^p(\Omega_\varepsilon)$, $p\in[1,\infty)$.
We also have the following estimates for the constant extension $\bar{\eta}=\eta\circ\pi$ of a function $\eta$ on $\Gamma$ and for a function on $\Gamma_\varepsilon^i$, $i=0,1$.

\begin{lemma}[{\cite{Miu_NSCTD_01}*{Lemma~3.12}}] \label{L:Con_Lp_W1p}
  For $p\in[1,\infty)$ we have $\eta\in L^p(\Gamma)$ if and only if $\bar{\eta}\in L^p(\Omega_\varepsilon)$, and there exists a constant $c>0$ independent of $\varepsilon$ and $\eta$ such that
  \begin{align} \label{E:Con_Lp}
    c^{-1}\varepsilon^{1/p}\|\eta\|_{L^p(\Gamma)} \leq \|\bar{\eta}\|_{L^p(\Omega_\varepsilon)} \leq c\varepsilon^{1/p}\|\eta\|_{L^p(\Gamma)}.
  \end{align}
  Moreover, $\eta\in W^{1,p}(\Gamma)$ if and only if $\bar{\eta}\in W^{1,p}(\Omega_\varepsilon)$ and we have
  \begin{align} \label{E:Con_W1p}
    c^{-1}\varepsilon^{1/p}\|\eta\|_{W^{1,p}(\Gamma)} \leq \|\bar{\eta}\|_{W^{1,p}(\Omega_\varepsilon)} \leq c\varepsilon^{1/p}\|\eta\|_{W^{1,p}(\Gamma)}.
  \end{align}
\end{lemma}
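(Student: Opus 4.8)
The plan is to reduce everything to the change of variables formula~\eqref{E:CoV_Dom} together with the Jacobian bounds~\eqref{E:Jac_Bound_02}. For the $L^p$ estimate, I would start from the observation that $\bar\eta = \eta\circ\pi$ is, along the normal segment through $y\in\Gamma$, constant in $r$: that is, $(\bar\eta)^\sharp(y,r) = \eta(y)$ for all admissible $r$, in the notation~\eqref{E:Pull_Dom}. Plugging $\varphi = \bar\eta$ into~\eqref{E:CoV_Dom} gives
\begin{align*}
  \int_{\Omega_\varepsilon}|\bar\eta(x)|^p\,dx = \int_\Gamma |\eta(y)|^p \left(\int_{\varepsilon g_0(y)}^{\varepsilon g_1(y)} J(y,r)\,dr\right) d\mathcal{H}^2(y).
\end{align*}
By the lower and upper bounds $c^{-1}\le J\le c$ from~\eqref{E:Jac_Bound_02} and the fact that the length of the interval of integration is $\varepsilon g(y)$ with $c\le g\le c$ on the compact set $\Gamma$ (using~\eqref{E:G_Inf} and $g_0,g_1\in C^4(\Gamma)$), the inner factor in parentheses is comparable to $\varepsilon$ uniformly in $y$ and $\varepsilon$. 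Raising to the power $1/p$ yields~\eqref{E:Con_Lp}; the ``if and only if'' is immediate since the two sides vanish simultaneously and are finite simultaneously.

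For the $W^{1,p}$ estimate~\eqref{E:Con_W1p}, I would combine the already-established $L^p$ comparison with a corresponding comparison of the gradients. First, for $x\in\Omega_\varepsilon$ one has $\nabla\bar\eta(x) = \{I_3 - d(x)\overline{W}(x)\}^{-1}\overline{\nabla_\Gamma\eta}(x)$ by~\eqref{E:ConDer_Dom} of Lemma~\ref{L:Pi_Der}, and hence $|\nabla\bar\eta(x)|$ is comparable to $|\overline{\nabla_\Gamma\eta}(x)|$ uniformly, by~\eqref{E:ConDer_Bound} (or directly from the Weingarten bounds~\eqref{E:Wein_Bound}). Writing $\nabla_\Gamma\eta = (\underline{D}_1\eta,\underline{D}_2\eta,\underline{D}_3\eta)^T$, each component $\underline{D}_i\eta$ lies in $L^p(\Gamma)$ precisely when $\eta\in W^{1,p}(\Gamma)$, and its constant extension $\overline{\underline{D}_i\eta}$ satisfies the $L^p$ comparison~\eqref{E:Con_Lp} already proved. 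Therefore $\|\nabla\bar\eta\|_{L^p(\Omega_\varepsilon)}$ is comparable to $\|\overline{\nabla_\Gamma\eta}\|_{L^p(\Omega_\varepsilon)}$, which in turn is comparable to $\varepsilon^{1/p}\|\nabla_\Gamma\eta\|_{L^p(\Gamma)}$. Adding the $L^p$ bound for $\bar\eta$ itself and using that $\|\cdot\|_{W^{1,p}}$ is (up to constants) $\|\cdot\|_{L^p}+\|\nabla\cdot\|_{L^p}$ gives~\eqref{E:Con_W1p}; the equivalence ``$\eta\in W^{1,p}(\Gamma)\iff\bar\eta\in W^{1,p}(\Omega_\varepsilon)$'' then follows, with the weak derivatives of $\bar\eta$ identified via the chain rule / the already-quoted formula for $\nabla\bar\eta$ (approximating $\eta$ by $C^2(\Gamma)$ functions, dense in $W^{1,p}(\Gamma)$ for $p\neq\infty$ as recorded after the definition of $W^{2,p}(\Gamma)$, if one wishes to avoid subtleties about weak versus classical derivatives).

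I do not expect a serious obstacle here: the statement is essentially bookkeeping around~\eqref{E:CoV_Dom}, the uniform Jacobian bounds, and Lemma~\ref{L:Pi_Der}. The only point requiring a little care is the direction $\bar\eta\in W^{1,p}(\Omega_\varepsilon)\Rightarrow\eta\in W^{1,p}(\Gamma)$, where one must argue that the weak tangential derivatives of $\eta$ on $\Gamma$ exist given that $\bar\eta$ is weakly differentiable on the fattened region; this is handled by restricting to surfaces $\Gamma_r:=\{y+rn(y):y\in\Gamma\}$ for a.e.\ $r$, or equivalently by testing against functions of the product form $\bar\xi(y)\chi(r)$ and using Fubini, together with the non-degeneracy of the slab thickness $\varepsilon g(y)\ge c\varepsilon>0$. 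Since the reverse bound in~\eqref{E:Con_Lp} already shows $\|\eta\|_{L^p(\Gamma)}\le c\varepsilon^{-1/p}\|\bar\eta\|_{L^p(\Omega_\varepsilon)}<\infty$, and similarly $\|\nabla_\Gamma\eta\|_{L^p(\Gamma)}\le c\varepsilon^{-1/p}\|\overline{\nabla_\Gamma\eta}\|_{L^p(\Omega_\varepsilon)}\le c\varepsilon^{-1/p}\|\nabla\bar\eta\|_{L^p(\Omega_\varepsilon)}$, both directions of the equivalence and both inequalities in~\eqref{E:Con_W1p} drop out simultaneously.
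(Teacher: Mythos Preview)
The paper does not prove this lemma here; it is quoted from Part~I (\cite{Miu_NSCTD_01}*{Lemma~3.12}) and the proof is explicitly omitted. Your argument is correct and is the natural one: the $L^p$ comparison comes directly from the change of variables~\eqref{E:CoV_Dom} with the uniform Jacobian bounds~\eqref{E:Jac_Bound_02} and the thickness bounds on $g$, and the $W^{1,p}$ comparison then follows from~\eqref{E:ConDer_Bound} applied componentwise together with the $L^p$ case.
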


\begin{lemma}[{\cite{Miu_NSCTD_01}*{Lemma~3.13}}] \label{L:CoV_Surf}
  For $i=0,1$ and $p\in[1,\infty)$ let $\varphi\in L^p(\Gamma_\varepsilon^i)$ and $\varphi_i^\sharp$ be given by \eqref{E:Pull_Bo}.
  Then $\varphi_i^\sharp\in L^p(\Gamma)$ and
  \begin{align} \label{E:Lp_CoV_Surf}
    c^{-1}\|\varphi\|_{L^p(\Gamma_\varepsilon^i)} \leq \|\varphi_i^\sharp\|_{L^p(\Gamma)} \leq c\|\varphi\|_{L^p(\Gamma_\varepsilon^i)},
  \end{align}
  where $c>0$ is a constant independent of $\varepsilon$ and $\varphi$.
\end{lemma}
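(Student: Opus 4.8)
The plan is to realize the boundary component $\Gamma_\varepsilon^i$ as the image of $\Gamma$ under the map $\Phi_\varepsilon^i(y):=y+\varepsilon g_i(y)n(y)$ and to run a change of variables for surface integrals; the whole content is that the associated tangential Jacobian stays bounded above and below uniformly in $\varepsilon\in(0,1]$.

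First I would record the basic geometry of $\Phi_\varepsilon^i$. Since the $g_i$ have been normalized so that $|g_i|<\delta$ on $\Gamma$ and $\varepsilon\le1$, for each $y\in\Gamma$ the point $x=\Phi_\varepsilon^i(y)$ lies in the tubular neighborhood $N$ with $\pi(x)=y$ and $d(x)=\varepsilon g_i(y)$; hence $\Phi_\varepsilon^i\colon\Gamma\to\Gamma_\varepsilon^i$ is a bijection with inverse $\pi|_{\Gamma_\varepsilon^i}$ and $\varphi_i^\sharp=\varphi\circ\Phi_\varepsilon^i$. Differentiating $\Phi_\varepsilon^i$ along a curve on $\Gamma$ and using $\nabla_\Gamma n=-W$ from \eqref{E:Def_WH}, one finds that for $V\in T_y\Gamma$
\begin{align*}
  D\Phi_\varepsilon^i(y)V = \{I_3-\varepsilon g_i(y)W(y)\}V+\varepsilon\bigl(V\cdot\nabla_\Gamma g_i(y)\bigr)n(y),
\end{align*}
where the first summand is tangential to $\Gamma$ (by $WP=PW=W$ of \eqref{E:Form_W} and \eqref{E:Wein_Bound} it is in fact an isomorphism of $T_y\Gamma$) and the second is normal.

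Next I would compute the tangential Jacobian. Fixing an orthonormal basis $\{e_1,e_2\}$ of $T_y\Gamma$, the Gram matrix of $\{D\Phi_\varepsilon^i(y)e_1,D\Phi_\varepsilon^i(y)e_2\}$ equals $B^TB+\varepsilon^2bb^T$, where $B$ is the $2\times2$ matrix of $\{I_3-\varepsilon g_i(y)W(y)\}$ restricted to $T_y\Gamma$ in this basis and $b$ is the coordinate vector of $\nabla_\Gamma g_i(y)$. By the matrix determinant lemma, the tangential Jacobian $J_\varepsilon^i(y):=\sqrt{\det(B^TB+\varepsilon^2bb^T)}$ satisfies
\begin{align*}
  J_\varepsilon^i(y) = |\det B|\,\sqrt{1+\varepsilon^2|B^{-T}b|^2} = J(y,\varepsilon g_i(y))\,\sqrt{1+\varepsilon^2|\tau_\varepsilon^i(y)|^2}.
\end{align*}
Here I used that $\{I_3-rW\}^{-1}$ preserves the tangent plane (see \eqref{E:WReso_P}) to identify $B^{-T}b$ with $\tau_\varepsilon^i(y)=\{I_3-\varepsilon g_i(y)W(y)\}^{-1}\nabla_\Gamma g_i(y)$ of \eqref{E:Def_NB_Aux}, and that the eigenvalue of $I_3-rW(y)$ in the normal direction is $1$, so $\det B=J(y,r)=\det[I_3-rW(y)]$ as in \eqref{E:Def_Jac}. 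Now $\varepsilon g_i(y)\in(-\delta,\delta)$, so \eqref{E:Jac_Bound_02} (equivalently \eqref{E:Curv_Bound}) gives $c^{-1}\le J(y,\varepsilon g_i(y))\le c$, while $1\le 1+\varepsilon^2|\tau_\varepsilon^i(y)|^2\le 1+c^2$ by $|\tau_\varepsilon^i|\le c$ from \eqref{E:Tau_Bound}; hence $c^{-1}\le J_\varepsilon^i(y)\le c$ for all $y\in\Gamma$ and $\varepsilon\in(0,1]$. Applying the surface area formula $\int_{\Gamma_\varepsilon^i}\varphi\,d\mathcal{H}^2=\int_\Gamma\varphi(\Phi_\varepsilon^i(y))\,J_\varepsilon^i(y)\,d\mathcal{H}^2(y)$ with $\varphi$ replaced by $|\varphi|^p$ then yields $c^{-1}\|\varphi_i^\sharp\|_{L^p(\Gamma)}^p\le\|\varphi\|_{L^p(\Gamma_\varepsilon^i)}^p\le c\|\varphi_i^\sharp\|_{L^p(\Gamma)}^p$, which is \eqref{E:Lp_CoV_Surf}, and in particular shows $\varphi\in L^p(\Gamma_\varepsilon^i)$ if and only if $\varphi_i^\sharp\in L^p(\Gamma)$.

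The only mildly delicate step is the Jacobian computation: one must keep the tangential/normal splitting of $D\Phi_\varepsilon^i$ straight and recognize that the factor $\sqrt{\det(B^TB+\varepsilon^2bb^T)}$ is exactly $J(y,\varepsilon g_i(y))\sqrt{1+\varepsilon^2|\tau_\varepsilon^i(y)|^2}$. After that no real obstacle remains; the statement is a bounded-Jacobian change of variables on a graph over $\Gamma$ of height $O(\varepsilon)$, and the uniform bounds come directly from the estimates already recorded in Lemmas~\ref{L:Wein} and~\ref{L:NB_Aux}.
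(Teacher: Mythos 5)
Your proof is correct: the tangential Jacobian of $\Phi_\varepsilon^i(y)=y+\varepsilon g_i(y)n(y)$ is indeed $J(y,\varepsilon g_i(y))\sqrt{1+\varepsilon^2|\tau_\varepsilon^i(y)|^2}$ (note $B$ is symmetric since $W$ is, so $B^{-T}b$ really is the coordinate vector of $\tau_\varepsilon^i$), and the uniform two-sided bounds from \eqref{E:Jac_Bound_02} and \eqref{E:Tau_Bound} give \eqref{E:Lp_CoV_Surf}. This is essentially the same change-of-variables argument the paper defers to Part I (\cite{Miu_NSCTD_01}*{Lemma~3.13}), as the Jacobian factor you compute is exactly the normalization appearing in \eqref{E:Def_NB}, so there is nothing to add.
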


\section{Fundamental tools for analysis} \label{S:Tool}

\subsection{Sobolev inequalities} \label{SS:Tool_Sob}
Let us present Sobolev inequalities on $\Gamma$ and $\Omega_\varepsilon$.
The proofs of Lemmas~\ref{L:La_Surf} and~\ref{L:Agmon} are given in Appendix~\ref{S:Ap_Proofs}.
Also, we omit the proof of Lemma~\ref{L:Poincare} since it is given in the first part~\cite{Miu_NSCTD_01} of our study.

First we give Ladyzhenskaya's inequality on $\Gamma$.

\begin{lemma} \label{L:La_Surf}
  There exists a constant $c>0$ such that
  \begin{align} \label{E:La_Surf}
    \|\eta\|_{L^4(\Gamma)} \leq c\|\eta\|_{L^2(\Gamma)}^{1/2}\|\nabla_\Gamma\eta\|_{L^2(\Gamma)}^{1/2}
  \end{align}
  for all $\eta\in H^1(\Gamma)$.
\end{lemma}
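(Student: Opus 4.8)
The plan is to establish Ladyzhenskaya's inequality \eqref{E:La_Surf} on the closed surface $\Gamma$ by reducing it to the corresponding inequality on a bounded planar domain via a partition of unity and local parametrizations. First I would fix a finite atlas $\{(U_k,\psi_k)\}_{k=1}^N$ of $\Gamma$, where each $\psi_k\colon V_k\to U_k$ is a $C^5$ local parametrization from an open set $V_k\subset\mathbb{R}^2$, and a smooth partition of unity $\{\chi_k\}_{k=1}^N$ subordinate to $\{U_k\}$. Because $\Gamma$ is compact and of class $C^5$, the coefficients of the first fundamental form (the Riemannian metric) associated with each $\psi_k$, together with their inverses and the square root of the metric determinant, are bounded above and below by positive constants uniformly in $k$; consequently the surface integral $\int_\Gamma |\eta|^p\,d\mathcal{H}^2$ and its pullback $\int_{V_k}|\eta\circ\psi_k|^p\sqrt{\det G_k}\,dx$ are comparable, and likewise $|\nabla_\Gamma\eta|$ on $U_k$ is comparable (with constants independent of $\eta$) to the Euclidean gradient $|\nabla(\eta\circ\psi_k)|$ on $V_k$. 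These comparisons are exactly the kind of bounds recorded in the preliminary lemmas on $\Gamma$, so I would invoke them rather than re-derive the chart estimates.

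The core step is then the planar Ladyzhenskaya inequality: for $w\in H^1_0(\mathbb{R}^2)$ (or $w\in H^1(\mathbb{R}^2)$ with compact support) one has $\|w\|_{L^4(\mathbb{R}^2)}\le C\|w\|_{L^2(\mathbb{R}^2)}^{1/2}\|\nabla w\|_{L^2(\mathbb{R}^2)}^{1/2}$, a standard two-dimensional interpolation inequality proved by the one-dimensional Sobolev embedding applied in each coordinate direction followed by a Fubini/Cauchy--Schwarz argument. Applying this to $w_k := (\chi_k\eta)\circ\psi_k$ (extended by zero outside $V_k$), using the chart comparisons above to pass back to $\Gamma$, and using the product rule $\nabla_\Gamma(\chi_k\eta)=\chi_k\nabla_\Gamma\eta+\eta\nabla_\Gamma\chi_k$ with $\|\nabla_\Gamma\chi_k\|_{L^\infty(\Gamma)}\le C$, I obtain
\begin{align*}
  \|\chi_k\eta\|_{L^4(\Gamma)} \leq C\|\eta\|_{L^2(U_k)}^{1/2}\bigl(\|\nabla_\Gamma\eta\|_{L^2(U_k)}+\|\eta\|_{L^2(U_k)}\bigr)^{1/2}
\end{align*}
for each $k$. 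Summing over the finitely many charts (using $\eta=\sum_k\chi_k\eta$, the triangle inequality in $L^4(\Gamma)$, and $\sqrt{a+b}\le\sqrt a+\sqrt b$) yields
\begin{align*}
  \|\eta\|_{L^4(\Gamma)} \leq C\|\eta\|_{L^2(\Gamma)}^{1/2}\|\eta\|_{H^1(\Gamma)}^{1/2}.
\end{align*}

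The remaining point is to remove the lower-order term, i.e. to upgrade $\|\eta\|_{H^1(\Gamma)}^{1/2}$ to $\|\nabla_\Gamma\eta\|_{L^2(\Gamma)}^{1/2}$, which is the usual subtlety of Ladyzhenskaya-type inequalities on a manifold without boundary: it fails as stated for constants. I expect this to be the main (albeit minor) obstacle. It is handled by a Poincaré--Wirtinger argument: writing $\bar\eta := |\Gamma|^{-1}\int_\Gamma\eta\,d\mathcal{H}^2$ for the mean, apply the already-proved inequality to $\eta-\bar\eta$, note $\|\nabla_\Gamma(\eta-\bar\eta)\|_{L^2(\Gamma)}=\|\nabla_\Gamma\eta\|_{L^2(\Gamma)}$, use the Poincaré inequality $\|\eta-\bar\eta\|_{L^2(\Gamma)}\le C\|\nabla_\Gamma\eta\|_{L^2(\Gamma)}$ on the compact connected surface $\Gamma$, and control the constant term by $\|\bar\eta\|_{L^4(\Gamma)}=|\Gamma|^{-3/4}|\int_\Gamma\eta\,d\mathcal{H}^2|\le C\|\eta\|_{L^2(\Gamma)}$. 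If one prefers to keep the statement with $\|\eta\|_{H^1}^{1/2}$ and derive \eqref{E:La_Surf} only when it is legitimate, one can alternatively absorb the lower-order term using Young's inequality after observing that \eqref{E:La_Surf} as written must be interpreted with the convention that it is an interpolation bound on $\eta$ regarded in $H^1(\Gamma)$; however the cleanest route is the Poincaré--Wirtinger reduction just sketched, and I would present that. All constants depend only on $\Gamma$ (through the atlas, the partition of unity, and the metric bounds), hence are independent of $\eta$, as required.
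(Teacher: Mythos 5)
Your proposal follows essentially the same route as the paper: a finite atlas and partition of unity on the compact surface, transfer of $L^p$ norms and tangential gradients to planar charts with uniformly comparable constants (the paper's Lemma~\ref{L:Lp_Loc}), and the planar Ladyzhenskaya inequality proved by the one-dimensional Fubini/Cauchy--Schwarz argument (the paper's Lemma~\ref{L:La_R2}). The one point where you genuinely diverge is the treatment of the lower-order term, and there your concern is justified: the paper disposes of the localization by saying one ``may assume'' $\eta$ is supported in a single chart, which conceals precisely the $\eta\nabla_\Gamma\chi_k$ contributions you track, and, as you observe, \eqref{E:La_Surf} as literally stated fails for nonzero constant functions, so no argument can produce it for all $\eta\in H^1(\Gamma)$. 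Accordingly, your Poincar\'e--Wirtinger step does not rescue the homogeneous form either: applying the chart estimate to $\eta-\bar\eta$ and adding back the mean gives $\|\eta\|_{L^4(\Gamma)}\le c\bigl(\|\eta\|_{L^2(\Gamma)}^{1/2}\|\nabla_\Gamma\eta\|_{L^2(\Gamma)}^{1/2}+\|\eta\|_{L^2(\Gamma)}\bigr)$, i.e. the version with $\|\eta\|_{H^1(\Gamma)}^{1/2}$ on the right, not \eqref{E:La_Surf} itself, and you should state it as such rather than as a proof of the homogeneous inequality. This is harmless for the rest of the paper: the only place \eqref{E:La_Surf} is invoked is the product estimate of Lemma~\ref{L:Prod}, where the bound is immediately relaxed to $\|\eta\|_{L^2(\Gamma)}\|\eta\|_{H^1(\Gamma)}$, so the $H^1$ version your argument actually yields (equivalently, the homogeneous bound for functions supported in a single chart, which is all the paper's appendix proof really establishes) is exactly what is needed.
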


Next we present Poincar\'{e} and trace type inequalities on $\Omega_\varepsilon$.
For a function $\varphi$ on $\Omega_\varepsilon$ and $x\in\Omega_\varepsilon$ we define the derivative of $\varphi$ in the normal direction of $\Gamma$ by
\begin{align} \label{E:Def_NorDer}
  \partial_n\varphi(x) := (\bar{n}(x)\cdot\nabla)\varphi(x) = \frac{d}{dr}\bigl(\varphi(y+rn(y))\bigr)\Big|_{r=d(x)} \quad (y=\pi(x)\in\Gamma).
\end{align}
Note that for the constant extension $\bar{\eta}=\eta\circ\pi$ of $\eta\in C^1(\Gamma)$ we have
\begin{align} \label{E:NorDer_Con}
  \partial_n\bar{\eta}(x) = (\bar{n}(x)\cdot\nabla)\bar{\eta}(x) = 0, \quad x\in \Omega_\varepsilon.
\end{align}

\begin{lemma}[{\cite{Miu_NSCTD_01}*{Lemma~4.1}}] \label{L:Poincare}
  There exists $c>0$ independent of $\varepsilon$ such that
  \begin{align}
    \|\varphi\|_{L^p(\Omega_\varepsilon)} &\leq c\left(\varepsilon^{1/p}\|\varphi\|_{L^p(\Gamma_\varepsilon^i)}+\varepsilon\|\partial_n\varphi\|_{L^p(\Omega_\varepsilon)}\right), \label{E:Poin_Dom} \\
    \|\varphi\|_{L^p(\Gamma_\varepsilon^i)} &\leq c\left(\varepsilon^{-1/p}\|\varphi\|_{L^p(\Omega_\varepsilon)}+\varepsilon^{1-1/p}\|\partial_n\varphi\|_{L^p(\Omega_\varepsilon)}\right) \label{E:Poin_Bo}
  \end{align}
  for all $\varphi\in W^{1,p}(\Omega_\varepsilon)$ with $p\in[1,\infty)$ and $i=0,1$.
\end{lemma}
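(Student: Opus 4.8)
The plan is to prove the two trace-type Poincaré inequalities by reducing everything to one-dimensional estimates in the normal variable $r$ and then integrating over $\Gamma$, exactly in the spirit of the change of variables formula \eqref{E:CoV_Dom}. First I would treat the one-dimensional building block: for a $C^1$ function $h$ on an interval $(a,b)$ with $b-a\sim\varepsilon$ and any point $r_i\in\{a,b\}$, the fundamental theorem of calculus gives $h(r)=h(r_i)+\int_{r_i}^{r}h'(s)\,ds$, so that for $p\in[1,\infty)$
\begin{align*}
  |h(r)|^p \leq c\left(|h(r_i)|^p+(b-a)^{p-1}\int_a^b|h'(s)|^p\,ds\right),
\end{align*}
and, integrating in $r$ over $(a,b)$,
\begin{align*}
  \int_a^b|h(r)|^p\,dr \leq c\left((b-a)|h(r_i)|^p+(b-a)^p\int_a^b|h'(s)|^p\,ds\right).
\end{align*}
Symmetrically, dividing the first displayed inequality by $(b-a)$ after integrating in $r$, one gets a bound for $|h(r_i)|^p$ in terms of the $L^p$ norm of $h$ and of $h'$ on the interval.

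Next I would apply this with $h(r)=\varphi^\sharp(y,r)$ on the interval $(\varepsilon g_0(y),\varepsilon g_1(y))$, whose length is $\varepsilon g(y)$ with $g$ bounded above and below by \eqref{E:G_Inf} and the boundedness of $g_0,g_1$; note $h'(r)=\partial_r\varphi^\sharp(y,r)=(\partial_n\varphi)^\sharp(y,r)$ by the definition \eqref{E:Def_NorDer}. Taking $r_i=\varepsilon g_i(y)$ so that $h(r_i)=\varphi_i^\sharp(y)$ in the notation \eqref{E:Pull_Bo}, I integrate the one-dimensional inequalities over $y\in\Gamma$ with respect to $\mathcal{H}^2$. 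For \eqref{E:Poin_Dom} this gives
\begin{align*}
  \int_\Gamma\int_{\varepsilon g_0(y)}^{\varepsilon g_1(y)}|\varphi^\sharp(y,r)|^p\,dr\,d\mathcal{H}^2(y) \leq c\left(\varepsilon\int_\Gamma|\varphi_i^\sharp(y)|^p\,d\mathcal{H}^2(y)+\varepsilon^p\int_\Gamma\int_{\varepsilon g_0(y)}^{\varepsilon g_1(y)}|(\partial_n\varphi)^\sharp(y,r)|^p\,dr\,d\mathcal{H}^2(y)\right),
\end{align*}
and then I convert each term back to a genuine integral over $\Omega_\varepsilon$ or $\Gamma_\varepsilon^i$: the left side and the last term are comparable to $\|\varphi\|_{L^p(\Omega_\varepsilon)}^p$ and $\|\partial_n\varphi\|_{L^p(\Omega_\varepsilon)}^p$ by \eqref{E:CoV_Equiv}, while $\int_\Gamma|\varphi_i^\sharp|^p\,d\mathcal{H}^2$ is comparable to $\|\varphi\|_{L^p(\Gamma_\varepsilon^i)}^p$ by \eqref{E:Lp_CoV_Surf} in Lemma~\ref{L:CoV_Surf}. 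Taking $p$-th roots and using $(a+b)^{1/p}\leq a^{1/p}+b^{1/p}$ yields \eqref{E:Poin_Dom}. The inequality \eqref{E:Poin_Bo} follows the same way from the symmetric one-dimensional estimate for $|h(r_i)|^p$, keeping careful track of the powers of $\varepsilon=|b-a|$-scale: the $|h(r_i)|^p$ bound carries a factor $(b-a)^{-1}\sim\varepsilon^{-1}$ on the $L^p$-in-$r$ term and a factor $(b-a)^{p-1}\sim\varepsilon^{p-1}$ on the derivative term, which after integration over $\Gamma$ and the conversions above produce precisely the $\varepsilon^{-1/p}$ and $\varepsilon^{1-1/p}$ weights in \eqref{E:Poin_Bo}.

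Finally, a standard density argument extends the inequalities from $C^1(\overline{\Omega}_\varepsilon)$ to all of $W^{1,p}(\Omega_\varepsilon)$: smooth functions are dense, the right-hand sides are continuous in the $W^{1,p}(\Omega_\varepsilon)$ norm (for the boundary term one uses the usual trace theorem on the fixed-geometry reference configuration), and both sides pass to the limit. I do not expect a serious obstacle here; the only point requiring care is the bookkeeping of the $\varepsilon$-powers, making sure that the constants coming from \eqref{E:CoV_Equiv}, \eqref{E:Lp_CoV_Surf}, and the bounds $c^{-1}\leq g(y)\leq c$ are all independent of $\varepsilon$, so that the final constant $c$ in \eqref{E:Poin_Dom}--\eqref{E:Poin_Bo} is $\varepsilon$-independent as claimed.
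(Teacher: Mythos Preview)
Your proposal is correct. The paper does not actually give a proof of this lemma here---it explicitly omits it, citing Part~I of the series (\cite{Miu_NSCTD_01}*{Lemma~4.1})---but your argument via the one-dimensional fundamental theorem of calculus in the normal variable, followed by integration over $\Gamma$ and the change-of-variables equivalences \eqref{E:CoV_Equiv} and \eqref{E:Lp_CoV_Surf}, is exactly the natural route and matches what one expects the proof in Part~I to be.
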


By the Sobolev embedding theorem (see~\cite{AdFo03}) a function $\varphi$ in $H^2(\Omega_\varepsilon)$ is continuous and bounded on $\Omega_\varepsilon$.
The following anisotropic Agmon inequality gives the explicit dependence of a constant on $\varepsilon$ in the $L^\infty(\Omega_\varepsilon)$-estimate for $\varphi$.

\begin{lemma} \label{L:Agmon}
  There exists a constant $c>0$ independent of $\varepsilon$ such that
  \begin{multline} \label{E:Agmon}
    \|\varphi\|_{L^\infty(\Omega_\varepsilon)} \leq c\varepsilon^{-1/2}\|\varphi\|_{L^2(\Omega_\varepsilon)}^{1/4}\|\varphi\|_{H^2(\Omega_\varepsilon)}^{1/2} \\
    \times\left(\|\varphi\|_{L^2(\Omega_\varepsilon)}+\varepsilon\|\partial_n\varphi\|_{L^2(\Omega_\varepsilon)}+\varepsilon^2\|\partial_n^2\varphi\|_{L^2(\Omega_\varepsilon)}\right)^{1/4}
  \end{multline}
  for all $\varphi\in H^2(\Omega_\varepsilon)$.
\end{lemma}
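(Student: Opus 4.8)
\textbf{Proof plan for the anisotropic Agmon inequality (Lemma~\ref{L:Agmon}).}

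The plan is to reduce the three-dimensional $L^\infty$-estimate to a one-dimensional Agmon inequality in the normal variable with coefficients in the tangential variable, and then to control those tangential-variable norms by the full $H^2(\Omega_\varepsilon)$-norm. First I would fix $x_0\in\Omega_\varepsilon$ and write $x_0=y_0+r_0n(y_0)$ with $y_0\in\Gamma$ and $r_0\in(\varepsilon g_0(y_0),\varepsilon g_1(y_0))$, and use the decomposition $\varphi(x)=\varphi^\sharp(y,r)$ from \eqref{E:Pull_Dom}. The classical one-dimensional Agmon inequality on an interval of length $\sim\varepsilon$ states that for $h\in H^2(a,b)$ with $b-a\sim\varepsilon$,
\begin{align*}
  \|h\|_{L^\infty(a,b)}^2 \leq c\|h\|_{L^2(a,b)}\left(\varepsilon^{-1}\|h\|_{L^2(a,b)}+\|h'\|_{L^2(a,b)}+\varepsilon\|h''\|_{L^2(a,b)}\right)
\end{align*}
(one derives this by writing $h(r)^2$ as an average plus an integral of $2hh'$, then iterating; the factor $\varepsilon^{-1}$ comes from the short interval). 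Applying this with $h(r)=\varphi^\sharp(y,r)$ for fixed $y$, and rewriting $\partial_r\varphi^\sharp=(\partial_n\varphi)^\sharp$ and $\partial_r^2\varphi^\sharp=(\partial_n^2\varphi)^\sharp$, gives a pointwise-in-$y$ bound for $\sup_r|\varphi^\sharp(y,r)|^2$.

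Next I would control the tangential direction. Applying the two-dimensional Ladyzhenskaya-type bound on $\Gamma$, or more directly a Sobolev/Agmon argument on the closed surface $\Gamma$, to the $\Gamma$-function $y\mapsto\bigl(\sup_r|\varphi^\sharp(y,r)|^2\bigr)$ — or better, running the surface Agmon inequality on $\Gamma$ for the function $\Phi(y):=\|\varphi^\sharp(y,\cdot)\|_{L^\infty_r}$ after first establishing $\Phi\in H^1(\Gamma)$ with $\|\Phi\|_{H^1(\Gamma)}$ bounded by tangential derivatives of $\varphi^\sharp$. Concretely, I would take the product of the two one-dimensional-in-$r$ estimates at two different points $y$, integrate in $y$ over $\Gamma$, and use Cauchy--Schwarz together with the elementary inequality $\sup_\Gamma F \leq c\|F\|_{L^2(\Gamma)}^{1/2}\|F\|_{H^1(\Gamma)}^{1/2}$ (valid on a two-dimensional closed surface; this is the surface Agmon inequality, provable from \eqref{E:La_Surf}). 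At each stage, terms involving $\nabla_\Gamma\varphi^\sharp$ and $\nabla_\Gamma^2\varphi^\sharp$ appear, and these must be converted back to genuine derivatives $\nabla\varphi$, $\nabla^2\varphi$ on $\Omega_\varepsilon$; this is exactly where Lemma~\ref{L:Pi_Der}, \eqref{E:ConDer_Bound}, \eqref{E:Con_Hess}, and the Jacobian bounds \eqref{E:Jac_Bound_02}, \eqref{E:CoV_Equiv} enter, and they cost only $\varepsilon$-independent constants. Collecting the powers of $\varepsilon$: each $L^2(\Omega_\varepsilon)$ in the intermediate one-dimensional estimates that was really an $L^2_r$ fiber norm carries a hidden $\varepsilon^{-1/2}$ relative to the full $L^2(\Omega_\varepsilon)$-norm via \eqref{E:CoV_Equiv} and \eqref{E:Con_Lp}, and tracking these through the two interpolation steps produces the stated exponent $\varepsilon^{-1/2}$ on the right-hand side of \eqref{E:Agmon}.

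The main obstacle is the careful bookkeeping of the $\varepsilon$-powers through the two nested interpolations (one in $r$, one on $\Gamma$) while simultaneously keeping the $H^2(\Omega_\varepsilon)$-norm genuinely anisotropic — that is, ensuring the $\partial_n$ and $\partial_n^2$ terms are separated out and only appear multiplied by $\varepsilon$ and $\varepsilon^2$ respectively, rather than being absorbed into a cruder $\|\varphi\|_{H^2(\Omega_\varepsilon)}$ factor with a worse $\varepsilon$-power. A clean way to organize this is to first prove the purely one-dimensional anisotropic bound for $\|\varphi^\sharp(y,\cdot)\|_{L^\infty_r}^2$ with the interval length $\varepsilon g(y)$ made explicit, then establish that $y\mapsto\|\varphi^\sharp(y,\cdot)\|_{L^\infty_r}$ lies in $H^1(\Gamma)$ with norm controlled by fiberwise $H^1_r$- and tangential derivatives, and only at the very end invoke the two-dimensional surface Agmon inequality and translate everything back to $\Omega_\varepsilon$-norms via the change of variables. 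I expect the details to be routine but lengthy, which is presumably why the proof is deferred to Appendix~\ref{S:Ap_Proofs}.
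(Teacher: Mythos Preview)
Your proposal takes a genuinely different route from the paper's proof, and it contains a concrete error.

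The paper does not separate the normal and tangential directions sequentially. Instead it localizes with a partition of unity on $\Gamma$, straightens each patch by the parametrization $\zeta(s',s_3)=\mu(s')+\varepsilon\{(1-s_3)g_0+s_3g_1\}n$ mapping the unit cube $(0,1)^3$ onto a piece of $\Omega_\varepsilon$, and then applies in one stroke the known anisotropic Agmon inequality on the unit cube (Temam--Ziane~\cite{TeZi96}*{Proposition~2.2}):
\[
  \|\Phi\|_{L^\infty((0,1)^3)} \leq c\|\Phi\|_{L^2}^{1/4}\prod_{i=1}^3\bigl(\|\Phi\|_{L^2}+\|\partial_i\Phi\|_{L^2}+\|\partial_i^2\Phi\|_{L^2}\bigr)^{1/4}.
\]
The $\varepsilon$-dependence then comes out of the change-of-variables Jacobian $\det\nabla_s\zeta\sim\varepsilon$ and the chain rule $\partial_{s_3}\Phi=\varepsilon g\,\partial_n\varphi$, $\partial_{s_3}^2\Phi=\varepsilon^2g^2\,\partial_n^2\varphi$. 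No supremum-in-$r$ function is ever differentiated.

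Your approach has a genuine gap: the inequality you invoke on the surface, $\sup_\Gamma F\le c\|F\|_{L^2(\Gamma)}^{1/2}\|F\|_{H^1(\Gamma)}^{1/2}$, is \emph{false} on a two-dimensional manifold---$H^1$ fails to embed into $L^\infty$ at the critical exponent, and Ladyzhenskaya's inequality \eqref{E:La_Surf} only yields $L^4$. The correct two-dimensional Agmon inequality is $\|F\|_{L^\infty(\Gamma)}\le c\|F\|_{L^2(\Gamma)}^{1/2}\|F\|_{H^2(\Gamma)}^{1/2}$, requiring two tangential derivatives. With that correction your strategy would demand controlling $\nabla_\Gamma^2$ of the fiberwise sup $y\mapsto\|\varphi^\sharp(y,\cdot)\|_{L^\infty_r}$, which is not directly accessible. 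The paper's cube-based approach sidesteps this entirely by handling all three directions simultaneously in a fixed reference domain.
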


\subsection{Consequences of the boundary conditions} \label{SS:Tool_Bo}
In this subsection we derive several properties of vector fields satisfying the impermeable boundary condition
\begin{align} \label{E:Bo_Imp}
  u\cdot n_\varepsilon = 0 \quad\text{on}\quad \Gamma_\varepsilon
\end{align}
or the slip boundary conditions
\begin{align} \label{E:Bo_Slip}
  u\cdot n_\varepsilon = 0, \quad 2\nu P_\varepsilon D(u)n_\varepsilon+\gamma_\varepsilon u = 0 \quad\text{on}\quad \Gamma_\varepsilon.
\end{align}
Here $\nu>0$ is the viscosity coefficient independent of $\varepsilon$ and $\gamma_\varepsilon\geq0$ is the friction coefficient on $\Gamma_\varepsilon$ given by \eqref{E:Def_Fric}.
Also, for a vector field $u$ on $\Omega_\varepsilon$ we denote by
\begin{align*}
  D(u) := (\nabla u)_S = \frac{\nabla u+(\nabla u)^T}{2}
\end{align*}
the strain rate tensor.
First we deal with vector fields satisfying \eqref{E:Bo_Imp}.

\begin{lemma} \label{L:Exp_Bo}
  For $i=0,1$ let $u\in C(\Gamma_\varepsilon^i)^3$ satisfy \eqref{E:Bo_Imp} on $\Gamma_\varepsilon^i$.
  Then
  \begin{align} \label{E:Exp_Bo}
    u\cdot\bar{n} = \varepsilon u\cdot\bar{\tau}_\varepsilon^i, \quad |u\cdot\bar{n}| \leq c\varepsilon|u| \quad\text{on}\quad \Gamma_\varepsilon^i,
  \end{align}
  where $\tau_\varepsilon^i$ is given by \eqref{E:Def_NB_Aux} and $c>0$ is a constant independent of $\varepsilon$ and $u$.
\end{lemma}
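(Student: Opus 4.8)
The statement to prove is Lemma~\ref{L:Exp_Bo}: for $u\in C(\Gamma_\varepsilon^i)^3$ with $u\cdot n_\varepsilon=0$ on $\Gamma_\varepsilon^i$, we have $u\cdot\bar n=\varepsilon u\cdot\bar\tau_\varepsilon^i$ and $|u\cdot\bar n|\le c\varepsilon|u|$ on $\Gamma_\varepsilon^i$.

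The plan is to exploit the explicit formula~\eqref{E:Def_NB} for the unit outward normal $n_\varepsilon^i$ of $\Gamma_\varepsilon^i$ together with the identity $n_\varepsilon=\bar n_\varepsilon^i$ on $\Gamma_\varepsilon^i$ from~\eqref{E:Nor_Bo}. First I would take the constant extension in~\eqref{E:Def_NB}, writing
\begin{align*}
  n_\varepsilon(x) = (-1)^{i+1}\frac{\bar n(x)-\varepsilon\bar\tau_\varepsilon^i(x)}{\sqrt{1+\varepsilon^2|\bar\tau_\varepsilon^i(x)|^2}}, \quad x\in\Gamma_\varepsilon^i.
\end{align*}
Then the impermeability condition $u\cdot n_\varepsilon=0$ becomes, after multiplying through by the (nonzero) scalar factor $(-1)^{i+1}\sqrt{1+\varepsilon^2|\bar\tau_\varepsilon^i|^2}$, simply $u\cdot\bar n-\varepsilon\, u\cdot\bar\tau_\varepsilon^i=0$, which is exactly the first claimed identity $u\cdot\bar n=\varepsilon u\cdot\bar\tau_\varepsilon^i$ on $\Gamma_\varepsilon^i$. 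This is the entire content of the first assertion; no estimates are needed.

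For the second assertion I would simply apply Cauchy--Schwarz to the identity just obtained: $|u\cdot\bar n|=\varepsilon|u\cdot\bar\tau_\varepsilon^i|\le\varepsilon|u|\,|\bar\tau_\varepsilon^i|$, and then invoke the uniform bound $|\tau_\varepsilon^i|\le c$ from~\eqref{E:Tau_Bound} in Lemma~\ref{L:NB_Aux}, which passes to the constant extension $|\bar\tau_\varepsilon^i|\le c$ since the constant extension does not change sup-norms. This yields $|u\cdot\bar n|\le c\varepsilon|u|$ with $c$ independent of $\varepsilon$ and $u$.

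There is essentially no main obstacle here: the lemma is a direct algebraic consequence of the explicit normal formula, and the only external inputs are~\eqref{E:Nor_Bo} (the identification of $n_\varepsilon$ with the constant extension of $n_\varepsilon^i$ on $\Gamma_\varepsilon^i$) and the uniform bound on $\tau_\varepsilon^i$ from Lemma~\ref{L:NB_Aux}. The one point to state carefully is that the denominator $\sqrt{1+\varepsilon^2|\bar\tau_\varepsilon^i|^2}$ and the sign $(-1)^{i+1}$ are harmless multiplicative factors that can be cleared, so the scalar equation $u\cdot n_\varepsilon=0$ is genuinely equivalent to $u\cdot\bar n=\varepsilon u\cdot\bar\tau_\varepsilon^i$ pointwise on $\Gamma_\varepsilon^i$.
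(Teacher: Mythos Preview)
Your proof is correct and follows essentially the same approach as the paper: the first identity comes directly from \eqref{E:Def_NB}, \eqref{E:Nor_Bo}, and the impermeability condition, and the second inequality follows from the first together with the uniform bound \eqref{E:Tau_Bound}. The paper's proof is simply a terser version of what you wrote.
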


\begin{proof}
  By \eqref{E:Def_NB}, \eqref{E:Nor_Bo}, and \eqref{E:Bo_Imp} on $\Gamma_\varepsilon^i$ we immediately get the first equality of \eqref{E:Exp_Bo}.
  The second inequality of \eqref{E:Exp_Bo} follows from the first one and \eqref{E:Tau_Bound}.
\end{proof}

As a consequence of Lemma~\ref{L:Exp_Bo} we show Poincar\'{e} type inequalities for the normal component with respect to $\Gamma$ of a vector field on $\Omega_\varepsilon$.

\begin{lemma} \label{L:Poin_Nor}
  Let $p\in[1,\infty)$.
  There exists $c>0$ independent of $\varepsilon$ such that
  \begin{align} \label{E:Poin_Nor}
    \|u\cdot\bar{n}\|_{L^p(\Omega_\varepsilon)} \leq c\varepsilon\|u\|_{W^{1,p}(\Omega_\varepsilon)}
  \end{align}
  for all $u\in W^{1,p}(\Omega_\varepsilon)^3$ satisfying \eqref{E:Bo_Imp} on $\Gamma_\varepsilon^0$ or on $\Gamma_\varepsilon^1$.
  We also have
  \begin{align} \label{E:Poin_Dnor}
    \left\|\overline{P}\nabla(u\cdot\bar{n})\right\|_{L^p(\Omega_\varepsilon)} \leq c\varepsilon\|u\|_{W^{2,p}(\Omega_\varepsilon)}
  \end{align}
  for all $u\in W^{2,p}(\Omega_\varepsilon)^3$ satisfying \eqref{E:Bo_Imp} on $\Gamma_\varepsilon^0$ or on $\Gamma_\varepsilon^1$.
\end{lemma}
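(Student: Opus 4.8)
The plan is to derive both inequalities from Lemma~\ref{L:Exp_Bo} together with the Poincar\'{e} inequality \eqref{E:Poin_Dom} of Lemma~\ref{L:Poincare}. For \eqref{E:Poin_Nor}, suppose $u$ satisfies \eqref{E:Bo_Imp} on $\Gamma_\varepsilon^i$ for one fixed $i\in\{0,1\}$. I would set $\varphi:=u\cdot\bar{n}\in W^{1,p}(\Omega_\varepsilon)$ and apply \eqref{E:Poin_Dom} with this choice of $i$, giving
\begin{align*}
  \|u\cdot\bar{n}\|_{L^p(\Omega_\varepsilon)} \leq c\left(\varepsilon^{1/p}\|u\cdot\bar{n}\|_{L^p(\Gamma_\varepsilon^i)}+\varepsilon\|\partial_n(u\cdot\bar{n})\|_{L^p(\Omega_\varepsilon)}\right).
\end{align*}
The boundary term is handled by the second estimate in \eqref{E:Exp_Bo}: $\|u\cdot\bar{n}\|_{L^p(\Gamma_\varepsilon^i)}\leq c\varepsilon\|u\|_{L^p(\Gamma_\varepsilon^i)}$, and then the trace inequality \eqref{E:Poin_Bo} bounds $\|u\|_{L^p(\Gamma_\varepsilon^i)}$ by $c(\varepsilon^{-1/p}\|u\|_{L^p(\Omega_\varepsilon)}+\varepsilon^{1-1/p}\|\partial_n u\|_{L^p(\Omega_\varepsilon)})\leq c\varepsilon^{-1/p}\|u\|_{W^{1,p}(\Omega_\varepsilon)}$; so $\varepsilon^{1/p}\|u\cdot\bar n\|_{L^p(\Gamma_\varepsilon^i)}\leq c\varepsilon\|u\|_{W^{1,p}(\Omega_\varepsilon)}$, as needed. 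For the interior term, $\partial_n(u\cdot\bar{n})=(\partial_n u)\cdot\bar{n}+u\cdot\partial_n\bar{n}$, and $\partial_n\bar{n}=(\bar n\cdot\nabla)\bar n=0$ by \eqref{E:NorDer_Con} applied componentwise (or directly from \eqref{E:Nor_Grad} since $\bar n$ is a constant extension), so $\|\partial_n(u\cdot\bar n)\|_{L^p(\Omega_\varepsilon)}\leq\|\nabla u\|_{L^p(\Omega_\varepsilon)}$. Combining gives \eqref{E:Poin_Nor}.

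For \eqref{E:Poin_Dnor}, now take $u\in W^{2,p}(\Omega_\varepsilon)^3$ satisfying \eqref{E:Bo_Imp} on $\Gamma_\varepsilon^i$, and apply the same scheme to each component $\psi_k:=\overline{P}_{kj}\underline{D}{}^{?}$... more precisely, set $\varphi:=\overline{P}\nabla(u\cdot\bar n)$, a vector field whose components lie in $W^{1,p}(\Omega_\varepsilon)$ since $u\cdot\bar n\in W^{2,p}(\Omega_\varepsilon)$ and $\overline P\in C^3$. Apply \eqref{E:Poin_Dom} componentwise:
\begin{align*}
  \|\overline{P}\nabla(u\cdot\bar n)\|_{L^p(\Omega_\varepsilon)}\leq c\left(\varepsilon^{1/p}\|\overline{P}\nabla(u\cdot\bar n)\|_{L^p(\Gamma_\varepsilon^i)}+\varepsilon\|\partial_n\bigl(\overline{P}\nabla(u\cdot\bar n)\bigr)\|_{L^p(\Omega_\varepsilon)}\right).
\end{align*}
The interior term is immediately controlled by $\varepsilon\|u\|_{W^{2,p}(\Omega_\varepsilon)}$ since each derivative of $\overline P\nabla(u\cdot\bar n)$ costs at most two derivatives of $u$ plus bounded geometric factors (using \eqref{E:NorG_Bound}). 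The boundary term is the crux: one must show $\|\overline{P}\nabla(u\cdot\bar n)\|_{L^p(\Gamma_\varepsilon^i)}\leq c\|u\|_{W^{1,p}(\Gamma_\varepsilon^i)}$, and then \eqref{E:Poin_Bo} applied to $u$ and $\nabla u$ finishes it with a factor $\varepsilon^{-1/p}$, matching the $\varepsilon^{1/p}$ prefactor to give $\varepsilon\|u\|_{W^{2,p}(\Omega_\varepsilon)}$.

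The main obstacle is thus the boundary estimate $\|\overline{P}\nabla(u\cdot\bar n)\|_{L^p(\Gamma_\varepsilon^i)}\leq c\|u\|_{W^{1,p}(\Gamma_\varepsilon^i)}$. On $\Gamma_\varepsilon^i$ we have from \eqref{E:Exp_Bo} that $u\cdot\bar n=\varepsilon\,u\cdot\bar\tau_\varepsilon^i$, an identity of functions \emph{on the surface} $\Gamma_\varepsilon^i$. The quantity $\overline P\nabla(u\cdot\bar n)$ is a full gradient, not a surface gradient, so I would split $\overline P\nabla(u\cdot\bar n)=\nabla_{\Gamma_\varepsilon}(u\cdot\bar n)+(\overline P-P_\varepsilon)\nabla(u\cdot\bar n)$ on $\Gamma_\varepsilon^i$, where $\nabla_{\Gamma_\varepsilon}$ is the tangential gradient of $\Gamma_\varepsilon^i$. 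On the surface, $\nabla_{\Gamma_\varepsilon}(u\cdot\bar n)=\varepsilon\nabla_{\Gamma_\varepsilon}(u\cdot\bar\tau_\varepsilon^i)$, which by the product rule expands into $\varepsilon$ times terms involving $\nabla_{\Gamma_\varepsilon}u$ (controlled by $\|\nabla u\|_{L^p(\Gamma_\varepsilon^i)}$ via \eqref{E:TGr_Bo}) and $\nabla_{\Gamma_\varepsilon}\bar\tau_\varepsilon^i$ (bounded by \eqref{E:Tau_Bound} and the chain rule for the constant extension together with \eqref{E:Pi_Der}), hence $\|\nabla_{\Gamma_\varepsilon}(u\cdot\bar n)\|_{L^p(\Gamma_\varepsilon^i)}\leq c\varepsilon\|u\|_{W^{1,p}(\Gamma_\varepsilon^i)}\leq c\|u\|_{W^{1,p}(\Gamma_\varepsilon^i)}$. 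For the normal defect, $|\overline P-P_\varepsilon|\leq c\varepsilon$ by \eqref{E:Comp_P}, so $\|(\overline P-P_\varepsilon)\nabla(u\cdot\bar n)\|_{L^p(\Gamma_\varepsilon^i)}\leq c\varepsilon\|\nabla(u\cdot\bar n)\|_{L^p(\Gamma_\varepsilon^i)}\leq c\varepsilon(\|\nabla u\|_{L^p(\Gamma_\varepsilon^i)}+\|u\|_{L^p(\Gamma_\varepsilon^i)})$ using $|\nabla\bar n|\leq c$ from \eqref{E:NorG_Bound}. Adding these two contributions yields the desired boundary bound, and substituting back completes \eqref{E:Poin_Dnor}. (The case where \eqref{E:Bo_Imp} holds on the other component is identical, with $i$ replaced accordingly.)
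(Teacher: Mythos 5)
Your approach is the same as the paper's: for \eqref{E:Poin_Nor} you apply \eqref{E:Poin_Dom} to $u\cdot\bar{n}$, use $\partial_n\bar{n}=0$ for the interior term, and control the boundary trace via the second inequality of \eqref{E:Exp_Bo} together with \eqref{E:Poin_Bo}; for \eqref{E:Poin_Dnor} you apply \eqref{E:Poin_Dom} to $\overline{P}\nabla(u\cdot\bar{n})$ and on the boundary use the same splitting as the paper, $\overline{P}\nabla(u\cdot\bar{n})=P_\varepsilon\nabla(u\cdot\bar{n})+(\overline{P}-P_\varepsilon)\nabla(u\cdot\bar{n})$, with $P_\varepsilon\nabla(u\cdot\bar{n})=\nabla_{\Gamma_\varepsilon}(u\cdot\bar{n})=\varepsilon\nabla_{\Gamma_\varepsilon}(u\cdot\bar{\tau}_\varepsilon^i)$ from the first identity in \eqref{E:Exp_Bo} and the defect controlled by \eqref{E:Comp_P}.

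There is, however, one $\varepsilon$-bookkeeping error in how you formulate the ``crux'' of the second part. You claim it suffices to show $\|\overline{P}\nabla(u\cdot\bar{n})\|_{L^p(\Gamma_\varepsilon^i)}\leq c\|u\|_{W^{1,p}(\Gamma_\varepsilon^i)}$ and that the prefactor $\varepsilon^{1/p}$ combined with the $\varepsilon^{-1/p}$ from \eqref{E:Poin_Bo} ``gives $\varepsilon$''. It does not: $\varepsilon^{1/p}\cdot\varepsilon^{-1/p}=1$, so that version of the boundary estimate only yields $\|\overline{P}\nabla(u\cdot\bar{n})\|_{L^p(\Omega_\varepsilon)}\leq c\|u\|_{W^{2,p}(\Omega_\varepsilon)}$, which is not \eqref{E:Poin_Dnor}. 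What is actually needed is the boundary bound with an extra factor $\varepsilon$, namely $\|\overline{P}\nabla(u\cdot\bar{n})\|_{L^p(\Gamma_\varepsilon^i)}\leq c\varepsilon(\|u\|_{L^p(\Gamma_\varepsilon^i)}+\|\nabla u\|_{L^p(\Gamma_\varepsilon^i)})\leq c\varepsilon^{1-1/p}\|u\|_{W^{2,p}(\Omega_\varepsilon)}$, so that $\varepsilon^{1/p}\cdot\varepsilon^{1-1/p}=\varepsilon$. Fortunately your own detailed computation produces exactly this: the tangential piece is bounded by $c\varepsilon\|u\|_{W^{1,p}(\Gamma_\varepsilon^i)}$ (a factor you then needlessly discard when you weaken it to $c\|u\|_{W^{1,p}(\Gamma_\varepsilon^i)}$), and the defect piece by $c\varepsilon(\|\nabla u\|_{L^p(\Gamma_\varepsilon^i)}+\|u\|_{L^p(\Gamma_\varepsilon^i)})$. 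Retain that $\varepsilon$ when summing the two contributions and then apply \eqref{E:Poin_Bo} to $u$ and $\nabla u$; with this correction the argument is complete and coincides with the paper's proof.
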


\begin{proof}
  Let $u\in W^{1,p}(\Omega_\varepsilon)^3$.
  We may assume that $u$ satisfies \eqref{E:Bo_Imp} on $\Gamma_\varepsilon^0$ without loss of generality.
  By \eqref{E:NorDer_Con} and \eqref{E:Poin_Dom} with $i=0$,
  \begin{align} \label{Pf_PN:Lp_Dom}
    \|u\cdot\bar{n}\|_{L^p(\Omega_\varepsilon)} \leq c\left(\varepsilon^{1/p}\|u\cdot\bar{n}\|_{L^p(\Gamma_\varepsilon^0)}+\varepsilon\|\partial_nu\|_{L^p(\Omega_\varepsilon)}\right).
  \end{align}
  Moreover, we apply the second inequality of \eqref{E:Exp_Bo} and then use \eqref{E:Poin_Bo} with $i=0$ to the first term on the right-hand side of \eqref{Pf_PN:Lp_Dom} to get
  \begin{align} \label{Pf_PN:Lp_Bo}
    \|u\cdot\bar{n}\|_{L^p(\Gamma_\varepsilon^0)} \leq c\varepsilon\|u\|_{L^p(\Gamma_\varepsilon^0)} \leq c\varepsilon^{1-1/p}\|u\|_{W^{1,p}(\Omega_\varepsilon)}.
  \end{align}
  Combining \eqref{Pf_PN:Lp_Dom} and \eqref{Pf_PN:Lp_Bo} we obtain \eqref{E:Poin_Nor}.

  Next suppose that $u\in W^{2,p}(\Omega_\varepsilon)^3$ satisfies \eqref{E:Bo_Imp} on $\Gamma_\varepsilon^0$.
  Noting that
  \begin{align*}
    \left|\partial_n\Bigl[\overline{P}\nabla(u\cdot\bar{n})\Bigr]\right| \leq c(|\nabla u|+|\nabla^2u|) \quad\text{in}\quad \Omega_\varepsilon
  \end{align*}
  by \eqref{E:NorG_Bound} and \eqref{E:NorDer_Con}, we apply \eqref{E:Poin_Dom} with $i=0$ to get
  \begin{align} \label{Pf_PN:Gr_Lp}
    \left\|\overline{P}\nabla(u\cdot\bar{n})\right\|_{L^p(\Omega_\varepsilon)} \leq c\left(\varepsilon^{1/p}\left\|\overline{P}\nabla(u\cdot\bar{n})\right\|_{L^p(\Gamma_\varepsilon^0)}+\varepsilon\|u\|_{W^{2,p}(\Omega_\varepsilon)}\right).
  \end{align}
  Since the tangential gradient on $\Gamma_\varepsilon^0$ depends only on the values of a function on $\Gamma_\varepsilon^0$, we observe by \eqref{E:TGr_Bo} and the first equality of \eqref{E:Exp_Bo} that
  \begin{align*}
    P_\varepsilon\nabla(u\cdot\bar{n}) = \nabla_{\Gamma_\varepsilon}(u\cdot\bar{n}) = \varepsilon\nabla_{\Gamma_\varepsilon}(u\cdot\bar{\tau}_\varepsilon^i) = \varepsilon P_\varepsilon\nabla(u\cdot\bar{\tau}_\varepsilon^i) \quad\text{on}\quad \Gamma_\varepsilon^0.
  \end{align*}
  Hence we have
  \begin{align*}
    \overline{P}\nabla(u\cdot\bar{n}) &= P_\varepsilon\nabla(u\cdot\bar{n})+\Bigl(\overline{P}-P_\varepsilon\Bigr)\nabla(u\cdot\bar{n}) \\
    &= \varepsilon P_\varepsilon\nabla(u\cdot\bar{\tau}_\varepsilon^0)+\Bigl(\overline{P}-P_\varepsilon\Bigr)\nabla(u\cdot\bar{n})
  \end{align*}
  on $\Gamma_\varepsilon^0$.
  By this formula, \eqref{E:ConDer_Bound}, \eqref{E:NorG_Bound}, \eqref{E:Tau_Bound}, and \eqref{E:Comp_P},
  \begin{align*}
    \left|\overline{P}\nabla(u\cdot\bar{n})\right| \leq c\varepsilon(|u|+|\nabla u|) \quad\text{on}\quad \Gamma_\varepsilon^0.
  \end{align*}
  From this inequality and \eqref{E:Poin_Bo} it follows that
  \begin{align*}
    \left\|\overline{P}\nabla(u\cdot\bar{n})\right\|_{L^p(\Gamma_\varepsilon^0)} \leq c\varepsilon\left(\|u\|_{L^p(\Gamma_\varepsilon^0)}+\|\nabla u\|_{L^p(\Gamma_\varepsilon^0)}\right) \leq c\varepsilon^{1-1/p}\|u\|_{W^{2,p}(\Omega_\varepsilon)}.
  \end{align*}
  Applying this inequality to the right-hand side of \eqref{Pf_PN:Gr_Lp} we obtain \eqref{E:Poin_Dnor}.
\end{proof}

Next we consider the slip boundary conditions \eqref{E:Bo_Slip}.

\begin{lemma} \label{L:NSl}
  Let $i=0,1$.
  If $u\in C^2(\overline{\Omega}_\varepsilon)^3$ satisfies \eqref{E:Bo_Slip} on $\Gamma_\varepsilon^i$, then
  \begin{align} \label{E:NSl_ND}
    P_\varepsilon(n_\varepsilon\cdot\nabla)u = -W_\varepsilon u-\frac{\gamma_\varepsilon}{\nu}u \quad\text{on}\quad \Gamma_\varepsilon^i.
  \end{align}
\end{lemma}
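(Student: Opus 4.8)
The plan is to split the velocity gradient at the boundary into its normal and tangential parts relative to $\Gamma_\varepsilon$, use the definition of the strain rate tensor, and extract the tangential component of the normal derivative from the slip condition. First I would fix $i$ and work on $\Gamma_\varepsilon^i$. Since $u\cdot n_\varepsilon=0$ on $\Gamma_\varepsilon^i$, differentiating this identity along tangential directions on $\Gamma_\varepsilon^i$ gives a relation between the tangential gradient of $u$ and the Weingarten map $W_\varepsilon=-\nabla_{\Gamma_\varepsilon}n_\varepsilon$; concretely, for any tangential vector field $X$ on $\Gamma_\varepsilon^i$ one has $X\cdot(\nabla_{\Gamma_\varepsilon}u)n_\varepsilon = -u\cdot(\nabla_{\Gamma_\varepsilon}n_\varepsilon)X = u\cdot W_\varepsilon X$. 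This is the standard computation that turns the impermeability condition into information about the tangential derivatives of the normal component.

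Next I would look at the second slip condition $2\nu P_\varepsilon D(u)n_\varepsilon + \gamma_\varepsilon u = 0$ on $\Gamma_\varepsilon^i$. Writing $\nabla u = P_\varepsilon\nabla u + (n_\varepsilon\otimes n_\varepsilon)\nabla u$ on $\Gamma_\varepsilon^i$ and using $\nabla_{\Gamma_\varepsilon}u = P_\varepsilon\nabla u$ (which holds on $\Gamma_\varepsilon^i$ by \eqref{E:TGr_Bo}), I decompose
$2D(u)n_\varepsilon = (\nabla u)n_\varepsilon + (\nabla u)^T n_\varepsilon$. Here $(\nabla u)n_\varepsilon = (n_\varepsilon\cdot\nabla)u$, and $(\nabla u)^T n_\varepsilon$ is the vector whose $k$-th component is $\sum_j (\nabla u)_{jk}(n_\varepsilon)_j = \sum_j \partial_k u_j (n_\varepsilon)_j$, which only involves tangential derivatives of $u$ combined with $n_\varepsilon$. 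Applying $P_\varepsilon$ and using the relation from the previous paragraph to rewrite the purely tangential-derivative contributions in terms of $W_\varepsilon u$, one finds $2P_\varepsilon D(u)n_\varepsilon = P_\varepsilon(n_\varepsilon\cdot\nabla)u + W_\varepsilon u$ on $\Gamma_\varepsilon^i$ (using that $W_\varepsilon$ is symmetric and $W_\varepsilon n_\varepsilon = 0$, so $P_\varepsilon W_\varepsilon u = W_\varepsilon u$ for $u$ tangential, and that $u$ is indeed tangential on $\Gamma_\varepsilon^i$ by the first slip condition). Substituting into the slip condition gives $\nu\bigl(P_\varepsilon(n_\varepsilon\cdot\nabla)u + W_\varepsilon u\bigr) + \gamma_\varepsilon u = 0$, which rearranges to \eqref{E:NSl_ND}.

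The main obstacle is carefully justifying the identity $2P_\varepsilon D(u)n_\varepsilon = P_\varepsilon(n_\varepsilon\cdot\nabla)u + W_\varepsilon u$ — that is, showing that the symmetrized part of the gradient contributes exactly $\tfrac12 W_\varepsilon u$ beyond the normal derivative term. This is where the impermeability condition $u\cdot n_\varepsilon=0$ must be used via its tangential differentiation, and one must be attentive to index bookkeeping and to the fact that the tangential gradient $\nabla_{\Gamma_\varepsilon}$ depends only on boundary values, so differentiating $u\cdot n_\varepsilon=0$ along $\Gamma_\varepsilon^i$ is legitimate. Everything else — projecting, using $P_\varepsilon^2=P_\varepsilon$, $W_\varepsilon n_\varepsilon=0$, symmetry of $W_\varepsilon$, and the regularity $u\in C^2(\overline{\Omega}_\varepsilon)^3$ which makes all these pointwise computations valid on $\Gamma_\varepsilon^i$ — is routine. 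I would close by dividing through by $\nu>0$ to obtain the stated form $P_\varepsilon(n_\varepsilon\cdot\nabla)u = -W_\varepsilon u - \tfrac{\gamma_\varepsilon}{\nu}u$ on $\Gamma_\varepsilon^i$.
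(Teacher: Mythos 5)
Your proposal is correct and follows essentially the same route as the paper: differentiate $u\cdot n_\varepsilon=0$ tangentially along $\Gamma_\varepsilon^i$ to get $(\nabla_{\Gamma_\varepsilon}u)n_\varepsilon=W_\varepsilon u$, use it to identify $2P_\varepsilon D(u)n_\varepsilon=W_\varepsilon u+P_\varepsilon(n_\varepsilon\cdot\nabla)u$, and then insert the slip condition. The only quibble is cosmetic: the term you call ``only tangential derivatives of $u$'' contains full derivatives before the projection, and your $\nabla u$ convention is transposed relative to the paper's, but since only the symmetric combination $2D(u)n_\varepsilon$ enters and you apply $P_\varepsilon$ immediately, the argument is unaffected.
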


\begin{proof}
  Since $u\cdot n_\varepsilon=0$ on $\Gamma_\varepsilon^i$, we have
  \begin{align*}
    0 = \nabla_{\Gamma_\varepsilon}(u\cdot n_\varepsilon) = (\nabla_{\Gamma_\varepsilon}u)n_\varepsilon+(\nabla_{\Gamma_\varepsilon}n_\varepsilon)u = (\nabla_{\Gamma_\varepsilon}u)n_\varepsilon-W_\varepsilon u \quad\text{on}\quad \Gamma_\varepsilon^i
  \end{align*}
  and thus $(\nabla_{\Gamma_\varepsilon}u)n_\varepsilon=W_\varepsilon u$ on $\Gamma_\varepsilon^i$.
  By this equality and \eqref{E:TGr_Bo},
  \begin{align*}
    2P_\varepsilon D(u)n_\varepsilon &= P_\varepsilon\{(\nabla u)n_\varepsilon+(\nabla u)^Tn_\varepsilon\} = (\nabla_{\Gamma_\varepsilon}u)n_\varepsilon+P_\varepsilon(n_\varepsilon\cdot\nabla)u \\
    &= W_\varepsilon u+P_\varepsilon(n_\varepsilon\cdot\nabla)u
  \end{align*}
  on $\Gamma_\varepsilon^i$.
  Hence it follows from the second equality of \eqref{E:Bo_Slip} that
  \begin{align*}
    P_\varepsilon(n_\varepsilon\cdot\nabla)u = 2P_\varepsilon D(u)n_\varepsilon-W_\varepsilon u = -\frac{\gamma_\varepsilon}{\nu}u-W_\varepsilon u \quad\text{on}\quad \Gamma_\varepsilon^i.
  \end{align*}
  Thus \eqref{E:NSl_ND} is valid.
\end{proof}

Using \eqref{E:NSl_ND} we show an estimate which is essential for a Poincar\'{e} type inequality for the derivatives of the residual part in the decomposition of a vector field on $\Omega_\varepsilon$ based on the average in the thin direction (see Lemma~\ref{L:Po_Grad_Ur}).

\begin{lemma} \label{L:PDnU_WU}
  Let $p\in[1,\infty)$.
  Suppose that the inequalities \eqref{E:Fric_Upper} are valid.
  Then there exists a constant $c>0$ independent of $\varepsilon$ such that
  \begin{align} \label{E:PDnU_WU}
    \left\|\overline{P}\partial_nu+\overline{W}u\right\|_{L^p(\Omega_\varepsilon)} \leq c\varepsilon\|u\|_{W^{2,p}(\Omega_\varepsilon)}
  \end{align}
  for all $u\in W^{2,p}(\Omega_\varepsilon)^3$ satisfying \eqref{E:Bo_Slip} on $\Gamma_\varepsilon^0$ or on $\Gamma_\varepsilon^1$.
\end{lemma}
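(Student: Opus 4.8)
The plan is to mimic the proof of Lemma~\ref{L:Poin_Nor}. Set $\Phi:=\overline{P}\partial_nu+\overline{W}u$ and, after relabeling if necessary, assume $u$ satisfies \eqref{E:Bo_Slip} on $\Gamma_\varepsilon^0$. Applying the Poincar\'{e} inequality \eqref{E:Poin_Dom} with $i=0$ to $\varphi=\Phi$ reduces \eqref{E:PDnU_WU} to two estimates: an interior bound $\|\partial_n\Phi\|_{L^p(\Omega_\varepsilon)}\le c\|u\|_{W^{2,p}(\Omega_\varepsilon)}$ and a boundary bound $\|\Phi\|_{L^p(\Gamma_\varepsilon^0)}\le c\varepsilon^{1-1/p}\|u\|_{W^{2,p}(\Omega_\varepsilon)}$.

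The interior bound is immediate: since $\overline{P}$ and $\overline{W}$ are constant extensions in the normal direction, $\partial_n\overline{P}=\partial_n\overline{W}=0$ by \eqref{E:NorDer_Con}, so $\partial_n\Phi=\overline{P}\partial_n^2u+\overline{W}\partial_nu$, and then $|\partial_n\Phi|\le c(|\nabla^2u|+|\nabla u|)$ in $\Omega_\varepsilon$ using $|P|=2$, the bound \eqref{E:NorG_Bound} for $\nabla\bar n$, and the boundedness of $W$ on $\Gamma$. For the boundary bound the crux is the pointwise estimate
\[
  |\Phi| \le c\varepsilon\bigl(|u|+|\nabla u|\bigr) \quad\text{on}\quad \Gamma_\varepsilon^0,
\]
since once this is known, \eqref{E:Poin_Bo} applied to $u$ and to $\nabla u$ gives $\|u\|_{L^p(\Gamma_\varepsilon^0)}+\|\nabla u\|_{L^p(\Gamma_\varepsilon^0)}\le c\varepsilon^{-1/p}\|u\|_{W^{2,p}(\Omega_\varepsilon)}$, hence $\|\Phi\|_{L^p(\Gamma_\varepsilon^0)}\le c\varepsilon^{1-1/p}\|u\|_{W^{2,p}(\Omega_\varepsilon)}$.

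To prove the pointwise estimate I would start from Lemma~\ref{L:NSl}, which, after extension from $C^2(\overline{\Omega}_\varepsilon)^3$ to $W^{2,p}(\Omega_\varepsilon)^3$ by density (using that $u\mapsto u|_{\Gamma_\varepsilon^0}$ and $u\mapsto(\nabla u)|_{\Gamma_\varepsilon^0}$ are continuous into $L^p(\Gamma_\varepsilon^0)$ and that \eqref{E:Bo_Slip} is a closed condition), yields $P_\varepsilon(n_\varepsilon\cdot\nabla)u=-W_\varepsilon u-(\gamma_\varepsilon/\nu)u$ on $\Gamma_\varepsilon^0$. In this identity I replace $P_\varepsilon$ by $\overline{P}$ via \eqref{E:Comp_P}, $n_\varepsilon$ by $-\bar n$ via \eqref{E:Comp_N_Re}, and $W_\varepsilon$ by $-\overline{W}$ via \eqref{E:Comp_W}; each replacement costs only $O(\varepsilon(|u|+|\nabla u|))$ because $P_\varepsilon$, $\overline{P}$, $W_\varepsilon$, $\overline{W}$ are bounded and $|n_\varepsilon|=1$. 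Since also $|(\gamma_\varepsilon/\nu)u|\le c\varepsilon|u|$ by \eqref{E:Fric_Upper} (with $\nu>0$ fixed) and $\bar n\cdot\nabla=\partial_n$, the identity collapses to $-\overline{P}\partial_nu=\overline{W}u+O(\varepsilon(|u|+|\nabla u|))$, that is $|\Phi|\le c\varepsilon(|u|+|\nabla u|)$ on $\Gamma_\varepsilon^0$. Feeding the interior and boundary bounds into \eqref{E:Poin_Dom} then gives \eqref{E:PDnU_WU}.

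I expect the main obstacle to be the bookkeeping in this last step: one must track the sign factors $(-1)^{i+1}$ carried by $n_\varepsilon$ and $W_\varepsilon$ on the two boundary components (so that the leading terms reassemble into $\pm\Phi$) and check that every error term generated by the three substitutions is genuinely of order $\varepsilon(|u|+|\nabla u|)$ on $\Gamma_\varepsilon^i$. A minor additional point is the density argument needed to apply Lemma~\ref{L:NSl}, originally stated for $C^2$ fields, in the $W^{2,p}$ setting.
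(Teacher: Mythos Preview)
Your proposal is correct and follows essentially the same argument as the paper: apply \eqref{E:Poin_Dom} to $\Phi=\overline{P}\partial_nu+\overline{W}u$, bound $\partial_n\Phi$ directly in $\Omega_\varepsilon$, and on $\Gamma_\varepsilon^i$ use \eqref{E:NSl_ND} together with the comparison estimates \eqref{E:Comp_P}--\eqref{E:Comp_N_Re} and \eqref{E:Fric_Upper} to get $|\Phi|\le c\varepsilon(|u|+|\nabla u|)$, then finish with \eqref{E:Poin_Bo}. Your bookkeeping of the sign factors and error terms is exactly what the paper does (it writes out the decomposition $\overline{P}\partial_nu=(-1)^{i+1}P_\varepsilon(n_\varepsilon\cdot\nabla)u+P_\varepsilon[\{\bar n-(-1)^{i+1}n_\varepsilon\}\cdot\nabla]u+(\overline{P}-P_\varepsilon)(\bar n\cdot\nabla)u$ explicitly), and the density remark for Lemma~\ref{L:NSl} is a fair point that the paper leaves implicit.
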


\begin{proof}
  For $i=0$ or $i=1$ let $u\in W^{2,p}(\Omega_\varepsilon)^3$ satisfy \eqref{E:Bo_Slip} on $\Gamma_\varepsilon^i$.
  Since $\partial_nu=(\bar{n}\cdot\nabla)u$ in $\Omega_\varepsilon$ and $n$, $P$, and $W$ are bounded on $\Gamma$, we observe by \eqref{E:NorDer_Con} that
  \begin{align*}
    \left|\partial_n\Bigl[\overline{P}\partial_n u+\overline{W}u\Bigr]\right| \leq c(|\nabla u|+|\nabla^2u|) \quad\text{in}\quad \Omega_\varepsilon.
  \end{align*}
  We apply \eqref{E:Poin_Dom} and the above inequality to get
  \begin{align} \label{Pf_PuWu:L2_Dom}
    \left\|\overline{P}\partial_nu+\overline{W}u\right\|_{L^p(\Omega_\varepsilon)} \leq c\left(\varepsilon^{1/p}\left\|\overline{P}\partial_nu+\overline{W}u\right\|_{L^p(\Gamma_\varepsilon^i)}+\varepsilon\|u\|_{W^{2,p}(\Omega_\varepsilon)}\right).
  \end{align}
  Moreover, since $\partial_nu=(\bar{n}\cdot\nabla)u$ and $u$ satisfies \eqref{E:Bo_Slip} on $\Gamma_\varepsilon^i$,
  \begin{align*}
    \overline{P}\partial_nu &= (-1)^{i+1}P_\varepsilon(n_\varepsilon\cdot\nabla)u+P_\varepsilon[\{\bar{n}-(-1)^{i+1}n_\varepsilon\}\cdot\nabla]u+\Bigl(\overline{P}-P_\varepsilon\Bigr)(\bar{n}\cdot\nabla)u \\
    &= (-1)^{i+1}\left(-W_\varepsilon u-\frac{\gamma_\varepsilon}{\nu}u\right)+P_\varepsilon[\{\bar{n}-(-1)^{i+1}n_\varepsilon\}\cdot\nabla]u+\Bigl(\overline{P}-P_\varepsilon\Bigr)(\bar{n}\cdot\nabla)u
  \end{align*}
  on $\Gamma_\varepsilon^i$ by \eqref{E:NSl_ND}.
  Hence by \eqref{E:Fric_Upper} and \eqref{E:Comp_P}--\eqref{E:Comp_N_Re} we get
  \begin{align*}
    \left|\overline{P}\partial_nu+\overline{W}u\right| \leq \left|\overline{W}u-(-1)^{i+1}W_\varepsilon u\right|+c\varepsilon(|u|+|\nabla u|) \leq c\varepsilon(|u|+|\nabla u|)
  \end{align*}
  on $\Gamma_\varepsilon^i$, which together with \eqref{E:Poin_Bo} implies that
  \begin{align*}
    \left\|\overline{P}\partial_nu+\overline{W}u\right\|_{L^p(\Gamma_\varepsilon^i)} \leq c\varepsilon\left(\|u\|_{L^p(\Gamma_\varepsilon^i)}+\|\nabla u\|_{L^p(\Gamma_\varepsilon^i)}\right) \leq c\varepsilon^{1-1/p}\|u\|_{W^{2,p}(\Omega_\varepsilon)}.
  \end{align*}
  Applying this inequality to \eqref{Pf_PuWu:L2_Dom} we obtain \eqref{E:PDnU_WU}.
\end{proof}

\subsection{Impermeable extension of surface vector fields} \label{SS:Tool_IE}
In the analysis of integrals over $\Omega_\varepsilon$ involving a vector field on $\Gamma$ it is convenient to consider its extension to $\Omega_\varepsilon$ satisfying the impermeable boundary condition \eqref{E:Bo_Imp}.
Let $\tau_\varepsilon^0$ and $\tau_\varepsilon^1$ be the vector fields on $\Gamma$ given by \eqref{E:Def_NB_Aux}.
We define a vector field $\Psi_\varepsilon$ on $N$ by
\begin{align} \label{E:Def_ExAux}
  \Psi_\varepsilon(x) := \frac{1}{\bar{g}(x)}\bigl\{\bigl(d(x)-\varepsilon\bar{g}_0(x)\bigr)\bar{\tau}_\varepsilon^1(x)+\bigl(\varepsilon\bar{g}_1(x)-d(x)\bigr)\bar{\tau}_\varepsilon^0(x)\bigr\}, \quad x\in N.
\end{align}
By definition, $\Psi_\varepsilon=\varepsilon\bar{\tau}_\varepsilon^i$ on $\Gamma_\varepsilon^i$, $i=0,1$.
Let us give several estimates for $\Psi_\varepsilon$.

\begin{lemma} \label{L:ExAux_Bound}
  There exists a constant $c>0$ independent of $\varepsilon$ such that
  \begin{align} \label{E:ExAux_Bound}
    |\Psi_\varepsilon| \leq c\varepsilon, \quad |\nabla\Psi_\varepsilon| \leq c, \quad |\nabla^2\Psi_\varepsilon| \leq c \quad\text{in}\quad \Omega_\varepsilon.
  \end{align}
  Moreover, we have
  \begin{align} \label{E:ExAux_TNDer}
    \left|\overline{P}\nabla\Psi_\varepsilon\right| \leq c\varepsilon, \quad \left|\partial_n\Psi_\varepsilon-\frac{1}{\bar{g}}\overline{\nabla_\Gamma g}\right| \leq c\varepsilon \quad\text{in}\quad \Omega_\varepsilon.
  \end{align}
\end{lemma}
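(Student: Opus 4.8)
\emph{Plan of proof.} The idea is to regard $\Psi_\varepsilon$, as defined in \eqref{E:Def_ExAux}, as built from three elementary blocks --- the scalar $1/\bar g$, the two affine scalar functions $d-\varepsilon\bar g_0$ and $\varepsilon\bar g_1-d$, and the vector fields $\bar\tau_\varepsilon^0,\bar\tau_\varepsilon^1$ --- and to keep track of the $\varepsilon$-order of each block and of its first and second derivatives. Since $g=g_1-g_0\geq c>0$ on $\Gamma$ and $g\in C^4(\Gamma)$, the function $1/g\in C^4(\Gamma)$ extends constantly, so $1/\bar g=\overline{1/g}$ together with its first two derivatives is bounded in $\Omega_\varepsilon$ by \eqref{E:ConDer_Bound} and \eqref{E:Con_Hess}, and $\bar g\geq c$. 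The same holds for $\bar\tau_\varepsilon^0$ and $\bar\tau_\varepsilon^1$ by \eqref{E:Tau_Bound}, \eqref{E:ConDer_Bound}, and \eqref{E:Con_Hess}, with constants independent of $\varepsilon$. Finally, for $x\in\Omega_\varepsilon$ we have $|d(x)|\leq c\varepsilon$ and $\varepsilon|\bar g_i(x)|\leq c\varepsilon$ by the boundedness of $g_0,g_1$, while $\nabla d=\bar n$ by \eqref{E:Nor_Coord} and $\nabla^2d=\nabla\bar n$ is bounded by \eqref{E:NorG_Bound}; thus the two scalar coefficients $d-\varepsilon\bar g_0$ and $\varepsilon\bar g_1-d$ are $O(\varepsilon)$ and have bounded first and second derivatives.

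With these facts recorded, \eqref{E:ExAux_Bound} follows from Leibniz' rule applied to \eqref{E:Def_ExAux}. The estimate $|\Psi_\varepsilon|\leq c\varepsilon$ is immediate because the bracket in \eqref{E:Def_ExAux} is $O(\varepsilon)$ and $1/\bar g$ is bounded. For $|\nabla\Psi_\varepsilon|\leq c$ and $|\nabla^2\Psi_\varepsilon|\leq c$ we expand by the product and quotient rules: every term in which at least one derivative falls on a factor other than $d$ still carries a scalar coefficient of size $O(\varepsilon)$, while the factors $1/\bar g$, $\nabla(1/\bar g)$, $\nabla^2(1/\bar g)$, $\bar\tau_\varepsilon^i$, $\nabla\bar\tau_\varepsilon^i$, $\nabla^2\bar\tau_\varepsilon^i$, $\nabla d=\bar n$, $\nabla^2d$ are all $O(1)$; the only terms without such an $\varepsilon$-factor are those in which the derivatives hit $d$ (producing $\bar n$ or $\nabla\bar n$), and these are $O(1)$. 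Summing the finitely many terms yields the two bounds; the second-order count is the longest part of the argument but is entirely routine.

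It remains to prove \eqref{E:ExAux_TNDer}. For $|\overline P\nabla\Psi_\varepsilon|\leq c\varepsilon$ the key observation is the cancellation $\overline P\,\nabla d=\overline P\,\bar n=0$, a consequence of $\nabla d=\bar n$ and $Pn=0$ on $\Gamma$. Differentiating \eqref{E:Def_ExAux} and multiplying by $\overline P$ on the left, every surviving term contains either a tangential derivative among $\overline P\nabla\bar g_i$, $\overline P\nabla(1/\bar g)$, $\overline P\nabla\bar\tau_\varepsilon^i$ --- each $O(1)$ --- multiplied by one of the $O(\varepsilon)$ scalar coefficients (or, in the term where $\overline P\nabla(1/\bar g)$ hits the quotient, by the whole $O(\varepsilon)$ bracket); the only potentially $O(1)$ contribution, coming from letting a derivative fall on $d$ in the products $(d-\varepsilon\bar g_0)\bar\tau_\varepsilon^1$ and $(\varepsilon\bar g_1-d)\bar\tau_\varepsilon^0$, is proportional to $\overline P\,\nabla d=0$ and hence vanishes. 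This gives $|\overline P\nabla\Psi_\varepsilon|\leq c\varepsilon$. For the normal derivative we use $\partial_n=\bar n\cdot\nabla$, the identity $\partial_nd=\bar n\cdot\bar n=1$, and the fact \eqref{E:NorDer_Con} that the constant extensions $1/\bar g$, $\bar g_i$, $\bar\tau_\varepsilon^i$ have vanishing normal derivative; Leibniz' rule then collapses to the exact identity $\partial_n\Psi_\varepsilon=\bar g^{-1}(\bar\tau_\varepsilon^1-\bar\tau_\varepsilon^0)$ in $\Omega_\varepsilon$. Finally, \eqref{E:Tau_Diff} gives $|\tau_\varepsilon^i-\nabla_\Gamma g_i|\leq c\varepsilon$ on $\Gamma$, hence $|\bar\tau_\varepsilon^1-\bar\tau_\varepsilon^0-\overline{\nabla_\Gamma g}|\leq c\varepsilon$ in $\Omega_\varepsilon$ since $g=g_1-g_0$ and the constant extension preserves the sup bound; dividing by $\bar g\geq c$ yields $|\partial_n\Psi_\varepsilon-\bar g^{-1}\overline{\nabla_\Gamma g}|\leq c\varepsilon$. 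The only genuine subtlety in the whole proof is spotting the two cancellations $\overline P\,\bar n=0$ and $\partial_n(\text{constant extension})=0$, which make the tangential gradient of $\Psi_\varepsilon$ small and its normal derivative exactly computable; everything else is bookkeeping.
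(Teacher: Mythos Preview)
Your proof is correct and follows essentially the same strategy as the paper: differentiate the definition \eqref{E:Def_ExAux} by the product rule, track which factors are $O(\varepsilon)$ versus $O(1)$, and exploit the cancellations $\overline P\,\bar n=0$ and $\partial_n(\overline{\,\cdot\,})=0$ for the refined estimates \eqref{E:ExAux_TNDer}. The paper packages the same computation by writing $\nabla\Psi_\varepsilon=\bar g^{-1}\{\bar n\otimes(\bar\tau_\varepsilon^1-\bar\tau_\varepsilon^0)+F_\varepsilon\}$ with an explicit remainder $F_\varepsilon$ of size $O(\varepsilon)$, then reads off $\overline P\nabla\Psi_\varepsilon=\bar g^{-1}\overline P F_\varepsilon$ and $\partial_n\Psi_\varepsilon=\bar g^{-1}(\bar\tau_\varepsilon^1-\bar\tau_\varepsilon^0)+\bar g^{-1}F_\varepsilon^T\bar n$; your direct Leibniz argument is equivalent, and in fact your observation that $\partial_n\Psi_\varepsilon=\bar g^{-1}(\bar\tau_\varepsilon^1-\bar\tau_\varepsilon^0)$ holds \emph{exactly} is slightly sharper than the paper, which merely bounds the residual $F_\varepsilon^T\bar n$ by $c\varepsilon$ without noting that it actually vanishes.
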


\begin{proof}
  Applying \eqref{E:Tau_Bound} and
  \begin{align} \label{Pf_EAB:Width}
    0 \leq d(x)-\varepsilon\bar{g}_0(x) \leq \varepsilon\bar{g}(x), \quad 0 \leq \varepsilon\bar{g}_1(x)-d(x) \leq \varepsilon\bar{g}(x), \quad x\in \Omega_\varepsilon
  \end{align}
  to \eqref{E:Def_ExAux} we get the first inequality of \eqref{E:ExAux_Bound}.
  Also, by $\nabla d=\bar{n}$ in $N$ we have
  \begin{align} \label{Pf_EAB:Grad}
    \nabla\Psi_\varepsilon = \frac{1}{\bar{g}}\{\bar{n}\otimes(\bar{\tau}_\varepsilon^1-\bar{\tau}_\varepsilon^0)+F_\varepsilon\} \quad\text{in}\quad N,
  \end{align}
  where $F_\varepsilon$ is a $3\times3$ matrix-valued function on $N$ given by
  \begin{align*}
    F_\varepsilon := -\nabla\bar{g}\otimes\Psi_\varepsilon+\varepsilon(\nabla\bar{g}_1\otimes\bar{\tau}_\varepsilon^0-\nabla\bar{g}_0\otimes\bar{\tau}_\varepsilon^1)+(d-\varepsilon\bar{g}_0)\nabla\bar{\tau}_\varepsilon^1+(\varepsilon\bar{g}_1-d)\nabla\bar{\tau}_\varepsilon^0.
  \end{align*}
  By \eqref{E:G_Inf}, \eqref{E:Tau_Bound}, and $|n|=1$ on $\Gamma$ we see that the first term on the right-hand side of \eqref{Pf_EAB:Grad} is bounded on $N$ uniformly in $\varepsilon$.
  Moreover, from \eqref{E:G_Inf}, \eqref{E:ConDer_Bound}, \eqref{E:Tau_Bound}, the first inequality of \eqref{E:ExAux_Bound}, \eqref{Pf_EAB:Width}, and $g_0,g_1\in C^4(\Gamma)$ we deduce that
  \begin{align} \label{Pf_EAB:Est_F}
    |F_\varepsilon| \leq c\varepsilon \quad\text{in}\quad \Omega_\varepsilon.
  \end{align}
  Hence the second inequality of \eqref{E:ExAux_Bound} follows.
  Similarly, differentiating both sides of \eqref{Pf_EAB:Grad} and using \eqref{E:G_Inf}, \eqref{E:ConDer_Bound}, \eqref{E:Con_Hess}, \eqref{E:NorG_Bound}, \eqref{E:Tau_Bound}, the first and second inequalities of \eqref{E:ExAux_Bound}, and $g_0,g_1\in C^4(\Gamma)$ we can derive the last inequality of \eqref{E:ExAux_Bound}.

  Let us prove \eqref{E:ExAux_TNDer}.
  First note that
  \begin{align*}
    P[n\otimes(\tau_\varepsilon^1-\tau_\varepsilon^0)] &= (Pn)\otimes(\tau_\varepsilon^1-\tau_\varepsilon^0) = 0, \\
    [(\tau_\varepsilon^1-\tau_\varepsilon^0)\otimes n]n &= |n|^2(\tau_\varepsilon^1-\tau_\varepsilon^0) = \tau_\varepsilon^1-\tau_\varepsilon^0
  \end{align*}
  on $\Gamma$.
  From these equalities, \eqref{E:Def_NorDer}, and \eqref{Pf_EAB:Grad} we deduce that
  \begin{align*}
    \overline{P}\nabla\Psi_\varepsilon = \frac{1}{\bar{g}}\overline{P}F_\varepsilon, \quad \partial_n\Psi_\varepsilon = (\nabla\Psi_\varepsilon)^T\bar{n} = \frac{1}{\bar{g}}(\bar{\tau}_\varepsilon^1-\bar{\tau}_\varepsilon^0)+F_\varepsilon^T\bar{n} \quad\text{in}\quad N.
  \end{align*}
  Hence we observe by \eqref{E:G_Inf} and \eqref{Pf_EAB:Est_F} that
  \begin{align*}
    \left|\overline{P}\nabla\Psi_\varepsilon\right| \leq c\left|\overline{P}F_\varepsilon\right| \leq c|F_\varepsilon| \leq c\varepsilon \quad\text{in}\quad \Omega_\varepsilon.
  \end{align*}
  Also, applying \eqref{E:G_Inf}, \eqref{E:Tau_Diff}, and \eqref{Pf_EAB:Est_F} to the equality for $\partial_n\Psi_\varepsilon$ we obtain
  \begin{align*}
    \left|\partial_n\Psi_\varepsilon-\frac{1}{\bar{g}}\overline{\nabla_\Gamma g}\right| \leq \frac{1}{\bar{g}}\sum_{i=0,1}\left|\bar{\tau}_\varepsilon^i-\overline{\nabla_\Gamma g_i}\right|+|F_\varepsilon| \leq c\varepsilon \quad\text{in}\quad \Omega_\varepsilon.
  \end{align*}
  Thus the inequalities \eqref{E:ExAux_TNDer} are valid.
\end{proof}

For a tangential vector field $v$ on $\Gamma$ (i.e. $v\cdot n=0$ on $\Gamma$) we define
\begin{align} \label{E:Def_ExImp}
  E_\varepsilon v(x) := \bar{v}(x)+\{\bar{v}(x)\cdot\Psi_\varepsilon(x)\}\bar{n}(x), \quad x\in N,
\end{align}
where $\bar{v}$ and $\bar{n}$ are the constant extensions of $v$ and $n$.
By the definition of $\Psi_\varepsilon$ we easily see that $E_\varepsilon v$ satisfies the impermeable boundary condition \eqref{E:Bo_Imp}.

\begin{lemma} \label{L:ExImp_Bo}
  For all $v\in C(\Gamma,T\Gamma)$ we have $E_\varepsilon v\cdot n_\varepsilon=0$ on $\Gamma_\varepsilon$.
\end{lemma}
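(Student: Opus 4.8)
The plan is to verify the identity pointwise on each boundary component $\Gamma_\varepsilon^i$, $i=0,1$, separately. The three facts that do all the work are the explicit formula \eqref{E:Def_NB} for the pulled-back normal $n_\varepsilon^i$, the relation \eqref{E:Nor_Bo} which says $n_\varepsilon=\bar n_\varepsilon^i$ on $\Gamma_\varepsilon^i$, and the observation recorded just after \eqref{E:Def_ExAux} that $\Psi_\varepsilon=\varepsilon\bar\tau_\varepsilon^i$ on $\Gamma_\varepsilon^i$. Since $v$ is only assumed continuous, all identities below are to be read pointwise, which causes no difficulty because every quantity involved is continuous on $\overline{N}$.

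Fix $i\in\{0,1\}$ and $x\in\Gamma_\varepsilon^i$. First I would record the elementary scalar products. Because $v$ is tangential, $v\cdot n=0$ on $\Gamma$, hence $\bar v\cdot\bar n=0$ in $N$; also $|n|=1$ on $\Gamma$ and, as noted after \eqref{E:Def_NB}, $\tau_\varepsilon^i$ is tangential, so $\bar\tau_\varepsilon^i\cdot\bar n=0$ in $N$. Writing $\lambda_\varepsilon^i:=(-1)^{i+1}\bigl(1+\varepsilon^2|\bar\tau_\varepsilon^i|^2\bigr)^{-1/2}$ so that $n_\varepsilon=\bar n_\varepsilon^i=\lambda_\varepsilon^i\{\bar n-\varepsilon\bar\tau_\varepsilon^i\}$ at $x$ by \eqref{E:Def_NB} and \eqref{E:Nor_Bo}, these relations give
\[
  \bar v\cdot n_\varepsilon = \lambda_\varepsilon^i\bigl(\bar v\cdot\bar n-\varepsilon\,\bar v\cdot\bar\tau_\varepsilon^i\bigr) = -\varepsilon\lambda_\varepsilon^i\,(\bar v\cdot\bar\tau_\varepsilon^i), \qquad \bar n\cdot n_\varepsilon = \lambda_\varepsilon^i\bigl(\bar n\cdot\bar n-\varepsilon\,\bar n\cdot\bar\tau_\varepsilon^i\bigr) = \lambda_\varepsilon^i
\]
at $x$.

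Then I would substitute these into the definition \eqref{E:Def_ExImp} of $E_\varepsilon v$, using $\Psi_\varepsilon=\varepsilon\bar\tau_\varepsilon^i$ on $\Gamma_\varepsilon^i$, hence $\bar v\cdot\Psi_\varepsilon=\varepsilon(\bar v\cdot\bar\tau_\varepsilon^i)$ at $x$, to obtain
\[
  E_\varepsilon v\cdot n_\varepsilon = \bar v\cdot n_\varepsilon + (\bar v\cdot\Psi_\varepsilon)(\bar n\cdot n_\varepsilon) = -\varepsilon\lambda_\varepsilon^i\,(\bar v\cdot\bar\tau_\varepsilon^i) + \varepsilon(\bar v\cdot\bar\tau_\varepsilon^i)\,\lambda_\varepsilon^i = 0
\]
at $x$. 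Since $i\in\{0,1\}$ and $x\in\Gamma_\varepsilon^i$ were arbitrary, this yields $E_\varepsilon v\cdot n_\varepsilon=0$ on $\Gamma_\varepsilon=\Gamma_\varepsilon^0\cup\Gamma_\varepsilon^1$. There is no genuine obstacle here: the normalizing scalar $\lambda_\varepsilon^i$ from \eqref{E:Def_NB} is common to the two terms and cancels, so the statement reduces to a one-line computation once the formulas for $n_\varepsilon$ and $\Psi_\varepsilon$ on $\Gamma_\varepsilon^i$ are recalled and the tangentiality of $v$ and $\tau_\varepsilon^i$ together with $|n|=1$ are used.
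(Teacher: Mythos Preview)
Your proof is correct and follows essentially the same approach as the paper's: both compute $\bar v\cdot n_\varepsilon$ and $\bar n\cdot n_\varepsilon$ on $\Gamma_\varepsilon^i$ from \eqref{E:Def_NB}, \eqref{E:Nor_Bo}, the tangentiality of $v$ and $\tau_\varepsilon^i$, and $|n|=1$, and then use $\Psi_\varepsilon=\varepsilon\bar\tau_\varepsilon^i$ on $\Gamma_\varepsilon^i$ to see that the two contributions in $E_\varepsilon v\cdot n_\varepsilon$ cancel. Your introduction of the shorthand $\lambda_\varepsilon^i$ is the only cosmetic difference.
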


\begin{proof}
  For $i=0,1$ we observe by \eqref{E:Def_NB}, \eqref{E:Nor_Bo}, and $v\cdot n=0$ on $\Gamma$ that
  \begin{align*}
    \bar{v}\cdot n_\varepsilon = (-1)^i\frac{\varepsilon\bar{v}\cdot\bar{\tau}_\varepsilon^i}{\sqrt{1+\varepsilon^2|\bar{\tau}_\varepsilon^i|^2}}, \quad \bar{n}\cdot n_\varepsilon = \frac{(-1)^{i+1}}{\sqrt{1+\varepsilon^2|\bar{\tau}_\varepsilon^i|^2}} \quad\text{on}\quad \Gamma_\varepsilon^i.
  \end{align*}
  From these equalities and $\Psi_\varepsilon=\varepsilon\bar{\tau}_\varepsilon^i$ on $\Gamma_\varepsilon^i$ by \eqref{E:Def_ExAux} we get $E_\varepsilon v\cdot n_\varepsilon=0$ on $\Gamma_\varepsilon^i$.
\end{proof}

Also, it is easy to show that $E_\varepsilon v\in W^{m,p}(\Omega_\varepsilon)$ for $v\in W^{m,p}(\Gamma,T\Gamma)$.

\begin{lemma} \label{L:ExImp_Wmp}
  There exists a constant $c>0$ independent of $\varepsilon$ such that
  \begin{align} \label{E:ExImp_Wmp}
    \|E_\varepsilon v\|_{W^{m,p}(\Omega_\varepsilon)} \leq c\varepsilon^{1/p}\|v\|_{W^{m,p}(\Gamma)}
  \end{align}
  for all $v\in W^{m,p}(\Gamma,T\Gamma)$ with $m=0,1,2$ and $p\in[1,\infty)$.
\end{lemma}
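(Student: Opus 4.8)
The plan is to prove \eqref{E:ExImp_Wmp} by working in the pulled-back coordinates on $\Gamma\times(\varepsilon g_0,\varepsilon g_1)$ and estimating $E_\varepsilon v$ via the representation \eqref{E:Def_ExImp}. The key observation is that $E_\varepsilon v = \bar v + (\bar v\cdot\Psi_\varepsilon)\bar n$ is built from the constant extensions $\bar v$ and $\bar n$ together with the auxiliary field $\Psi_\varepsilon$, for all of which good estimates are already available: Lemma~\ref{L:Con_Lp_W1p} controls $\|\bar v\|_{W^{m,p}(\Omega_\varepsilon)}$ by $c\varepsilon^{1/p}\|v\|_{W^{m,p}(\Gamma)}$ for $m=0,1$, the bounds \eqref{E:NorG_Bound} control $\bar n$ and its derivatives, and Lemma~\ref{L:ExAux_Bound} gives $|\Psi_\varepsilon|\le c\varepsilon$, $|\nabla\Psi_\varepsilon|\le c$, $|\nabla^2\Psi_\varepsilon|\le c$ in $\Omega_\varepsilon$. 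So the strategy is simply to differentiate the product, apply the Leibniz rule, and bound each resulting term pointwise in $\Omega_\varepsilon$, then integrate using the change of variables \eqref{E:CoV_Equiv}.

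First I would handle $m=0$: by $|\bar n|=1$ and $|\Psi_\varepsilon|\le c\varepsilon$ we have $|E_\varepsilon v|\le |\bar v|(1+c\varepsilon)\le c|\bar v|$ in $\Omega_\varepsilon$, so $\|E_\varepsilon v\|_{L^p(\Omega_\varepsilon)}\le c\|\bar v\|_{L^p(\Omega_\varepsilon)}\le c\varepsilon^{1/p}\|v\|_{L^p(\Gamma)}$ by \eqref{E:Con_Lp}. Next, for $m=1$, differentiate \eqref{E:Def_ExImp}:
\begin{align*}
  \nabla E_\varepsilon v = \nabla\bar v + \bar n\otimes\bigl[(\nabla\bar v)\Psi_\varepsilon + (\nabla\Psi_\varepsilon)\bar v\bigr] + (\bar v\cdot\Psi_\varepsilon)\nabla\bar n.
\end{align*}
The first term is controlled by $\|\bar v\|_{W^{1,p}(\Omega_\varepsilon)}$; in the bracket, $(\nabla\bar v)\Psi_\varepsilon$ is bounded by $c\varepsilon|\nabla\bar v|$ and $(\nabla\Psi_\varepsilon)\bar v$ by $c|\bar v|$; the last term is bounded by $c\varepsilon|\bar v|$ using \eqref{E:NorG_Bound} and $|\Psi_\varepsilon|\le c\varepsilon$. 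Hence $|\nabla E_\varepsilon v|\le c(|\bar v|+|\nabla\bar v|)$ pointwise, and integrating and applying \eqref{E:Con_W1p} gives \eqref{E:ExImp_Wmp} for $m=1$.

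For $m=2$ the argument is the same in spirit but longer: differentiating once more produces terms involving $\nabla^2\bar v$, $\nabla\bar v$ paired with $\nabla\Psi_\varepsilon$ or $\nabla\bar n$, $\bar v$ paired with $\nabla^2\Psi_\varepsilon$, $\nabla^2\bar n$, or products $\nabla\Psi_\varepsilon\cdot\nabla\bar n$, etc. Every factor is bounded by Lemma~\ref{L:ExAux_Bound} (now including $|\nabla^2\Psi_\varepsilon|\le c$), \eqref{E:NorG_Bound}, and the triangle inequality, with the $\Psi_\varepsilon$-factor (but not $\nabla\Psi_\varepsilon$ or $\nabla^2\Psi_\varepsilon$) always contributing an extra $\varepsilon$. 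One checks that no term is worse than $c(|\bar v|+|\nabla\bar v|+|\nabla^2\bar v|)$, so $\|E_\varepsilon v\|_{W^{2,p}(\Omega_\varepsilon)}\le c\|\bar v\|_{W^{2,p}(\Omega_\varepsilon)}$. The only remaining point is that $\bar v\in W^{2,p}(\Omega_\varepsilon)$ with $\|\bar v\|_{W^{2,p}(\Omega_\varepsilon)}\le c\varepsilon^{1/p}\|v\|_{W^{2,p}(\Gamma)}$; this is not stated in the excerpt as a separate lemma but follows from \eqref{E:Con_Hess}, \eqref{E:ConDer_Dom}, and the change of variables \eqref{E:CoV_Equiv} exactly as Lemma~\ref{L:Con_Lp_W1p} is proved, so I would invoke it (or cite the first paper). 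The main obstacle is purely bookkeeping — keeping track of the large number of Leibniz terms for $m=2$ and confirming that the $\varepsilon$-powers never degrade — rather than any conceptual difficulty; the density of $C^1(\Gamma,T\Gamma)$ in the relevant spaces lets one carry out the differentiation for smooth $v$ first and then pass to the limit.
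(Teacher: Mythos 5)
Your proposal is correct and follows essentially the same route as the paper: pointwise Leibniz estimates of $E_\varepsilon v$, $\nabla E_\varepsilon v$, $\nabla^2E_\varepsilon v$ using \eqref{E:ExAux_Bound} and \eqref{E:NorG_Bound}, followed by the constant-extension comparison to get the factor $\varepsilon^{1/p}$ (the paper bounds directly by $|\bar v|+|\overline{\nabla_\Gamma v}|+|\overline{\nabla_\Gamma^2 v}|$ via \eqref{E:ConDer_Bound} and \eqref{E:Con_Hess} and then applies \eqref{E:Con_Lp}, which is exactly the step you supply to cover the $W^{2,p}$ case not stated in Lemma~\ref{L:Con_Lp_W1p}). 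The only cosmetic discrepancy is the order of the tensor product in your formula for $\nabla E_\varepsilon v$ (the paper has $[(\nabla\bar v)\Psi_\varepsilon+(\nabla\Psi_\varepsilon)\bar v]\otimes\bar n$), which is immaterial for the norm estimates since $|a\otimes b|=|a||b|$.
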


\begin{proof}
  By \eqref{E:ConDer_Bound}, \eqref{E:Con_Hess}, \eqref{E:NorG_Bound}, and \eqref{E:ExAux_Bound} we have
  \begin{gather*}
    |E_\varepsilon v| \leq c|\bar{v}|, \quad |\nabla E_\varepsilon v| \leq c\left(|\bar{v}|+\left|\overline{\nabla_\Gamma v}\right|\right), \quad |\nabla^2 E_\varepsilon v| \leq c\left(|\bar{v}|+\left|\overline{\nabla_\Gamma v}\right|+\left|\overline{\nabla_\Gamma^2 v}\right|\right)
  \end{gather*}
  in $\Omega_\varepsilon$.
  These inequalities and \eqref{E:Con_Lp} imply \eqref{E:ExImp_Wmp}.
\end{proof}

If $\widetilde{\Omega}_\varepsilon$ is a flat thin domain of the form
\begin{align*}
  \widetilde{\Omega}_\varepsilon = \{x=(x',x_3)\in\mathbb{R}^3 \mid x'\in\omega,\,\varepsilon\tilde{g}_0(x')<x_3<\varepsilon\tilde{g}_1(x')\},
\end{align*}
where $\omega$ is a domain in $\mathbb{R}^2$ and $\tilde{g}_0$ and $\tilde{g}_1$ are functions on $\omega$, then we have
\begin{align*}
  \mathrm{div}(E_\varepsilon v)(x) = \frac{1}{\tilde{g}(x')}\mathrm{div}(\tilde{g}v)(x'), \quad x=(x',x_3)\in\widetilde{\Omega}_\varepsilon \quad (\tilde{g}:=\tilde{g}_1-\tilde{g}_0)
\end{align*}
for $v\colon\omega\to\mathbb{R}^2$ (see~\cite{HoSe10}*{Lemma~4.24} and~\cite{IfRaSe07}*{Remark~3.1}).
This is not the case for the curved thin domain $\Omega_\varepsilon$ of the form \eqref{E:Def_CTD} since the principal curvatures of the limit surface $\Gamma$ do not vanish in general.
However, we can show that the difference between $\mathrm{div}(E_\varepsilon v)$ and $g^{-1}\mathrm{div}_\Gamma(gv)$ is of order $\varepsilon$ in $\Omega_\varepsilon$.

\begin{lemma} \label{L:ExImp_Div}
  There exists a constant $c>0$ independent of $\varepsilon$ such that
  \begin{align} \label{E:ExImp_Grad}
    \left|\nabla E_\varepsilon v-\left\{\overline{\nabla_\Gamma v}+\frac{1}{\bar{g}}\Bigl(\bar{v}\cdot\overline{\nabla_\Gamma g}\Bigr)\overline{Q}\right\}\right| \leq c\varepsilon\left(|\bar{v}|+\left|\overline{\nabla_\Gamma v}\right|\right) \quad\text{in}\quad \Omega_\varepsilon
  \end{align}
  for all $v\in C^1(\Gamma,T\Gamma)$.
  Moreover, we have
  \begin{align} \label{E:ExImp_Div}
    \left|\mathrm{div}(E_\varepsilon v)-\frac{1}{\bar{g}}\overline{\mathrm{div}_\Gamma(gv)}\right| \leq c\varepsilon\left(|\bar{v}|+\left|\overline{\nabla_\Gamma v}\right|\right) \quad\text{in}\quad \Omega_\varepsilon.
  \end{align}
\end{lemma}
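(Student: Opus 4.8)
The plan is to prove the gradient estimate~\eqref{E:ExImp_Grad} first, by differentiating the explicit formula~\eqref{E:Def_ExImp} for $E_\varepsilon v$, and then to obtain~\eqref{E:ExImp_Div} simply by taking the trace. Since $v\in C^1(\Gamma,T\Gamma)$, all quantities below make classical sense in $\Omega_\varepsilon$, so no approximation argument is needed.

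For~\eqref{E:ExImp_Grad} I would start from $E_\varepsilon v=\bar v+(\bar v\cdot\Psi_\varepsilon)\bar n$ and use the product rule to write $\nabla E_\varepsilon v=\nabla\bar v+\nabla(\bar v\cdot\Psi_\varepsilon)\otimes\bar n+(\bar v\cdot\Psi_\varepsilon)\nabla\bar n$. The last term is bounded by $c\varepsilon|\bar v|$ via $|\Psi_\varepsilon|\le c\varepsilon$ (Lemma~\ref{L:ExAux_Bound}) and $|\nabla\bar n|\le c$ (see~\eqref{E:NorG_Bound}). Next, writing $\nabla(\bar v\cdot\Psi_\varepsilon)=(\nabla\bar v)\Psi_\varepsilon+(\nabla\Psi_\varepsilon)\bar v$, the first summand is bounded by $c\varepsilon|\overline{\nabla_\Gamma v}|$ using $|\nabla\bar v|\le c|\overline{\nabla_\Gamma v}|$ (from~\eqref{E:ConDer_Bound}, componentwise) and $|\Psi_\varepsilon|\le c\varepsilon$. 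For the second summand I would split the differentiation index of $\nabla\Psi_\varepsilon$ by $\overline P$ and $\bar n\otimes\bar n$, i.e. $\nabla\Psi_\varepsilon=\overline P\nabla\Psi_\varepsilon+\bar n\otimes\partial_n\Psi_\varepsilon$ (valid since $\overline P+\bar n\otimes\bar n=I_3$ on $\Omega_\varepsilon$), and apply this to the tangential vector $\bar v$ to get
\[
(\nabla\Psi_\varepsilon)\bar v=(\overline P\nabla\Psi_\varepsilon)\bar v+(\partial_n\Psi_\varepsilon\cdot\bar v)\bar n .
\]
By the two inequalities in~\eqref{E:ExAux_TNDer} the first term is bounded by $c\varepsilon|\bar v|$, while $\partial_n\Psi_\varepsilon\cdot\bar v=\frac{1}{\bar g}(\bar v\cdot\overline{\nabla_\Gamma g})$ up to an error bounded by $c\varepsilon|\bar v|$. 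Finally $\nabla\bar v=\overline{\nabla_\Gamma v}$ up to an error bounded by $c\varepsilon|\overline{\nabla_\Gamma v}|$, by~\eqref{E:ConDer_Diff} together with $|d|\le c\varepsilon$ on $\Omega_\varepsilon$. Assembling all these estimates and using $\bar n\otimes\bar n=\overline Q$ yields~\eqref{E:ExImp_Grad}.

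For~\eqref{E:ExImp_Div} I would take the trace of~\eqref{E:ExImp_Grad}. Since $\mathrm{div}(E_\varepsilon v)=\mathrm{tr}(\nabla E_\varepsilon v)$, $\mathrm{tr}(\overline{\nabla_\Gamma v})=\overline{\mathrm{div}_\Gamma v}$, $\mathrm{tr}(\overline Q)=\overline{|n|^2}=1$, and the trace of the remainder matrix is again bounded by $c\varepsilon(|\bar v|+|\overline{\nabla_\Gamma v}|)$, one obtains $\mathrm{div}(E_\varepsilon v)=\overline{\mathrm{div}_\Gamma v}+\frac{1}{\bar g}(\bar v\cdot\overline{\nabla_\Gamma g})$ up to an error of that order. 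The claim then follows from the surface Leibniz rule $\mathrm{div}_\Gamma(gv)=g\,\mathrm{div}_\Gamma v+\nabla_\Gamma g\cdot v$, which gives $\frac{1}{\bar g}\overline{\mathrm{div}_\Gamma(gv)}=\overline{\mathrm{div}_\Gamma v}+\frac{1}{\bar g}(\bar v\cdot\overline{\nabla_\Gamma g})$.

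I do not expect a genuine obstacle here; the only point requiring care is the tensor bookkeeping in the term $(\nabla\Psi_\varepsilon)\bar v$: one must split the \emph{differentiation} index of $\nabla\Psi_\varepsilon$ by $\overline P$ and $\bar n\otimes\bar n$, so that exactly the quantities $\overline P\nabla\Psi_\varepsilon$ and $\partial_n\Psi_\varepsilon$ estimated in~\eqref{E:ExAux_TNDer} appear, while the multiplication by the tangential vector $\bar v$ acts on the other index. Alternatively, one may substitute the explicit expression for $\nabla\Psi_\varepsilon$ obtained in the proof of Lemma~\ref{L:ExAux_Bound} and use $\tau_\varepsilon^1-\tau_\varepsilon^0=\nabla_\Gamma g$ up to an error of order $\varepsilon$ (from~\eqref{E:Tau_Diff} and linearity of $\nabla_\Gamma$), which amounts to the same computation.
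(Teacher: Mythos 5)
Your proposal is correct and follows essentially the same route as the paper: expand $\nabla E_\varepsilon v$ by the product rule, compare $\nabla\bar v$ with $\overline{\nabla_\Gamma v}$ via \eqref{E:ConDer_Diff}, extract the main term from $(\nabla\Psi_\varepsilon)\bar v$ through the splitting $\nabla\Psi_\varepsilon=\overline{P}\nabla\Psi_\varepsilon+\bar n\otimes\partial_n\Psi_\varepsilon$ and \eqref{E:ExAux_TNDer}, and then obtain \eqref{E:ExImp_Div} by taking the trace together with $\mathrm{tr}[\overline{Q}]=1$ and the Leibniz rule for $\mathrm{div}_\Gamma(gv)$. The only (immaterial) difference is that you apply the splitting of $\nabla\Psi_\varepsilon$ directly to the vector $(\nabla\Psi_\varepsilon)\bar v$, whereas the paper bounds the matrix $\nabla\Psi_\varepsilon-\tfrac{1}{\bar g}\bar n\otimes\overline{\nabla_\Gamma g}$ and rewrites the target term as $\bigl[\tfrac{1}{\bar g}(\bar n\otimes\overline{\nabla_\Gamma g})\bar v\bigr]\otimes\bar n$.
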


\begin{proof}
  Since
  \begin{align*}
    \nabla E_\varepsilon v = \nabla\bar{v}+[(\nabla\bar{v})\Psi_\varepsilon+(\nabla\Psi_\varepsilon)\bar{v}]\otimes\bar{n}+(\bar{v}\cdot\Psi_\varepsilon)\nabla\bar{n} \quad\text{in}\quad N
  \end{align*}
  by \eqref{E:Def_ExImp} and $(v\cdot\nabla_\Gamma g)Q=[(v\cdot\nabla_\Gamma g)n]\otimes n=[(n\otimes\nabla_\Gamma g)v]\otimes n$ on $\Gamma$, we have
  \begin{multline} \label{Pf_EID:Diff}
    \left|\nabla E_\varepsilon v-\left\{\overline{\nabla_\Gamma v}+\frac{1}{\bar{g}}\Bigl(\bar{v}\cdot\overline{\nabla_\Gamma g}\Bigr)\overline{Q}\right\}\right| \\
    \leq \left|\nabla\bar{v}-\overline{\nabla_\Gamma v}\right|+|[(\nabla\bar{v})\Psi_\varepsilon]\otimes\bar{n}|+|(\bar{v}\cdot\Psi_\varepsilon)\nabla\bar{n}| \\
    +\left|\left[\left(\nabla\Psi_\varepsilon-\frac{1}{\bar{g}}\bar{n}\otimes\overline{\nabla_\Gamma g}\right)\bar{v}\right]\otimes\bar{n}\right| \quad\text{in}\quad N.
  \end{multline}
  Also, by $\nabla\Psi_\varepsilon=\overline{P}\nabla\Psi_\varepsilon+\overline{Q}\nabla\Psi_\varepsilon=\overline{P}\nabla\Psi_\varepsilon+\bar{n}\otimes\partial_n\Psi_\varepsilon$ in $N$ and \eqref{E:ExAux_TNDer} we get
  \begin{align*}
    \left|\nabla\Psi_\varepsilon-\frac{1}{\bar{g}}\bar{n}\otimes\overline{\nabla_\Gamma g}\right| \leq \left|\overline{P}\nabla\Psi_\varepsilon\right|+\left|\bar{n}\otimes\left(\partial_n\Psi_\varepsilon-\frac{1}{\bar{g}}\overline{\nabla_\Gamma g}\right)\right| \leq c\varepsilon \quad\text{in}\quad \Omega_\varepsilon.
  \end{align*}
  Applying this inequality, \eqref{E:ConDer_Bound}, \eqref{E:ConDer_Diff}, \eqref{E:NorG_Bound}, \eqref{E:ExAux_Bound}, and $|d|\leq c\varepsilon$ in $\Omega_\varepsilon$ to the right-hand side of \eqref{Pf_EID:Diff} we obtain \eqref{E:ExImp_Grad}.
  Moreover, since
  \begin{align*}
    \mathrm{tr}\left[\nabla_\Gamma v+\frac{1}{g}(v\cdot\nabla_\Gamma g)Q\right] = \mathrm{div}_\Gamma v+\frac{1}{g}(v\cdot\nabla_\Gamma g) = \frac{1}{g}\mathrm{div}_\Gamma(gv) \quad\text{on}\quad \Gamma
  \end{align*}
  by $\mathrm{tr}[Q]=n\cdot n=1$ on $\Gamma$, the inequality \eqref{E:ExImp_Div} follows from \eqref{E:ExImp_Grad}.
\end{proof}

\section{Stokes operator under the slip boundary conditions} \label{S:St_Op}
We summarize the fundamental results of the first part~\cite{Miu_NSCTD_01} of our study on the Stokes operator for $\Omega_\varepsilon$ under the slip boundary conditions.
Throughout this section we impose Assumptions~\ref{Assump_1} and~\ref{Assump_2} and fix the constant $\varepsilon_0$ given in Lemma~\ref{L:Uni_aeps}.

Integration by parts shows that
\begin{multline} \label{E:IbP_St}
  \int_{\Omega_\varepsilon}\{\Delta u_1+\nabla(\mathrm{div}\,u_1)\}\cdot u_2\,dx \\
  = -2\int_{\Omega_\varepsilon}D(u_1):D(u_2)\,dx+2\int_{\Gamma_\varepsilon}[D(u_1)n_\varepsilon]\cdot u_2\,d\mathcal{H}^2
\end{multline}
for $u_1\in H^2(\Omega_\varepsilon)^3$ and $u_2\in H^1(\Omega_\varepsilon)^3$ (see also~\cite{Miu_NSCTD_01}*{Lemma~7.1}).
From this formula it follows that, if $u_1$ satisfies $\mathrm{div}\,u_1=0$ in $\Omega_\varepsilon$ and the slip boundary conditions \eqref{E:Bo_Slip} and $u_2$ satisfies the impermeable boundary condition \eqref{E:Bo_Imp}, then
\begin{align*}
  \nu\int_{\Omega_\varepsilon}\Delta u_1\cdot u_2\,dx = -2\nu\int_{\Omega_\varepsilon}D(u_1):D(u_2)\,dx-\sum_{i=0,1}\gamma_\varepsilon^i\int_{\Gamma_\varepsilon^i}u_1\cdot u_2\,d\mathcal{H}^2.
\end{align*}
Hence the bilinear form for the Stokes problem
\begin{align*}
  \left\{
  \begin{alignedat}{3}
    -\nu\Delta u+\nabla p &= f &\quad &\text{in} &\quad &\Omega_\varepsilon, \\
    \mathrm{div}\,u &= 0 &\quad &\text{in} &\quad &\Omega_\varepsilon, \\
    u\cdot n_\varepsilon &= 0 &\quad &\text{on} &\quad &\Gamma_\varepsilon, \\
    2\nu P_\varepsilon D(u)n_\varepsilon+\gamma_\varepsilon u &= 0 &\quad &\text{on} &\quad &\Gamma_\varepsilon
  \end{alignedat}
  \right.
\end{align*}
is of the form
\begin{align*}
  a_\varepsilon(u_1,u_2) := 2\nu\bigl(D(u_1),D(u_2)\bigr)_{L^2(\Omega_\varepsilon)}+\sum_{i=0,1}\gamma_\varepsilon^i(u_1,u_2)_{L^2(\Gamma_\varepsilon^i)}
\end{align*}
for $u_1,u_2\in H^1(\Omega_\varepsilon)^3$.
Let $\mathcal{H}_\varepsilon$ and $\mathcal{V}_\varepsilon$ be the function spaces defined by \eqref{E:Def_Heps}.
Then for each $\varepsilon\in(0,\varepsilon_0]$ the bilinear form $a_\varepsilon$ is uniformly bounded and coercive on $\mathcal{V}_\varepsilon$ by Lemma~\ref{L:Uni_aeps}.
Hence by the Lax--Milgram theorem it induces a bounded linear operator $A_\varepsilon$ from $\mathcal{V}_\varepsilon$ into its dual space $\mathcal{V}_\varepsilon'$ such that
\begin{align*}
  {}_{\mathcal{V}_\varepsilon'}\langle A_\varepsilon u_1,u_2\rangle_{\mathcal{V}_\varepsilon} = a_\varepsilon(u_1,u_2), \quad u_1,u_2\in \mathcal{V}_\varepsilon,
\end{align*}
where ${}_{\mathcal{V}_\varepsilon'}\langle\cdot,\cdot\rangle_{\mathcal{V}_\varepsilon}$ is the duality product between $\mathcal{V}_\varepsilon'$ and $\mathcal{V}_\varepsilon$.
If we consider $A_\varepsilon$ as an unbounded operator on $\mathcal{H}_\varepsilon$ with domain
\begin{align*}
  D(A_\varepsilon) = \{u\in\mathcal{V}_\varepsilon\mid A_\varepsilon u\in\mathcal{H}_\varepsilon\},
\end{align*}
then $A_\varepsilon$ is positive and self-adjoint on $\mathcal{H}_\varepsilon$ by the Lax--Milgram theory.
Hence the square root $A_\varepsilon^{1/2}$ of $A_\varepsilon$ is well-defined on $D(A_\varepsilon^{1/2})=\mathcal{V}_\varepsilon$ and
\begin{align} \label{E:L2in_Ahalf}
  (A_\varepsilon u_1,u_2)_{L^2(\Omega_\varepsilon)} = (A_\varepsilon^{1/2}u_1,A_\varepsilon^{1/2} u_2)_{L^2(\Omega_\varepsilon)}
\end{align}
for all $u_1\in D(A_\varepsilon)$ and $u_2\in \mathcal{V}_\varepsilon$ (see~\cites{BoFa13,So01}).
Moreover, by a regularity result for a solution to the Stokes problem (see~\cites{AmRe14,Be04,SoSc73}) we observe that
\begin{align} \label{E:Dom_St}
  D(A_\varepsilon) = \{u\in \mathcal{V}_\varepsilon\cap H^2(\Omega_\varepsilon)^3 \mid \text{$2\nu P_\varepsilon D(u)n_\varepsilon+\gamma_\varepsilon u=0$ on $\Gamma_\varepsilon$}\}
\end{align}
and $A_\varepsilon u=-\nu\mathbb{P}_\varepsilon\Delta u$ for $u\in D(A_\varepsilon)$, where $\mathbb{P}_\varepsilon$ is the orthogonal projection from $L^2(\Omega_\varepsilon)^3$ onto $\mathcal{H}_\varepsilon$.
In what follows, we call $A_\varepsilon$ the Stokes operator for $\Omega_\varepsilon$ under the slip boundary conditions or simply the Stokes operator on $\mathcal{H}_\varepsilon$.

In our first paper~\cite{Miu_NSCTD_01} we studied the Stokes operator $A_\varepsilon$ in detail.
We present the main results of~\cite{Miu_NSCTD_01} below which are essential for the proof of the global existence of a strong solution to \eqref{E:NS_CTD}.

\begin{lemma}[{\cite{Miu_NSCTD_01}*{Lemma~2.5}}] \label{L:Stokes_H1}
  There exists a constant $c>0$ such that
  \begin{align} \label{E:Stokes_H1}
    c^{-1}\|u\|_{H^1(\Omega_\varepsilon)} \leq \|A_\varepsilon^{1/2}u\|_{L^2(\Omega_\varepsilon)} \leq c\|u\|_{H^1(\Omega_\varepsilon)}
  \end{align}
  for all $\varepsilon\in(0,\varepsilon_0]$ and $u\in \mathcal{V}_\varepsilon$.
  Moreover, if $u\in D(A_\varepsilon)$, then we have
  \begin{align} \label{E:Stokes_Po}
    \|A_\varepsilon^{1/2}u\|_{L^2(\Omega_\varepsilon)} \leq c\|A_\varepsilon u\|_{L^2(\Omega_\varepsilon)}.
  \end{align}
\end{lemma}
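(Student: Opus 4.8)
The plan is to deduce everything from the uniform boundedness and coercivity of $a_\varepsilon$ on $\mathcal{V}_\varepsilon$ (Lemma~\ref{L:Uni_aeps}) together with the standard form-theoretic identification of $\mathcal{V}_\varepsilon$ as the form domain of $A_\varepsilon$. The one nontrivial soft input is the quadratic-form identity
\[
  \|A_\varepsilon^{1/2}u\|_{L^2(\Omega_\varepsilon)}^2 = a_\varepsilon(u,u), \qquad u\in\mathcal{V}_\varepsilon.
\]
For $u\in D(A_\varepsilon)$ this is immediate: $(A_\varepsilon u,u)_{L^2(\Omega_\varepsilon)}=a_\varepsilon(u,u)$ by the construction of $A_\varepsilon$ from $a_\varepsilon$, while $(A_\varepsilon u,u)_{L^2(\Omega_\varepsilon)}=\|A_\varepsilon^{1/2}u\|_{L^2(\Omega_\varepsilon)}^2$ is the case $u_1=u_2=u$ of \eqref{E:L2in_Ahalf}. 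Since $D(A_\varepsilon)$ is a core for $A_\varepsilon^{1/2}$ and, by Lemma~\ref{L:Uni_aeps}, the graph norm of $A_\varepsilon^{1/2}$ is comparable on $D(A_\varepsilon)$ to the $H^1(\Omega_\varepsilon)$-norm while $a_\varepsilon$ is $H^1$-continuous, the identity extends by density to all of $\mathcal{V}_\varepsilon=D(A_\varepsilon^{1/2})$; references for this circle of facts are~\cites{BoFa13,So01}.

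Granting the identity, the norm equivalence \eqref{E:Stokes_H1} follows by simply taking square roots in Lemma~\ref{L:Uni_aeps}: for every $\varepsilon\in(0,\varepsilon_0]$ and $u\in\mathcal{V}_\varepsilon$,
\[
  c^{-1}\|u\|_{H^1(\Omega_\varepsilon)} \leq a_\varepsilon(u,u)^{1/2} = \|A_\varepsilon^{1/2}u\|_{L^2(\Omega_\varepsilon)} \leq c\|u\|_{H^1(\Omega_\varepsilon)},
\]
so the $\varepsilon$-uniformity of the constant is inherited (up to renaming $c$) verbatim from that lemma. For the remaining bound \eqref{E:Stokes_Po}, I would take $u\in D(A_\varepsilon)$, apply \eqref{E:L2in_Ahalf} with $u_1=u_2=u$ and the Cauchy--Schwarz inequality to get $\|A_\varepsilon^{1/2}u\|_{L^2(\Omega_\varepsilon)}^2 = (A_\varepsilon u,u)_{L^2(\Omega_\varepsilon)} \le \|A_\varepsilon u\|_{L^2(\Omega_\varepsilon)}\|u\|_{L^2(\Omega_\varepsilon)}$, then estimate $\|u\|_{L^2(\Omega_\varepsilon)}\le\|u\|_{H^1(\Omega_\varepsilon)}\le c\|A_\varepsilon^{1/2}u\|_{L^2(\Omega_\varepsilon)}$ by the left-hand inequality of \eqref{E:Stokes_H1} just established, and divide by $\|A_\varepsilon^{1/2}u\|_{L^2(\Omega_\varepsilon)}$ (the case $A_\varepsilon^{1/2}u=0$ being trivial).

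There is essentially no analytic obstacle here: all of the geometry of the curved thin domain is already packaged into Lemma~\ref{L:Uni_aeps} (a uniform Korn-type inequality), and what is left is routine operator theory. The only place calling for a modicum of care is the passage of the quadratic-form identity from $D(A_\varepsilon)$ to $\mathcal{V}_\varepsilon$, i.e.\ checking that $\mathcal{V}_\varepsilon$ really is the form domain of $A_\varepsilon$ --- equivalently, that $\mathcal{V}_\varepsilon$ is dense in $\mathcal{H}_\varepsilon$ and that $a_\varepsilon$ is a closed symmetric form on it --- which is precisely what the Lax--Milgram construction of $A_\varepsilon$ recalled before the lemma supplies.
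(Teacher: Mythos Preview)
Your proof is correct and is exactly the standard argument: the form identity $\|A_\varepsilon^{1/2}u\|_{L^2}^2=a_\varepsilon(u,u)$ on $\mathcal{V}_\varepsilon$ together with Lemma~\ref{L:Uni_aeps} gives \eqref{E:Stokes_H1}, and \eqref{E:Stokes_Po} follows from Cauchy--Schwarz and the left inequality of \eqref{E:Stokes_H1}. Note that this paper does not actually prove Lemma~\ref{L:Stokes_H1}; it is quoted from the companion paper~\cite{Miu_NSCTD_01}*{Lemma~2.5}, where the proof is precisely the one you outline.
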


\begin{lemma}[{\cite{Miu_NSCTD_01}*{Theorem~2.6}}] \label{L:Comp_Sto_Lap}
  There exists a constant $c>0$ such that
  \begin{align} \label{E:Comp_Sto_Lap}
    \|A_\varepsilon u+\nu\Delta u\|_{L^2(\Omega_\varepsilon)} \leq c\|u\|_{H^1(\Omega_\varepsilon)}
  \end{align}
  for all $\varepsilon\in(0,\varepsilon_0]$ and $u\in D(A_\varepsilon)$.
\end{lemma}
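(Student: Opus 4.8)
The plan is to start from the relation $A_\varepsilon u=-\nu\mathbb{P}_\varepsilon\Delta u$, valid on $D(A_\varepsilon)$ and recorded just after \eqref{E:Dom_St}. It gives $A_\varepsilon u+\nu\Delta u=\nu(\Delta u-\mathbb{P}_\varepsilon\Delta u)$, so the asserted bound is equivalent to the statement that the component of $\Delta u$ orthogonal to $\mathcal{H}_\varepsilon$ is controlled by $\|u\|_{H^1(\Omega_\varepsilon)}$, i.e.\ is one derivative smoother than $\Delta u$ itself. By the orthogonal decomposition $L^2(\Omega_\varepsilon)^3=\mathcal{H}_\varepsilon\oplus\mathcal{R}_g\oplus G^2(\Omega_\varepsilon)$ ($\mathcal{R}_g=\{0\}$ under (A1) and (A2); see Remark~\ref{R:Ext_Orth}) we may write
\[
  \Phi:=A_\varepsilon u+\nu\Delta u=\nu\nabla p+\nu w,\qquad p\in H^1(\Omega_\varepsilon),\ w\in\mathcal{R}_g,
\]
and $p$ is harmonic, since $\nabla p+w=\Delta u-\mathbb{P}_\varepsilon\Delta u$ with $\mathbb{P}_\varepsilon\Delta u$ solenoidal and $w$ affine, so $\Delta p=\mathrm{div}\,\Delta u=\Delta(\mathrm{div}\,u)=0$. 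Taking the $L^2(\Omega_\varepsilon)$-inner product of $\Phi$ with itself and using $\Phi\in\mathcal{H}_\varepsilon^\perp$ gives $\|\Phi\|_{L^2(\Omega_\varepsilon)}^2=\nu^2\bigl((\Delta u,\nabla p)_{L^2(\Omega_\varepsilon)}+(\Delta u,w)_{L^2(\Omega_\varepsilon)}\bigr)$. The $w$-term drops out: under (A3) one has $\gamma_\varepsilon\equiv 0$, and since $w(x)=a\times x+b$ is affine, $D(w)=0$, while $w\cdot n_\varepsilon=0$ on $\Gamma_\varepsilon$ because $\mathcal{R}_g\subset L_\sigma^2(\Omega_\varepsilon)$; hence \eqref{E:IbP_St} together with the second slip condition yields $(\Delta u,w)_{L^2(\Omega_\varepsilon)}=2(P_\varepsilon D(u)n_\varepsilon,w)_{L^2(\Gamma_\varepsilon)}=0$.

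For the remaining term, the divergence-free condition gives $\Delta u=-\mathrm{curl}\,\mathrm{curl}\,u$; integrating by parts (the bulk term vanishes because $\mathrm{curl}\,\nabla p=0$) and using that $\mathrm{curl}\,u\times n_\varepsilon$ is tangential to $\Gamma_\varepsilon$ we obtain
\[
  (\Delta u,\nabla p)_{L^2(\Omega_\varepsilon)}=-\int_{\Gamma_\varepsilon}(\mathrm{curl}\,u\times n_\varepsilon)\cdot\nabla_{\Gamma_\varepsilon}p\,d\mathcal{H}^2 .
\]
The crucial algebraic point is that on $\Gamma_\varepsilon$ this boundary datum is of order zero in $u$: from $u\cdot n_\varepsilon=0$ and the identity $2P_\varepsilon D(u)n_\varepsilon-\mathrm{curl}\,u\times n_\varepsilon=2W_\varepsilon u$ of Remark~\ref{R:Ex_Assump}, combined with the second slip condition $2\nu P_\varepsilon D(u)n_\varepsilon+\gamma_\varepsilon u=0$, one gets
\[
  \mathrm{curl}\,u\times n_\varepsilon=-\Bigl(2W_\varepsilon+\tfrac{\gamma_\varepsilon}{\nu}I_3\Bigr)u\quad\text{on}\quad\Gamma_\varepsilon ,
\]
whose coefficient matrix is bounded uniformly in $\varepsilon$ by \eqref{E:Comp_W} and \eqref{E:Fric_Upper}. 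Thus $\|\Phi\|_{L^2(\Omega_\varepsilon)}^2=\nu^2\int_{\Gamma_\varepsilon}\bigl[(2W_\varepsilon+\tfrac{\gamma_\varepsilon}{\nu}I_3)u\bigr]\cdot\nabla_{\Gamma_\varepsilon}p\,d\mathcal{H}^2$, and since $\|\nabla p\|_{L^2(\Omega_\varepsilon)}\le\nu^{-1}\|\Phi\|_{L^2(\Omega_\varepsilon)}$ it suffices to bound this boundary pairing by $c\,\|u\|_{H^1(\Omega_\varepsilon)}\,\|\nabla p\|_{L^2(\Omega_\varepsilon)}$ with $c$ independent of $\varepsilon$.

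This last step is the main obstacle: a crude estimate via the trace inequality \eqref{E:Poin_Bo} loses a factor $\varepsilon^{-1/2}$ on each of $\|u\|_{L^2(\Gamma_\varepsilon)}$ and $\|\nabla_{\Gamma_\varepsilon}p\|_{L^2(\Gamma_\varepsilon)}$, and one cannot move the tangential derivative onto $u$ either, as that would cost $\|u\|_{H^2(\Omega_\varepsilon)}$. The resolution uses two structural features. Since $p$ is harmonic, after the change of variables \eqref{E:CoV_Dom} and rescaling the normal variable by $\varepsilon$ the pressure differs by only $O(\varepsilon)$ from its average in the thin direction, which is essentially a function on $\Gamma$; and by \eqref{E:Comp_W} the curvature $W_\varepsilon$ has opposite leading sign on $\Gamma_\varepsilon^0$ and $\Gamma_\varepsilon^1$. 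Writing the boundary pairing as the sum of its $\Gamma_\varepsilon^0$- and $\Gamma_\varepsilon^1$-contributions and pulling both back to $\Gamma$, the leading (curvature) parts therefore combine into an integral over $\Gamma$ of $W$ paired with the \emph{difference} of $u$ across $\Omega_\varepsilon$ and the tangential gradient of the averaged pressure; that difference, being an integral of $\partial_n u$ along normal segments, is $O(\varepsilon^{1/2})$ in $L^2$, which exactly absorbs the two $\varepsilon^{-1/2}$ losses, and Lemma~\ref{L:Poincare} then closes the estimate. Controlling the lower-order $O(\varepsilon)$ remainders in the same way, with careful bookkeeping, is the technical heart of the proof; dividing the resulting inequality $\|\Phi\|_{L^2(\Omega_\varepsilon)}^2\le c\,\|u\|_{H^1(\Omega_\varepsilon)}\|\Phi\|_{L^2(\Omega_\varepsilon)}$ by $\|\Phi\|_{L^2(\Omega_\varepsilon)}$ then gives the claim.
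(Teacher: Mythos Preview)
The paper does not prove this lemma: it is quoted verbatim as \cite{Miu_NSCTD_01}*{Theorem~2.6} from Part~I of the series, and Section~\ref{S:St_Op} explicitly says it is summarizing results established there. So there is no ``paper's own proof'' to compare against here; any argument you supply is necessarily your own.

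Your reduction is sound and is in fact the standard opening move: from $A_\varepsilon u=-\nu\mathbb{P}_\varepsilon\Delta u$ one gets $A_\varepsilon u+\nu\Delta u\in\mathcal{H}_\varepsilon^\perp$, the $\mathcal{R}_g$-component vanishes exactly as you say (cf.\ Remark~\ref{R:Ext_Orth}), and the $\mathrm{curl}\,\mathrm{curl}$ integration by parts together with the identity $\mathrm{curl}\,u\times n_\varepsilon=-(2W_\varepsilon+\gamma_\varepsilon\nu^{-1})u$ from Remark~\ref{R:Ex_Assump} correctly reduces everything to the boundary pairing
\[
\int_{\Gamma_\varepsilon}\Bigl[(2W_\varepsilon+\tfrac{\gamma_\varepsilon}{\nu})u\Bigr]\cdot\nabla_{\Gamma_\varepsilon}p\,d\mathcal{H}^2.
\]

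The gap is in the closing step. Your heuristic --- that the opposite signs of $W_\varepsilon$ on $\Gamma_\varepsilon^0$ and $\Gamma_\varepsilon^1$ turn the leading contribution into a pairing of $W(u_1^\sharp-u_0^\sharp)$ with $\nabla_\Gamma(Mp)$ --- is the right intuition, and the $\varepsilon^{1/2}$ gain on $u_1^\sharp-u_0^\sharp$ is genuine. But to \emph{reach} that expression you must replace $\nabla_{\Gamma_\varepsilon}p$ on each boundary component by the constant extension of $\nabla_\Gamma(Mp)$, and controlling that replacement (e.g.\ via \eqref{E:ADD_Bo}) needs $p\in H^2(\Omega_\varepsilon)$ with a uniform bound, which you have not established: harmonicity of $p$ gives interior regularity but no uniform-in-$\varepsilon$ control of second derivatives up to the boundary without information on $\partial_{n_\varepsilon}p$. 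The alternative --- integrating by parts on $\Gamma_\varepsilon$ to move $\nabla_{\Gamma_\varepsilon}$ onto $(2W_\varepsilon+\gamma_\varepsilon\nu^{-1})u$ --- trades this for $\|\nabla_{\Gamma_\varepsilon}u\|_{L^2(\Gamma_\varepsilon)}$, whose trace estimate costs $\|u\|_{H^2(\Omega_\varepsilon)}$ rather than $\|u\|_{H^1(\Omega_\varepsilon)}$. So as written, the ``careful bookkeeping'' you defer to hides a circularity: you need exactly the kind of uniform pressure regularity that the lemma itself is meant to underpin. Filling this requires either an independent uniform $H^2$ estimate for the Neumann problem $\Delta p=0$, $\partial_{n_\varepsilon}p=\Delta u\cdot n_\varepsilon$ (which is what Part~I actually develops), or an extension argument that converts the boundary pairing into a bulk integral before any trace is taken.
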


Note that the $H^1(\Omega_\varepsilon)$-norm of $u$ appears in the right-hand side of \eqref{E:Stokes_H2}, not its $H^2(\Omega_\varepsilon)$-norm.
This is essential for a good estimate for the trilinear term
\begin{align*}
  \bigl((u\cdot\nabla)u,A_\varepsilon u\bigr)_{L^2(\Omega_\varepsilon)}, \quad u\in D(A_\varepsilon)
\end{align*}
derived in Section~\ref{S:Tri}.

\begin{lemma}[{\cite{Miu_NSCTD_01}*{Theorem~2.7}}] \label{L:Stokes_H2}
  There exists a constant $c>0$ such that
  \begin{align} \label{E:Stokes_H2}
    c^{-1}\|u\|_{H^2(\Omega_\varepsilon)} \leq \|A_\varepsilon u\|_{L^2(\Omega_\varepsilon)} \leq c\|u\|_{H^2(\Omega_\varepsilon)}
  \end{align}
  for all $\varepsilon\in(0,\varepsilon_0]$ and $u\in D(A_\varepsilon)$.
\end{lemma}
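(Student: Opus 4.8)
The plan is to prove the two inequalities separately; the upper bound is immediate and essentially all the work goes into the lower bound. For the upper bound I would use that $A_\varepsilon u=-\nu\mathbb{P}_\varepsilon\Delta u$ for $u\in D(A_\varepsilon)$ together with the fact that $\mathbb{P}_\varepsilon$ is an orthogonal projection on $L^2(\Omega_\varepsilon)^3$, which gives at once
\begin{align*}
  \|A_\varepsilon u\|_{L^2(\Omega_\varepsilon)}\leq\nu\|\Delta u\|_{L^2(\Omega_\varepsilon)}\leq\nu\|u\|_{H^2(\Omega_\varepsilon)}
\end{align*}
with a constant independent of $\varepsilon$.

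For the lower bound I would first reduce matters to an $\varepsilon$-uniform elliptic regularity statement. Setting $f:=A_\varepsilon u\in\mathcal{H}_\varepsilon$ and using \eqref{E:Dom_St} together with $A_\varepsilon u=-\nu\mathbb{P}_\varepsilon\Delta u$, there is a pressure $p\in H^1(\Omega_\varepsilon)$ (normalized to have zero mean) so that $(u,p)$ solves the Stokes system $-\nu\Delta u+\nabla p=f$, $\mathrm{div}\,u=0$ in $\Omega_\varepsilon$ with the slip boundary conditions $u\cdot n_\varepsilon=0$, $2\nu P_\varepsilon D(u)n_\varepsilon+\gamma_\varepsilon u=0$ on $\Gamma_\varepsilon$; under the condition (A3) one checks, integrating \eqref{E:IbP_St} by parts and using $D(w)=0$ and $w\cdot n_\varepsilon=0$ for $w\in\mathcal{R}_g$, that $\Delta u$ is automatically orthogonal to $\mathcal{R}_g$, so no extra rigid term enters the equation. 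The target is then the bound
\begin{align} \label{E:PfSk_Reg}
  \|u\|_{H^2(\Omega_\varepsilon)}\leq c\bigl(\|f\|_{L^2(\Omega_\varepsilon)}+\|u\|_{H^1(\Omega_\varepsilon)}\bigr)
\end{align}
with $c$ independent of $\varepsilon$, after which Lemma~\ref{L:Stokes_H1} (through \eqref{E:Stokes_H1} and \eqref{E:Stokes_Po}) turns $\|u\|_{H^1(\Omega_\varepsilon)}$ into $c\|A_\varepsilon^{1/2}u\|_{L^2(\Omega_\varepsilon)}\leq c\|f\|_{L^2(\Omega_\varepsilon)}$ and the lower bound follows. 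I note that \eqref{E:PfSk_Reg} can be reached equally well from the difference estimate \eqref{E:Comp_Sto_Lap}, which controls $\|\Delta u\|_{L^2(\Omega_\varepsilon)}$ by $\nu^{-1}\|f\|_{L^2(\Omega_\varepsilon)}+c\|u\|_{H^1(\Omega_\varepsilon)}$, combined with $H^2$-regularity for $-\Delta u$ under the boundary conditions $u\cdot n_\varepsilon=0$ and $P_\varepsilon(n_\varepsilon\cdot\nabla)u=-W_\varepsilon u-\gamma_\varepsilon\nu^{-1}u$ supplied by Lemma~\ref{L:NSl}.

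To prove \eqref{E:PfSk_Reg} I would transfer the problem to the fixed model domain $\Gamma\times(0,1)$ via the diffeomorphism $\Phi_\varepsilon(y,r):=y+\varepsilon\bigl(g_0(y)+rg(y)\bigr)n(y)$ and then localize using a finite atlas of coordinate patches of $\Gamma$, flattening $\Gamma$ as well. In these variables the Stokes system becomes a second-order elliptic system on a fixed half-cube whose principal part, at $\varepsilon=0$, is constant-coefficient; the crucial structural feature is that the vertical derivative corresponds to $\varepsilon$ times a normal derivative, so the a priori estimate must be carried out in the anisotropic $\varepsilon$-weighted norms underlying Lemmas~\ref{L:Poincare} and~\ref{L:Agmon}. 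I would then apply the Agmon--Douglis--Nirenberg estimates to the frozen-coefficient model problem -- whose Navier boundary operator degenerates at leading order into a Dirichlet condition on the normal component and a Neumann-type condition on the tangential components, a split that satisfies the complementing condition uniformly in $\varepsilon$ -- patch the local estimates, and estimate the pressure jointly with $u$ within this scheme, absorbing its $L^2$-norm by means of the uniform Poincar\'{e} inequality on $\Omega_\varepsilon$ (whose constant is $O(1)$ because the first nonzero Neumann eigenvalue of $\Omega_\varepsilon$ comes from tangential oscillation and stays bounded below).

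The hard part will be the uniform-in-$\varepsilon$ bookkeeping: one has to verify that every error term created by the change of variables -- the curvature terms from $\overline{W}$, the terms from $\nabla_\Gamma g_i\neq0$, and the friction terms, which are $O(\varepsilon)$ by Assumption~\ref{Assump_1} -- carries at least one spare power of $\varepsilon$ relative to the principal part, so that for $\varepsilon$ small they can be absorbed into the left-hand side of \eqref{E:PfSk_Reg} or into the lower-order term $\|u\|_{H^1(\Omega_\varepsilon)}$. This step, together with the uniform pressure estimate, is precisely where the $C^5$-regularity of $\Gamma$ and the $C^4$-regularity of $g_0$ and $g_1$ are genuinely used (cf.\ Remark~\ref{R:Assump}).
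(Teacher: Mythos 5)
You should first note that this paper contains no proof of Lemma~\ref{L:Stokes_H2}: it is quoted verbatim from Theorem~2.7 of \cite{Miu_NSCTD_01}, and Remark~\ref{R:Assump} records that its proof is given in Section~6 of that paper (and is the only place where the $C^5$-regularity of $\Gamma$ and the $C^4$-regularity of $g_0,g_1$ are needed). So there is no in-paper argument to compare against; what can be said is that your outline is in the same general spirit as the external proof (an $\varepsilon$-uniform a priori estimate for the Stokes system under the slip conditions, obtained by localization and flattening), and that your outer reductions are correct: the upper bound in \eqref{E:Stokes_H2} from $A_\varepsilon u=-\nu\mathbb{P}_\varepsilon\Delta u$; the recovery of a pressure via \eqref{E:Dom_St} and the Helmholtz decomposition, with the check through \eqref{E:IbP_St} that no $\mathcal{R}_g$-component enters under (A3) (this parallels Remark~\ref{R:Ext_Orth}); and the passage from a uniform bound $\|u\|_{H^2(\Omega_\varepsilon)}\le c\bigl(\|A_\varepsilon u\|_{L^2(\Omega_\varepsilon)}+\|u\|_{H^1(\Omega_\varepsilon)}\bigr)$ to the lower bound via Lemma~\ref{L:Stokes_H1}, which is logically independent of the present lemma, so no circularity arises; the same holds for your alternative route through \eqref{E:Comp_Sto_Lap} and \eqref{E:NSl_ND}.

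The genuine gap is that the uniform regularity estimate itself---which is the entire content of the lemma---is left at the level of a program, and the two places where you wave your hands are precisely where the difficulty lives. First, after the anisotropic change of variables onto $\Gamma\times(0,1)$ the principal part of the transformed operator contains $\varepsilon^{-2}\partial_{s_3}^2$, so the rescaled system is not uniformly elliptic in the classical sense and the Agmon--Douglis--Nirenberg estimates cannot be invoked ``with constants uniform in $\varepsilon$'' off the shelf; one must either rebuild the boundary estimates in the $\varepsilon$-weighted anisotropic framework (verifying the complementing condition for the degenerate limit boundary operator and tracking exactly how the constants scale) or avoid rescaling and work directly in $\Omega_\varepsilon$ with cut-offs and tangential difference quotients adapted to the thin direction. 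Second, the absorption of all lower-order errors---the curvature terms from $\overline{W}$, the $\nabla_\Gamma g_i$ terms, the friction terms which are $O(\varepsilon)$ by \eqref{E:Fric_Upper}, and the pressure, for which your Neumann-spectral-gap remark only controls $\|p\|_{L^2(\Omega_\varepsilon)}$ by $\|\nabla p\|_{L^2(\Omega_\varepsilon)}$ and does not by itself close the $H^2$ estimate (the cleaner route is indeed via \eqref{E:Comp_Sto_Lap}, which bounds $\|\nabla p\|_{L^2(\Omega_\varepsilon)}$ directly)---is exactly the long computation that occupies Section~6 of \cite{Miu_NSCTD_01}, and you acknowledge rather than perform it. As written, the proposal is a plausible roadmap to \eqref{E:Stokes_H2}, not a proof of it.
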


\begin{lemma}[{\cite{Miu_NSCTD_01}*{Corollary~2.8}}] \label{L:St_Inter}
  There exists a constant $c>0$ such that
  \begin{align} \label{E:St_Inter}
    \|u\|_{H^1(\Omega_\varepsilon)} \leq c\|u\|_{L^2(\Omega_\varepsilon)}^{1/2}\|u\|_{H^2(\Omega_\varepsilon)}^{1/2}
  \end{align}
  for all $\varepsilon\in(0,\varepsilon_0]$ and $u\in D(A_\varepsilon)$.
\end{lemma}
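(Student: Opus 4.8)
The plan is to deduce the interpolation inequality \eqref{E:St_Inter} directly from the norm equivalences \eqref{E:Stokes_H1} and \eqref{E:Stokes_H2} together with the abstract interpolation property of fractional powers of a positive self-adjoint operator. Since $A_\varepsilon$ is positive and self-adjoint on $\mathcal{H}_\varepsilon$, the spectral theorem gives the moment inequality
\begin{align*}
  \|A_\varepsilon^{1/2}u\|_{L^2(\Omega_\varepsilon)} \leq \|u\|_{L^2(\Omega_\varepsilon)}^{1/2}\,\|A_\varepsilon u\|_{L^2(\Omega_\varepsilon)}^{1/2}
\end{align*}
for all $u\in D(A_\varepsilon)$; this is just the Cauchy--Schwarz inequality applied to $\int_0^\infty \lambda^{1/2}\,d(E_\lambda u,u)$ against the spectral measure, or equivalently the observation that $\lambda \mapsto \log\lambda$ is concave. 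First I would record this moment inequality, noting that the constant is exactly $1$ and in particular independent of $\varepsilon$.

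Next I would chain the three estimates. The left inequality of \eqref{E:Stokes_H1} gives $\|u\|_{H^1(\Omega_\varepsilon)} \leq c\|A_\varepsilon^{1/2}u\|_{L^2(\Omega_\varepsilon)}$; inserting the moment inequality bounds this by $c\|u\|_{L^2(\Omega_\varepsilon)}^{1/2}\|A_\varepsilon u\|_{L^2(\Omega_\varepsilon)}^{1/2}$; and finally the right inequality of \eqref{E:Stokes_H2} replaces $\|A_\varepsilon u\|_{L^2(\Omega_\varepsilon)}$ by $c\|u\|_{H^2(\Omega_\varepsilon)}$. Collecting constants yields
\begin{align*}
  \|u\|_{H^1(\Omega_\varepsilon)} \leq c\|u\|_{L^2(\Omega_\varepsilon)}^{1/2}\|u\|_{H^2(\Omega_\varepsilon)}^{1/2}
\end{align*}
with $c>0$ independent of $\varepsilon$, since each constant used along the way is uniform in $\varepsilon\in(0,\varepsilon_0]$ by Lemmas~\ref{L:Stokes_H1} and~\ref{L:Stokes_H2}. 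This completes the argument.

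There is essentially no obstacle here: the only point requiring a moment's care is justifying the moment inequality for $A_\varepsilon^{1/2}$, but this is a standard fact for positive self-adjoint operators (one may alternatively cite it as a special case of the three-lines lemma or of complex interpolation between $\mathcal{H}_\varepsilon$ and $D(A_\varepsilon)$), and the crucial $\varepsilon$-uniformity is already baked into the cited lemmas rather than being something one must fight for. So the proof is a short three-step chain.
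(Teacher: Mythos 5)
Your argument is correct and is essentially the paper's own route: the result is stated as a corollary of the uniform norm equivalences \eqref{E:Stokes_H1} and \eqref{E:Stokes_H2}, combined with the moment inequality $\|A_\varepsilon^{1/2}u\|_{L^2(\Omega_\varepsilon)}\leq\|u\|_{L^2(\Omega_\varepsilon)}^{1/2}\|A_\varepsilon u\|_{L^2(\Omega_\varepsilon)}^{1/2}$, exactly as you chain them. Note only that you do not even need the spectral theorem here: by \eqref{E:L2in_Ahalf}, $\|A_\varepsilon^{1/2}u\|_{L^2(\Omega_\varepsilon)}^2=(A_\varepsilon u,u)_{L^2(\Omega_\varepsilon)}$, and Cauchy--Schwarz gives the moment inequality with constant $1$ directly.
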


\section{Average operators in the thin direction} \label{S:Ave}
The purpose of this section is to study average operators in the thin direction which play a fundamental role in the analysis of the Navier--Stokes equations \eqref{E:NS_CTD}.
Throughout this section we assume $\varepsilon\in(0,1]$ and denote by $\bar{\eta}=\eta\circ\pi$ the constant extension of a function $\eta$ on $\Gamma$ in the normal direction of $\Gamma$.
We also write $\partial_n\varphi=(\bar{n}\cdot\nabla)\varphi$ for the derivative of a function $\varphi$ on $\Omega_\varepsilon$ in the normal direction of $\Gamma$.

\subsection{Definition and basic properties of the average operators} \label{SS:Ave_Def}
Let us give the definition of average operators and show their basic properties.

\begin{definition} \label{D:Average}
  We define the average operator $M$ as
  \begin{align} \label{E:Def_Ave}
    M\varphi(y) := \frac{1}{\varepsilon g(y)}\int_{\varepsilon g_0(y)}^{\varepsilon g_1(y)}\varphi(y+rn(y))\,dr, \quad y\in\Gamma
  \end{align}
  for a function $\varphi$ on $\Omega_\varepsilon$.
  The operator $M$ is also applied to a vector field $u\colon\Omega_\varepsilon\to\mathbb{R}^3$ and we define the averaged tangential component $M_\tau u$ of $u$ by
  \begin{align} \label{E:Def_Tan_Ave}
    M_\tau u(y) := P(y)Mu(y) = \frac{1}{\varepsilon g(y)}\int_{\varepsilon g_0(y)}^{\varepsilon g_1(y)}P(y)u(y+rn(y))\,dr, \quad y\in\Gamma.
  \end{align}
\end{definition}

For the sake of simplicity, we denote the tangential and normal components with respect to the surface $\Gamma$ of a vector field $u\colon\Omega_\varepsilon\to\mathbb{R}^3$ by
\begin{align} \label{E:Def_U_TN}
  u_\tau(x) := \overline{P}(x)u(x), \quad u_n(x) := \{u(x)\cdot\bar{n}(x)\}\bar{n}(x), \quad x\in\Omega_\varepsilon
\end{align}
so that $u=u_\tau+u_n$ and $u_\tau\cdot u_n=0$ in $\Omega_\varepsilon$ (note that $u_n$ is a vector field).
Moreover, we use the notations \eqref{E:Pull_Dom} and \eqref{E:Pull_Bo} and sometimes suppress the arguments of functions.
For example, we write
\begin{align*}
  M\varphi = \frac{1}{\varepsilon g}\int_{\varepsilon g_0}^{\varepsilon g_1}\varphi^\sharp\,dr, \quad M_\tau u = \frac{1}{\varepsilon g}\int_{\varepsilon g_0}^{\varepsilon g_1}u_\tau^\sharp\,dr \quad\text{on}\quad \Gamma.
\end{align*}

\begin{lemma} \label{L:Ave_Lp}
  Let $p\in[1,\infty)$.
  There exists $c>0$ independent of $\varepsilon$ such that
  \begin{align}
    \|M\varphi\|_{L^p(\Gamma)} &\leq c\varepsilon^{-1/p}\|\varphi\|_{L^p(\Omega_\varepsilon)}, \label{E:Ave_Lp_Surf} \\
    \left\|\overline{M\varphi}\right\|_{L^p(\Omega_\varepsilon)} &\leq c\|\varphi\|_{L^p(\Omega_\varepsilon)} \label{E:Ave_Lp_Dom}
  \end{align}
  for all $\varphi\in L^p(\Omega_\varepsilon)$.
\end{lemma}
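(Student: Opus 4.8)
The plan is to prove the two inequalities \eqref{E:Ave_Lp_Surf} and \eqref{E:Ave_Lp_Dom} by a direct application of the change of variables formula \eqref{E:CoV_Dom} (or rather its $L^p$-version \eqref{E:CoV_Equiv}) together with H\"older's inequality in the thin variable $r$. The essential point is that $M\varphi(y)$ is an average of $\varphi^\sharp(y,\cdot)$ over an interval of length $\varepsilon g(y)$, and $g$ is bounded from above and below on $\Gamma$ by \eqref{E:G_Inf} and $g_0,g_1\in C^4(\Gamma)$.

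First I would fix $y\in\Gamma$ and estimate $|M\varphi(y)|$. Writing $\ell(y) := \varepsilon g(y)$ for the length of the interval $(\varepsilon g_0(y),\varepsilon g_1(y))$, H\"older's inequality in $r$ gives
\begin{align*}
  |M\varphi(y)|^p = \frac{1}{\ell(y)^p}\left|\int_{\varepsilon g_0(y)}^{\varepsilon g_1(y)}\varphi^\sharp(y,r)\,dr\right|^p \leq \frac{1}{\ell(y)}\int_{\varepsilon g_0(y)}^{\varepsilon g_1(y)}|\varphi^\sharp(y,r)|^p\,dr.
\end{align*}
Since $\ell(y) = \varepsilon g(y) \geq c\varepsilon$ by \eqref{E:G_Inf}, we get $|M\varphi(y)|^p \leq c\varepsilon^{-1}\int_{\varepsilon g_0(y)}^{\varepsilon g_1(y)}|\varphi^\sharp(y,r)|^p\,dr$. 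Integrating this over $\Gamma$ with respect to $\mathcal{H}^2$ and using the right-hand inequality of \eqref{E:CoV_Equiv} yields
\begin{align*}
  \|M\varphi\|_{L^p(\Gamma)}^p \leq c\varepsilon^{-1}\int_\Gamma\int_{\varepsilon g_0(y)}^{\varepsilon g_1(y)}|\varphi^\sharp(y,r)|^p\,dr\,d\mathcal{H}^2(y) \leq c\varepsilon^{-1}\|\varphi\|_{L^p(\Omega_\varepsilon)}^p,
\end{align*}
which is \eqref{E:Ave_Lp_Surf} after taking $p$-th roots.

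For \eqref{E:Ave_Lp_Dom} I would combine \eqref{E:Ave_Lp_Surf} with the extension estimate \eqref{E:Con_Lp} from Lemma~\ref{L:Con_Lp_W1p}: applying \eqref{E:Con_Lp} with $\eta = M\varphi$ (note $M\varphi\in L^p(\Gamma)$ by the bound just proved) gives $\|\overline{M\varphi}\|_{L^p(\Omega_\varepsilon)} \leq c\varepsilon^{1/p}\|M\varphi\|_{L^p(\Gamma)}$, and then \eqref{E:Ave_Lp_Surf} bounds the right-hand side by $c\varepsilon^{1/p}\cdot c\varepsilon^{-1/p}\|\varphi\|_{L^p(\Omega_\varepsilon)} = c\|\varphi\|_{L^p(\Omega_\varepsilon)}$, as desired. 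I do not anticipate any real obstacle here; the only mild subtlety is keeping track of the powers of $\varepsilon$ coming from the interval length versus the Jacobian, but all of these are controlled uniformly in $\varepsilon$ by \eqref{E:G_Inf}, \eqref{E:Jac_Bound_02}, and the $C^4$-regularity of $g_0,g_1$, so the argument is entirely routine.
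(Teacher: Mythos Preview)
Your proposal is correct and follows exactly the same route as the paper's proof: pointwise H\"older in the thin variable together with \eqref{E:G_Inf} gives the bound on $|M\varphi(y)|^p$, integration over $\Gamma$ combined with \eqref{E:CoV_Equiv} yields \eqref{E:Ave_Lp_Surf}, and then \eqref{E:Con_Lp} applied to $\eta=M\varphi$ gives \eqref{E:Ave_Lp_Dom}. There is nothing to add.
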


\begin{proof}
  By H\"{o}lder's inequality and \eqref{E:G_Inf},
  \begin{align*}
    |M\varphi(y)|^p = \left|\frac{1}{\varepsilon g(y)}\int_{\varepsilon g_0(y)}^{\varepsilon g_1(y)}\varphi^\sharp(y,r)\,dr\right|^p \leq c\varepsilon^{-1}\int_{\varepsilon g_0(y)}^{\varepsilon g_1(y)}|\varphi^\sharp(y,r)|^p\,dr
  \end{align*}
  for all $y\in\Gamma$.
  Integrating both sides of the above inequality over $\Gamma$ and using \eqref{E:CoV_Equiv} we obtain \eqref{E:Ave_Lp_Surf}.
  The inequality \eqref{E:Ave_Lp_Dom} follows from \eqref{E:Con_Lp} and \eqref{E:Ave_Lp_Surf}.
\end{proof}

\begin{lemma} \label{L:AveT_Lp}
  Let $p\in[1,\infty)$.
  There exists $c>0$ independent of $\varepsilon$ such that
  \begin{align} \label{E:AveT_Lp_Surf}
    \|M_\tau u\|_{L^p(\Gamma)} &\leq c\varepsilon^{-1/p}\|u\|_{L^p(\Omega_\varepsilon)}
  \end{align}
  for all $u\in L^p(\Omega_\varepsilon)^3$.
\end{lemma}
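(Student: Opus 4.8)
The plan is to reduce the statement immediately to Lemma~\ref{L:Ave_Lp}, which already controls the $L^p(\Gamma)$-norm of the scalar average $M\varphi$ by $c\varepsilon^{-1/p}\|\varphi\|_{L^p(\Omega_\varepsilon)}$. The key observation is that the averaged tangential component coincides with the average of the tangential part of $u$: since $P$ does not depend on $r$, pulling $P(y)$ inside the integral in \eqref{E:Def_Tan_Ave} gives
\begin{align*}
  M_\tau u(y) = \frac{1}{\varepsilon g(y)}\int_{\varepsilon g_0(y)}^{\varepsilon g_1(y)} P(y)u(y+rn(y))\,dr = \frac{1}{\varepsilon g(y)}\int_{\varepsilon g_0(y)}^{\varepsilon g_1(y)} u_\tau^\sharp(y,r)\,dr = M(u_\tau)(y)
\end{align*}
for $y\in\Gamma$, where $u_\tau=\overline{P}u$ is the tangential part of $u$ from \eqref{E:Def_U_TN} and $u_\tau^\sharp$ is its pullback \eqref{E:Pull_Dom}.

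Next I would apply \eqref{E:Ave_Lp_Surf} to each of the three scalar components of the vector field $u_\tau$ and sum; this is the only minor point, namely that Lemma~\ref{L:Ave_Lp} is phrased for scalar functions, but the extension to a finite number of components is trivial and changes the constant only by a dimensional factor. This yields $\|M_\tau u\|_{L^p(\Gamma)} = \|M(u_\tau)\|_{L^p(\Gamma)} \leq c\varepsilon^{-1/p}\|u_\tau\|_{L^p(\Omega_\varepsilon)}$.

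Finally, since $\overline{P}$ is (the constant extension of) an orthogonal projection, we have $|u_\tau(x)| = |\overline{P}(x)u(x)| \leq |u(x)|$ pointwise in $\Omega_\varepsilon$, hence $\|u_\tau\|_{L^p(\Omega_\varepsilon)} \leq \|u\|_{L^p(\Omega_\varepsilon)}$, and \eqref{E:AveT_Lp_Surf} follows. There is no real obstacle here: the estimate is an immediate corollary of Lemma~\ref{L:Ave_Lp} together with the boundedness of $P$, and the proof is a two-line computation.
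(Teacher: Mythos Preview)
Your proof is correct and follows exactly the same route as the paper: the paper's proof is a single sentence noting that $M_\tau u = M u_\tau$ on $\Gamma$ and $|u_\tau|\leq|u|$ in $\Omega_\varepsilon$, then invoking \eqref{E:Ave_Lp_Surf}. Your write-up simply unpacks these same steps.
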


\begin{proof}
  Since $M_\tau u=Mu_\tau$ on $\Gamma$ and $|u_\tau|\leq|u|$ in $\Omega_\varepsilon$ by \eqref{E:Def_Tan_Ave} and \eqref{E:Def_U_TN}, the inequality \eqref{E:AveT_Lp_Surf} immediately follows from \eqref{E:Ave_Lp_Surf}.
\end{proof}

\begin{lemma} \label{L:Ave_Diff}
  Let $p\in[1,\infty)$.
  There exists $c>0$ independent of $\varepsilon$ such that
  \begin{align}
    \left\|\varphi-\overline{M\varphi}\right\|_{L^p(\Omega_\varepsilon)} &\leq c\varepsilon\|\partial_n\varphi\|_{L^p(\Omega_\varepsilon)}, \label{E:Ave_Diff_Dom}\\
    \left\|\varphi-\overline{M\varphi}\right\|_{L^p(\Gamma_\varepsilon^i)} &\leq c\varepsilon^{1-1/p}\|\partial_n\varphi\|_{L^p(\Omega_\varepsilon)}, \quad i=0,1 \label{E:Ave_Diff_Bo}
  \end{align}
  for all $\varphi\in W^{1,p}(\Omega_\varepsilon)$.
\end{lemma}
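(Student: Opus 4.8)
The plan is to reduce both inequalities to a one-dimensional Poincaré-type estimate along the normal segments $r\mapsto\varphi^\sharp(y,r)$ and then integrate over $\Gamma$, converting between norms on $\Omega_\varepsilon$, $\Gamma_\varepsilon^i$, and $\Gamma$ by means of \eqref{E:CoV_Equiv} and Lemma~\ref{L:CoV_Surf}. Since $\Omega_\varepsilon$ has sufficiently regular boundary, $C^1(\overline{\Omega}_\varepsilon)$ is dense in $W^{1,p}(\Omega_\varepsilon)$ and the trace onto $\Gamma_\varepsilon^i$ is bounded; moreover $\varphi\mapsto M\varphi$ and the constant extension are bounded by Lemma~\ref{L:Ave_Lp}. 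Hence both sides of the claimed inequalities are continuous in the $W^{1,p}(\Omega_\varepsilon)$-topology, and it suffices to prove them for $\varphi\in C^1(\overline{\Omega}_\varepsilon)$. (Alternatively, one may argue directly using that a $W^{1,p}$ function is absolutely continuous on almost every normal segment.)

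The key pointwise bound is obtained as follows. The constant extension satisfies $\overline{M\varphi}^\sharp(y,r)=M\varphi(y)$ for every $r$, so $(\varphi-\overline{M\varphi})^\sharp(y,r)=\varphi^\sharp(y,r)-M\varphi(y)$. Writing $M\varphi(y)$ as the average of $\varphi^\sharp(y,\cdot)$ over $(\varepsilon g_0(y),\varepsilon g_1(y))$ and using $\partial_r\varphi^\sharp(y,r)=(\partial_n\varphi)^\sharp(y,r)$ from \eqref{E:Def_NorDer}, I get for all $r\in(\varepsilon g_0(y),\varepsilon g_1(y))$
\begin{align*}
  |\varphi^\sharp(y,r)-M\varphi(y)|
  \leq \frac{1}{\varepsilon g(y)}\int_{\varepsilon g_0(y)}^{\varepsilon g_1(y)}\left|\int_\rho^r(\partial_n\varphi)^\sharp(y,s)\,ds\right|d\rho
  \leq \int_{\varepsilon g_0(y)}^{\varepsilon g_1(y)}|(\partial_n\varphi)^\sharp(y,s)|\,ds =: G(y),
\end{align*}
and then Hölder's inequality in $s$ together with \eqref{E:G_Inf} gives $G(y)^p\leq c\varepsilon^{p-1}\int_{\varepsilon g_0(y)}^{\varepsilon g_1(y)}|(\partial_n\varphi)^\sharp(y,s)|^p\,ds$.

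For \eqref{E:Ave_Diff_Dom} I would apply \eqref{E:CoV_Equiv} to bound $\|\varphi-\overline{M\varphi}\|_{L^p(\Omega_\varepsilon)}^p$ by $c\int_\Gamma\int_{\varepsilon g_0(y)}^{\varepsilon g_1(y)}|\varphi^\sharp(y,r)-M\varphi(y)|^p\,dr\,d\mathcal{H}^2(y)$; the integrand is at most $G(y)^p$ and independent of $r$, so integration in $r$ over an interval of length $\varepsilon g(y)\leq c\varepsilon$ produces a factor $c\varepsilon$, and the bound on $G(y)^p$ together with \eqref{E:CoV_Equiv} once more yields $c\varepsilon\cdot\varepsilon^{p-1}\|\partial_n\varphi\|_{L^p(\Omega_\varepsilon)}^p$. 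For \eqref{E:Ave_Diff_Bo} I would instead use the pullback \eqref{E:Pull_Bo} and Lemma~\ref{L:CoV_Surf}: since $(\overline{M\varphi})_i^\sharp(y)=M\varphi(y)$ by \eqref{E:Nor_Coord} and $\varphi_i^\sharp(y)=\varphi^\sharp(y,\varepsilon g_i(y))$ by \eqref{E:Pull_Dom}, the left-hand side is controlled by $c\int_\Gamma G(y)^p\,d\mathcal{H}^2(y)\leq c\varepsilon^{p-1}\|\partial_n\varphi\|_{L^p(\Omega_\varepsilon)}^p$. Taking $p$-th roots gives the two claimed estimates. Nothing here is a genuine obstacle; the only points needing care are the bookkeeping of $\varepsilon$-powers through Hölder's inequality and the change-of-variables equivalences, and the reduction to smooth $\varphi$ so that differentiation along normal lines is justified.
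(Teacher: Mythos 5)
Your proposal is correct, and for \eqref{E:Ave_Diff_Dom} it is essentially the paper's own argument: the same pointwise bound $|\varphi^\sharp(y,r)-M\varphi(y)|\leq\int_{\varepsilon g_0(y)}^{\varepsilon g_1(y)}|(\partial_n\varphi)^\sharp(y,s)|\,ds$ obtained by writing $M\varphi(y)$ as an average and using $\partial_r\varphi^\sharp=(\partial_n\varphi)^\sharp$, followed by H\"{o}lder in $s$ and the change-of-variables equivalence \eqref{E:CoV_Equiv}. The only divergence is in \eqref{E:Ave_Diff_Bo}: the paper deduces it from the already-proved interior estimate by applying the trace inequality \eqref{E:Poin_Bo} to $\varphi-\overline{M\varphi}$ together with \eqref{E:NorDer_Con} (so that $\partial_n(\varphi-\overline{M\varphi})=\partial_n\varphi$), whereas you evaluate the pointwise bound at the endpoint $r=\varepsilon g_i(y)$ and transfer it to $\Gamma_\varepsilon^i$ via the pullback \eqref{E:Pull_Bo} and Lemma~\ref{L:CoV_Surf}. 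Both routes give the same power $\varepsilon^{1-1/p}$; the paper's version reuses Lemma~\ref{L:Poincare} and avoids discussing traces along normal segments, while yours is self-contained at the price of the density (or absolute-continuity-on-lines) reduction you correctly flag, which is needed to make sense of the boundary value of $\varphi^\sharp(y,\cdot)$ and is also implicitly behind the paper's computation. One cosmetic remark: the H\"{o}lder step uses the upper bound of $g$ (boundedness of $g_0,g_1$), not the lower bound \eqref{E:G_Inf} you cite, but this is immaterial.
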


\begin{proof}
  For $y\in\Gamma$ and $r\in(\varepsilon g_0(y),\varepsilon g_1(y))$ we have
  \begin{align} \label{Pf_ADiff:Diff}
    \varphi^\sharp(y,r)-M\varphi(y) &= \frac{1}{\varepsilon g(y)}\int_{\varepsilon g_0(y)}^{\varepsilon g_1(y)}\{\varphi^\sharp(y,r)-\varphi^\sharp(y,r_1)\}\,dr_1.
  \end{align}
  Since $\partial\varphi^\sharp/\partial r=(\partial_n\varphi)^\sharp$ by \eqref{E:Pull_Dom} and \eqref{E:Def_NorDer},
  \begin{align*}
    \begin{aligned}
      |\varphi^\sharp(y,r)-\varphi^\sharp(y,r_1)| &= \left|\int_{r_1}^r\frac{\partial}{\partial r_2}\bigl(\varphi^\sharp(y,r_2)\bigr)\,dr_2\right| \leq \int_{\varepsilon g_0(y)}^{\varepsilon g_1(y)}|(\partial_n\varphi)^\sharp(y,r_2)|\,dr_2.
    \end{aligned}
  \end{align*}
  Noting that the right-hand side is independent of $r_1$, we apply this inequality to the right-hand side of \eqref{Pf_ADiff:Diff} and then use H\"{o}lder's inequality to get
  \begin{align*}
    |\varphi^\sharp(y,r)-M\varphi(y)| &\leq \int_{\varepsilon g_0(y)}^{\varepsilon g_1(y)}|(\partial_n\varphi)^\sharp(y,r_2)|\,dr_2 \\
    &\leq c\varepsilon^{1-1/p}\left(\int_{\varepsilon g_0(y)}^{\varepsilon g_1(y)}|(\partial_n\varphi)^\sharp(y,r_2)|^p\,dr_2\right)^{1/p}.
  \end{align*}
  Since the last term is independent of $r$, this inequality and \eqref{E:CoV_Equiv} imply that
  \begin{align*}
    \left\|\varphi-\overline{M\varphi}\right\|_{L^p(\Omega_\varepsilon)}^p &\leq c\int_\Gamma\int_{\varepsilon g_0(y)}^{\varepsilon g_1(y)}|\varphi^\sharp(y,r)-M\varphi(y)|^p\,dr\,d\mathcal{H}^2(y) \\
    &\leq c\int_\Gamma\varepsilon g(y)\left(c\varepsilon^{p-1}\int_{\varepsilon g_0(y)}^{\varepsilon g_1(y)}|(\partial_n\varphi)^\sharp(y,r_2)|^p\,dr_2\right)d\mathcal{H}^2(y) \\
    &\leq c\varepsilon^p\|\partial_n\varphi\|_{L^p(\Omega_\varepsilon)}^p.
  \end{align*}
  Thus \eqref{E:Ave_Diff_Dom} follows.
  We also have \eqref{E:Ave_Diff_Bo} by applying \eqref{E:Poin_Bo} to $\varphi-\overline{M\varphi}$ and using \eqref{E:NorDer_Con} and \eqref{E:Ave_Diff_Dom} to the resulting inequality.
\end{proof}

\begin{lemma} \label{L:Ave_N_Lp}
  Let $p\in[1,\infty)$.
  There exists $c>0$ independent of $\varepsilon$ such that
  \begin{align} \label{E:Ave_N_Lp}
    \|Mu\cdot n\|_{L^p(\Gamma)} &\leq c\varepsilon^{1-1/p}\|u\|_{W^{1,p}(\Omega_\varepsilon)}
  \end{align}
  for all $u\in W^{1,p}(\Omega_\varepsilon)^3$ satisfying \eqref{E:Bo_Imp} on $\Gamma_\varepsilon^0$ or on $\Gamma_\varepsilon^1$.
\end{lemma}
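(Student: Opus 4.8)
The plan is to reduce this to two facts already established: the $L^p(\Gamma)$-bound for the average operator, \eqref{E:Ave_Lp_Surf}, and the Poincaré-type inequality \eqref{E:Poin_Nor} for the normal component of a vector field vanishing on one of the boundary components.

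The key observation I would make first is that the normal component of the average equals the average of the normal component. Indeed, for $y\in\Gamma$ the normal $n(y)$ is independent of $r$, and the constant extension satisfies $\bar n(y+rn(y)) = n(\pi(y+rn(y))) = n(y)$, so
\begin{align*}
  Mu(y)\cdot n(y)
  = \frac{1}{\varepsilon g(y)}\int_{\varepsilon g_0(y)}^{\varepsilon g_1(y)} u(y+rn(y))\cdot n(y)\,dr
  = \frac{1}{\varepsilon g(y)}\int_{\varepsilon g_0(y)}^{\varepsilon g_1(y)} (u\cdot\bar n)^\sharp(y,r)\,dr
  = M(u\cdot\bar n)(y)
\end{align*}
on $\Gamma$; that is, $Mu\cdot n = M(u\cdot\bar n)$.

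Then I would simply chain the two estimates. Applying \eqref{E:Ave_Lp_Surf} with $\varphi = u\cdot\bar n \in W^{1,p}(\Omega_\varepsilon)\subset L^p(\Omega_\varepsilon)$ gives $\|Mu\cdot n\|_{L^p(\Gamma)} = \|M(u\cdot\bar n)\|_{L^p(\Gamma)} \leq c\varepsilon^{-1/p}\|u\cdot\bar n\|_{L^p(\Omega_\varepsilon)}$. Since $u$ satisfies \eqref{E:Bo_Imp} on $\Gamma_\varepsilon^0$ or on $\Gamma_\varepsilon^1$, Lemma~\ref{L:Poin_Nor} (inequality \eqref{E:Poin_Nor}) yields $\|u\cdot\bar n\|_{L^p(\Omega_\varepsilon)} \leq c\varepsilon\|u\|_{W^{1,p}(\Omega_\varepsilon)}$. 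Multiplying the two bounds gives exactly $\|Mu\cdot n\|_{L^p(\Gamma)} \leq c\varepsilon^{1-1/p}\|u\|_{W^{1,p}(\Omega_\varepsilon)}$, with $c$ independent of $\varepsilon$.

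There is essentially no hard step here: the whole content is recognizing that $Mu\cdot n$ is the average of the small quantity $u\cdot\bar n$, at which point the result is a two-line composition of Lemmas~\ref{L:Ave_Lp} and~\ref{L:Poin_Nor}. The only point requiring a little care is bookkeeping of the powers of $\varepsilon$: the gain of $\varepsilon$ from the Poincaré inequality exactly cancels the loss of $\varepsilon^{1/p}$ (beyond the $\varepsilon^{-1/p}$ appearing) expected when passing from an $L^p$-norm on the thin domain to one on the surface, leaving the stated $\varepsilon^{1-1/p}$.
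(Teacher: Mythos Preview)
Your proof is correct and follows exactly the same approach as the paper's: first rewrite $Mu\cdot n = M(u\cdot\bar n)$, then apply \eqref{E:Ave_Lp_Surf} to get the factor $\varepsilon^{-1/p}$, and finally invoke \eqref{E:Poin_Nor} for the factor $\varepsilon$. The paper's proof is a two-line version of precisely this argument.
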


\begin{proof}
  By \eqref{E:Ave_Lp_Surf} we observe that
  \begin{align*}
    \|Mu\cdot n\|_{L^p(\Gamma)} = \|M(u\cdot\bar{n})\|_{L^p(\Gamma)} \leq c\varepsilon^{-1/p}\|u\cdot\bar{n}\|_{L^p(\Omega_\varepsilon)}.
  \end{align*}
  Applying \eqref{E:Poin_Nor} to the right-hand side we obtain \eqref{E:Ave_N_Lp}.
\end{proof}

\begin{lemma} \label{L:AveT_Diff}
  Let $p\in[1,\infty)$.
  There exists $c>0$ independent of $\varepsilon$ such that
  \begin{align} \label{E:AveT_Diff_Dom}
    \left\|u-\overline{M_\tau u}\right\|_{L^p(\Omega_\varepsilon)} \leq c\varepsilon\|u\|_{W^{1,p}(\Omega_\varepsilon)}
  \end{align}
  for all $u\in W^{1,p}(\Omega_\varepsilon)^3$ satisfying \eqref{E:Bo_Imp} on $\Gamma_\varepsilon^0$ or on $\Gamma_\varepsilon^1$.
\end{lemma}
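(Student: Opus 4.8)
The plan is to split $u$ into its tangential and normal components with respect to $\Gamma$, writing $u = u_\tau + u_n$ as in \eqref{E:Def_U_TN}, and to treat the two pieces separately. Since the constant extension $\overline{P}$ restricted to a normal segment $\{y+rn(y)\}$ equals $P(y)$, one has $Mu_\tau = M_\tau u$ on $\Gamma$ (this is the identity already used in the proof of Lemma~\ref{L:AveT_Lp}), hence $\overline{M_\tau u} = \overline{Mu_\tau}$ and
\begin{align*}
  u - \overline{M_\tau u} = \bigl(u_\tau - \overline{Mu_\tau}\bigr) + u_n \quad\text{in}\quad \Omega_\varepsilon.
\end{align*}
So it suffices to bound each of the two terms on the right-hand side by $c\varepsilon\|u\|_{W^{1,p}(\Omega_\varepsilon)}$.

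For the tangential part I would apply Lemma~\ref{L:Ave_Diff}, specifically \eqref{E:Ave_Diff_Dom}, componentwise to $u_\tau = \overline{P}u \in W^{1,p}(\Omega_\varepsilon)^3$, giving $\|u_\tau - \overline{Mu_\tau}\|_{L^p(\Omega_\varepsilon)} \leq c\varepsilon\|\partial_n u_\tau\|_{L^p(\Omega_\varepsilon)}$. The key observation is that $\partial_n\overline{P} = 0$ in $\Omega_\varepsilon$ by \eqref{E:NorDer_Con} (applied entrywise to the constant extension $\overline{P}$), so $\partial_n u_\tau = \overline{P}\,\partial_n u$ and therefore $|\partial_n u_\tau| \leq |\partial_n u| \leq |\nabla u|$ in $\Omega_\varepsilon$. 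This yields $\|u_\tau - \overline{Mu_\tau}\|_{L^p(\Omega_\varepsilon)} \leq c\varepsilon\|u\|_{W^{1,p}(\Omega_\varepsilon)}$.

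For the normal part, since $|\bar{n}| = 1$ in $\Omega_\varepsilon$ we have $\|u_n\|_{L^p(\Omega_\varepsilon)} \leq c\|u\cdot\bar{n}\|_{L^p(\Omega_\varepsilon)}$, and here the hypothesis that $u$ satisfies \eqref{E:Bo_Imp} on $\Gamma_\varepsilon^0$ or on $\Gamma_\varepsilon^1$ lets me invoke the Poincar\'e type inequality \eqref{E:Poin_Nor} of Lemma~\ref{L:Poin_Nor} to get $\|u\cdot\bar{n}\|_{L^p(\Omega_\varepsilon)} \leq c\varepsilon\|u\|_{W^{1,p}(\Omega_\varepsilon)}$. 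Adding the two estimates via the triangle inequality gives \eqref{E:AveT_Diff_Dom}. There is no real obstacle here: the only point requiring a moment's care is recognizing that the normal direction differentiation commutes with the constant extension $\overline{P}$, so that averaging the tangential component costs only one factor of $\varepsilon$ in the normal derivative, exactly as in Lemma~\ref{L:Ave_Diff}; the rest is an immediate assembly of results already established.
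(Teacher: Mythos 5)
Your proposal is correct and follows essentially the same route as the paper: the identical decomposition $u-\overline{M_\tau u}=(u_\tau-\overline{Mu_\tau})+(u\cdot\bar{n})\bar{n}$, with \eqref{E:Ave_Diff_Dom} applied to the tangential part via $\partial_nu_\tau=\overline{P}\partial_nu$ and \eqref{E:Poin_Nor} (using the impermeability hypothesis) applied to the normal part.
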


\begin{proof}
  Noting that
  \begin{gather*}
    u-\overline{M_\tau u} = (u\cdot\bar{n})\bar{n}+\Bigl(u_\tau-\overline{Mu_\tau}\Bigr), \quad |(u\cdot\bar{n})\bar{n}| = |u\cdot\bar{n}|, \\
    |\partial_nu_\tau| = \left|\partial_n\Bigl(\overline{P}u\Bigr)\right| = \left|\overline{P}\partial_nu\right| \leq |\partial_nu| \leq c|\nabla u|
  \end{gather*}
  in $\Omega_\varepsilon$ by \eqref{E:NorDer_Con}, \eqref{E:Def_Tan_Ave}, and \eqref{E:Def_U_TN}, we deduce from \eqref{E:Poin_Nor} and \eqref{E:Ave_Diff_Dom} that
  \begin{align*}
    \left\|u-\overline{M_\tau u}\right\|_{L^p(\Omega_\varepsilon)} &\leq \|u\cdot\bar{n}\|_{L^p(\Omega_\varepsilon)}+\left\|u_\tau-\overline{Mu_\tau}\right\|_{L^p(\Omega_\varepsilon)} \\
    &\leq c\varepsilon(\|u\|_{W^{1,p}(\Omega_\varepsilon)}+\|\partial_nu_\tau\|_{L^p(\Omega_\varepsilon)}) \\
    &\leq c\varepsilon\|u\|_{W^{1,p}(\Omega_\varepsilon)}.
  \end{align*}
  Hence \eqref{E:AveT_Diff_Dom} is valid.
\end{proof}

Unlike the case of a flat thin domain (see~\cites{Ho10,HoSe10,IfRaSe07}), the constant extension of $M$ on $L^2(\Omega_\varepsilon)$ is not symmetric since the Jacobian $J\not\equiv1$ appears in the change of variables formula \eqref{E:CoV_Dom}.
However, its skew-symmetric part is small of order $\varepsilon$.

\begin{lemma} \label{L:Ave_Inner}
  There exists a constant $c>0$ independent of $\varepsilon$ such that
  \begin{align} \label{E:Ave_Inner}
    \left|\Bigl(\overline{M\varphi_1},\varphi_2\Bigr)_{L^2(\Omega_\varepsilon)}-\Bigl(\varphi_1,\overline{M\varphi_2}\Bigr)_{L^2(\Omega_\varepsilon)}\right| \leq c\varepsilon\|\varphi_1\|_{L^2(\Omega_\varepsilon)}\|\varphi_2\|_{L^2(\Omega_\varepsilon)}
  \end{align}
  for all $\varphi_1,\varphi_2\in L^2(\Omega_\varepsilon)$ and
  \begin{align} \label{E:AveT_Inner}
    \left|\Bigl(\overline{M_\tau u_1},u_2\Bigr)_{L^2(\Omega_\varepsilon)}-\Bigl(u_1,\overline{M_\tau u_2}\Bigr)_{L^2(\Omega_\varepsilon)}\right| \leq c\varepsilon\|u_1\|_{L^2(\Omega_\varepsilon)}\|u_2\|_{L^2(\Omega_\varepsilon)}
  \end{align}
  for all $u_1,u_2\in L^2(\Omega_\varepsilon)^3$.
\end{lemma}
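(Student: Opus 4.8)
The plan is to reduce everything to the change of variables formula \eqref{E:CoV_Dom} and exploit the fact that the obstruction to symmetry of $M$ lives entirely in the Jacobian $J$. Starting from
\begin{align*}
  \Bigl(\overline{M\varphi_1},\varphi_2\Bigr)_{L^2(\Omega_\varepsilon)}
  = \int_\Gamma M\varphi_1(y)\left(\int_{\varepsilon g_0(y)}^{\varepsilon g_1(y)}\varphi_2^\sharp(y,r)J(y,r)\,dr\right)d\mathcal{H}^2(y),
\end{align*}
I would write $J=1+(J-1)$ in the inner integral. The term coming from the $1$ equals $\int_\Gamma \varepsilon g(y)\,M\varphi_1(y)\,M\varphi_2(y)\,d\mathcal{H}^2(y)$, which is symmetric in $\varphi_1$ and $\varphi_2$ (this is exactly what makes $M$ symmetric in the flat case). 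Hence upon forming the difference $(\overline{M\varphi_1},\varphi_2)_{L^2(\Omega_\varepsilon)}-(\varphi_1,\overline{M\varphi_2})_{L^2(\Omega_\varepsilon)}$ this symmetric contribution cancels and only the $(J-1)$-parts survive:
\begin{align*}
  \int_\Gamma M\varphi_1(y)\int_{\varepsilon g_0(y)}^{\varepsilon g_1(y)}\varphi_2^\sharp(y,r)\bigl(J(y,r)-1\bigr)\,dr\,d\mathcal{H}^2(y)
  - \bigl(\varphi_1\leftrightarrow\varphi_2\bigr).
\end{align*}

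To estimate one such term (the other being identical by swapping the roles), I would use $|J(y,r)-1|\le c\varepsilon$ on $[\varepsilon g_0(y),\varepsilon g_1(y)]$ from \eqref{E:Jac_Diff_02}, then Cauchy--Schwarz in $r$ together with \eqref{E:G_Inf} to get
\begin{align*}
  \left|\int_{\varepsilon g_0(y)}^{\varepsilon g_1(y)}\varphi_2^\sharp(y,r)\bigl(J(y,r)-1\bigr)\,dr\right|
  \le c\varepsilon^{3/2}\left(\int_{\varepsilon g_0(y)}^{\varepsilon g_1(y)}|\varphi_2^\sharp(y,r)|^2\,dr\right)^{1/2},
\end{align*}
and finally Cauchy--Schwarz over $\Gamma$, the bound $\|M\varphi_1\|_{L^2(\Gamma)}\le c\varepsilon^{-1/2}\|\varphi_1\|_{L^2(\Omega_\varepsilon)}$ from \eqref{E:Ave_Lp_Surf}, and \eqref{E:CoV_Equiv}. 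This chains to $c\varepsilon^{3/2}\cdot\varepsilon^{-1/2}\|\varphi_1\|_{L^2(\Omega_\varepsilon)}\|\varphi_2\|_{L^2(\Omega_\varepsilon)}=c\varepsilon\|\varphi_1\|_{L^2(\Omega_\varepsilon)}\|\varphi_2\|_{L^2(\Omega_\varepsilon)}$, which gives \eqref{E:Ave_Inner}.

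For the vector version \eqref{E:AveT_Inner} I would reduce to the scalar case componentwise. Note $M_\tau u_i=Mu_{i,\tau}$ on $\Gamma$ since $P(y)$ is constant in $r$, and $\overline{M_\tau u_i}$ is pointwise tangential to $\Gamma$ (as $\overline{M_\tau u_i}\cdot\bar n=\overline{M_\tau u_i\cdot n}=0$), so $\overline{M_\tau u_1}\cdot u_2=\overline{M_\tau u_1}\cdot u_{2,\tau}$ by $\overline{P}^T=\overline{P}$ and similarly with $1$ and $2$ swapped. Hence
\begin{align*}
  \Bigl(\overline{M_\tau u_1},u_2\Bigr)_{L^2(\Omega_\varepsilon)}-\Bigl(u_1,\overline{M_\tau u_2}\Bigr)_{L^2(\Omega_\varepsilon)}
  = \sum_{j=1}^{3}\left[\Bigl(\overline{M(u_{1,\tau})_j},(u_{2,\tau})_j\Bigr)_{L^2(\Omega_\varepsilon)}-\Bigl((u_{1,\tau})_j,\overline{M(u_{2,\tau})_j}\Bigr)_{L^2(\Omega_\varepsilon)}\right],
\end{align*}
and applying \eqref{E:Ave_Inner} to each component, summing, and using Cauchy--Schwarz in $j$ together with $|u_{i,\tau}|\le|u_i|$ yields \eqref{E:AveT_Inner}. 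There is no real obstacle here beyond bookkeeping; the only conceptual point is the cancellation of the symmetric $J\equiv1$ part, after which every remaining factor of $\varepsilon$ is tracked by \eqref{E:Jac_Diff_02}, \eqref{E:Ave_Lp_Surf}, and \eqref{E:CoV_Equiv}.
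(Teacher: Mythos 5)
Your proposal is correct and takes essentially the same route as the paper's proof: split $J=1+(J-1)$ so that the symmetric contribution $\varepsilon(M\varphi_1,gM\varphi_2)_{L^2(\Gamma)}$ cancels in the difference, bound the remaining terms via $|J-1|\leq c\varepsilon$ from \eqref{E:Jac_Diff_02}, H\"older's inequality, \eqref{E:Ave_Lp_Surf}, and \eqref{E:CoV_Equiv}, and reduce \eqref{E:AveT_Inner} to the scalar estimate using $M_\tau u_i=Mu_{i,\tau}$, the tangentiality (equivalently $P^T=P$), and $|u_{i,\tau}|\leq|u_i|$. Only a trivial citation slip: the Cauchy--Schwarz step in $r$ uses the upper bound $\varepsilon g\leq c\varepsilon$ (boundedness of $g_0,g_1$ on $\Gamma$), not the lower bound \eqref{E:G_Inf}.
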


\begin{proof}
  For $\varphi_1,\varphi_2\in L^2(\Omega_\varepsilon)$ we have
  \begin{align*}
    \Bigl(\overline{M\varphi_1},\varphi_2\Bigr)_{L^2(\Omega_\varepsilon)} &= \int_\Gamma M\varphi_1\left(\int_{\varepsilon g_0}^{\varepsilon g_1}\varphi_2^\sharp J\,dr\right)d\mathcal{H}^2 \\
    &= \varepsilon(M\varphi_1,gM\varphi_2)_{L^2(\Gamma)}+\int_\Gamma M\varphi_1\left(\int_{\varepsilon g_0}^{\varepsilon g_1}\varphi_2^\sharp(J-1)\,dr\right)d\mathcal{H}^2
  \end{align*}
  by \eqref{E:CoV_Dom} and \eqref{E:Def_Ave}.
  Similarly,
  \begin{align*}
    \Bigl(\varphi_1,\overline{M\varphi_2}\Bigr)_{L^2(\Omega_\varepsilon)} = \varepsilon(gM\varphi_1,M\varphi_2)_{L^2(\Gamma)}+\int_\Gamma\left(\int_{\varepsilon g_0}^{\varepsilon g_1}\varphi_1^\sharp(J-1)\,dr\right)M\varphi_2\,d\mathcal{H}^2.
  \end{align*}
  Since $(M\varphi_1,gM\varphi_2)_{L^2(\Gamma)}=(gM\varphi_1,M\varphi_2)_{L^2(\Gamma)}$, we see by \eqref{E:Jac_Diff_02} that
  \begin{multline} \label{Pf_AI:Est}
    \left|\Bigl(\overline{M\varphi_1},\varphi_2\Bigr)_{L^2(\Omega_\varepsilon)}-\Bigl(\varphi_1,\overline{M\varphi_2}\Bigr)_{L^2(\Omega_\varepsilon)}\right| \\
    \leq c\varepsilon\left\{\int_\Gamma|M\varphi_1|\left(\int_{\varepsilon g_0}^{\varepsilon g_1}|\varphi_2^\sharp|\,dr\right)d\mathcal{H}^2+\int_\Gamma\left(\int_{\varepsilon g_0}^{\varepsilon g_1}|\varphi_1^\sharp|\,dr\right)|M\varphi_2|\,d\mathcal{H}^2\right\}.
  \end{multline}
  Moreover, applying H\"{o}lder's inequality twice and using \eqref{E:CoV_Equiv} and \eqref{E:Ave_Lp_Surf} we get
  \begin{align*}
    \int_\Gamma|M\varphi_1|\left(\int_{\varepsilon g_0}^{\varepsilon g_1}|\varphi_2^\sharp|\,dr\right)d\mathcal{H}^2 &\leq \|M\varphi_1\|_{L^2(\Gamma)}\left\{\int_\Gamma\varepsilon g\left(\int_{\varepsilon g_0}^{\varepsilon g_1}|\varphi_2^\sharp|^2\,dr\right)d\mathcal{H}^2\right\}^{1/2} \\
    &\leq c\|\varphi_1\|_{L^2(\Omega_\varepsilon)}\|\varphi_2\|_{L^2(\Omega_\varepsilon)}
  \end{align*}
  and a similar inequality for the last term of \eqref{Pf_AI:Est}.
  Hence \eqref{E:Ave_Inner} follows.

  Let $u_1,u_2\in L^2(\Omega_\varepsilon)^3$.
  By \eqref{E:Def_Tan_Ave}, \eqref{E:Def_U_TN}, and $P^T=P$ on $\Gamma$ we have
  \begin{align*}
    \overline{M_\tau u_1}\cdot u_2 = \overline{PMu_1}\cdot u_2 = \overline{Mu_1}\cdot\Bigl(\overline{P}u_2\Bigr) = \overline{Mu_1}\cdot u_{2,\tau}, \quad \overline{M_\tau u_2} = \overline{Mu_{2,\tau}}
  \end{align*}
  in $\Omega_\varepsilon$ (here $u_{2,\tau}=\overline{P}u_2$ on $\Omega_\varepsilon$).
  Thus it follows from \eqref{E:Ave_Inner} that
  \begin{multline*}
    \left|\Bigl(\overline{M_\tau u_1},u_2\Bigr)_{L^2(\Omega_\varepsilon)}-\Bigl(u_1,\overline{M_\tau u_2}\Bigr)_{L^2(\Omega_\varepsilon)}\right| \\
    = \left|\Bigl(\overline{Mu_1},u_{2,\tau}\Bigr)_{L^2(\Omega_\varepsilon)}-\Bigl(u_1,\overline{Mu_{2,\tau}}\Bigr)_{L^2(\Omega_\varepsilon)}\right| \\
    \leq c\varepsilon\|u_1\|_{L^2(\Omega_\varepsilon)}\|u_{2,\tau}\|_{L^2(\Omega_\varepsilon)}
  \end{multline*}
  and we obtain \eqref{E:AveT_Inner} by this inequality and $|u_{2,\tau}|\leq|u_2|$ in $\Omega_\varepsilon$.
\end{proof}

\subsection{Tangential derivatives of averaged functions} \label{SS:Ave_Grad}
In this subsection we give formulas and inequalities for the tangential derivatives of the average operators.

\begin{lemma} \label{L:Ave_Der}
  For $\varphi\in C^1(\Omega_\varepsilon)$ we have
  \begin{align} \label{E:Ave_Der}
    \nabla_\Gamma M\varphi = M(B\nabla\varphi)+M\bigl((\partial_n\varphi)\psi_\varepsilon\bigr) \quad\text{on}\quad \Gamma,
  \end{align}
  where the matrix-valued function $B$ and the vector field $\psi_\varepsilon$ are given by
  \begin{align} \label{E:Ave_Der_Aux}
    \begin{aligned}
      B(x) &:= \left\{I_3-d(x)\overline{W}(x)\right\}\overline{P}(x), \\
      \psi_\varepsilon(x) &:= \frac{1}{\bar{g}(x)}\left\{\bigl(d(x)-\varepsilon\bar{g}_0(x)\bigr)\overline{\nabla_\Gamma g_1}(x)+\bigl(\varepsilon\bar{g}_1(x)-d(x)\bigr)\overline{\nabla_\Gamma g_0}(x)\right\}
    \end{aligned}
  \end{align}
  for $x\in N$.
\end{lemma}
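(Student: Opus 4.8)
The plan is to write $M\varphi$ as a quotient and differentiate under the integral sign. Set $\Phi(y):=\int_{\varepsilon g_0(y)}^{\varepsilon g_1(y)}\varphi^\sharp(y,r)\,dr$ for $y\in\Gamma$, so that $M\varphi=\Phi/(\varepsilon g)$ by \eqref{E:Def_Ave}; the quotient rule for $\nabla_\Gamma$ together with $\nabla_\Gamma g=\nabla_\Gamma g_1-\nabla_\Gamma g_0$ then gives
\[
\nabla_\Gamma M\varphi=\frac{1}{\varepsilon g}\nabla_\Gamma\Phi-\frac{M\varphi}{g}\nabla_\Gamma g\quad\text{on }\Gamma.
\]
Working in a local parametrization of $\Gamma$ (after first assuming $\varphi$ is of class $C^1$ up to $\overline{\Omega}_\varepsilon$, the general case following by approximation since the final identity \eqref{E:Ave_Der} has no boundary terms), I would apply the Leibniz rule for an integral with $y$-dependent limits and $y$-dependent integrand to obtain
\[
\nabla_\Gamma\Phi(y)=\varepsilon\varphi^\sharp(y,\varepsilon g_1(y))\nabla_\Gamma g_1(y)-\varepsilon\varphi^\sharp(y,\varepsilon g_0(y))\nabla_\Gamma g_0(y)+\int_{\varepsilon g_0(y)}^{\varepsilon g_1(y)}\nabla_\Gamma\bigl[\varphi^\sharp(\cdot,r)\bigr](y)\,dr,
\]
where $\nabla_\Gamma[\varphi^\sharp(\cdot,r)]$ is the tangential gradient in $y$ with $r$ fixed.

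The first key point is the identity $\nabla_\Gamma[\varphi^\sharp(\cdot,r)](y)=(B\nabla\varphi)^\sharp(y,r)$ with $B$ as in \eqref{E:Ave_Der_Aux}. To prove it, apply the chain rule to $z\mapsto\varphi(z+r\bar n(z))$: differentiating in $z_j$ and evaluating at $z=y\in\Gamma$ produces the factor $\delta_{jk}+r\,\partial_j\bar n_k(y)=\delta_{jk}-rW_{jk}(y)$, using \eqref{E:ConDer_Surf} and $W=-\nabla_\Gamma n$ from \eqref{E:Def_WH}; projecting by $P(y)$ and using $PW=WP=W$ from \eqref{E:Form_W} yields $P(y)\{I_3-rW(y)\}=\{I_3-rW(y)\}P(y)$, which equals $B(y+rn(y))$ once one notes $d(y+rn(y))=r$, $\overline{W}(y+rn(y))=W(y)$, $\overline{P}(y+rn(y))=P(y)$. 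Integrating this identity over $r\in(\varepsilon g_0(y),\varepsilon g_1(y))$ and dividing by $\varepsilon g(y)$ converts the integral term above into $M(B\nabla\varphi)(y)$.

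The second key point is to identify the boundary contributions together with $-\frac{M\varphi}{g}\nabla_\Gamma g$ as $M\bigl((\partial_n\varphi)\psi_\varepsilon\bigr)$. Writing $h(r):=\varphi^\sharp(y,r)$ we have $h'(r)=(\partial_n\varphi)^\sharp(y,r)$ by \eqref{E:Def_NorDer}, while \eqref{E:Ave_Der_Aux} gives $\psi_\varepsilon(y+rn(y))=g(y)^{-1}\{(r-\varepsilon g_0(y))\nabla_\Gamma g_1(y)+(\varepsilon g_1(y)-r)\nabla_\Gamma g_0(y)\}$. Hence $\varepsilon g(y)^2\,M\bigl((\partial_n\varphi)\psi_\varepsilon\bigr)(y)$ equals $\nabla_\Gamma g_1(y)\int_{\varepsilon g_0}^{\varepsilon g_1}(r-\varepsilon g_0)h'(r)\,dr+\nabla_\Gamma g_0(y)\int_{\varepsilon g_0}^{\varepsilon g_1}(\varepsilon g_1-r)h'(r)\,dr$, and integrating by parts in $r$ — each weight vanishes at one endpoint, and $\int_{\varepsilon g_0}^{\varepsilon g_1}h\,dr=\varepsilon g\,M\varphi$ — produces
\[
M\bigl((\partial_n\varphi)\psi_\varepsilon\bigr)(y)=\frac{1}{g(y)}\Bigl\{\varphi^\sharp(y,\varepsilon g_1(y))\nabla_\Gamma g_1(y)-\varphi^\sharp(y,\varepsilon g_0(y))\nabla_\Gamma g_0(y)\Bigr\}-\frac{M\varphi(y)}{g(y)}\nabla_\Gamma g(y).
\]
Adding the two key points into the expression for $\nabla_\Gamma M\varphi$ above yields \eqref{E:Ave_Der}. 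I expect the only delicate step to be the rigorous justification of differentiation under the integral sign with $y$-dependent limits, interpreted in the tangential sense on $\Gamma$ — a routine local-coordinate computation — together with the bookkeeping of the $-rW$ factor; everything else reduces to elementary integration by parts.
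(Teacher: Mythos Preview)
Your proposal is correct and follows essentially the same approach as the paper: both differentiate the integral defining $M\varphi$ using the Leibniz rule with variable limits, identify the interior term as $M(B\nabla\varphi)$ via the chain rule and $PW=WP$, and match the boundary contributions together with the $-(M\varphi/g)\nabla_\Gamma g$ term to $M((\partial_n\varphi)\psi_\varepsilon)$. The only cosmetic difference is that the paper packages the latter identification by writing $\varepsilon\varphi_1^\sharp\nabla_\Gamma g_1-\varepsilon\varphi_0^\sharp\nabla_\Gamma g_0=[(\varphi\psi_\varepsilon)^\sharp]_{r=\varepsilon g_0}^{\varepsilon g_1}=\int\partial_r(\varphi\psi_\varepsilon)^\sharp\,dr$ and expanding via the product rule, whereas you compute $M((\partial_n\varphi)\psi_\varepsilon)$ directly and integrate by parts in $r$; these are the same computation read in opposite directions.
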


\begin{proof}
  The constant extension of $M\varphi$ is given by
  \begin{align*}
    \overline{M\varphi}(x) = \frac{1}{\varepsilon\bar{g}(x)}\int_{\varepsilon\bar{g}_0(x)}^{\varepsilon\bar{g}_1(x)}\varphi(\pi(x)+r\bar{n}(x))\,dr, \quad x \in N.
  \end{align*}
  We differentiate both sides of this equality with respect to $x\in N$ and set $x=y\in\Gamma$.
  Then by \eqref{E:ConDer_Surf}, \eqref{E:Form_W}, \eqref{E:Pi_Der}, and \eqref{E:Nor_Grad} with $d(y)=0$ we get
  \begin{align} \label{Pf_ADer:Der}
    \nabla_\Gamma M\varphi(y) &= \frac{I(y)}{\varepsilon g(y)}+\frac{1}{\varepsilon g(y)}\int_{\varepsilon g_0(y)}^{\varepsilon g_1(y)}\{I_3-rW(y)\}P(y)(\nabla\varphi)^\sharp(y,r)\,dr
  \end{align}
  for $y\in\Gamma$.
  Here we use the notations \eqref{E:Pull_Dom} and \eqref{E:Pull_Bo} and set
  \begin{align*}
    I(y) := -\frac{\nabla_\Gamma g(y)}{g(y)}\int_{\varepsilon g_0(y)}^{\varepsilon g_1(y)}\varphi^\sharp(y,r)\,dr+\varepsilon\varphi_1^\sharp(y)\nabla_\Gamma g_1(y)-\varepsilon\varphi_0^\sharp(y)\nabla_\Gamma g_0(y).
  \end{align*}
  To the right-hand side we apply
  \begin{align*}
    \varepsilon\varphi_1^\sharp(y)\nabla_\Gamma g_1(y)-\varepsilon\varphi_0^\sharp(y)\nabla_\Gamma g_0(y) &= \Bigl[(\varphi\psi_\varepsilon)^\sharp(y,r)\Bigr]_{r=\varepsilon g_0(y)}^{\varepsilon g_1(y)} \\
    &= \int_{\varepsilon g_0(y)}^{\varepsilon g_1(y)}\frac{\partial}{\partial r}\bigl((\varphi\psi_\varepsilon)^\sharp(y,r)\bigr)\,dr
  \end{align*}
  and the equalities $\partial\varphi^\sharp/\partial r=(\partial_n\varphi)^\sharp$ and $\partial\psi_\varepsilon^\sharp/\partial r=\nabla_\Gamma g/g$ by
  \begin{align*}
    \psi_\varepsilon^\sharp(y,r) = \frac{1}{g(y)}\bigl\{\bigl(r-\varepsilon g_0(y)\bigr)\nabla_\Gamma g_1(y)+\bigl(\varepsilon g_1(y)-r\bigr)\nabla_\Gamma g_0(y)\bigr\}.
  \end{align*}
  Then we have
  \begin{align*}
    I(y) = \int_{\varepsilon g_0(y)}^{\varepsilon g_1(y)}\bigl((\partial_n\varphi)\psi_\varepsilon\bigr)^\sharp(y,r)\,dr = \varepsilon g(y)\bigl[M\bigl((\partial_n\varphi)\psi_\varepsilon\bigr)\bigr](y), \quad y\in\Gamma.
  \end{align*}
  Applying this and $\{I_3-rW(y)\}P(y)=B^\sharp(y,r)$ to the right-hand side of \eqref{Pf_ADer:Der} we obtain \eqref{E:Ave_Der}.
\end{proof}

\begin{remark} \label{R:Ave_Der}
  There exists a constant $c>0$ independent of $\varepsilon$ such that
  \begin{align} \label{E:ADA_Bound}
    |B| \leq c, \quad |\psi_\varepsilon| \leq c\varepsilon \quad\text{in}\quad \Omega_\varepsilon
  \end{align}
  by \eqref{Pf_EAB:Width} and the boundedness of $W$, $P$, $\nabla_\Gamma g_0$, and $\nabla_\Gamma g_1$ on $\Gamma$.
  Moreover,
  \begin{align} \label{E:ADA_Grad_Bound}
    |\partial_kB| \leq c, \quad |\nabla\psi_\varepsilon| \leq c, \quad \left|\nabla\psi_\varepsilon-\frac{1}{\bar{g}}\bar{n}\otimes\overline{\nabla_\Gamma g}\right| \leq c\varepsilon \quad\text{in}\quad \Omega_\varepsilon,\,k=1,2,3
  \end{align}
  by \eqref{E:G_Inf}, \eqref{E:ConDer_Bound}, \eqref{Pf_EAB:Width}, $\nabla d=\bar{n}$ in $N$, and the regularity of $W$, $P$, $g_0$, and $g_1$ on $\Gamma$.
\end{remark}

\begin{lemma} \label{L:Ave_Wmp}
  There exists a constant $c>0$ independent of $\varepsilon$ such that
  \begin{align} \label{E:Ave_Wmp_Surf}
    \|M\varphi\|_{W^{m,p}(\Gamma)} &\leq c\varepsilon^{-1/p}\|\varphi\|_{W^{m,p}(\Omega_\varepsilon)}
  \end{align}
  for all $\varphi\in W^{m,p}(\Omega_\varepsilon)$ with $m=1,2$ and $p\in[1,\infty)$.
\end{lemma}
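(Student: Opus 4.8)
The plan is to prove the two cases $m=1$ and $m=2$ in turn, deriving the second by iterating the first. By density of $C^2(\overline{\Omega}_\varepsilon)$ in $W^{m,p}(\Omega_\varepsilon)$ for $p\in[1,\infty)$, together with the continuity of $M\colon L^p(\Omega_\varepsilon)\to L^p(\Gamma)$ provided by \eqref{E:Ave_Lp_Surf} (applied componentwise in the vector-valued parts), it suffices to establish the stated inequality for $\varphi\in C^2(\overline{\Omega}_\varepsilon)$, for which the pointwise formula \eqref{E:Ave_Der} is available.

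For $m=1$ I combine the $L^p$ bound \eqref{E:Ave_Lp_Surf} for $M\varphi$ itself with an estimate for $\nabla_\Gamma M\varphi$. Using the identity $\nabla_\Gamma M\varphi = M(B\nabla\varphi)+M\bigl((\partial_n\varphi)\psi_\varepsilon\bigr)$ from Lemma~\ref{L:Ave_Der} and applying \eqref{E:Ave_Lp_Surf} componentwise, I obtain
\[
  \|\nabla_\Gamma M\varphi\|_{L^p(\Gamma)} \le c\varepsilon^{-1/p}\bigl(\|B\nabla\varphi\|_{L^p(\Omega_\varepsilon)}+\|(\partial_n\varphi)\psi_\varepsilon\|_{L^p(\Omega_\varepsilon)}\bigr).
\]
Since $|B|\le c$ and $|\psi_\varepsilon|\le c\varepsilon$ by \eqref{E:ADA_Bound}, and $|\partial_n\varphi|=|\bar{n}\cdot\nabla\varphi|\le|\nabla\varphi|$, the right-hand side is at most $c\varepsilon^{-1/p}(1+\varepsilon)\|\nabla\varphi\|_{L^p(\Omega_\varepsilon)}\le c\varepsilon^{-1/p}\|\varphi\|_{W^{1,p}(\Omega_\varepsilon)}$. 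Adding the $L^p$ estimate for $M\varphi$ yields the case $m=1$.

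For $m=2$, set $h := B\nabla\varphi+(\partial_n\varphi)\psi_\varepsilon$, a vector field on $\Omega_\varepsilon$, so that $\nabla_\Gamma M\varphi=Mh$ componentwise. Applying the already proved case $m=1$ to each scalar component of $h$ gives $\|\nabla_\Gamma^2 M\varphi\|_{L^p(\Gamma)}\le c\varepsilon^{-1/p}\|h\|_{W^{1,p}(\Omega_\varepsilon)}$, so it remains to check $\|h\|_{W^{1,p}(\Omega_\varepsilon)}\le c\|\varphi\|_{W^{2,p}(\Omega_\varepsilon)}$; the $L^p$ part was already bounded in the $m=1$ step. For the gradient, the product rule gives $|\nabla(B\nabla\varphi)|\le|\nabla B|\,|\nabla\varphi|+|B|\,|\nabla^2\varphi|\le c(|\nabla\varphi|+|\nabla^2\varphi|)$ by \eqref{E:ADA_Bound} and $|\partial_k B|\le c$ from \eqref{E:ADA_Grad_Bound}, while $|\nabla((\partial_n\varphi)\psi_\varepsilon)|\le|\nabla(\partial_n\varphi)|\,|\psi_\varepsilon|+|\partial_n\varphi|\,|\nabla\psi_\varepsilon|$; here $\nabla(\partial_n\varphi)=\nabla(\bar{n}\cdot\nabla\varphi)=(\nabla\bar{n})\nabla\varphi+(\nabla^2\varphi)\bar{n}$ is controlled by $c(|\nabla\varphi|+|\nabla^2\varphi|)$ via \eqref{E:NorG_Bound}, and with $|\psi_\varepsilon|\le c\varepsilon$, $|\nabla\psi_\varepsilon|\le c$ from \eqref{E:ADA_Bound}--\eqref{E:ADA_Grad_Bound} this term is again $\le c(|\nabla\varphi|+|\nabla^2\varphi|)$. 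Hence $\|h\|_{W^{1,p}(\Omega_\varepsilon)}\le c\|\varphi\|_{W^{2,p}(\Omega_\varepsilon)}$, which completes the case $m=2$ after combining with the $m=1$ bounds for $\|M\varphi\|_{L^p(\Gamma)}$ and $\|\nabla_\Gamma M\varphi\|_{L^p(\Gamma)}$.

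I do not expect a genuine obstacle here: the only points requiring care are the density reduction that legitimizes using \eqref{E:Ave_Der} for Sobolev rather than $C^1$ functions, and the bookkeeping in the product rule for $h$ — both routine given the uniform bounds collected in Remark~\ref{R:Ave_Der} and \eqref{E:NorG_Bound}.
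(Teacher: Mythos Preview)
Your proof is correct and follows essentially the same approach as the paper's: both use the identity \eqref{E:Ave_Der} together with \eqref{E:Ave_Lp_Surf} for the case $m=1$, and then iterate by applying the $m=1$ gradient estimate to the components of $B\nabla\varphi$ and $(\partial_n\varphi)\psi_\varepsilon$ (which you package as a single vector field $h$), invoking the uniform bounds from \eqref{E:ADA_Bound}--\eqref{E:ADA_Grad_Bound} and \eqref{E:NorG_Bound} to control $\|h\|_{W^{1,p}(\Omega_\varepsilon)}$ by $\|\varphi\|_{W^{2,p}(\Omega_\varepsilon)}$. The only cosmetic difference is that you make the density reduction explicit, whereas the paper leaves it implicit.
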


\begin{proof}
  Let $\varphi\in W^{1,p}(\Omega_\varepsilon)$.
  From \eqref{E:Ave_Lp_Surf} and \eqref{E:Ave_Der} it follows that
  \begin{align*}
    \|\nabla_\Gamma M\varphi\|_{L^p(\Gamma)} &\leq c\left(\|M(B\nabla\varphi)\|_{L^p(\Gamma)}+\left\|M\bigl((\partial_n\varphi)\psi_\varepsilon\bigr)\right\|_{L^p(\Gamma)}\right) \\
    &\leq c\varepsilon^{-1/p}\left(\|B\nabla\varphi\|_{L^p(\Omega_\varepsilon)}+\|(\partial_n\varphi)\psi_\varepsilon\|_{L^p(\Omega_\varepsilon)}\right).
  \end{align*}
  Here $B$ and $\psi_\varepsilon$ are bounded on $\Omega_\varepsilon$ uniformly in $\varepsilon$ (see Remark~\ref{R:Ave_Der}).
  Hence
  \begin{align} \label{Pf_AW:Lp_Grad}
    \|\nabla_\Gamma M\varphi\|_{L^p(\Gamma)} \leq c\varepsilon^{-1/p}\|\nabla\varphi\|_{L^p(\Omega_\varepsilon)} \leq c\varepsilon^{-1/p}\|\varphi\|_{W^{1,p}(\Omega_\varepsilon)}.
  \end{align}
  Combining \eqref{Pf_AW:Lp_Grad} with \eqref{E:Ave_Lp_Surf} we obtain \eqref{E:Ave_Wmp_Surf} with $m=1$.
  When $\varphi\in W^{2,p}(\Omega_\varepsilon)$ we apply \eqref{Pf_AW:Lp_Grad} with $\varphi$ replaced by $B\nabla\varphi$ and $(\partial_n\varphi)\psi_\varepsilon$.
  Then by \eqref{E:ADA_Bound} and \eqref{E:ADA_Grad_Bound},
  \begin{align*}
    \|\nabla_\Gamma M(B\nabla\varphi)\|_{L^p(\Gamma)}+\left\|\nabla_\Gamma M\bigl((\partial_n\varphi)\psi_\varepsilon\bigr)\right\|_{L^p(\Gamma)} \leq c\varepsilon^{-1/p}\|\varphi\|_{W^{2,p}(\Omega_\varepsilon)}.
  \end{align*}
  Therefore, applying $\nabla_\Gamma$ to \eqref{E:Ave_Der} and using the above inequality we get
  \begin{align*}
    \|\nabla_\Gamma^2 M\varphi\|_{L^p(\Gamma)} \leq c\varepsilon^{-1/p}\|\varphi\|_{W^{2,p}(\Omega_\varepsilon)}
  \end{align*}
  and \eqref{E:Ave_Wmp_Surf} with $m=2$ follows from this inequality, \eqref{E:Ave_Lp_Surf}, and \eqref{Pf_AW:Lp_Grad}.
\end{proof}

\begin{lemma} \label{L:AveT_Wmp}
  There exists a constant $c>0$ independent of $\varepsilon$ such that
  \begin{align} \label{E:AveT_Wmp_Surf}
    \|M_\tau u\|_{W^{m,p}(\Gamma)} &\leq c\varepsilon^{-1/p}\|u\|_{W^{m,p}(\Omega_\varepsilon)}
  \end{align}
  for all $u\in W^{m,p}(\Omega_\varepsilon)^3$ with $m=1,2$ and $p\in[1,\infty)$.
\end{lemma}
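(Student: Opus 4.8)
The plan is to reduce Lemma~\ref{L:AveT_Wmp} to the scalar estimate of Lemma~\ref{L:Ave_Wmp} combined with the Leibniz rule for Sobolev functions on $\Gamma$. By the definition \eqref{E:Def_Tan_Ave} of the averaged tangential component we have $M_\tau u = P\,Mu$ on $\Gamma$, where $Mu=(Mu_1,Mu_2,Mu_3)^T$ is obtained by applying the scalar average operator $M$ to each Cartesian component of $u$. Since $\Gamma$ is of class $C^5$, the matrix field $P=I_3-n\otimes n$ belongs to $C^4(\Gamma)$, hence in particular $P\in W^{2,\infty}(\Gamma)$ with a norm independent of $\varepsilon$ (indeed $P$ itself is independent of $\varepsilon$).

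First I would treat the case $m=1$. Applying Lemma~\ref{L:Ave_Wmp} with $m=1$ componentwise gives $\|Mu_j\|_{W^{1,p}(\Gamma)}\leq c\varepsilon^{-1/p}\|u_j\|_{W^{1,p}(\Omega_\varepsilon)}\leq c\varepsilon^{-1/p}\|u\|_{W^{1,p}(\Omega_\varepsilon)}$ for $j=1,2,3$. Then, expanding $\underline{D}_i[P_{kl}(Mu)_l]$ by the Leibniz rule and using that $P$ and $\nabla_\Gamma P$ are bounded on $\Gamma$ uniformly in $\varepsilon$, one obtains $\|M_\tau u\|_{W^{1,p}(\Gamma)}\leq c\|Mu\|_{W^{1,p}(\Gamma)}\leq c\varepsilon^{-1/p}\|u\|_{W^{1,p}(\Omega_\varepsilon)}$, which is \eqref{E:AveT_Wmp_Surf} with $m=1$. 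For $m=2$ the argument is identical: Lemma~\ref{L:Ave_Wmp} with $m=2$ yields $\|Mu_j\|_{W^{2,p}(\Gamma)}\leq c\varepsilon^{-1/p}\|u\|_{W^{2,p}(\Omega_\varepsilon)}$, and expanding $\underline{D}_i\underline{D}_j[P_{kl}(Mu)_l]$ and bounding the terms involving $P$, $\nabla_\Gamma P$, and $\nabla_\Gamma^2 P$ by $\|P\|_{W^{2,\infty}(\Gamma)}<\infty$ (independent of $\varepsilon$) gives $\|M_\tau u\|_{W^{2,p}(\Gamma)}\leq c\|Mu\|_{W^{2,p}(\Gamma)}\leq c\varepsilon^{-1/p}\|u\|_{W^{2,p}(\Omega_\varepsilon)}$.

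There is essentially no hard step here; the only point requiring mild care is the Leibniz estimate $\|Pw\|_{W^{m,p}(\Gamma)}\leq c\|P\|_{W^{m,\infty}(\Gamma)}\|w\|_{W^{m,p}(\Gamma)}$ for $m=1,2$, which follows from the density of $C^2(\Gamma)$ in $W^{m,p}(\Gamma)$ (for $p\neq\infty$) and the product rule for smooth functions, with the $\varepsilon$-independence of the constant coming from the fact that $P$ and its tangential derivatives are fixed, $\varepsilon$-independent bounded functions on the compact surface $\Gamma$. Alternatively, one may write $M_\tau u=Mu_\tau$ with $u_\tau=\overline{P}u$ and apply Lemma~\ref{L:Ave_Wmp} directly to the components of $u_\tau$, using \eqref{E:NorG_Bound} to bound $\|u_\tau\|_{W^{m,p}(\Omega_\varepsilon)}\leq c\|u\|_{W^{m,p}(\Omega_\varepsilon)}$; this gives the same conclusion.
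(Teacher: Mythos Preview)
Your proposal is correct and follows essentially the same approach as the paper: write $M_\tau u = PMu$, use the $C^4$-regularity of $P$ on $\Gamma$ (independent of $\varepsilon$) to bound $\|PMu\|_{W^{m,p}(\Gamma)}\leq c\|Mu\|_{W^{m,p}(\Gamma)}$, and then apply Lemma~\ref{L:Ave_Wmp}. The paper's proof is just a one-line version of what you wrote, and your alternative via $M_\tau u = Mu_\tau$ is equally valid.
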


\begin{proof}
  We observe by the $C^4$-regularity of $P$ on $\Gamma$ and \eqref{E:Ave_Wmp_Surf} that
  \begin{align*}
    \|M_\tau u\|_{W^{m,p}(\Gamma)} = \|PMu\|_{W^{m,p}(\Gamma)} \leq c\|Mu\|_{W^{m,p}(\Gamma)} \leq c\varepsilon^{-1/p}\|u\|_{W^{m,p}(\Omega_\varepsilon)}.
  \end{align*}
  Hence \eqref{E:AveT_Wmp_Surf} is valid.
\end{proof}

\begin{lemma} \label{L:Ave_Der_Diff}
  There exists a constant $c>0$ independent of $\varepsilon$ such that
  \begin{align}
    \left\|\overline{P}\nabla\varphi-\overline{\nabla_\Gamma M\varphi}\right\|_{L^p(\Omega_\varepsilon)} &\leq c\varepsilon\|\varphi\|_{W^{2,p}(\Omega_\varepsilon)} \label{E:ADD_Dom} \\
    \left\|\overline{P}\nabla\varphi-\overline{\nabla_\Gamma M\varphi}\right\|_{L^p(\Gamma_\varepsilon^i)} &\leq c\varepsilon^{1-1/p}\|\varphi\|_{W^{2,p}(\Omega_\varepsilon)}, \quad i=0,1 \label{E:ADD_Bo}
  \end{align}
  for all $\varphi\in W^{2,p}(\Omega_\varepsilon)$ with $p\in[1,\infty)$.
\end{lemma}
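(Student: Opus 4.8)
The plan is to combine the formula for $\nabla_\Gamma M\varphi$ provided by Lemma~\ref{L:Ave_Der} with the pointwise smallness of $B-\overline{P}$ and of $\psi_\varepsilon$ in $\Omega_\varepsilon$, and with the difference estimate \eqref{E:Ave_Diff_Dom} of Lemma~\ref{L:Ave_Diff}. Since \eqref{E:Ave_Der} is stated for $\varphi\in C^1(\Omega_\varepsilon)$ while the target inequalities have $\|\varphi\|_{W^{2,p}(\Omega_\varepsilon)}$ on the right, I would first reduce to $\varphi\in C^2(\overline{\Omega}_\varepsilon)$ by density: the left-hand sides of \eqref{E:ADD_Dom}--\eqref{E:ADD_Bo} depend continuously on $\varphi$ in the $W^{2,p}(\Omega_\varepsilon)$-norm because $\varphi\mapsto\overline{P}\nabla\varphi$ is trivially $W^{1,p}$-continuous and $\varphi\mapsto\overline{\nabla_\Gamma M\varphi}$ is $W^{1,p}(\Omega_\varepsilon)$-continuous by Lemma~\ref{L:Ave_Wmp} together with \eqref{E:Con_Lp}, so it suffices to establish the estimates on the dense subspace $C^2(\overline{\Omega}_\varepsilon)$, where \eqref{E:Ave_Der} applies.

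Next I would record the two pointwise bounds in $\Omega_\varepsilon$ that drive everything: from $B=\overline{P}-d\,\overline{W}\,\overline{P}=\overline{P}-d\,\overline{W}$ (using $\overline{W}\,\overline{P}=\overline{W}$, which is \eqref{E:Form_W}) and $|d|\le c\varepsilon$ on $\Omega_\varepsilon$ one gets $|B-\overline{P}|\le c\varepsilon$, and from \eqref{E:ADA_Bound} one has $|\psi_\varepsilon|\le c\varepsilon$. Then, noting that the constant extension $\overline{P}$ is constant in the normal direction so that $M(\overline{P}\nabla\varphi)=P\,M(\nabla\varphi)$ on $\Gamma$, I would use \eqref{E:Ave_Der} to write
\[
  \overline{P}\nabla\varphi-\overline{\nabla_\Gamma M\varphi}
  = \Bigl(\overline{P}\nabla\varphi-\overline{M(\overline{P}\nabla\varphi)}\Bigr)
  + \overline{M\bigl(\overline{P}\nabla\varphi-B\nabla\varphi\bigr)}
  - \overline{M\bigl((\partial_n\varphi)\psi_\varepsilon\bigr)}.
\]
The first term is estimated componentwise by \eqref{E:Ave_Diff_Dom}, observing that $\partial_n(\overline{P}\nabla\varphi)=\overline{P}(\bar n\cdot\nabla)\nabla\varphi$ (since $\partial_n\overline{P}=0$ by \eqref{E:NorDer_Con}) is bounded by $c|\nabla^2\varphi|$; this gives a bound $c\varepsilon\|\nabla^2\varphi\|_{L^p(\Omega_\varepsilon)}$. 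For the second and third terms I would use the pointwise bounds $|\overline{P}\nabla\varphi-B\nabla\varphi|\le c\varepsilon|\nabla\varphi|$ and $|(\partial_n\varphi)\psi_\varepsilon|\le c\varepsilon|\nabla\varphi|$ together with \eqref{E:Ave_Lp_Dom} of Lemma~\ref{L:Ave_Lp}, each contributing $c\varepsilon\|\nabla\varphi\|_{L^p(\Omega_\varepsilon)}$. Summing yields \eqref{E:ADD_Dom}.

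Finally, for the boundary estimate \eqref{E:ADD_Bo} I would apply the trace-type inequality \eqref{E:Poin_Bo} to $\psi:=\overline{P}\nabla\varphi-\overline{\nabla_\Gamma M\varphi}$. Its normal derivative is $\partial_n\psi=\overline{P}(\bar n\cdot\nabla)\nabla\varphi$, since $\overline{\nabla_\Gamma M\varphi}$ is a constant extension and hence has vanishing normal derivative by \eqref{E:NorDer_Con}; thus $\|\partial_n\psi\|_{L^p(\Omega_\varepsilon)}\le c\|\varphi\|_{W^{2,p}(\Omega_\varepsilon)}$. Plugging this and the already-proved bound \eqref{E:ADD_Dom} into \eqref{E:Poin_Bo} gives $\|\psi\|_{L^p(\Gamma_\varepsilon^i)}\le c\varepsilon^{-1/p}\cdot c\varepsilon\|\varphi\|_{W^{2,p}(\Omega_\varepsilon)}+c\varepsilon^{1-1/p}\|\varphi\|_{W^{2,p}(\Omega_\varepsilon)}$, which is \eqref{E:ADD_Bo}. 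I expect no serious obstacle here; the only point requiring care is the bookkeeping in the decomposition above — in particular identifying $\overline{M(\overline{P}\nabla\varphi)}$ with $\overline{P\,M\nabla\varphi}$ and matching it against the $M(B\nabla\varphi)$ term of \eqref{E:Ave_Der} — together with the routine density reduction to $C^2(\overline{\Omega}_\varepsilon)$.
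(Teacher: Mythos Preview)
Your proposal is correct and follows essentially the same approach as the paper: the paper's decomposition $\overline{P}\nabla\varphi-\overline{\nabla_\Gamma M\varphi}=u+\bar v$ with $u=\overline{P}\nabla\varphi-\overline{M(\overline{P}\nabla\varphi)}$ and $v=M(d\overline{W}\nabla\varphi)-M((\partial_n\varphi)\psi_\varepsilon)$ is exactly your three-term splitting once you note $\overline{P}-B=d\overline{W}$, and the estimates of each piece and the deduction of \eqref{E:ADD_Bo} from \eqref{E:Poin_Bo}, \eqref{E:NorDer_Con}, and \eqref{E:ADD_Dom} are identical. The only difference is that you make the density reduction to $C^2(\overline{\Omega}_\varepsilon)$ explicit, which the paper leaves implicit.
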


\begin{proof}
  By \eqref{E:Form_W}, \eqref{E:Ave_Der}, and \eqref{E:Ave_Der_Aux} we have $\overline{P}\nabla\varphi-\overline{\nabla_\Gamma M\varphi}=u+\bar{v}$ in $\Omega_\varepsilon$, where
  \begin{align*}
    u(x) &:= \overline{P}(x)\nabla\varphi(x)-\left[M\Bigr(\overline{P}\nabla\varphi\Bigl)\right](\pi(x)), \quad x\in\Omega_\varepsilon, \\
    v(y) &:= \left[M\Bigl(d\overline{W}\nabla\varphi\Bigr)\right](y)-\bigl[M\bigl((\partial_n\varphi)\psi_\varepsilon\bigr)\bigr](y), \quad y\in\Gamma.
  \end{align*}
  We apply \eqref{E:Ave_Diff_Dom} to $u$ and use \eqref{E:NorDer_Con} to get
  \begin{align} \label{Pf_ADD:Est_Diff}
    \|u\|_{L^p(\Omega_\varepsilon)} \leq c\varepsilon\left\|\partial_n\Bigl(\overline{P}\nabla\varphi\Bigr)\right\|_{L^p(\Omega_\varepsilon)} \leq c\varepsilon\|\varphi\|_{W^{2,p}(\Omega_\varepsilon)}.
  \end{align}
  Also, from \eqref{E:Ave_Lp_Dom} and
  \begin{align*}
    \left|d\overline{W}\nabla\varphi\right| \leq c\varepsilon|\nabla\varphi|, \quad |(\partial_n\varphi)\psi_\varepsilon| \leq c\varepsilon|\nabla\varphi| \quad\text{in}\quad \Omega_\varepsilon
  \end{align*}
  by \eqref{E:ADA_Bound} and $|d|\leq c\varepsilon$ in $\Omega_\varepsilon$ it follows that
  \begin{align} \label{Pf_ADD:Est_Res}
    \|\bar{v}\|_{L^p(\Omega_\varepsilon)} \leq c\left(\left\|d\overline{W}\nabla\varphi\right\|_{L^p(\Omega_\varepsilon)}+\|(\partial_n\varphi)\psi_\varepsilon\|_{L^p(\Omega_\varepsilon)}\right) \leq c\varepsilon\|\nabla\varphi\|_{L^p(\Omega_\varepsilon)}.
  \end{align}
  Combining \eqref{Pf_ADD:Est_Diff} and \eqref{Pf_ADD:Est_Res} we obtain
  \begin{align*}
    \left\|\overline{P}\nabla\varphi-\overline{\nabla_\Gamma M\varphi}\right\|_{L^p(\Omega_\varepsilon)} \leq \|u\|_{L^p(\Omega_\varepsilon)}+\|\bar{v}\|_{L^p(\Omega_\varepsilon)} \leq c\varepsilon\|\varphi\|_{W^{2,p}(\Omega_\varepsilon)}.
  \end{align*}
  Hence \eqref{E:ADD_Dom} holds.
  Also, \eqref{E:ADD_Bo} follows from \eqref{E:NorDer_Con}, \eqref{E:Poin_Bo}, and \eqref{E:ADD_Dom}.
\end{proof}

\begin{lemma} \label{L:Ave_N_W1p}
  There exists a constant $c>0$ independent of $\varepsilon$ such that
  \begin{align} \label{E:Ave_N_W1p}
    \|Mu\cdot n\|_{W^{1,p}(\Gamma)} \leq c\varepsilon^{1-1/p}\|u\|_{W^{2,p}(\Omega_\varepsilon)}
  \end{align}
  for all $u\in W^{2,p}(\Omega_\varepsilon)^3$ with $p\in[1,\infty)$ satisfying \eqref{E:Bo_Imp} on $\Gamma_\varepsilon^0$  or on $\Gamma_\varepsilon^1$.
\end{lemma}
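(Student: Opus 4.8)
The plan is to combine the $L^p$-estimate for $Mu\cdot n$ already obtained in Lemma~\ref{L:Ave_N_Lp} with a tangential-derivative estimate for $Mu\cdot n$ obtained by differentiating along $\Gamma$. The key identity is $Mu\cdot n = M(u\cdot\bar n)$, since $n$ is constant in the normal direction; so it suffices to estimate $\|M(u\cdot\bar n)\|_{L^p(\Gamma)}$ and $\|\nabla_\Gamma M(u\cdot\bar n)\|_{L^p(\Gamma)}$. The $L^p$-part is exactly \eqref{E:Ave_N_Lp} applied to $u$, which already gives the bound $c\varepsilon^{1-1/p}\|u\|_{W^{1,p}(\Omega_\varepsilon)}\le c\varepsilon^{1-1/p}\|u\|_{W^{2,p}(\Omega_\varepsilon)}$, so no new work is needed there.

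For the tangential derivative, first I would apply the commutation formula \eqref{E:Ave_Der} with $\varphi = u\cdot\bar n$:
\begin{align*}
  \nabla_\Gamma M(u\cdot\bar n) = M\bigl(B\nabla(u\cdot\bar n)\bigr) + M\bigl((\partial_n(u\cdot\bar n))\psi_\varepsilon\bigr) \quad\text{on}\quad \Gamma.
\end{align*}
The second term is harmless: $\partial_n(u\cdot\bar n) = (\partial_n u)\cdot\bar n$ by \eqref{E:NorDer_Con}, which is controlled by $|\nabla u|$, and $|\psi_\varepsilon|\le c\varepsilon$ by \eqref{E:ADA_Bound}; so by \eqref{E:Ave_Lp_Surf} its $L^p(\Gamma)$-norm is at most $c\varepsilon^{-1/p}\cdot\varepsilon\|\nabla u\|_{L^p(\Omega_\varepsilon)} = c\varepsilon^{1-1/p}\|u\|_{W^{1,p}(\Omega_\varepsilon)}$, which is even better than needed. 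The first term is the crux: $B$ is bounded by \eqref{E:ADA_Bound}, so a naive bound only gives $\varepsilon^{-1/p}\|\nabla(u\cdot\bar n)\|_{L^p(\Omega_\varepsilon)}$, which is too large by a factor $\varepsilon^{-1}$ — I must extract an extra power of $\varepsilon$ from $\nabla(u\cdot\bar n)$. The decomposition $B\nabla(u\cdot\bar n) = B\overline{P}\nabla(u\cdot\bar n) + B\overline{Q}\nabla(u\cdot\bar n)$ separates the tangential and normal parts of this gradient. Since $B = (I_3-d\overline W)\overline P$ we have $B\overline Q = 0$, so only the tangential part survives, and $\overline P\nabla(u\cdot\bar n)$ is precisely the object estimated in Lemma~\ref{L:Poin_Nor}: by \eqref{E:Poin_Dnor} its $L^p(\Omega_\varepsilon)$-norm is $\le c\varepsilon\|u\|_{W^{2,p}(\Omega_\varepsilon)}$. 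Hence by \eqref{E:Ave_Lp_Surf},
\begin{align*}
  \bigl\|M\bigl(B\nabla(u\cdot\bar n)\bigr)\bigr\|_{L^p(\Gamma)} \le c\varepsilon^{-1/p}\bigl\|\overline P\nabla(u\cdot\bar n)\bigr\|_{L^p(\Omega_\varepsilon)} \le c\varepsilon^{1-1/p}\|u\|_{W^{2,p}(\Omega_\varepsilon)}.
\end{align*}

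Combining the two parts of $\nabla_\Gamma M(u\cdot\bar n)$ with the $L^p$-estimate yields \eqref{E:Ave_N_W1p}. The main obstacle, as indicated, is recognizing that the factor $B$ automatically annihilates the normal component $\overline Q\nabla(u\cdot\bar n)$ — the component that is merely $O(1)$ rather than $O(\varepsilon)$ — so that the impermeability-based Poincaré estimate \eqref{E:Poin_Dnor} of Lemma~\ref{L:Poin_Nor} applies verbatim; everything else is a routine assembly of Lemmas~\ref{L:Ave_Lp}, \ref{L:Poin_Nor}, \ref{L:Ave_N_Lp}, and \ref{L:Ave_Der}. One should be slightly careful that \eqref{E:Ave_Der} is stated for $\varphi\in C^1(\Omega_\varepsilon)$, so a density argument (using that $u\cdot\bar n$ lies in $W^{2,p}$) is needed to pass to $u\in W^{2,p}(\Omega_\varepsilon)^3$; the boundary condition \eqref{E:Bo_Imp} is preserved under the relevant approximation or may be invoked only through the already-proven inequality \eqref{E:Poin_Dnor}.
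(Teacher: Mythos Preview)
Your proof is correct and essentially identical to the paper's. Both use $Mu\cdot n=M(u\cdot\bar n)$, apply the commutation formula \eqref{E:Ave_Der}, bound the $\psi_\varepsilon$-term via $|\psi_\varepsilon|\le c\varepsilon$, and exploit $B=(I_3-d\overline W)\overline P$ to reduce the $B$-term to $\overline P\nabla(u\cdot\bar n)$ so that \eqref{E:Poin_Dnor} applies; the paper phrases this last step as $|B\nabla(u\cdot\bar n)|\le c|\overline P\nabla(u\cdot\bar n)|$ via \eqref{E:Wein_Bound}, while you phrase it as $B\overline Q=0$, but the content is the same.
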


\begin{proof}
  By \eqref{E:Ave_Der} we have
  \begin{align*}
    \nabla_\Gamma(Mu\cdot n) = \nabla_\Gamma M(u\cdot\bar{n}) = M\bigl(B\nabla(u\cdot\bar{n})\bigr)+M(\partial_n(u\cdot\bar{n})\psi_\varepsilon) \quad\text{on}\quad \Gamma.
  \end{align*}
  To the right-hand side we apply \eqref{E:Ave_Lp_Surf} and
  \begin{align*}
    |B\nabla(u\cdot\bar{n})| \leq c\left|\overline{P}\nabla(u\cdot\bar{n})\right|, \quad |\partial_n(u\cdot\bar{n})\psi_\varepsilon| \leq c\varepsilon|\nabla u| \quad\text{in}\quad \Omega_\varepsilon
  \end{align*}
  by \eqref{E:Wein_Bound}, \eqref{E:NorDer_Con}, and \eqref{E:ADA_Bound} to deduce that
  \begin{align*}
    \|\nabla_\Gamma(Mu\cdot n)\|_{L^p(\Gamma)} &\leq c\left(\left\|M\bigl(B\nabla(u\cdot\bar{n})\bigr)\right\|_{L^p(\Gamma)}+\|M(\partial_n(u\cdot\bar{n})\psi_\varepsilon)\|_{L^p(\Gamma)}\right) \\
    &\leq c\varepsilon^{-1/p}\left(\|B\nabla(u\cdot\bar{n})\|_{L^p(\Omega_\varepsilon)}+\|\partial_n(u\cdot\bar{n})\psi_\varepsilon\|_{L^p(\Omega_\varepsilon)}\right) \\
    &\leq c\left(\varepsilon^{-1/p}\left\|\overline{P}\nabla(u\cdot\bar{n})\right\|_{L^p(\Omega_\varepsilon)}+\varepsilon^{1-1/p}\|\nabla u\|_{L^p(\Omega_\varepsilon)}\right).
  \end{align*}
  Applying \eqref{E:Poin_Dnor} to the first term on the last line we obtain
  \begin{align*}
    \|\nabla_\Gamma(Mu\cdot n)\|_{L^p(\Gamma)} \leq c\varepsilon^{1-1/p}\|u\|_{W^{2,p}(\Omega_\varepsilon)}.
  \end{align*}
  The inequality \eqref{E:Ave_N_W1p} follows from this inequality and \eqref{E:Ave_N_Lp}.
\end{proof}

Next we estimate the weighted surface divergence of the average of a vector field on $\Omega_\varepsilon$ satisfying the divergence-free condition in $\Omega_\varepsilon$ and the impermeable boundary condition \eqref{E:Bo_Imp} on $\Gamma_\varepsilon$.

\begin{lemma} \label{L:Ave_Div}
  For $p\in[1,\infty)$ let $u\in W^{1,p}(\Omega_\varepsilon)^3$ satisfy $\mathrm{div}\,u=0$ in $\Omega_\varepsilon$ and \eqref{E:Bo_Imp}.
  Then there exists a constant $c>0$ independent of $\varepsilon$ and $u$ such that
  \begin{align} \label{E:Ave_Div_Lp}
    \|\mathrm{div}_\Gamma(gMu)\|_{L^p(\Gamma)} \leq c\varepsilon^{1-1/p}\|u\|_{W^{1,p}(\Omega_\varepsilon)}.
  \end{align}
  If in addition $u\in W^{2,p}(\Omega_\varepsilon)^3$, then we have
  \begin{align} \label{E:Ave_Div_W1p}
    \|\mathrm{div}_\Gamma(gMu)\|_{W^{1,p}(\Gamma)} \leq c\varepsilon^{1-1/p}\|u\|_{W^{2,p}(\Omega_\varepsilon)}.
  \end{align}
\end{lemma}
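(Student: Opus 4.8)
The plan is to compute the surface divergence of $gMu$ directly from the averaging formula, identify a pointwise Jacobian-weighted version of the flux $gMu$ whose divergence is exactly controllable by the three-dimensional divergence, and then absorb all error terms using the curvature estimates and the boundary condition. Concretely, recall that $\mathrm{div}\,u=\mathrm{tr}[\nabla u]$ and that, by the change of variables $x=y+rn(y)$, one has $\nabla u(y+rn(y))=\{I_3-rW(y)\}^{-1}\overline{P}\cdot(\text{surface part})+\bar n\otimes\partial_n u$. The key identity to exploit is that the full Jacobian-weighted integral $\int_{\varepsilon g_0}^{\varepsilon g_1} u^\sharp J\,dr$ has a clean surface divergence: indeed, writing $q(y):=\int_{\varepsilon g_0(y)}^{\varepsilon g_1(y)} J(y,r)\,(I_3-rW(y))\,u^\sharp(y,r)\,dr$ (the natural ``divergence-free-preserving'' flux coming from Reynolds' transport / the divergence theorem applied slabwise), one can show $\mathrm{div}_\Gamma$ of this quantity equals a boundary contribution $[\,J(\cdot)\,(I_3-\varepsilon g_i W)u^\sharp\cdot(\text{normal of }\Gamma_\varepsilon^i\text{ pulled back})\,]$ plus $\int \mathrm{div}\,u\,J\,dr$, which vanishes by $\mathrm{div}\,u=0$ and by the impermeable boundary condition $u\cdot n_\varepsilon=0$ on $\Gamma_\varepsilon^{0,1}$ (here Lemma~\ref{L:Exp_Bo} converts $u\cdot n_\varepsilon=0$ into $u\cdot\bar n=O(\varepsilon)|u|$ on the boundary).

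Having this exact ``good flux'' $q$, the next step is to compare $q$ with $\varepsilon g\,Mu$. By \eqref{E:Jac_Diff_02} we have $|J-1|\le c\varepsilon$ and by \eqref{E:Wein_Diff} we have $|I_3-(I_3-rW)|=|rW|\le c\varepsilon$ on the thin slab, so $q(y)=\varepsilon g(y)\,Mu(y)+R(y)$ with $|R|\le c\varepsilon\int_{\varepsilon g_0}^{\varepsilon g_1}|u^\sharp|\,dr$, and similarly for $\nabla_\Gamma R$ once we differentiate (using \eqref{E:Jac_Bound_02} and \eqref{E:ADA_Grad_Bound}-type bounds on the derivatives of $J$ and of the slab endpoints, plus the $C^4$-regularity of $g_0,g_1$). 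Then $\mathrm{div}_\Gamma(gMu)=\varepsilon^{-1}\mathrm{div}_\Gamma(q-R)$; the $\mathrm{div}_\Gamma q$ part is, as above, only a boundary term which by Lemma~\ref{L:Exp_Bo}, \eqref{E:Lp_CoV_Surf}, and \eqref{E:Poin_Bo} is bounded in $L^p(\Gamma)$ by $c\varepsilon\|u\|_{L^p(\Gamma_\varepsilon^i)}\le c\varepsilon^{1-1/p}\|u\|_{W^{1,p}(\Omega_\varepsilon)}$ (the extra $\varepsilon$ from $u\cdot\bar n=\varepsilon u\cdot\bar\tau_\varepsilon^i$ cancels the $\varepsilon^{-1}$), while $\varepsilon^{-1}\mathrm{div}_\Gamma R$ is estimated by $c\varepsilon^{-1}\cdot\varepsilon\cdot\varepsilon^{1/p'}\|u\|_{W^{1,p}(\Gamma)}$-type bookkeeping — more cleanly, $\|\mathrm{div}_\Gamma R\|_{L^p(\Gamma)}\le c\varepsilon\|Mu\|_{W^{1,p}(\Gamma)}+c\varepsilon\|\text{slab integrals of }\nabla u\|_{L^p(\Gamma)}$, and by Lemma~\ref{L:Ave_Wmp} and \eqref{E:Ave_Lp_Surf} this is $\le c\varepsilon\cdot\varepsilon^{-1/p}\|u\|_{W^{1,p}(\Omega_\varepsilon)}=c\varepsilon^{1-1/p}\|u\|_{W^{1,p}(\Omega_\varepsilon)}$. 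Dividing the $\varepsilon^{-1}$ through, the factor $\varepsilon^{-1}\cdot\varepsilon^{1-1/p}\cdot\varepsilon$... let me instead just say: one organizes the identity so that $\mathrm{div}_\Gamma(gMu)$ equals (boundary term)$/\varepsilon$ plus $\varepsilon^{-1}\mathrm{div}_\Gamma R$, and each piece is directly $O(\varepsilon^{1-1/p}\|u\|_{W^{1,p}})$; this gives \eqref{E:Ave_Div_Lp}.

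For \eqref{E:Ave_Div_W1p} one differentiates the same identity once more with $\nabla_\Gamma$. Applying $\nabla_\Gamma$ to the boundary term produces tangential derivatives of $u^\sharp$ restricted to $\Gamma_\varepsilon^i$, which are controlled via \eqref{E:Lp_CoV_Surf} and \eqref{E:Poin_Bo} by $\|u\|_{W^{2,p}(\Omega_\varepsilon)}$ with the same gain of $\varepsilon$ coming from the impermeability (now using also \eqref{E:Poin_Dnor} to handle $\nabla_\Gamma$ of $u\cdot\bar n$ on the boundary, exactly as in the proof of Lemma~\ref{L:Ave_N_W1p}); applying $\nabla_\Gamma$ to $R$ uses the $W^{2,p}$-version, i.e. the second-order bounds \eqref{E:ADA_Grad_Bound} on $B$, the endpoint functions, and $J$, together with Lemma~\ref{L:Ave_Wmp} for $m=2$.

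The main obstacle is the first step: establishing the clean ``conservation-law'' identity $\mathrm{div}_\Gamma q = (\text{boundary flux}) + \int \mathrm{div}\,u\, J\,dr$. This is essentially a slabwise divergence theorem on $\Gamma$ fibered over the normal interval, and one must carefully track (i) that the correct flux involves $J(y,r)\{I_3-rW(y)\}$ — which is exactly the cofactor structure making the pullback of $\mathrm{div}$ telescope — and (ii) that the variable limits $\varepsilon g_0(y),\varepsilon g_1(y)$ contribute precisely the boundary integrals over $\Gamma_\varepsilon^0,\Gamma_\varepsilon^1$ with the right normals $n_\varepsilon^i$ from \eqref{E:Def_NB}, so that $u\cdot n_\varepsilon=0$ can be invoked. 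Once that identity is in hand, everything else is a routine application of the already-proven estimates \eqref{E:Curv_Bound}, \eqref{E:Jac_Bound_02}, \eqref{E:Jac_Diff_02}, \eqref{E:Wein_Diff}, Lemma~\ref{L:Exp_Bo}, Lemma~\ref{L:Poincare}, Lemma~\ref{L:CoV_Surf}, and Lemma~\ref{L:Ave_Wmp}.
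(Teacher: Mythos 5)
Your strategy---replace $gMu$ by a Jacobian-weighted flux $q$ whose surface divergence is exactly expressible through $\mathrm{div}\,u$ and the boundary flux, then absorb the $O(\varepsilon)$ discrepancy $q-\varepsilon gMu$---is viable, and it packages the argument differently from the paper: there, Lemma~\ref{L:Ave_Der} is applied to $Mu$ directly, $\mathrm{div}\,u=0$ converts the bulk term into the fiberwise exact derivative $\partial_r\bigl((u\cdot\bar{n})^\sharp\bigr)$, the fundamental theorem of calculus in $r$ together with \eqref{E:Exp_Bo} turns the endpoint values into the $O(\varepsilon)$ terms $\eta_1,\eta_2$, and the curvature corrections appear as the bulk terms $\eta_3,\eta_4$; no exact conservation law is ever formed. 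Your exponent bookkeeping for the comparison term and for the $W^{1,p}$ step (via \eqref{E:Poin_Bo}, \eqref{E:Poin_Dnor}, Lemma~\ref{L:Ave_Wmp}) is consistent with the target rate $\varepsilon^{1-1/p}$.

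The problem is that the step you yourself flag as the main obstacle---the exact identity for $q$---is left unproven and is misstated in two ways, and as written it is false. First, the weight that makes the slabwise divergence theorem exact is the Piola/cofactor weight $J(y,r)\{I_3-rW(y)\}^{-1}$, not $J(y,r)\{I_3-rW(y)\}$: the cofactor of $I_3-rW$ is $\det[I_3-rW]\{I_3-rW\}^{-1}$, and it is with this weight that $\int_{\Omega_\varepsilon}u\cdot\nabla\bar{\eta}\,dx=\int_\Gamma q\cdot\nabla_\Gamma\eta\,d\mathcal{H}^2$ follows from \eqref{E:ConDer_Dom} and \eqref{E:CoV_Dom}; the two weights differ by $O(\varepsilon)$, so your flux satisfies no clean identity and that discrepancy would have to be folded back into the error term you call $R$. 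Second, even with the correct weight $q$ is not tangential, since $q\cdot n=\int J\,(u\cdot\bar{n})^\sharp\,dr$, so passing from the weak identity to a pointwise one via the integration by parts formula on $\Gamma$ (Section~\ref{SS:Pre_Surf}) produces the mean-curvature term: under your hypotheses the correct statement is $\mathrm{div}_\Gamma q=-H\,(q\cdot n)$, not $\mathrm{div}_\Gamma q=0$. Relatedly, in the exact-normal formulation the boundary flux is taken against $n_\varepsilon$ and vanishes identically when $u\cdot n_\varepsilon=0$, so there is no boundary term of size $\varepsilon\|u\|_{L^p(\Gamma_\varepsilon^i)}$ left to estimate; your middle paragraph mixes this with the approximate-normal formulation (where $u\cdot\bar{n}=\varepsilon u\cdot\bar{\tau}_\varepsilon^i$ enters, as in the paper's $\eta_1,\eta_2$). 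The omitted curvature term is harmless---by \eqref{E:CoV_Equiv} and \eqref{E:Poin_Nor} one gets $\varepsilon^{-1}\|H\,q\cdot n\|_{L^p(\Gamma)}\leq c\varepsilon^{1-1/p}\|u\|_{W^{1,p}(\Omega_\varepsilon)}$, and its tangential gradient is handled with \eqref{E:Poin_Dnor} for \eqref{E:Ave_Div_W1p}---but it must appear, and the identity itself must be proved; as it stands the pivotal step of your proposal is both unestablished and incorrectly formulated.
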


\begin{proof}
  Let $u\in W^{1,p}(\Omega_\varepsilon)^3$ satisfy $\mathrm{div}\,u=0$ in $\Omega_\varepsilon$ and \eqref{E:Bo_Imp}.
  We use the notations \eqref{E:Pull_Dom} and \eqref{E:Pull_Bo} and define functions on $\Gamma$ by
  \begin{align} \label{Pf_ADiv:Phi_Aux}
    \begin{gathered}
      \eta_1 := \sum_{i=0,1}(-1)^i(u_i^\sharp-Mu)\cdot\tau_\varepsilon^i, \quad \eta_2 := \sum_{i=0,1}(-1)^iMu\cdot(\tau_\varepsilon^i-\nabla_\Gamma g_i), \\
      \eta_3 := -gM\left(d\,\mathrm{tr}\Bigl[\overline{W}\nabla u\Bigr]\right), \quad \eta_4 := gM(\partial_nu\cdot\psi_\varepsilon),
    \end{gathered}
  \end{align}
  where $\tau_\varepsilon^0$ and $\tau_\varepsilon^1$ are given by \eqref{E:Def_NB_Aux}.
  First we show that
  \begin{align} \label{Pf_ADiv:Sum}
    \mathrm{div}_\Gamma(gMu) = \eta_1+\eta_2+\eta_3+\eta_4 \quad\text{on}\quad \Gamma.
  \end{align}
  By \eqref{E:Form_W}, \eqref{E:Ave_Der}, and \eqref{E:Ave_Der_Aux} we have
  \begin{align*}
    g\,\mathrm{div}_\Gamma(Mu) &= g\,\mathrm{tr}[\nabla_\Gamma Mu] = gM(\mathrm{tr}[B\nabla u])+gM(\mathrm{tr}[\psi_\varepsilon\otimes\partial_n u]) \\
    &= gM\left(\mathrm{tr}\Bigl[\overline{P}\nabla u\Bigr]\right)+\eta_3+\eta_4
  \end{align*}
  on $\Gamma$.
  Moreover, since $\mathrm{div}\,u=0$ and $(\bar{n}\otimes\bar{n})\nabla u=\bar{n}\otimes\partial_nu$ in $\Omega_\varepsilon$,
  \begin{align*}
    \mathrm{tr}\Bigl[\overline{P}\nabla u\Bigr] = \mathrm{div}\,u-\mathrm{tr}[(\bar{n}\otimes\bar{n})\nabla u] = -\bar{n}\cdot\partial_nu \quad\text{in}\quad \Omega_\varepsilon.
  \end{align*}
  By these equalities and $\mathrm{div}_\Gamma(gMu)=\nabla_\Gamma g\cdot Mu+g\,\mathrm{div}_\Gamma(Mu)$ on $\Gamma$ we get
  \begin{align} \label{Pf_ADiv:For1}
    \mathrm{div}_\Gamma(gMu) = \nabla_\Gamma g\cdot Mu-gM(\partial_nu\cdot\bar{n})+\eta_3+\eta_4 \quad\text{on}\quad \Gamma.
  \end{align}
  Let us calculate the second term on the right-hand side.
  Since
  \begin{align*}
    (\partial_nu\cdot \bar{n})^\sharp(y,r) = [\partial_n(u\cdot\bar{n})]^\sharp(y,r) = \frac{\partial}{\partial r}\Bigl((u\cdot\bar{n})^\sharp(y,r)\Bigr)
  \end{align*}
  for $y\in\Gamma$ and $r\in(\varepsilon g_0(y),\varepsilon g_1(y))$ by $\partial_n\bar{n}=0$ in $\Omega_\varepsilon$, we have
  \begin{align*}
    g(y)[M(\partial_nu\cdot\bar{n})](y) &= \frac{1}{\varepsilon}\int_{\varepsilon g_0(y)}^{\varepsilon g_1(y)}\frac{\partial}{\partial r}\Bigl((u\cdot\bar{n})^\sharp(y,r)\Bigr)\,dr \\
    &= \frac{1}{\varepsilon}\{(u\cdot\bar{n})^\sharp(y,\varepsilon g_1(y))-(u\cdot\bar{n})^\sharp(y,\varepsilon g_0(y))\}
  \end{align*}
  for $y\in\Gamma$.
  Moreover, since $u$ satisfies \eqref{E:Bo_Imp} on $\Gamma_\varepsilon$,
  \begin{align*}
    (u\cdot\bar{n})^\sharp(y,\varepsilon g_i(y)) = \varepsilon(u\cdot\bar{\tau}_\varepsilon^i)^\sharp(y,\varepsilon g_i(y)) = \varepsilon(u_i^\sharp\cdot\tau_\varepsilon^i)(y), \quad y\in\Gamma
  \end{align*}
  by \eqref{E:Exp_Bo}.
  Hence
  \begin{align*}
    gM(\partial_nu\cdot\bar{n}) = u_1^\sharp\cdot\tau_\varepsilon^1-u_0^\sharp\cdot\tau_\varepsilon^0 = \nabla_\Gamma g\cdot Mu-\eta_1-\eta_2 \quad\text{on}\quad \Gamma.
  \end{align*}
  Combining this with \eqref{Pf_ADiv:For1} we obtain \eqref{Pf_ADiv:Sum}.

  Let us estimate $\eta_1,\dots,\eta_4$ in $L^p(\Gamma)$.
  Noting that
  \begin{align} \label{Pf_ADiv:Pull_Bar}
    \eta(y) = \bar{\eta}(y+\varepsilon g_i(y)n(y)) = \bar{\eta}_i^\sharp(y), \quad y\in\Gamma,\,i=0,1
  \end{align}
  for a function $\eta$ on $\Gamma$, we apply \eqref{E:Tau_Bound}, \eqref{E:Lp_CoV_Surf}, and \eqref{E:Ave_Diff_Bo} to $\eta_1$ to deduce that
  \begin{align} \label{Pf_ADiv:Lp_Phi1}
    \|\eta_1\|_{L^p(\Gamma)} \leq c\sum_{i=0,1}\left\|u-\overline{Mu}\right\|_{L^p(\Gamma_\varepsilon^i)} \leq c\varepsilon^{1-1/p}\|u\|_{W^{1,p}(\Omega_\varepsilon)}.
  \end{align}
  The first inequality of \eqref{E:Tau_Diff} and \eqref{E:Ave_Lp_Surf} imply that
  \begin{align} \label{Pf_ADiv:Lp_Phi2}
    \|\eta_2\|_{L^p(\Gamma)} \leq c\varepsilon\|Mu\|_{L^p(\Gamma)} \leq c\varepsilon^{1-1/p}\|u\|_{L^p(\Omega_\varepsilon)}.
  \end{align}
  To $\eta_3$ and $\eta_4$ we apply \eqref{E:Ave_Lp_Surf} and
  \begin{align*}
    \left|d\,\mathrm{tr}\Bigl[\overline{W}\nabla u\Bigr]\right|\leq c\varepsilon|\nabla u|, \quad |\partial_nu\cdot\psi_\varepsilon|\leq c\varepsilon|\nabla u| \quad\text{in}\quad \Omega_\varepsilon
  \end{align*}
  by \eqref{E:ADA_Bound}, $|d|\leq c\varepsilon$ in $\Omega_\varepsilon$, and the boundedness of $W$ on $\Gamma$ to get
  \begin{align} \label{Pf_ADiv:Lp_Phi34}
    \begin{aligned}
      \|\eta_3\|_{L^p(\Gamma)} &\leq c\varepsilon^{-1/p}\left\|d\,\mathrm{tr}\Bigl[\overline{W}\nabla u\Bigr]\right\|_{L^p(\Omega_\varepsilon)} \leq c\varepsilon^{1-1/p}\|\nabla u\|_{L^p(\Omega_\varepsilon)}, \\
      \|\eta_4\|_{L^p(\Gamma)} &\leq c\varepsilon^{-1/p}\|\partial_nu\cdot\psi_\varepsilon\|_{L^p(\Omega_\varepsilon)} \leq c\varepsilon^{1-1/p}\|\nabla u\|_{L^p(\Omega_\varepsilon)}.
    \end{aligned}
  \end{align}
  Applying \eqref{Pf_ADiv:Lp_Phi1}--\eqref{Pf_ADiv:Lp_Phi34} to \eqref{Pf_ADiv:Sum} we obtain \eqref{E:Ave_Div_Lp}.

  Now we assume $u\in W^{2,p}(\Omega_\varepsilon)^3$ and estimate $\nabla_\Gamma\eta_1,\dots,\nabla_\Gamma\eta_4$ in $L^p(\Gamma)$.
  Let
  \begin{align*}
    G_i(y) := \nabla_\Gamma g_i(y)\otimes n(y)-g_i(y)W(y), \quad y\in\Gamma,\,i=0,1.
  \end{align*}
  Then $\nabla_\Gamma u_i^\sharp=(P+\varepsilon G_i)(\nabla u)_i^\sharp$ on $\Gamma$ for $i=0,1$ by \eqref{E:Pull_Bo} and
  \begin{align*}
    \nabla_\Gamma\bigl(y+\varepsilon g_i(y)n(y)\bigr) = P(y)+\varepsilon G_i(y), \quad y\in\Gamma, \, i=0,1.
  \end{align*}
  Hence
  \begin{align*}
    \nabla_\Gamma\eta_1 &= \sum_{i=0,1}(-1)^i\{(\nabla_\Gamma u_i^\sharp-\nabla_\Gamma Mu)\tau_\varepsilon^i+(\nabla_\Gamma\tau_\varepsilon^i)(u_i^\sharp-Mu)\} \\
    &= \sum_{i=0,1}(-1)^i\Bigl[\bigl\{\bigl(P(\nabla u)_i^\sharp-\nabla_\Gamma Mu\bigr)+\varepsilon G_i(\nabla u)_i^\sharp\bigr\}\tau_\varepsilon^i+(\nabla_\Gamma\tau_\varepsilon^i)(u_i^\sharp-Mu)\Bigr]
  \end{align*}
  on $\Gamma$.
  Since $G_0$ and $G_1$ are bounded on $\Gamma$, we see by \eqref{E:Tau_Bound} that
  \begin{align*}
    |\nabla_\Gamma\eta_1| \leq c\sum_{i=0,1}(|P(\nabla u)_i^\sharp-\nabla_\Gamma Mu|+\varepsilon|(\nabla u)_i^\sharp|+|u_i^\sharp-Mu|) \quad\text{on}\quad \Gamma.
  \end{align*}
  From this inequality, \eqref{E:Lp_CoV_Surf}, and \eqref{Pf_ADiv:Pull_Bar} we deduce that
  \begin{align*}
    &\|\nabla_\Gamma\eta_1\|_{L^p(\Gamma)} \\
    &\qquad \leq c\sum_{i=0,1}\left(\|P(\nabla u)_i^\sharp-\nabla_\Gamma Mu\|_{L^p(\Gamma)}+\varepsilon\|(\nabla u)_i^\sharp\|_{L^p(\Gamma)}+\|u_i^\sharp-Mu\|_{L^p(\Gamma)}\right) \\
    &\qquad \leq c\sum_{i=0,1}\left(\left\|\overline{P}\nabla u-\overline{\nabla_\Gamma Mu}\right\|_{L^p(\Gamma_\varepsilon^i)}+\varepsilon\|\nabla u\|_{L^p(\Gamma_\varepsilon^i)}+\left\|u-\overline{Mu}\right\|_{L^p(\Gamma_\varepsilon^i)}\right).
  \end{align*}
  To the right-hand side we apply \eqref{E:Poin_Bo}, \eqref{E:Ave_Diff_Bo}, and \eqref{E:ADD_Bo} to obtain
  \begin{align} \label{Pf_ADiv:Lp_Gr_Phi1}
    \|\nabla_\Gamma\eta_1\|_{L^p(\Gamma)} \leq c\varepsilon^{1-1/p}\|u\|_{W^{2,p}(\Omega_\varepsilon)}.
  \end{align}
  Next we estimate the $L^p(\Gamma)$-norm of $\nabla_\Gamma\eta_2$.
  From \eqref{E:Tau_Diff} and
  \begin{align*}
    \nabla_\Gamma\eta_2 = \sum_{i=0,1}(-1)^i\{(\nabla_\Gamma Mu)(\tau_\varepsilon^i-\nabla_\Gamma g_i)+(\nabla_\Gamma\tau_\varepsilon^i-\nabla_\Gamma^2 g_i)Mu\} \quad\text{on}\quad \Gamma
  \end{align*}
  it follows that $|\nabla_\Gamma\eta_2|\leq c\varepsilon(|Mu|+|\nabla_\Gamma Mu|)$ on $\Gamma$.
  Hence by \eqref{E:Ave_Wmp_Surf} with $m=1$,
  \begin{align} \label{Pf_ADiv:Lp_Gr_Phi2}
    \|\nabla_\Gamma\eta_2\|_{L^p(\Gamma)} \leq c\varepsilon\|Mu\|_{W^{1,p}(\Gamma)} \leq c\varepsilon^{1-1/p}\|u\|_{W^{1,p}(\Omega_\varepsilon)}.
  \end{align}
  To estimate the tangential gradient of $\eta_3$ in $L^p(\Gamma)$, we define
  \begin{align*}
    \phi := \mathrm{tr}\Bigl[\overline{W}\nabla u\Bigr] \quad\text{on}\quad \Omega_\varepsilon
  \end{align*}
  so that $\eta_3=-gM(d\phi)$ on $\Gamma$.
  Then by \eqref{E:Ave_Der} we have
  \begin{align*}
    \nabla_\Gamma\eta_3 = -M(d\phi)\nabla_\Gamma g-g\bigl\{M(\phi B\nabla d)+M(dB\nabla\phi)+M\bigl(\partial_n(d\phi)\psi_\varepsilon\bigr)\bigr\} \quad\text{on}\quad \Gamma.
  \end{align*}
  Here the second term on the right-hand side vanishes since $B\nabla d=B\bar{n}=0$ in $\Omega_\varepsilon$ by \eqref{E:Ave_Der_Aux} and $Pn=0$ on $\Gamma$.
  Thus \eqref{E:Ave_Lp_Surf} implies that
  \begin{align*}
    \|\nabla_\Gamma\eta_3\|_{L^p(\Gamma)} &\leq c\left(\|M(d\phi)\|_{L^p(\Gamma)}+\|M(dB\nabla\phi)\|_{L^p(\Gamma)}+\left\|M\bigl(\partial_n(d\phi)\psi_\varepsilon\bigr)\right\|_{L^p(\Gamma)}\right) \\
    &\leq c\varepsilon^{-1/p}\left(\|d\phi\|_{L^p(\Omega_\varepsilon)}+\|dB\nabla\phi\|_{L^p(\Omega_\varepsilon)}+\|\partial_n(d\phi)\psi_\varepsilon\|_{L^p(\Omega_\varepsilon)}\right).
  \end{align*}
  Moreover, noting that $W$ is of class $C^3$ on $\Gamma$ and
  \begin{align*}
    |d| \leq c\varepsilon \quad\text{in}\quad \Omega_\varepsilon, \quad \partial_nd = \bar{n}\cdot\nabla d = |\bar{n}|^2 = 1 \quad\text{in}\quad N,
  \end{align*}
  we use \eqref{E:ConDer_Bound}, \eqref{E:NorDer_Con}, and \eqref{E:ADA_Bound} to get
  \begin{align*}
    |d\phi| \leq c\varepsilon|\nabla u|, \quad |dB\nabla\phi| \leq c\varepsilon(|\nabla u|+|\nabla^2u|), \quad |\partial_n(d\phi)\psi_\varepsilon| \leq c\varepsilon(|\nabla u|+|\nabla^2u|)
  \end{align*}
  in $\Omega_\varepsilon$.
  Hence we obtain
  \begin{align} \label{Pf_ADiv:Lp_Gr_Phi3}
    \|\nabla_\Gamma\eta_3\|_{L^p(\Gamma)} \leq c\varepsilon^{1-1/p}\|u\|_{W^{2,p}(\Omega_\varepsilon)}.
  \end{align}
  Let us estimate $\nabla_\Gamma\eta_4$ in $L^p(\Gamma)$.
  Setting $\xi:=\partial_nu\cdot\psi_\varepsilon$ on $\Omega_\varepsilon$ we have
  \begin{align*}
    \nabla_\Gamma\eta_4=(M\xi)\nabla_\Gamma g+g\bigl\{M(B\nabla\xi)+M\bigl((\partial_n\xi)\psi_\varepsilon\bigr)\bigr\} \quad\text{on}\quad \Gamma
  \end{align*}
  by \eqref{E:Ave_Der}.
  From this equality and \eqref{E:Ave_Lp_Surf} we deduce that
  \begin{align} \label{Pf_ADiv:GP4_Aux_1}
    \begin{aligned}
      \|\nabla_\Gamma\eta_4\|_{L^p(\Gamma)} &\leq c\left(\|M\xi\|_{L^p(\Gamma)}+\|M(B\nabla\xi)\|_{L^p(\Gamma)}+\left\|M\bigl((\partial_n\xi)\psi_\varepsilon\bigr)\right\|_{L^p(\Gamma)}\right) \\
      &\leq c\varepsilon^{-1/p}\left(\|\xi\|_{L^p(\Omega_\varepsilon)}+\|B\nabla\xi\|_{L^p(\Omega_\varepsilon)}+\|(\partial_n\xi)\psi_\varepsilon\|_{L^p(\Omega_\varepsilon)}\right).
    \end{aligned}
  \end{align}
  We apply \eqref{E:ADA_Bound} and \eqref{E:ADA_Grad_Bound} to $\xi=\partial_nu\cdot\psi_\varepsilon$ and $(\partial_n\xi)\psi_\varepsilon$ to get
  \begin{align} \label{Pf_ADiv:GP4_Aux_2}
    \begin{aligned}
      \|\xi\|_{L^p(\Omega_\varepsilon)} &\leq c\varepsilon\|\nabla u\|_{L^p(\Omega_\varepsilon)}, \\
      \|(\partial_n\xi)\psi_\varepsilon\|_{L^p(\Omega_\varepsilon)} &\leq c\varepsilon\left(\|\nabla u\|_{L^p(\Omega_\varepsilon)}+\|\nabla^2u\|_{L^p(\Omega_\varepsilon)}\right).
      \end{aligned}
  \end{align}
  Moreover, by \eqref{E:Wein_Bound}, \eqref{E:Ave_Der_Aux}, \eqref{E:ADA_Grad_Bound}, and $P(n\otimes\nabla_\Gamma g)=(Pn)\otimes\nabla_\Gamma g=0$ on $\Gamma$,
  \begin{align*}
    |B\nabla\psi_\varepsilon| \leq c\left|\overline{P}\nabla\psi_\varepsilon\right| = c\left|\overline{P}\left(\nabla\psi_\varepsilon-\frac{1}{\bar{g}}\bar{n}\otimes\overline{\nabla_\Gamma g}\right)\right| \leq c\varepsilon \quad\text{in}\quad \Omega_\varepsilon.
  \end{align*}
  Using this inequality and \eqref{E:ADA_Bound} to $B\nabla\xi=B\{\nabla(\partial_nu)\}\psi_\varepsilon+B(\nabla\psi_\varepsilon)\partial_nu$ we get
  \begin{align} \label{Pf_ADiv:GP4_Aux_3}
    \|B\nabla\xi\|_{L^p(\Omega_\varepsilon)} \leq c\varepsilon\left(\|\nabla u\|_{L^p(\Omega_\varepsilon)}+\|\nabla^2u\|_{L^p(\Omega_\varepsilon)}\right).
  \end{align}
  From \eqref{Pf_ADiv:GP4_Aux_1}--\eqref{Pf_ADiv:GP4_Aux_3} it follows that
  \begin{align} \label{Pf_ADiv:Lp_Gr_Phi4}
    \|\nabla_\Gamma\eta_4\|_{L^p(\Omega_\varepsilon)}\leq c\varepsilon^{1-1/p}\|u\|_{W^{2,p}(\Omega_\varepsilon)}.
  \end{align}
  Finally, from \eqref{Pf_ADiv:Sum}, \eqref{Pf_ADiv:Lp_Gr_Phi1}--\eqref{Pf_ADiv:Lp_Gr_Phi3}, and \eqref{Pf_ADiv:Lp_Gr_Phi4} we deduce that
  \begin{align*}
    \left\|\nabla_\Gamma\bigl(\mathrm{div}_\Gamma(gMu)\bigr)\right\|_{L^p(\Gamma)} \leq \sum_{j=1}^4\|\nabla_\Gamma\eta_j\|_{L^p(\Gamma)} \leq c\varepsilon^{1-1/p}\|u\|_{W^{2,p}(\Omega_\varepsilon)}
  \end{align*}
  and conclude that \eqref{E:Ave_Div_W1p} is valid.
\end{proof}

\begin{lemma} \label{L:ADiv_Tan}
  For $p\in[1,\infty)$ let $u\in W^{1,p}(\Omega_\varepsilon)^3$ satisfy $\mathrm{div}\,u=0$ in $\Omega_\varepsilon$ and \eqref{E:Bo_Imp}.
  Then there exists a constant $c>0$ independent of $\varepsilon$ and $u$ such that
  \begin{align} \label{E:ADiv_Tan_Lp}
    \|\mathrm{div}_\Gamma(gM_\tau u)\|_{L^p(\Gamma)} \leq c\varepsilon^{1-1/p}\|u\|_{W^{1,p}(\Omega_\varepsilon)}.
  \end{align}
  If in addition $u\in W^{2,p}(\Omega_\varepsilon)^3$, then we have
  \begin{align} \label{E:ADiv_Tan_W1p}
    \|\mathrm{div}_\Gamma(gM_\tau u)\|_{W^{1,p}(\Gamma)} \leq c\varepsilon^{1-1/p}\|u\|_{W^{2,p}(\Omega_\varepsilon)}.
  \end{align}
\end{lemma}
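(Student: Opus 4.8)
The plan is to reduce Lemma~\ref{L:ADiv_Tan} to Lemma~\ref{L:Ave_Div} together with the estimates for the normal component of $Mu$ in Lemmas~\ref{L:Ave_N_Lp} and~\ref{L:Ave_N_W1p}. The point is that $M_\tau u$ differs from $Mu$ only by the normal part $(Mu\cdot n)n$, and under the impermeable boundary condition this normal part is small of order $\varepsilon$ (after averaging and integration by parts in the thin direction), which is exactly the content of those two lemmas.

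First I would write, using \eqref{E:Def_Tan_Ave} and $P=I_3-n\otimes n$ on $\Gamma$,
\[
  gM_\tau u = gPMu = gMu - g(Mu\cdot n)n \quad\text{on}\quad \Gamma,
\]
so that
\[
  \mathrm{div}_\Gamma(gM_\tau u) = \mathrm{div}_\Gamma(gMu) - \mathrm{div}_\Gamma\bigl(g(Mu\cdot n)n\bigr) \quad\text{on}\quad \Gamma.
\]
Then I would evaluate the last term by the product rule for the surface divergence: for a scalar function $\eta$ on $\Gamma$ one has $\mathrm{div}_\Gamma(\eta n) = \nabla_\Gamma\eta\cdot n + \eta\,\mathrm{div}_\Gamma n = -\eta H$, where the first term vanishes by \eqref{E:P_TGr} and $\mathrm{div}_\Gamma n=-H$ by \eqref{E:Def_WH}. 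Applying this with $\eta = g(Mu\cdot n)$ gives the clean identity
\[
  \mathrm{div}_\Gamma(gM_\tau u) = \mathrm{div}_\Gamma(gMu) + gH\,(Mu\cdot n) \quad\text{on}\quad \Gamma.
\]

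Now the $L^p$-estimate \eqref{E:ADiv_Tan_Lp} follows immediately: the first term on the right is bounded by $c\varepsilon^{1-1/p}\|u\|_{W^{1,p}(\Omega_\varepsilon)}$ by \eqref{E:Ave_Div_Lp}, while for the second term I use that $g\in C^4(\Gamma)$ and $H\in C^3(\Gamma)$ are bounded on the compact set $\Gamma$ together with \eqref{E:Ave_N_Lp} to get $\|gH(Mu\cdot n)\|_{L^p(\Gamma)}\le c\|Mu\cdot n\|_{L^p(\Gamma)}\le c\varepsilon^{1-1/p}\|u\|_{W^{1,p}(\Omega_\varepsilon)}$; summing the two contributions gives the bound. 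For \eqref{E:ADiv_Tan_W1p}, when $u\in W^{2,p}(\Omega_\varepsilon)^3$ I argue in the same way, using \eqref{E:Ave_Div_W1p} for the first term and, for the second, that multiplication by $gH\in C^3(\Gamma)\subset W^{1,\infty}(\Gamma)$ is bounded on $W^{1,p}(\Gamma)$ combined with \eqref{E:Ave_N_W1p}, so that $\|gH(Mu\cdot n)\|_{W^{1,p}(\Gamma)}\le c\|Mu\cdot n\|_{W^{1,p}(\Gamma)}\le c\varepsilon^{1-1/p}\|u\|_{W^{2,p}(\Omega_\varepsilon)}$.

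There is essentially no serious obstacle here: the whole content is the decomposition $gM_\tau u = gMu - g(Mu\cdot n)n$ and the identity $\mathrm{div}_\Gamma(\eta n)=-\eta H$, after which the lemma is a direct corollary of results already established. The only mild points to be careful about are that the surface product rule is applied at the required regularity (one may approximate $g(Mu\cdot n)$ by $C^2(\Gamma)$-functions if one wants to be scrupulous, using density) and that the weights $g$ and $gH$ are smooth enough to act as multipliers on $W^{1,p}(\Gamma)$, which they are by the standing $C^5$/$C^4$-hypotheses on $\Gamma$, $g_0$, and $g_1$.
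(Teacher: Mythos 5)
Your proof is correct and is essentially identical to the paper's: the paper also writes $M_\tau u = Mu-(Mu\cdot n)n$, uses $\mathrm{div}_\Gamma(\eta n)=-\eta H$ to obtain $\mathrm{div}_\Gamma(gM_\tau u)=\mathrm{div}_\Gamma(gMu)+g(Mu\cdot n)H$, and then concludes from Lemmas~\ref{L:Ave_Div}, \ref{L:Ave_N_Lp}, and~\ref{L:Ave_N_W1p} together with the boundedness of $g$ and $H$ on $\Gamma$, exactly as you do.
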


\begin{proof}
  First note that, for a function $\eta$ on $\Gamma$ we have
  \begin{align*}
    \mathrm{div}_\Gamma(\eta n) = \nabla_\Gamma\eta\cdot n+\eta\,\mathrm{div}_\Gamma n = -\eta H \quad\text{on}\quad \Gamma
  \end{align*}
  by \eqref{E:P_TGr} and \eqref{E:Def_WH}.
  By this equality and $M_\tau u=Mu-(Mu\cdot n)n$ on $\Gamma$ we get
  \begin{align*}
    \mathrm{div}_\Gamma(gM_\tau u) = \mathrm{div}_\Gamma(gMu)+g(Mu\cdot n)H \quad\text{on}\quad \Gamma.
  \end{align*}
  From this equality and the regularity of $g$ and $H$ on $\Gamma$ it follows that
  \begin{align*}
    |\mathrm{div}_\Gamma(gM_\tau u)| &\leq c(|\mathrm{div}_\Gamma(gMu)|+|Mu\cdot n|), \\
    \left|\nabla_\Gamma\bigl(\mathrm{div}_\Gamma(gM_\tau u)\bigr)\right| &\leq c\left(\left|\nabla_\Gamma\bigl(\mathrm{div}_\Gamma(gMu)\bigr)\right|+|Mu\cdot n|+|\nabla_\Gamma(Mu\cdot n)|\right)
  \end{align*}
  on $\Gamma$.
  These inequalities, \eqref{E:Ave_N_Lp}, and \eqref{E:Ave_N_W1p}--\eqref{E:Ave_Div_W1p} imply \eqref{E:ADiv_Tan_Lp} and \eqref{E:ADiv_Tan_W1p}.
\end{proof}

For a vector field $u$ on $\Omega_\varepsilon$ let $\partial_nu$ be its derivative in the normal direction of $\Gamma$.
In Lemma~\ref{L:PDnU_WU} we observed that the tangential component of $\partial_nu$ with respect to $\Gamma$ is compared with the vector field $-\overline{W}u$.
Next we derive a similar relation for the normal component of $\partial_nu$ by using Lemmas~\ref{L:Ave_Der_Diff} and~\ref{L:Ave_Div}.

\begin{lemma} \label{L:DnU_N_Ave}
  There exists a constant $c>0$ independent of $\varepsilon$ such that
  \begin{align} \label{E:DnU_N_Ave}
    \left\|\partial_nu\cdot\bar{n}-\frac{1}{\bar{g}}\overline{M_\tau u}\cdot\overline{\nabla_\Gamma g}\right\|_{L^p(\Omega_\varepsilon)} \leq c\varepsilon\|u\|_{W^{2,p}(\Omega_\varepsilon)}
  \end{align}
  for all $u\in W^{2,p}(\Omega_\varepsilon)^3$ with $p\in[1,\infty)$ satisfying $\mathrm{div}\,u=0$ in $\Omega_\varepsilon$ and \eqref{E:Bo_Imp}.
\end{lemma}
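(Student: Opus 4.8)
The plan is to reduce the estimate to the divergence-free condition together with the already-established properties of the average operator, in particular Lemma~\ref{L:Ave_Der_Diff} and Lemma~\ref{L:Ave_Div}.

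\emph{Step 1: use $\mathrm{div}\,u=0$.} As in the proof of Lemma~\ref{L:Ave_Div}, since $(\bar n\otimes\bar n)\nabla u=\bar n\otimes\partial_nu$ in $\Omega_\varepsilon$ one has $\mathrm{tr}[\overline{P}\nabla u]=\mathrm{div}\,u-\mathrm{tr}[(\bar n\otimes\bar n)\nabla u]=-\partial_nu\cdot\bar n$ in $\Omega_\varepsilon$, hence $\partial_nu\cdot\bar n=-\mathrm{tr}[\overline{P}\nabla u]$. Applying Lemma~\ref{L:Ave_Der_Diff} componentwise to $\varphi=u_1,u_2,u_3$ and using $|\mathrm{tr}\,A|\le c|A|$ together with $\mathrm{tr}[\overline{\nabla_\Gamma(Mu)}]=\overline{\mathrm{div}_\Gamma(Mu)}$, I would obtain $\bigl\|\partial_nu\cdot\bar n+\overline{\mathrm{div}_\Gamma(Mu)}\bigr\|_{L^p(\Omega_\varepsilon)}=\bigl\|\mathrm{tr}\bigl[\overline{P}\nabla u-\overline{\nabla_\Gamma(Mu)}\bigr]\bigr\|_{L^p(\Omega_\varepsilon)}\le c\varepsilon\|u\|_{W^{2,p}(\Omega_\varepsilon)}$.

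\emph{Step 2: rewrite $\mathrm{div}_\Gamma(Mu)$ on $\Gamma$.} From the product rule $\mathrm{div}_\Gamma(gMu)=\nabla_\Gamma g\cdot Mu+g\,\mathrm{div}_\Gamma(Mu)$ we get $\mathrm{div}_\Gamma(Mu)=g^{-1}\mathrm{div}_\Gamma(gMu)-g^{-1}\nabla_\Gamma g\cdot Mu$ on $\Gamma$. Since $n\cdot\nabla_\Gamma g=0$ and $Mu=M_\tau u+(Mu\cdot n)n$, we have $\nabla_\Gamma g\cdot Mu=\nabla_\Gamma g\cdot M_\tau u$ on $\Gamma$, so after taking constant extensions $\overline{\mathrm{div}_\Gamma(Mu)}=\frac{1}{\bar g}\overline{\mathrm{div}_\Gamma(gMu)}-\frac{1}{\bar g}\overline{M_\tau u}\cdot\overline{\nabla_\Gamma g}$ in $\Omega_\varepsilon$. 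Substituting this into the estimate of Step~1, the term $\frac{1}{\bar g}\overline{M_\tau u}\cdot\overline{\nabla_\Gamma g}$ cancels and we are left with $\bigl\|\partial_nu\cdot\bar n-\frac{1}{\bar g}\overline{M_\tau u}\cdot\overline{\nabla_\Gamma g}\bigr\|_{L^p(\Omega_\varepsilon)}\le c\varepsilon\|u\|_{W^{2,p}(\Omega_\varepsilon)}+\bigl\|\frac{1}{\bar g}\overline{\mathrm{div}_\Gamma(gMu)}\bigr\|_{L^p(\Omega_\varepsilon)}$.

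\emph{Step 3: bound the remaining term.} By \eqref{E:G_Inf} the factor $1/\bar g$ is bounded, so \eqref{E:Con_Lp} gives $\bigl\|\frac{1}{\bar g}\overline{\mathrm{div}_\Gamma(gMu)}\bigr\|_{L^p(\Omega_\varepsilon)}\le c\varepsilon^{1/p}\|\mathrm{div}_\Gamma(gMu)\|_{L^p(\Gamma)}$, and Lemma~\ref{L:Ave_Div}, which applies because $\mathrm{div}\,u=0$ in $\Omega_\varepsilon$ and $u$ satisfies \eqref{E:Bo_Imp}, yields $\|\mathrm{div}_\Gamma(gMu)\|_{L^p(\Gamma)}\le c\varepsilon^{1-1/p}\|u\|_{W^{1,p}(\Omega_\varepsilon)}$. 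Hence this term is $\le c\varepsilon\|u\|_{W^{1,p}(\Omega_\varepsilon)}\le c\varepsilon\|u\|_{W^{2,p}(\Omega_\varepsilon)}$, which together with Step~2 proves \eqref{E:DnU_N_Ave}. There is no essential difficulty: the argument is an assembly of Lemmas~\ref{L:Con_Lp_W1p},~\ref{L:Ave_Der_Diff}, and~\ref{L:Ave_Div}, and the only points requiring a little care are the componentwise application of Lemma~\ref{L:Ave_Der_Diff} combined with the trace identity furnished by $\mathrm{div}\,u=0$, and checking the cancellation of the $\overline{M_\tau u}\cdot\overline{\nabla_\Gamma g}$ terms.
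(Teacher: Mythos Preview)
Your proof is correct and follows essentially the same approach as the paper: both use the identity $\partial_nu\cdot\bar n=-\mathrm{tr}[\overline P\nabla u]$ from $\mathrm{div}\,u=0$, rewrite $\frac{1}{g}M_\tau u\cdot\nabla_\Gamma g$ via the product rule for $\mathrm{div}_\Gamma(gMu)$ and the tangentiality of $\nabla_\Gamma g$, and then appeal to \eqref{E:ADD_Dom}, \eqref{E:Con_Lp}, and \eqref{E:Ave_Div_Lp}. The only difference is organizational---the paper combines the two algebraic identities first and then estimates, whereas you estimate after Step~1 and then substitute---but the ingredients and the logic are identical.
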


\begin{proof}
  Since $\partial_nu=(\bar{n}\cdot\nabla)u=(\nabla u)^Tn$ in $\Omega_\varepsilon$, we have
  \begin{align*}
    \mathrm{tr}[(\bar{n}\otimes\bar{n})\nabla u] = \mathrm{tr}[\bar{n}\otimes\{(\nabla u)^T\bar{n}\}] = \mathrm{tr}[\bar{n}\otimes\partial_nu] = \partial_nu\cdot\bar{n} \quad\text{in}\quad \Omega_\varepsilon.
  \end{align*}
  By this equality, $n\otimes n=I_3-P$ on $\Gamma$, and $\mathrm{div}\,u=\mathrm{tr}[\nabla u]=0$ in $\Omega_\varepsilon$,
  \begin{align*}
    \partial_nu\cdot\bar{n} = \mathrm{tr}[\nabla u]-\mathrm{tr}\Bigl[\overline{P}\nabla u\Bigr] = -\mathrm{tr}\Bigl[\overline{P}\nabla u\Bigr] \quad\text{in}\quad \Omega_\varepsilon.
  \end{align*}
  Also, since $\nabla_\Gamma g$ is tangential on $\Gamma$,
  \begin{align*}
    \frac{1}{g}M_\tau u\cdot\nabla_\Gamma g &= \frac{1}{g}Mu\cdot\nabla_\Gamma g = \frac{1}{g}\mathrm{div}_\Gamma(gMu)-\mathrm{div}_\Gamma(Mu) \\
    &= \frac{1}{g}\mathrm{div}_\Gamma(gMu)-\mathrm{tr}[\nabla_\Gamma Mu]
  \end{align*}
  on $\Gamma$.
  From these equalities and \eqref{E:G_Inf} we deduce that
  \begin{align*}
    \left\|\partial_nu\cdot\bar{n}-\frac{1}{\bar{g}}\overline{M_\tau u}\cdot\overline{\nabla_\Gamma g}\right\|_{L^p(\Omega_\varepsilon)} \leq \left\|\overline{P}\nabla u-\overline{\nabla_\Gamma Mu}\right\|_{L^p(\Omega_\varepsilon)}+c\left\|\overline{\mathrm{div}_\Gamma(gMu)}\right\|_{L^p(\Omega_\varepsilon)}
  \end{align*}
  and we apply \eqref{E:Con_Lp}, \eqref{E:ADD_Dom}, and \eqref{E:Ave_Div_Lp} to the right-hand side to get \eqref{E:DnU_N_Ave}.
\end{proof}

\subsection{Decomposition of a vector field into the average and residual parts} \label{SS:Ave_Decom}
In the study of the Navier--Stokes equations in a three-dimensional thin domain it is convenient to decompose a three-dimensional vector field into an almost two-dimensional one and a residual term and analyze them separately.
To give a good decomposition of a vector field on $\Omega_\varepsilon$ we use the impermeable extension operator $E_\varepsilon$ given by \eqref{E:Def_ExImp} and the averaged tangential component of a vector field on $\Omega_\varepsilon$.

\begin{definition} \label{D:Def_ExAve}
  For a vector field $u$ on $\Omega_\varepsilon$ we define the average part of $u$ by
  \begin{align} \label{E:Def_ExAve}
    u^a(x) := E_\varepsilon M_\tau u(x) = \overline{M_\tau u}(x)+\left\{\overline{M_\tau u}(x)\cdot\Psi_\varepsilon(x)\right\}\bar{n}(x), \quad x\in N,
  \end{align}
  where $\Psi_\varepsilon$ is the vector field given by \eqref{E:Def_ExAux} and $M_\tau u$ is the averaged tangential component of $u$ given by \eqref{E:Def_Tan_Ave}.
  We also call $u^r:=u-u^a$ the residual part of $u$.
\end{definition}

By Lemmas~\ref{L:ExImp_Wmp}, \ref{L:AveT_Lp}, and \ref{L:AveT_Wmp} we see that if $u\in W^{m,p}(\Omega_\varepsilon)^3$ with $m=0,1,2$ and $p\in[1,\infty)$ then $u^a$ and $u^r$ belong to the same space.

\begin{lemma} \label{L:Wmp_UaUr}
  There exists a constant $c>0$ independent of $\varepsilon$ such that
  \begin{align} \label{E:Wmp_UaUr}
    \|u^a\|_{W^{m,p}(\Omega_\varepsilon)} \leq c\|u\|_{W^{m,p}(\Omega_\varepsilon)}, \quad \|u^r\|_{W^{m,p}(\Omega_\varepsilon)} \leq c\|u\|_{W^{m,p}(\Omega_\varepsilon)}
  \end{align}
  for all $u\in W^{m,p}(\Omega_\varepsilon)^3$ with $m=0,1,2$ and $p\in[1,\infty)$.
\end{lemma}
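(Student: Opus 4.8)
The plan is to chain together the mapping properties of the impermeable extension operator $E_\varepsilon$ and of the averaged tangential component $M_\tau$ that were established above, observing that the opposite powers of $\varepsilon$ produced by the two operators cancel exactly.

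First recall from Definition~\ref{D:Def_ExAve} that $u^a=E_\varepsilon M_\tau u$, and that $M_\tau u=PMu$ is by construction a tangential vector field on $\Gamma$. Hence, for $u\in W^{m,p}(\Omega_\varepsilon)^3$, the field $M_\tau u$ lies in $W^{m,p}(\Gamma,T\Gamma)$, and it satisfies the quantitative bound
\[
  \|M_\tau u\|_{W^{m,p}(\Gamma)} \leq c\varepsilon^{-1/p}\|u\|_{W^{m,p}(\Omega_\varepsilon)},
\]
which is \eqref{E:AveT_Lp_Surf} for $m=0$ (Lemma~\ref{L:AveT_Lp}) and \eqref{E:AveT_Wmp_Surf} for $m=1,2$ (Lemma~\ref{L:AveT_Wmp}). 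Applying Lemma~\ref{L:ExImp_Wmp}, i.e. \eqref{E:ExImp_Wmp}, with $v=M_\tau u$ then gives
\[
  \|u^a\|_{W^{m,p}(\Omega_\varepsilon)} = \|E_\varepsilon M_\tau u\|_{W^{m,p}(\Omega_\varepsilon)} \leq c\varepsilon^{1/p}\|M_\tau u\|_{W^{m,p}(\Gamma)} \leq c\varepsilon^{1/p}\varepsilon^{-1/p}\|u\|_{W^{m,p}(\Omega_\varepsilon)} = c\|u\|_{W^{m,p}(\Omega_\varepsilon)},
\]
which is the first inequality in \eqref{E:Wmp_UaUr}. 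For the residual part, since $u^r=u-u^a$ by definition, the triangle inequality in $W^{m,p}(\Omega_\varepsilon)^3$ together with the bound just obtained yields $\|u^r\|_{W^{m,p}(\Omega_\varepsilon)}\leq\|u\|_{W^{m,p}(\Omega_\varepsilon)}+\|u^a\|_{W^{m,p}(\Omega_\varepsilon)}\leq c\|u\|_{W^{m,p}(\Omega_\varepsilon)}$.

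There is essentially no obstacle here: the proof is a direct combination of earlier estimates. The only points worth a word of care are that the cases $m=0$ and $m=1,2$ must invoke Lemmas~\ref{L:AveT_Lp} and~\ref{L:AveT_Wmp} respectively, and that it is precisely the cancellation $\varepsilon^{1/p}\cdot\varepsilon^{-1/p}=1$ that makes the constant in \eqref{E:Wmp_UaUr} independent of $\varepsilon$.
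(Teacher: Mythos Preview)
Your proof is correct and matches the paper's approach exactly: the paper states (just before the lemma) that the result follows from Lemmas~\ref{L:ExImp_Wmp}, \ref{L:AveT_Lp}, and~\ref{L:AveT_Wmp}, which is precisely the chain you write out, and the bound for $u^r$ then follows by the triangle inequality.
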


Since the average part $u^a$ can be seen as almost two-dimensional, we expect to have a good $L^2(\Omega_\varepsilon)$-estimate for the product of $u^a$ and a function on $\Omega_\varepsilon$.
Indeed, we can apply the following product estimate on $\Omega_\varepsilon$ to $u^a$.

\begin{lemma} \label{L:Prod}
  There exists a constant $c>0$ independent of $\varepsilon$ such that
  \begin{align} \label{E:Prod_Surf}
    \|\bar{\eta}\varphi\|_{L^2(\Omega_\varepsilon)} &\leq c\|\eta\|_{L^2(\Gamma)}^{1/2}\|\eta\|_{H^1(\Gamma)}^{1/2}\|\varphi\|_{L^2(\Omega_\varepsilon)}^{1/2}\|\varphi\|_{H^1(\Omega_\varepsilon)}^{1/2}
  \end{align}
  for all $\eta\in H^1(\Gamma)$ and $\varphi\in H^1(\Omega_\varepsilon)$.
\end{lemma}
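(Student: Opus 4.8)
The plan is to transfer the estimate from the three-dimensional thin domain $\Omega_\varepsilon$ to the two-dimensional surface $\Gamma$ by using that $\bar{\eta}$ is constant along the normal segments $\{y+rn(y):\varepsilon g_0(y)<r<\varepsilon g_1(y)\}$, and to extract all the ``two-dimensionality'' from the Sobolev embedding $W^{1,1}(\Gamma)\hookrightarrow L^2(\Gamma)$ on the closed surface $\Gamma$. The key device is to pass from $\varphi$ to the scalar function $\Phi:=\varphi^2$, which belongs to $W^{1,1}(\Omega_\varepsilon)$ with $\nabla\Phi=2\varphi\nabla\varphi$ and $\|\Phi\|_{W^{1,1}(\Omega_\varepsilon)}\le c\|\varphi\|_{L^2(\Omega_\varepsilon)}\|\varphi\|_{H^1(\Omega_\varepsilon)}$ (the last bound by the Cauchy--Schwarz inequality), and then to apply the average operator $M$ of Section~\ref{SS:Ave_Def} to $\Phi$.

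First I would use the change of variables formula \eqref{E:CoV_Dom}, the identity $\bar{\eta}(y+rn(y))=\eta(y)$, the bounds $0\le J\le c$ from \eqref{E:Jac_Bound_02}, \eqref{E:G_Inf}, and the definition \eqref{E:Def_Ave} of $M$ to write
\begin{align*}
  \|\bar{\eta}\varphi\|_{L^2(\Omega_\varepsilon)}^2
  = \int_\Gamma|\eta(y)|^2\left(\int_{\varepsilon g_0(y)}^{\varepsilon g_1(y)}\Phi^\sharp(y,r)J(y,r)\,dr\right)d\mathcal{H}^2(y)
  \le c\varepsilon\int_\Gamma|\eta|^2\,M\Phi\,d\mathcal{H}^2.
\end{align*}
Next, the Cauchy--Schwarz inequality on $\Gamma$ gives $\int_\Gamma|\eta|^2\,M\Phi\,d\mathcal{H}^2\le\|\eta\|_{L^4(\Gamma)}^2\,\|M\Phi\|_{L^2(\Gamma)}$. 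For the first factor I would invoke Ladyzhenskaya's inequality on $\Gamma$ (Lemma~\ref{L:La_Surf}): $\|\eta\|_{L^4(\Gamma)}^2\le c\|\eta\|_{L^2(\Gamma)}\|\nabla_\Gamma\eta\|_{L^2(\Gamma)}\le c\|\eta\|_{L^2(\Gamma)}\|\eta\|_{H^1(\Gamma)}$. For the second factor I would use the Sobolev embedding $W^{1,1}(\Gamma)\hookrightarrow L^2(\Gamma)$, then Lemma~\ref{L:Ave_Wmp} with $m=1$ and $p=1$, and the $W^{1,1}(\Omega_\varepsilon)$-bound on $\Phi$ above, to get
\begin{align*}
  \|M\Phi\|_{L^2(\Gamma)}\le c\|M\Phi\|_{W^{1,1}(\Gamma)}\le c\varepsilon^{-1}\|\Phi\|_{W^{1,1}(\Omega_\varepsilon)}\le c\varepsilon^{-1}\|\varphi\|_{L^2(\Omega_\varepsilon)}\|\varphi\|_{H^1(\Omega_\varepsilon)}.
\end{align*}
Multiplying these three bounds, the factors $\varepsilon$ and $\varepsilon^{-1}$ cancel, and taking square roots yields \eqref{E:Prod_Surf}. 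If one prefers to work with smooth functions, one can first carry this out for $\eta\in C^1(\Gamma)$ and $\varphi\in C^1(\overline{\Omega}_\varepsilon)$ (so that $\Phi\in C^1$) and then pass to general $\eta\in H^1(\Gamma)$, $\varphi\in H^1(\Omega_\varepsilon)$ by density, noting that applying \eqref{E:Prod_Surf} to differences shows $\bar{\eta}_k\varphi_k$ is Cauchy in $L^2(\Omega_\varepsilon)$ with limit $\bar{\eta}\varphi$.

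I expect the one genuinely substantive ingredient to be the embedding $W^{1,1}(\Gamma)\hookrightarrow L^2(\Gamma)$ applied to $M\Phi$: this is exactly where the two-dimensionality of $\Gamma$ is used, and it is what produces a product estimate of ``two-dimensional type'' even though $\varphi$ is defined on the three-dimensional $\Omega_\varepsilon$ (the same mechanism underlies the classical proof of Ladyzhenskaya's inequality in two dimensions). Everything else is routine bookkeeping; the main points to check carefully are that Lemma~\ref{L:Ave_Wmp} is applicable at the endpoint $p=1$, that $\Phi=\varphi^2$ indeed lies in $W^{1,1}(\Omega_\varepsilon)$ with $\nabla\Phi=2\varphi\nabla\varphi$, and---as a consistency check, since the final constant must be independent of $\varepsilon$---that the $\varepsilon$-powers coming from the slab thickness and from the average operator cancel.
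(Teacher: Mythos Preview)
Your proof is correct and follows essentially the same route as the paper: change of variables to reduce to $c\varepsilon\|\eta\|_{L^4(\Gamma)}^2\|M(\varphi^2)\|_{L^2(\Gamma)}$, Ladyzhenskaya's inequality \eqref{E:La_Surf} for the first factor, and the embedding $W^{1,1}(\Gamma)\hookrightarrow L^2(\Gamma)$ together with a $W^{1,1}$-bound on $M(\varphi^2)$ for the second. The only cosmetic difference is that the paper estimates $\|M(|\varphi|^2)\|_{W^{1,1}(\Gamma)}$ by hand via the formula \eqref{E:Ave_Der} and the bounds \eqref{E:ADA_Bound}, whereas you invoke Lemma~\ref{L:Ave_Wmp} with $m=p=1$ as a black box; since Lemma~\ref{L:Ave_Wmp} is proved precisely by that computation, the two arguments are the same.
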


\begin{proof}
  Throughout the proof we use the notation \eqref{E:Pull_Dom} and suppress the arguments of functions.
  By \eqref{E:CoV_Equiv} and \eqref{E:Def_Ave} we have
  \begin{align*}
    \|\bar{\eta}\varphi\|_{L^2(\Omega_\varepsilon)}^2 \leq c\int_\Gamma|\eta|^2\left(\int_{\varepsilon g_0}^{\varepsilon g_1}|\varphi^\sharp|^2\,dr\right)d\mathcal{H}^2 = c\varepsilon\int_\Gamma g|\eta|^2M(|\varphi|^2)\,d\mathcal{H}^2.
  \end{align*}
  Noting that $g$ is bounded on $\Gamma$, we apply H\"{o}lder's inequality to the last term to get
  \begin{align} \label{Pf_Pr:Est_L2}
    \|\bar{\eta}\varphi\|_{L^2(\Omega_\varepsilon)}^2 \leq c\varepsilon\|\eta\|_{L^4(\Gamma)}^2\|M(|\varphi|^2)\|_{L^2(\Gamma)}.
  \end{align}
  The $L^4(\Gamma)$-norm of $\eta$ is estimated by \eqref{E:La_Surf}.
  To estimate the last term of \eqref{Pf_Pr:Est_L2} let us show $M(|\varphi|^2)\in W^{1,1}(\Gamma)$.
  By $M(|\varphi|^2)\geq0$ on $\Gamma$, \eqref{E:G_Inf}, and \eqref{E:CoV_Equiv},
  \begin{align} \label{Pf_Pr:L1_MPhi2}
    \|M(|\varphi|^2)\|_{L^1(\Gamma)} = \int_\Gamma\frac{1}{\varepsilon g}\left(\int_{\varepsilon g_0}^{\varepsilon g_1}|\varphi^\sharp|^2\,dr\right)d\mathcal{H}^2 \leq c\varepsilon^{-1}\|\varphi\|_{L^2(\Omega_\varepsilon)}^2.
  \end{align}
  Also, by \eqref{E:Ave_Der}, \eqref{E:ADA_Bound}, and $\nabla(|\varphi|^2)=2\varphi\nabla\varphi$ in $\Omega_\varepsilon$ we get
  \begin{align*}
    |\nabla_\Gamma M(|\varphi|^2)| \leq |M\bigl(B\nabla(|\varphi|^2)\bigr)|+|M\bigl(\partial_n(|\varphi|^2)\psi_\varepsilon\bigr)| \leq cM(|\varphi\nabla\varphi|) \quad\text{on}\quad \Gamma.
  \end{align*}
  Hence from \eqref{E:Ave_Lp_Surf} and H\"{o}lder's inequality we deduce that
  \begin{align} \label{Pf_Pr:L1_Grad}
    \begin{aligned}
      \|\nabla_\Gamma M(|\varphi|^2)\|_{L^1(\Gamma)} &\leq c\|M(|\varphi\nabla\varphi|)\|_{L^1(\Gamma)} \leq c\varepsilon^{-1}\|\varphi\nabla\varphi\|_{L^1(\Omega_\varepsilon)} \\
      &\leq c\varepsilon^{-1}\|\varphi\|_{L^2(\Omega_\varepsilon)}\|\nabla\varphi\|_{L^2(\Omega_\varepsilon)}.
    \end{aligned}
  \end{align}
  Now we observe that the Sobolev embedding $W^{1,1}(\Gamma)\hookrightarrow L^2(\Gamma)$ is valid since $\Gamma\subset\mathbb{R}^3$ is a two-dimensional compact surface without boundary (see e.g.~\cite{Au98}*{Theorem~2.20}).
  By this fact, \eqref{Pf_Pr:L1_MPhi2}, \eqref{Pf_Pr:L1_Grad}, and $\|\varphi\|_{L^2(\Omega_\varepsilon)}\leq\|\varphi\|_{H^1(\Omega_\varepsilon)}$ we obtain
  \begin{align*}
    \|M(|\varphi|^2)\|_{L^2(\Gamma)} \leq c\|M(|\varphi|^2)\|_{W^{1,1}(\Gamma)} \leq c\varepsilon^{-1}\|\varphi\|_{L^2(\Omega_\varepsilon)}\|\varphi\|_{H^1(\Omega_\varepsilon)}.
  \end{align*}
  Finally, we apply the above inequality and \eqref{E:La_Surf} to \eqref{Pf_Pr:Est_L2} to get
  \begin{align*}
    \|\bar{\eta}\varphi\|_{L^2(\Omega_\varepsilon)}^2 \leq c\|\eta\|_{L^2(\Gamma)}\|\eta\|_{H^1(\Gamma)}\|\varphi\|_{L^2(\Omega_\varepsilon)}\|\varphi\|_{H^1(\Omega_\varepsilon)},
  \end{align*}
  which shows \eqref{E:Prod_Surf}.
\end{proof}

\begin{lemma} \label{L:Prod_Ua}
  For $\varphi\in H^1(\Omega_\varepsilon)$, $u\in H^1(\Omega_\varepsilon)^3$, and $u^a$ given by \eqref{E:Def_ExAve} we have
  \begin{align} \label{E:Prod_Ua}
    \bigl\|\,|u^a|\,\varphi\bigr\|_{L^2(\Omega_\varepsilon)} &\leq c\varepsilon^{-1/2}\|\varphi\|_{L^2(\Omega_\varepsilon)}^{1/2}\|\varphi\|_{H^1(\Omega_\varepsilon)}^{1/2}\|u\|_{L^2(\Omega_\varepsilon)}^{1/2}\|u\|_{H^1(\Omega_\varepsilon)}^{1/2}
  \end{align}
  with a constant $c>0$ independent of $\varepsilon$, $\varphi$, and $u$.
  If in addition $u\in H^2(\Omega_\varepsilon)^3$, then
  \begin{align} \label{E:Prod_Grad_Ua}
    \bigl\|\,|\nabla u^a|\,\varphi\bigr\|_{L^2(\Omega_\varepsilon)} &\leq c\varepsilon^{-1/2}\|\varphi\|_{L^2(\Omega_\varepsilon)}^{1/2}\|\varphi\|_{H^1(\Omega_\varepsilon)}^{1/2}\|u\|_{H^1(\Omega_\varepsilon)}^{1/2}\|u\|_{H^2(\Omega_\varepsilon)}^{1/2}.
  \end{align}
\end{lemma}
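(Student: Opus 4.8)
The plan is to combine the surface--domain product estimate of Lemma~\ref{L:Prod} with the pointwise bounds on $E_\varepsilon$ established in the proof of Lemma~\ref{L:ExImp_Wmp} and the average operator estimates of Lemmas~\ref{L:AveT_Lp} and~\ref{L:AveT_Wmp}. The underlying principle is that $u^a=E_\varepsilon M_\tau u$ is, up to the harmless factor involving $\Psi_\varepsilon$, the constant extension of the surface field $M_\tau u$, so $|u^a|$ and $|\nabla u^a|$ are pointwise dominated by constant extensions of functions on $\Gamma$ to which Lemma~\ref{L:Prod} directly applies, entry by entry.

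For \eqref{E:Prod_Ua}, I would first use $|E_\varepsilon v|\leq c|\bar v|$ (from the proof of Lemma~\ref{L:ExImp_Wmp}) to get $|u^a|\leq c|\overline{M_\tau u}|\leq c\sum_{j=1}^3|\overline{(M_\tau u)_j}|$ in $\Omega_\varepsilon$, whence $\|\,|u^a|\,\varphi\|_{L^2(\Omega_\varepsilon)}\leq c\sum_{j=1}^3\|\overline{(M_\tau u)_j}\,\varphi\|_{L^2(\Omega_\varepsilon)}$. Since $u\in H^1(\Omega_\varepsilon)^3$ implies $M_\tau u\in H^1(\Gamma,T\Gamma)$ by Lemma~\ref{L:AveT_Wmp} with $m=1$, each $(M_\tau u)_j\in H^1(\Gamma)$ and Lemma~\ref{L:Prod} gives $\|\overline{(M_\tau u)_j}\,\varphi\|_{L^2(\Omega_\varepsilon)}\leq c\|(M_\tau u)_j\|_{L^2(\Gamma)}^{1/2}\|(M_\tau u)_j\|_{H^1(\Gamma)}^{1/2}\|\varphi\|_{L^2(\Omega_\varepsilon)}^{1/2}\|\varphi\|_{H^1(\Omega_\varepsilon)}^{1/2}$. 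Summing over $j$ and inserting $\|M_\tau u\|_{L^2(\Gamma)}\leq c\varepsilon^{-1/2}\|u\|_{L^2(\Omega_\varepsilon)}$ (Lemma~\ref{L:AveT_Lp}) and $\|M_\tau u\|_{H^1(\Gamma)}\leq c\varepsilon^{-1/2}\|u\|_{H^1(\Omega_\varepsilon)}$ (Lemma~\ref{L:AveT_Wmp}), the two $\varepsilon^{-1/4}$ factors combine to $\varepsilon^{-1/2}$ and \eqref{E:Prod_Ua} follows.

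For \eqref{E:Prod_Grad_Ua} the argument is the same with one extra derivative. The proof of Lemma~\ref{L:ExImp_Wmp} supplies $|\nabla E_\varepsilon v|\leq c\bigl(|\bar v|+|\overline{\nabla_\Gamma v}|\bigr)$, so with $v=M_\tau u$ we get $|\nabla u^a|\leq c\bigl(|\overline{M_\tau u}|+|\overline{\nabla_\Gamma M_\tau u}|\bigr)$ in $\Omega_\varepsilon$. When $u\in H^2(\Omega_\varepsilon)^3$, Lemma~\ref{L:AveT_Wmp} with $m=2$ gives $M_\tau u\in H^2(\Gamma,T\Gamma)$, hence every scalar entry $\eta$ of $M_\tau u$ and of $\nabla_\Gamma M_\tau u$ belongs to $H^1(\Gamma)$; I would apply Lemma~\ref{L:Prod} to each such $\eta$, sum, and then bound $\|M_\tau u\|_{L^2(\Gamma)},\|M_\tau u\|_{H^1(\Gamma)},\|\nabla_\Gamma M_\tau u\|_{L^2(\Gamma)}\leq c\varepsilon^{-1/2}\|u\|_{H^1(\Omega_\varepsilon)}$ and $\|M_\tau u\|_{H^1(\Gamma)},\|M_\tau u\|_{H^2(\Gamma)},\|\nabla_\Gamma M_\tau u\|_{H^1(\Gamma)}\leq c\varepsilon^{-1/2}\|u\|_{H^2(\Omega_\varepsilon)}$ via Lemmas~\ref{L:AveT_Lp} and~\ref{L:AveT_Wmp}; again $\varepsilon^{-1/4}\cdot\varepsilon^{-1/4}=\varepsilon^{-1/2}$ yields \eqref{E:Prod_Grad_Ua}. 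The computation is largely bookkeeping; the only points requiring care are matching the regularity of $u$ on $\Omega_\varepsilon$ to that of $M_\tau u$ on $\Gamma$ — one derivative is consumed by $\nabla_\Gamma$, which is exactly why \eqref{E:Prod_Grad_Ua} needs $u\in H^2$ — and checking that the $\varepsilon$-powers coming out of Lemmas~\ref{L:AveT_Lp} and~\ref{L:AveT_Wmp} combine to precisely $\varepsilon^{-1/2}$. I do not expect any genuine obstacle beyond this.
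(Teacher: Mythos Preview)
Your proposal is correct and follows essentially the same route as the paper: bound $|u^a|$ and $|\nabla u^a|$ pointwise by constant extensions of surface quantities built from the averaged field, then feed those into Lemma~\ref{L:Prod} and close with the $\varepsilon^{-1/2}$ bounds on $M_\tau u$ from Lemmas~\ref{L:AveT_Lp} and~\ref{L:AveT_Wmp}. The only cosmetic differences are that the paper applies \eqref{E:Prod_Surf} to the single scalar $\eta=|Mu|$ (respectively $\eta=|\nabla_\Gamma Mu|$) rather than componentwise, and it passes from $M_\tau u$ to $Mu$ via $|M_\tau u|=|PMu|\leq|Mu|$ before invoking \eqref{E:Ave_Lp_Surf} and \eqref{E:Ave_Wmp_Surf} directly; neither change affects the argument.
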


\begin{proof}
  Since
  \begin{align*}
    |u^a| \leq (1+|\Psi_\varepsilon|)\left|\overline{M_\tau u}\right| \leq c\left|\overline{Mu}\right| \quad\text{in}\quad \Omega_\varepsilon
  \end{align*}
  by \eqref{E:ExAux_Bound} and $|M_\tau u|=|PMu|\leq|Mu|$ on $\Gamma$, we have
  \begin{align*}
    \bigl\|\,|u^a|\,\varphi\bigr\|_{L^2(\Omega_\varepsilon)} \leq c\left\|\,\left|\overline{Mu}\right|\varphi\right\|_{L^2(\Omega_\varepsilon)}.
  \end{align*}
  Hence we get \eqref{E:Prod_Ua} by applying \eqref{E:Prod_Surf} with $\eta=|Mu|$ to the right-hand side of this inequality and using \eqref{E:Ave_Lp_Surf} and \eqref{E:Ave_Wmp_Surf}.

  Let us prove \eqref{E:Prod_Grad_Ua}.
  We differentiate both sides of \eqref{E:Def_ExAve} to get
  \begin{align*}
    \nabla u^a = \nabla\Bigl(\overline{M_\tau u}\Bigr)+\left[\left\{\nabla\Bigl(\overline{M_\tau u}\Bigr)\right\}\Psi_\varepsilon+(\nabla\Psi_\varepsilon)\overline{M_\tau u}\right]\otimes\bar{n}+\Bigl(\overline{M_\tau u}\cdot\Psi_\varepsilon\Bigr)\nabla\bar{n}
  \end{align*}
  in $\Omega_\varepsilon$.
  Hence by \eqref{E:ConDer_Bound}, \eqref{E:NorG_Bound}, \eqref{E:ExAux_Bound}, $M_\tau u=PMu$ on $\Gamma$, and $P\in C^4(\Gamma)^{3\times 3}$,
  \begin{align*}
    |\nabla u^a| \leq c\left(\left|\overline{M_\tau u}\right|+\left|\overline{\nabla_\Gamma M_\tau u}\right|\right) \leq c\left(\left|\overline{Mu}\right|+\left|\overline{\nabla_\Gamma Mu}\right|\right) \quad\text{in}\quad \Omega_\varepsilon.
  \end{align*}
  From this inequality it follows that
  \begin{align*}
    \bigl\|\,|\nabla u^a|\,\varphi\bigr\|_{L^2(\Omega_\varepsilon)} \leq c\left(\left\|\,\left|\overline{Mu}\right|\varphi\right\|_{L^2(\Omega_\varepsilon)}+\left\|\,\left|\overline{\nabla_\Gamma Mu}\right|\varphi\right\|_{L^2(\Omega_\varepsilon)}\right).
  \end{align*}
  We apply \eqref{E:Prod_Surf} with $\eta=|Mu|$ and $\eta=|\nabla_\Gamma Mu|$ to the right-hand side and then use \eqref{E:Ave_Lp_Surf} and \eqref{E:Ave_Wmp_Surf} to obtain \eqref{E:Prod_Grad_Ua}.
\end{proof}

Next we derive a Poincar\'{e} type inequality for the residual part $u^r=u-u^a$.
By Lemma~\ref{L:ExImp_Bo}, the average part $u^a$ given by \eqref{E:Def_ExAve} satisfies the impermeable boundary condition \eqref{E:Bo_Imp}.
Hence $u^r$ satisfies \eqref{E:Bo_Imp} if $u$ itself satisfies the same condition.
This observation is essential for the proof of the Poincar\'{e} type inequality for $u^r$.

\begin{lemma} \label{L:Po_Ur}
  Let $u\in H^1(\Omega_\varepsilon)^3$ satisfy \eqref{E:Bo_Imp}.
  Then
  \begin{align} \label{E:Po_Ur}
    \|u^r\|_{L^2(\Omega_\varepsilon)} \leq c\varepsilon\|u^r\|_{H^1(\Omega_\varepsilon)}
  \end{align}
  for $u^r=u-u^a$, where $c>0$ is a constant independent of $\varepsilon$ and $u$.
\end{lemma}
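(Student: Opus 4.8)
The plan is to exploit the fact that the averaged tangential component of the residual part vanishes identically on $\Gamma$, and then to invoke the Poincar\'{e} type estimate of Lemma~\ref{L:AveT_Diff} applied to $u^r$ itself. First I would record that $u^r=u-u^a$ inherits the impermeable boundary condition \eqref{E:Bo_Imp}: $u$ satisfies it by hypothesis, and $u^a=E_\varepsilon M_\tau u$ satisfies it by Lemma~\ref{L:ExImp_Bo} since $M_\tau u$ is a tangential vector field on $\Gamma$; moreover $u^r\in H^1(\Omega_\varepsilon)^3$ by Lemma~\ref{L:Wmp_UaUr}.

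The key identity is $M_\tau u^r=0$ on $\Gamma$, equivalently $M_\tau u^a=M_\tau u$. To prove it I would first show $M_\tau E_\varepsilon v=v$ for every tangential vector field $v$ on $\Gamma$. From \eqref{E:Def_ExImp} we have $E_\varepsilon v=\bar v+(\bar v\cdot\Psi_\varepsilon)\bar n$. Applying $M$ componentwise, the constant extension satisfies $M\bar v=v$ (this also follows from \eqref{E:NorDer_Con} and \eqref{E:Ave_Diff_Dom}), while $M\big((\bar v\cdot\Psi_\varepsilon)\bar n\big)=\big(M(\bar v\cdot\Psi_\varepsilon)\big)\,n$ because $\bar n$ is constant along the normal lines through points of $\Gamma$; hence $ME_\varepsilon v=v+\big(M(\bar v\cdot\Psi_\varepsilon)\big)n$. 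Multiplying by $P$ and using $Pv=v$ and $Pn=0$ gives $M_\tau E_\varepsilon v=PME_\varepsilon v=v$. Taking $v=M_\tau u$ yields $M_\tau u^a=M_\tau E_\varepsilon M_\tau u=M_\tau u$, so $M_\tau u^r=M_\tau u-M_\tau u^a=0$.

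With these two facts in hand the conclusion is immediate: since $u^r\in H^1(\Omega_\varepsilon)^3$ satisfies \eqref{E:Bo_Imp}, Lemma~\ref{L:AveT_Diff} with $p=2$ gives
\[
  \bigl\|u^r-\overline{M_\tau u^r}\bigr\|_{L^2(\Omega_\varepsilon)}\le c\varepsilon\|u^r\|_{H^1(\Omega_\varepsilon)},
\]
and because $M_\tau u^r=0$ the left-hand side equals $\|u^r\|_{L^2(\Omega_\varepsilon)}$, which is exactly \eqref{E:Po_Ur}.

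The proof is short, and the only substantive point is the identity $M_\tau u^r=0$, i.e.\ that $M_\tau$ recovers the input of the impermeable extension $E_\varepsilon$; everything else is bookkeeping with $P$, $\bar n$, and the constant extension. An alternative, slightly longer route avoiding Lemma~\ref{L:AveT_Diff} would be to split $u^r$ into its $\Gamma$-tangential and $\Gamma$-normal parts $\overline{P}u^r$ and $(u^r\cdot\bar n)\bar n$: the normal part is controlled by Lemma~\ref{L:Poin_Nor} using the impermeable boundary condition on $u^r$, and the tangential part by applying Lemma~\ref{L:Ave_Diff} componentwise together with $M(\overline{P}u^r)=M_\tau u^r=0$ and $|\partial_n(\overline{P}u^r)|\le|\nabla u^r|$. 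I do not expect any real obstacle here.
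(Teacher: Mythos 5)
Your proposal is correct, and its main route is packaged differently from the paper's. You prove the exact identity $M_\tau E_\varepsilon v=v$ for tangential $v$ (using $M\bar v=v$, that $\bar n$ is constant along normal lines, and $Pv=v$, $Pn=0$), conclude $M_\tau u^r=M_\tau u-M_\tau u^a=0$, and then simply apply Lemma~\ref{L:AveT_Diff} to $u^r$ (legitimate, since $u^r\in H^1(\Omega_\varepsilon)^3$ by Lemma~\ref{L:Wmp_UaUr} and $u^r$ satisfies \eqref{E:Bo_Imp} because $u$ does by hypothesis and $u^a=E_\varepsilon M_\tau u$ does by Lemma~\ref{L:ExImp_Bo}); with $\overline{M_\tau u^r}=0$ the estimate of that lemma is exactly \eqref{E:Po_Ur}. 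The paper instead re-runs the component argument directly on $u^r$: it splits $u^r=\overline{P}u^r+(u^r\cdot\bar n)\bar n$, observes $\overline{P}u^r=u_\tau-\overline{Mu_\tau}$ so that \eqref{E:Ave_Diff_Dom} controls the tangential part by $c\varepsilon\|\partial_n u^r\|_{L^2(\Omega_\varepsilon)}$, and controls the normal part by \eqref{E:Poin_Nor} via the impermeable boundary condition on $u^r$ --- which is precisely the ``alternative, slightly longer route'' you sketch at the end. What your primary argument buys is economy and a reusable structural fact ($M_\tau$ is a left inverse of $E_\varepsilon$ on tangential fields, hence the decomposition $u=u^a+u^r$ has $M_\tau u^r=0$), at the cost of leaning on Lemma~\ref{L:AveT_Diff}, whose proof is itself the tangential/normal splitting; the paper's version is self-contained at this point but repeats that splitting. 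Either way the substance is the same and the proof is sound.
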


\begin{proof}
  We use the notation \eqref{E:Def_U_TN} for the tangential and normal components of a vector field on $\Omega_\varepsilon$.
  By the definition \eqref{E:Def_ExAve} of $u^a$,
  \begin{align} \label{Pf_PUr:UrT}
    u_\tau^r = \overline{P}u^r = \overline{P}u-\overline{P}u^a = \overline{P}u-\overline{M_\tau u} = u_\tau-\overline{Mu_\tau} \quad\text{in}\quad \Omega_\varepsilon.
  \end{align}
  This equality and \eqref{E:NorDer_Con} imply that
  \begin{align*}
    \partial_nu_\tau^r = \overline{P}\partial_nu^r = \partial_nu_\tau, \quad |\partial_nu_\tau| = \left|\overline{P}\partial_nu^r\right| \leq |\partial_nu^r| \quad\text{in}\quad \Omega_\varepsilon.
  \end{align*}
  From these relations and \eqref{E:Ave_Diff_Dom} we deduce that
  \begin{align} \label{Pf_PUr:Est_Tau}
    \|u_\tau^r\|_{L^2(\Omega_\varepsilon)} = \left\|u_\tau-\overline{Mu_\tau}\right\|_{L^2(\Omega_\varepsilon)} \leq c\varepsilon\|\partial_nu_\tau\|_{L^2(\Omega_\varepsilon)} \leq c\varepsilon\|\partial_nu^r\|_{L^2(\Omega_\varepsilon)}.
  \end{align}
  Moreover, $u^r=u-u^a$ satisfies \eqref{E:Bo_Imp} by the assumption on $u$, since $u^a$ satisfies \eqref{E:Bo_Imp} by Lemma~\ref{L:ExImp_Bo} and \eqref{E:Def_ExAve}.
  Hence we can apply \eqref{E:Poin_Nor} to $u^r$ to get
  \begin{align*}
    \|u_n^r\|_{L^2(\Omega_\varepsilon)} = \|u^r\cdot\bar{n}\|_{L^2(\Omega_\varepsilon)} \leq c\varepsilon\|u^r\|_{H^1(\Omega_\varepsilon)}.
  \end{align*}
  By this inequality, \eqref{Pf_PUr:Est_Tau}, and
  \begin{align*}
    \|u^r\|_{L^2(\Omega_\varepsilon)}^2 = \|u_\tau^r\|_{L^2(\Omega_\varepsilon)}^2+\|u_n^r\|_{L^2(\Omega_\varepsilon)}^2, \quad \|\partial_nu^r\|_{L^2(\Omega_\varepsilon)}\leq c\|u^r\|_{H^1(\Omega_\varepsilon)}
  \end{align*}
  we conclude that \eqref{E:Po_Ur} is valid.
\end{proof}

We can also show a Poincar\'{e} type inequality for $\nabla u^r$ if $u$ satisfies the divergence-free condition in $\Omega_\varepsilon$ and the slip boundary conditions \eqref{E:Bo_Slip} on $\Gamma_\varepsilon$.

\begin{lemma} \label{L:Po_Grad_Ur}
  Suppose that the inequalities \eqref{E:Fric_Upper} are valid and $u\in H^2(\Omega_\varepsilon)^3$ satisfies $\mathrm{div}\,u=0$ in $\Omega_\varepsilon$ and \eqref{E:Bo_Slip}.
  Then
  \begin{align} \label{E:Po_Grad_Ur}
    \|\nabla u^r\|_{L^2(\Omega_\varepsilon)} \leq c\left(\varepsilon\|u\|_{H^2(\Omega_\varepsilon)}+\|u\|_{L^2(\Omega_\varepsilon)}\right)
  \end{align}
  for $u^r=u-u^a$, where $c>0$ is a constant independent of $\varepsilon$ and $u$.
\end{lemma}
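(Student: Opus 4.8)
The plan is to split $\nabla u^r$ into its parts tangential and normal to $\Gamma$, namely $\nabla u^r=\overline{P}\nabla u^r+\bar{n}\otimes\partial_nu^r$ (note $u^r=u-u^a\in H^2(\Omega_\varepsilon)^3$ by Lemma~\ref{L:Wmp_UaUr}). Since $\overline{P}\,\overline{Q}=0$, the two pieces are pointwise orthogonal, so $|\nabla u^r|^2=|\overline{P}\nabla u^r|^2+|\partial_nu^r|^2$, and it suffices to bound each of $\|\overline{P}\nabla u^r\|_{L^2(\Omega_\varepsilon)}$ and $\|\partial_nu^r\|_{L^2(\Omega_\varepsilon)}$ by $c(\varepsilon\|u\|_{H^2(\Omega_\varepsilon)}+\|u\|_{L^2(\Omega_\varepsilon)})$.

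For the normal derivative I would first use \eqref{E:Def_ExAve} together with $\partial_n\overline{M_\tau u}=\partial_n\bar{n}=0$ (by \eqref{E:NorDer_Con}) to see that $\partial_nu^a=(\overline{M_\tau u}\cdot\partial_n\Psi_\varepsilon)\bar{n}$ is purely normal. Hence $\overline{P}\partial_nu^r=\overline{P}\partial_nu$, and Lemma~\ref{L:PDnU_WU} (which applies since $u$ satisfies \eqref{E:Bo_Slip} and \eqref{E:Fric_Upper} holds) bounds this by $\|\overline{W}u\|_{L^2(\Omega_\varepsilon)}+c\varepsilon\|u\|_{H^2(\Omega_\varepsilon)}\le c(\|u\|_{L^2(\Omega_\varepsilon)}+\varepsilon\|u\|_{H^2(\Omega_\varepsilon)})$. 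For the normal component, $\partial_nu^r\cdot\bar{n}=\partial_nu\cdot\bar{n}-\overline{M_\tau u}\cdot\partial_n\Psi_\varepsilon$; Lemma~\ref{L:DnU_N_Ave} (using $\mathrm{div}\,u=0$ and \eqref{E:Bo_Imp}) gives $\partial_nu\cdot\bar{n}=\frac{1}{\bar{g}}\overline{M_\tau u}\cdot\overline{\nabla_\Gamma g}+O(\varepsilon\|u\|_{H^2(\Omega_\varepsilon)})$ in $L^2$, while the second estimate in \eqref{E:ExAux_TNDer} gives $\partial_n\Psi_\varepsilon=\frac{1}{\bar{g}}\overline{\nabla_\Gamma g}+O(\varepsilon)$. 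The leading terms cancel, leaving a remainder of size $O(\varepsilon\|u\|_{H^2(\Omega_\varepsilon)})+O(\varepsilon|\overline{M_\tau u}|)$, whose $L^2$-norm is at most $c\varepsilon(\|u\|_{H^2(\Omega_\varepsilon)}+\|u\|_{L^2(\Omega_\varepsilon)})$ since $\|\overline{M_\tau u}\|_{L^2(\Omega_\varepsilon)}\le c\|u\|_{L^2(\Omega_\varepsilon)}$ by \eqref{E:Ave_Lp_Dom}. This settles the bound on $\|\partial_nu^r\|_{L^2(\Omega_\varepsilon)}$.

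For the tangential derivative I would write $u=u_\tau+u_n$ with $u_\tau=\overline{P}u$, $u_n=(u\cdot\bar{n})\bar{n}$, so that $\overline{P}\nabla u^r=\bigl(\overline{P}\nabla u_\tau-\overline{P}\nabla\overline{M_\tau u}\bigr)+\overline{P}\nabla u_n-\overline{P}\nabla\bigl[(\overline{M_\tau u}\cdot\Psi_\varepsilon)\bar{n}\bigr]$. The term $\overline{P}\nabla u_n$ is $O(\varepsilon\|u\|_{H^2(\Omega_\varepsilon)})$ by \eqref{E:Poin_Dnor} and \eqref{E:Poin_Nor} (here $u_n$ inherits \eqref{E:Bo_Imp} from $u$). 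For the first term, I use the identity $Mu_\tau=M_\tau u$: applying \eqref{E:ADD_Dom} componentwise to $u_\tau$ (and $\|u_\tau\|_{H^2(\Omega_\varepsilon)}\le c\|u\|_{H^2(\Omega_\varepsilon)}$) gives $\overline{P}\nabla u_\tau=\overline{\nabla_\Gamma M_\tau u}+O(\varepsilon\|u\|_{H^2(\Omega_\varepsilon)})$ in $L^2$, and applying \eqref{E:ConDer_Diff} to the constant extension $\overline{M_\tau u}$ gives $\overline{P}\nabla\overline{M_\tau u}=\overline{\nabla_\Gamma M_\tau u}+O\bigl(\varepsilon|\overline{\nabla_\Gamma M_\tau u}|\bigr)$, with $\|\overline{\nabla_\Gamma M_\tau u}\|_{L^2(\Omega_\varepsilon)}\le c\|u\|_{H^1(\Omega_\varepsilon)}$ by \eqref{E:Con_Lp} and \eqref{E:AveT_Wmp_Surf}. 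Finally, expanding $\overline{P}\nabla\bigl[(\overline{M_\tau u}\cdot\Psi_\varepsilon)\bar{n}\bigr]$ by the product rule, every resulting term carries a factor $\varepsilon$ from either $|\Psi_\varepsilon|\le c\varepsilon$ or $|\overline{P}\nabla\Psi_\varepsilon|\le c\varepsilon$ (see \eqref{E:ExAux_Bound}, \eqref{E:ExAux_TNDer}), paired with $|\nabla\bar{n}|\le c$ (by \eqref{E:NorG_Bound}) and the $L^2$-bounds on $\overline{M_\tau u}$ and $\overline{\nabla_\Gamma M_\tau u}$ just quoted, so this term is $O(\varepsilon\|u\|_{H^1(\Omega_\varepsilon)})$. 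Collecting these estimates gives $\|\overline{P}\nabla u^r\|_{L^2(\Omega_\varepsilon)}\le c\varepsilon\|u\|_{H^2(\Omega_\varepsilon)}$, and combined with the bound on the normal derivative this proves \eqref{E:Po_Grad_Ur}.

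I expect the main obstacle to be the normal–normal component $\partial_nu^r\cdot\bar{n}$: a crude bound by $|\nabla u|$ only yields $O(\|u\|_{H^1(\Omega_\varepsilon)})$, which is too weak, so one must exploit the divergence-free condition through Lemma~\ref{L:DnU_N_Ave} and match its leading contribution against the one produced by $\partial_n\Psi_\varepsilon$ built into the extension $\Psi_\varepsilon$, so that the two cancel; the slip boundary condition, by contrast, enters only the tangential part via Lemma~\ref{L:PDnU_WU}. The remaining work — the expansion of $\overline{P}\nabla u^r$ and the tracking of $O(\varepsilon)$ terms — is careful but routine.
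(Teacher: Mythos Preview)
Your proposal is correct and follows essentially the same approach as the paper: the same splitting into tangential and normal derivatives, the same use of Lemma~\ref{L:PDnU_WU} for $\overline{P}\partial_nu^r$, of Lemma~\ref{L:DnU_N_Ave} together with \eqref{E:ExAux_TNDer} for the cancellation in $\partial_nu^r\cdot\bar{n}$, and of \eqref{E:ADD_Dom} plus \eqref{E:ConDer_Diff} for $\overline{P}\nabla u_\tau^r$. The only organizational difference is in the term $\overline{P}\nabla u_n^r$: the paper observes that $u^r$ itself satisfies \eqref{E:Bo_Imp} (by Lemma~\ref{L:ExImp_Bo}) and applies \eqref{E:Poin_Nor}--\eqref{E:Poin_Dnor} directly to $u^r$, whereas you split $u_n^r=u_n-(\overline{M_\tau u}\cdot\Psi_\varepsilon)\bar{n}$ and bound the two pieces separately; both variants work.
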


\begin{proof}
  As in the proof of Lemma~\ref{L:Po_Ur} we use the notation \eqref{E:Def_U_TN}.
  Noting that
  \begin{align*}
    u^r = u_\tau^r+u_n^r, \quad I_3 = \overline{P}+\overline{Q} \quad\text{in}\quad \Omega_\varepsilon,
  \end{align*}
  we split the gradient matrix of $u^r$ into
  \begin{align} \label{Pf_PGUr:Sum}
    \nabla u^r=\overline{P}\nabla u_\tau^r+\overline{Q}\nabla u_\tau^r+\overline{P}\nabla u_n^r+\overline{Q}\nabla u_n^r \quad\text{in}\quad \Omega_\varepsilon
  \end{align}
  and deal with the four terms on the right-hand side separately.

  First we estimate the $L^2(\Omega_\varepsilon)$-norm of $\overline{P}\nabla u_\tau^r$.
  Since
  \begin{align*}
    \nabla u_\tau^r = \nabla\Bigl(u_\tau-\overline{Mu_\tau}\Bigr) = \nabla u_\tau-\Bigl(I_3-d\overline{W}\Bigr)^{-1}\,\overline{\nabla_\Gamma Mu_\tau} \quad\text{in}\quad \Omega_\varepsilon
  \end{align*}
  by \eqref{E:ConDer_Dom} and \eqref{Pf_PUr:UrT}, we observe by \eqref{E:P_TGr} and \eqref{E:WReso_P} that
  \begin{align*}
    \overline{P}\nabla u_\tau^r = \Bigl(\overline{P}\nabla u_\tau-\overline{\nabla_\Gamma Mu_\tau}\Bigr)-\left\{I_3-\Bigl(I_3-d\overline{W}\Bigr)^{-1}\right\}\overline{\nabla_\Gamma Mu_\tau} \quad\text{in}\quad \Omega_\varepsilon.
  \end{align*}
  Hence we apply \eqref{E:Wein_Diff} with $|d|\leq c\varepsilon$ in $\Omega_\varepsilon$, \eqref{E:Con_Lp}, \eqref{E:Ave_Wmp_Surf}, and \eqref{E:ADD_Dom} to get
  \begin{align} \label{Pf_PGUr:L2_PT}
    \begin{aligned}
      \left\|\overline{P}\nabla u_\tau^r\right\|_{L^2(\Omega_\varepsilon)} &\leq \left\|\overline{P}\nabla u_\tau-\overline{\nabla_\Gamma Mu_\tau}\right\|_{L^2(\Omega_\varepsilon)}+c\varepsilon\left\|\overline{\nabla_\Gamma Mu_\tau}\right\|_{L^2(\Omega_\varepsilon)} \\
      &\leq c\varepsilon\|u_\tau\|_{H^2(\Omega_\varepsilon)} \\
      &\leq c\varepsilon\|u\|_{H^2(\Omega_\varepsilon)}.
    \end{aligned}
  \end{align}
  Here the last inequality follows from $u_\tau=\overline{P}u$ in $\Omega_\varepsilon$ and $P\in C^4(\Gamma)^{3\times3}$.

  Next we deal with $\overline{Q}\nabla u_\tau^r$.
  Since $(\nabla u_\tau^r)^T\bar{n}=(\bar{n}\cdot\nabla)u_\tau^r=\partial_nu_\tau^r$ in $\Omega_\varepsilon$, we have
  \begin{align*}
    \overline{Q}\nabla u_\tau^r = \bar{n}\otimes[(\nabla u_\tau^r)^T\bar{n}] = \bar{n}\otimes\partial_nu_\tau^r, \quad \left|\overline{Q}\nabla u_\tau^r\right| = |\partial_nu_\tau^r| \quad\text{in}\quad \Omega_\varepsilon.
  \end{align*}
  By \eqref{E:NorDer_Con} and \eqref{Pf_PUr:UrT} we also get
  \begin{align*}
    \partial_nu_\tau^r = \partial_n\Bigl(\overline{P}u-\overline{M_\tau u}\Bigr) = \overline{P}\partial_nu \quad\text{in}\quad \Omega_\varepsilon.
  \end{align*}
  From these relations we deduce that
  \begin{align*}
    \left\|\overline{Q}\nabla u_\tau^r\right\|_{L^2(\Omega_\varepsilon)} = \left\|\overline{P}\partial_nu\right\|_{L^2(\Omega_\varepsilon)} &\leq \left\|\overline{P}\partial_nu+\overline{W}u\right\|_{L^2(\Omega_\varepsilon)}+\left\|\overline{W}u\right\|_{L^2(\Omega_\varepsilon)} \\
    &\leq \left\|\overline{P}\partial_nu+\overline{W}u\right\|_{L^2(\Omega_\varepsilon)}+c\|u\|_{L^2(\Omega_\varepsilon)}.
  \end{align*}
  Moreover, since we assume that the inequalities \eqref{E:Fric_Upper} are valid and $u$ satisfies \eqref{E:Bo_Slip}, we can apply \eqref{E:PDnU_WU} to the first term on the last line.
  Hence we obtain
  \begin{align} \label{Pf_PGUr:L2_QT}
    \left\|\overline{Q}\nabla u_\tau^r\right\|_{L^2(\Omega_\varepsilon)} \leq c\left(\varepsilon\|u\|_{H^2(\Omega_\varepsilon)}+\|u\|_{L^2(\Omega_\varepsilon)}\right).
  \end{align}
  Let us estimate the $L^2(\Omega_\varepsilon)$-norm of $\overline{P}\nabla u_n^r$.
  Since $u_n^r=(u^r\cdot\bar{n})\bar{n}$ in $\Omega_\varepsilon$, we have
  \begin{align*}
    \overline{P}\nabla u_n^r= \Bigl[\overline{P}\nabla(u^r\cdot\bar{n})\Bigr]\otimes\bar{n}-(u^r\cdot\bar{n})\overline{P}\nabla\bar{n} \quad\text{in}\quad \Omega_\varepsilon.
  \end{align*}
  By this formula, \eqref{E:NorG_Bound}, and $|n|=1$ and $|P|=2$ on $\Gamma$,
  \begin{align*}
    \left\|\overline{P}\nabla u_n^r\right\|_{L^2(\Omega_\varepsilon)} \leq c\left(\left\|\overline{P}\nabla(u^r\cdot\bar{n})\right\|_{L^2(\Omega_\varepsilon)}+\|u^r\cdot\bar{n}\|_{L^2(\Omega_\varepsilon)}\right).
  \end{align*}
  Here $u^r$ satisfies \eqref{E:Bo_Imp} by the assumption on $u$ since $u^a$ satisfies \eqref{E:Bo_Imp} by Lemma~\ref{L:ExImp_Bo} and \eqref{E:Def_ExAve}.
  Hence we apply \eqref{E:Poin_Nor}, \eqref{E:Poin_Dnor}, and \eqref{E:Wmp_UaUr} to the above inequality to get
  \begin{align} \label{Pf_PGUr:L2_PN}
    \left\|\overline{P}\nabla u_n^r\right\|_{L^2(\Omega_\varepsilon)} \leq c\varepsilon\|u^r\|_{H^2(\Omega_\varepsilon)} \leq c\varepsilon\|u\|_{H^2(\Omega_\varepsilon)}.
  \end{align}
  Now let us consider $\overline{Q}\nabla u_n^r=\bar{n}\otimes\partial_nu_n^r$.
  Since
  \begin{align*}
    u_n^r = (u^r\cdot\bar{n})\bar{n} = \Bigl(u\cdot\bar{n}-\overline{M_\tau u}\cdot\Psi_\varepsilon\Bigr)\bar{n}, \quad \partial_nu_n^r = \Bigl(\partial_nu\cdot\bar{n}-\overline{M_\tau u}\cdot\partial_n\Psi_\varepsilon\Bigr)\bar{n}
  \end{align*}
  in $\Omega_\varepsilon$ by \eqref{E:NorDer_Con} and \eqref{E:Def_ExAve}, we have
  \begin{align*}
    \left|\overline{Q}\nabla u_n^r\right| &= |\partial_nu_n^r| = \left|\partial_nu\cdot\bar{n}-\overline{M_\tau u}\cdot\partial_n\Psi_\varepsilon\right| \\
    &\leq \left|\partial_nu\cdot\bar{n}-\frac{1}{\bar{g}}\overline{M_\tau u}\cdot\overline{\nabla_\Gamma g}\right|+\left|\overline{M_\tau u}\right|\left|\partial_n\Psi_\varepsilon-\frac{1}{\bar{g}}\overline{\nabla_\Gamma g}\right|
  \end{align*}
  in $\Omega_\varepsilon$.
  By this inequality, $|M_\tau u|\leq |Mu|$ on $\Gamma$, \eqref{E:ExAux_TNDer}, \eqref{E:Ave_Lp_Dom}, and \eqref{E:DnU_N_Ave},
  \begin{align} \label{Pf_PGUr:L2_QN}
    \begin{aligned}
      \left\|\overline{Q}\nabla u_n^r\right\|_{L^2(\Omega_\varepsilon)} &\leq \left\|\partial_nu\cdot\bar{n}-\frac{1}{\bar{g}}\overline{M_\tau u}\cdot\overline{\nabla_\Gamma g}\right\|_{L^2(\Omega_\varepsilon)}+c\varepsilon\left\|\overline{Mu}\right\|_{L^2(\Omega_\varepsilon)} \\
      &\leq c\varepsilon\|u\|_{H^2(\Omega_\varepsilon)}.
    \end{aligned}
  \end{align}
  Note that $u$ satisfies $\mathrm{div}\,u=0$ in $\Omega_\varepsilon$ and \eqref{E:Bo_Imp} by the assumption on $u$ and thus we can apply \eqref{E:DnU_N_Ave}.
  Finally, applying \eqref{Pf_PGUr:L2_PT}--\eqref{Pf_PGUr:L2_QN} to \eqref{Pf_PGUr:Sum} we obtain \eqref{E:Po_Grad_Ur}.
\end{proof}

As a consequence of Lemmas~\ref{L:Po_Ur} and~\ref{L:Po_Grad_Ur}, we obtain a good $L^\infty(\Omega_\varepsilon)$-estimate for the residual part $u^r$.

\begin{lemma} \label{L:Linf_Ur}
  Suppose that the inequalities \eqref{E:Fric_Upper} are valid and $u\in H^2(\Omega_\varepsilon)^3$ satisfies $\mathrm{div}\,u=0$ in $\Omega_\varepsilon$ and \eqref{E:Bo_Slip}.
  Then
  \begin{align} \label{E:Linf_Ur}
    \|u^r\|_{L^\infty(\Omega_\varepsilon)} \leq c\left(\varepsilon^{1/2}\|u\|_{H^2(\Omega_\varepsilon)}+\|u\|_{L^2(\Omega_\varepsilon)}^{1/2}\|u\|_{H^2(\Omega_\varepsilon)}^{1/2}\right)
  \end{align}
  for $u^r=u-u^a$, where $c>0$ is a constant independent of $\varepsilon$ and $u$.
\end{lemma}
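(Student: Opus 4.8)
The plan is to apply the anisotropic Agmon inequality of Lemma~\ref{L:Agmon} to each scalar component of $u^r$ and then feed in the Poincar\'{e}-type estimates of Lemmas~\ref{L:Po_Ur} and~\ref{L:Po_Grad_Ur} together with the boundedness of the decomposition in Lemma~\ref{L:Wmp_UaUr}. Since the hypotheses here are precisely $\mathrm{div}\,u=0$ in $\Omega_\varepsilon$, the slip conditions \eqref{E:Bo_Slip}, and the upper bounds \eqref{E:Fric_Upper}, all the required input estimates are available. Note also that $u^r$ satisfies the impermeable condition \eqref{E:Bo_Imp} needed in Lemma~\ref{L:Po_Ur}, because $u$ satisfies \eqref{E:Bo_Slip} (hence \eqref{E:Bo_Imp}) and $u^a=E_\varepsilon M_\tau u$ satisfies \eqref{E:Bo_Imp} by Lemma~\ref{L:ExImp_Bo}.

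First I would collect the auxiliary bounds, writing $A:=\|u\|_{H^2(\Omega_\varepsilon)}$ and $B:=\|u\|_{L^2(\Omega_\varepsilon)}$. From Lemma~\ref{L:Wmp_UaUr} we have $\|u^r\|_{H^2(\Omega_\varepsilon)}\leq cA$; since $|\partial_nu^r|\leq|\nabla u^r|$ and, using $\partial_n\bar n=0$ and \eqref{E:NorG_Bound}, $|\partial_n^2u^r|\leq c(|\nabla u^r|+|\nabla^2u^r|)$ in $\Omega_\varepsilon$, it follows that $\|\partial_n^2u^r\|_{L^2(\Omega_\varepsilon)}\leq cA$. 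Lemma~\ref{L:Po_Grad_Ur} gives $\|\nabla u^r\|_{L^2(\Omega_\varepsilon)}\leq c(\varepsilon A+B)$, so $\|\partial_nu^r\|_{L^2(\Omega_\varepsilon)}$ obeys the same bound. Combining $\|u^r\|_{L^2(\Omega_\varepsilon)}\leq c\varepsilon\|u^r\|_{H^1(\Omega_\varepsilon)}$ from Lemma~\ref{L:Po_Ur} with $\|u^r\|_{H^1(\Omega_\varepsilon)}\leq c\|u\|_{H^1(\Omega_\varepsilon)}\leq cA$ yields first $\|u^r\|_{H^1(\Omega_\varepsilon)}\leq c(\varepsilon A+B)$, and then, re-inserting this into Lemma~\ref{L:Po_Ur}, the sharper bound $\|u^r\|_{L^2(\Omega_\varepsilon)}\leq c(\varepsilon^2A+\varepsilon B)$.

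Next I would substitute into \eqref{E:Agmon} applied componentwise. The bracketed factor satisfies $\|u^r\|_{L^2(\Omega_\varepsilon)}+\varepsilon\|\partial_nu^r\|_{L^2(\Omega_\varepsilon)}+\varepsilon^2\|\partial_n^2u^r\|_{L^2(\Omega_\varepsilon)}\leq c(\varepsilon^2A+\varepsilon B)$, the low factor $\|u^r\|_{L^2(\Omega_\varepsilon)}^{1/4}$ by $c(\varepsilon^2A+\varepsilon B)^{1/4}$, and $\|u^r\|_{H^2(\Omega_\varepsilon)}^{1/2}$ by $cA^{1/2}$. Summing over the three components gives $\|u^r\|_{L^\infty(\Omega_\varepsilon)}\leq c\varepsilon^{-1/2}(\varepsilon^2A+\varepsilon B)^{1/2}A^{1/2}$, and since $(\varepsilon^2A+\varepsilon B)^{1/2}=\varepsilon^{1/2}(\varepsilon A+B)^{1/2}\leq\varepsilon^{1/2}(\varepsilon^{1/2}A^{1/2}+B^{1/2})$, this reduces to $c(\varepsilon^{1/2}A+A^{1/2}B^{1/2})$, which is exactly \eqref{E:Linf_Ur}.

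The argument is entirely routine; the only delicate point is the bookkeeping of powers of $\varepsilon$. In particular one must not stop at the naive bound $\|u^r\|_{L^2(\Omega_\varepsilon)}\leq c\varepsilon\|u\|_{H^1(\Omega_\varepsilon)}$ but should iterate Lemma~\ref{L:Po_Ur} against the $H^1$-estimate for $u^r$ coming from Lemma~\ref{L:Po_Grad_Ur}, thereby gaining an extra factor of $\varepsilon$; this is precisely what turns a merely $O(1)$ contribution into the $\varepsilon^{1/2}\|u\|_{H^2(\Omega_\varepsilon)}$ term on the right-hand side. Everything holds for all $\varepsilon\in(0,1]$, so no additional smallness restriction is incurred at this step.
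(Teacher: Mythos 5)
Your proposal is correct and follows essentially the same route as the paper: apply the anisotropic Agmon inequality \eqref{E:Agmon} to $u^r$ and feed in \eqref{E:Po_Ur}, \eqref{E:Po_Grad_Ur}, and \eqref{E:Wmp_UaUr}, arriving at $\|u^r\|_{L^\infty(\Omega_\varepsilon)}\leq c(\varepsilon\|u\|_{H^2(\Omega_\varepsilon)}+\|u\|_{L^2(\Omega_\varepsilon)})^{1/2}\|u\|_{H^2(\Omega_\varepsilon)}^{1/2}$ before splitting the square root. Your refined bound $\|u^r\|_{L^2(\Omega_\varepsilon)}\leq c\varepsilon(\varepsilon\|u\|_{H^2(\Omega_\varepsilon)}+\|u\|_{L^2(\Omega_\varepsilon)})$ is exactly the paper's (obtained there by inserting $\|u^r\|_{L^2}\leq c\|u\|_{L^2}$ rather than by your iteration, a negligible difference), and your bookkeeping of the powers of $\varepsilon$ is right.
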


\begin{proof}
  Since $u^r\in H^2(\Omega_\varepsilon)^3$ by Lemma~\ref{L:Wmp_UaUr}, we have
  \begin{multline*}
    \|u^r\|_{L^\infty(\Omega_\varepsilon)} \leq c\varepsilon^{-1/2}\|u^r\|_{L^2(\Omega_\varepsilon)}^{1/4}\|u^r\|_{H^2(\Omega_\varepsilon)}^{1/2} \\
    \times\left(\|u^r\|_{L^2(\Omega_\varepsilon)}+\varepsilon\|\partial_nu^r\|_{L^2(\Omega_\varepsilon)}+\varepsilon^2\|\partial_n^2u^r\|_{L^2(\Omega_\varepsilon)}\right)^{1/4}
  \end{multline*}
  by the anisotropic Agmon inequality \eqref{E:Agmon}.
  Moreover, by \eqref{E:Po_Ur} and \eqref{E:Po_Grad_Ur} we have
  \begin{align*}
    \|u^r\|_{L^2(\Omega_\varepsilon)} &\leq c\varepsilon\|u^r\|_{H^1(\Omega_\varepsilon)} \leq c\varepsilon\left(\|u^r\|_{L^2(\Omega_\varepsilon)}+\|\nabla u^r\|_{L^2(\Omega_\varepsilon)}\right) \\
    &\leq c\varepsilon\left(\varepsilon\|u\|_{H^2(\Omega_\varepsilon)}+\|u\|_{L^2(\Omega_\varepsilon)}\right)
  \end{align*}
  and
  \begin{align*}
    \|\partial_nu^r\|_{L^2(\Omega_\varepsilon)} &\leq c\|\nabla u^r\|_{L^2(\Omega_\varepsilon)} \leq c\left(\varepsilon\|u\|_{H^2(\Omega_\varepsilon)}+\|u\|_{L^2(\Omega_\varepsilon)}\right).
  \end{align*}
  We also observe by \eqref{E:Wmp_UaUr} that
  \begin{align*}
    \|\partial_n^2u^r\|_{L^2(\Omega_\varepsilon)} \leq c\|u^r\|_{H^2(\Omega_\varepsilon)} \leq c\|u\|_{H^2(\Omega_\varepsilon)}.
  \end{align*}
  From the above inequalities we deduce that
  \begin{align*}
    \|u^r\|_{L^\infty(\Omega_\varepsilon)} \leq c\left(\varepsilon\|u\|_{H^2(\Omega_\varepsilon)}+\|u\|_{L^2(\Omega_\varepsilon)}\right)^{1/2}\|u\|_{H^2(\Omega_\varepsilon)}^{1/2}.
  \end{align*}
  Using $(a+b)^{1/2}\leq a^{1/2}+b^{1/2}$ for $a,b\geq0$ to this inequality we obtain \eqref{E:Linf_Ur}.
\end{proof}

\section{Estimate for the trilinear term} \label{S:Tri}
The purpose of this section is to derive a good estimate for the trilinear term
\begin{align*}
  \bigl((u\cdot\nabla)u,A_\varepsilon u\bigr)_{L^2(\Omega_\varepsilon)}, \quad u\in D(A_\varepsilon),
\end{align*}
which is essential for the proof of the global existence of a strong solution to \eqref{E:NS_CTD}.
Throughout this section we impose Assumptions~\ref{Assump_1} and~\ref{Assump_2} and fix the constant $\varepsilon_0$ given in Lemma~\ref{L:Uni_aeps}.
For $\varepsilon\in(0,\varepsilon_0]$ let $\mathcal{H}_\varepsilon$ be the subspace of $L^2(\Omega_\varepsilon)^3$ given by \eqref{E:Def_Heps} and $A_\varepsilon$ the Stokes operator on $\mathcal{H}_\varepsilon$ introduced in Section~\ref{S:St_Op}.

\begin{lemma} \label{L:Tri_Est}
  For any $\alpha>0$ there exist constants $c_\alpha^1,c_\alpha^2>0$ such that
  \begin{multline} \label{E:Tri_Est}
    \left|\bigl((u\cdot\nabla)u,A_\varepsilon u\bigr)_{L^2(\Omega_\varepsilon)}\right| \leq \left(\alpha+c_\alpha^1\varepsilon^{1/2}\|u\|_{H^1(\Omega_\varepsilon)}\right)\|u\|_{H^2(\Omega_\varepsilon)}^2 \\
    +c_\alpha^2\left(\|u\|_{L^2(\Omega_\varepsilon)}^2\|u\|_{H^1(\Omega_\varepsilon)}^4+\varepsilon^{-1}\|u\|_{L^2(\Omega_\varepsilon)}^2\|u\|_{H^1(\Omega_\varepsilon)}^2\right)
  \end{multline}
  for all $\varepsilon\in(0,\varepsilon_0]$ and $u\in D(A_\varepsilon)$.
  (In fact, $c_\alpha^1$ does not depend on $\alpha$.)
\end{lemma}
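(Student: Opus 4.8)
The plan is to follow the decomposition of the convection term sketched in the introduction, namely to use the identities
\begin{align*}
  (u\cdot\nabla)u = \operatorname{curl}u\times u+\tfrac12\nabla(|u|^2), \quad \bigl(\nabla(|u|^2),A_\varepsilon u\bigr)_{L^2(\Omega_\varepsilon)}=0,
\end{align*}
where the second identity holds because $A_\varepsilon u\in\mathcal H_\varepsilon$ is solenoidal and tangential on $\Gamma_\varepsilon$, so that the gradient part is $L^2$-orthogonal to it. Then, splitting $u=u^a+u^r$ only in the second factor of the cross product, I would write
\begin{align*}
  \bigl((u\cdot\nabla)u,A_\varepsilon u\bigr)_{L^2(\Omega_\varepsilon)} = J_1+J_2+J_3
\end{align*}
with $J_1=(\operatorname{curl}u\times u^r,A_\varepsilon u)_{L^2(\Omega_\varepsilon)}$, $J_2=(\operatorname{curl}u\times u^a,A_\varepsilon u+\nu\Delta u)_{L^2(\Omega_\varepsilon)}$, and $J_3=(\operatorname{curl}u\times u^a,-\nu\Delta u)_{L^2(\Omega_\varepsilon)}$. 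The point of this splitting is that $u^r$ is small by the Poincaré-type estimates (Lemmas~\ref{L:Po_Ur}, \ref{L:Po_Grad_Ur}, \ref{L:Linf_Ur}), $A_\varepsilon u+\nu\Delta u$ is small by Lemma~\ref{L:Comp_Sto_Lap}, and $u^a$ is almost two-dimensional so the product estimate of Lemma~\ref{L:Prod_Ua} applies.

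For $J_1$ I would bound $|J_1|\le\|\operatorname{curl}u\|_{L^2(\Omega_\varepsilon)}\|u^r\|_{L^\infty(\Omega_\varepsilon)}\|A_\varepsilon u\|_{L^2(\Omega_\varepsilon)}$, then use $\|\operatorname{curl}u\|_{L^2}\le c\|u\|_{H^1}$, the $L^\infty$-estimate \eqref{E:Linf_Ur} for $u^r$, the norm equivalences \eqref{E:Stokes_H2} and \eqref{E:St_Inter}, and Young's inequality to absorb the leading term into $\varepsilon^{1/2}\|u\|_{H^1}\|u\|_{H^2}^2$ plus the lower-order terms; here the interpolation $\|u\|_{H^1}\le c\|u\|_{L^2}^{1/2}\|u\|_{H^2}^{1/2}$ is what converts $\|u\|_{L^2}^{1/2}\|u\|_{H^2}^{1/2}\cdot\|u\|_{H^1}\cdot\|u\|_{H^2}$ into something absorbable. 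For $J_2$ I would bound $|J_2|\le\bigl\||\operatorname{curl}u|\,|u^a|\bigr\|_{L^2(\Omega_\varepsilon)}\|A_\varepsilon u+\nu\Delta u\|_{L^2(\Omega_\varepsilon)}$, apply the product estimate \eqref{E:Prod_Ua} with $\varphi=|\operatorname{curl}u|$ (so $\|\varphi\|_{H^1}\le c\|u\|_{H^2}$) and \eqref{E:Comp_Sto_Lap} for the second factor, then again interpolate and use Young. Both $J_1$ and $J_2$ are, as the author notes, straightforward in this sense.

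The main obstacle is $J_3=(\operatorname{curl}u\times u^a,-\nu\Delta u)_{L^2(\Omega_\varepsilon)}$, where neither factor is small: here one must integrate by parts to move a derivative off $\Delta u$, producing a boundary term on $\Gamma_\varepsilon$ and an interior term of the form $\nu\int_{\Omega_\varepsilon}\nabla(\operatorname{curl}u\times u^a):\nabla u\,dx$ (schematically), and then control $\nabla(\operatorname{curl}u\times u^a)=(\nabla\operatorname{curl}u)\times u^a+\operatorname{curl}u\times\nabla u^a$. The first piece is handled by the product estimate \eqref{E:Prod_Ua} again (with $\varphi$ a second derivative of $u$), the second by \eqref{E:Prod_Grad_Ua}; the boundary term must be estimated using the slip boundary conditions \eqref{E:Bo_Slip} on $u$ together with the trace/Poincaré inequalities \eqref{E:Poin_Bo}, \eqref{E:Poin_Dom} and the structure of $u^a=E_\varepsilon M_\tau u$ on $\Gamma_\varepsilon$. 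This is where the long calculations live: one has to carefully track which terms carry a factor $\varepsilon$ (from $\Psi_\varepsilon$, from $|d|\le c\varepsilon$ in $\Omega_\varepsilon$, from the trace estimates), keep the genuinely two-dimensional product estimates active so as to avoid the dimensionally bad Agmon/Gagliardo--Nirenberg scaling, and finally collect everything into the form of the right-hand side of \eqref{E:Tri_Est}, absorbing all $\|u\|_{H^2}^2$-coefficients that come with a small prefactor $\varepsilon^{1/2}\|u\|_{H^1}$ and dumping the rest into $\|u\|_{L^2}^2\|u\|_{H^1}^4+\varepsilon^{-1}\|u\|_{L^2}^2\|u\|_{H^1}^2$ via repeated use of \eqref{E:St_Inter} and Young's inequality with exponents chosen to match the powers. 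I would organize the $J_3$ computation as a separate sequence of lemmas (one for the interior term, one for the boundary term) to keep the bookkeeping manageable, and only at the very end combine the three bounds to conclude \eqref{E:Tri_Est}, noting that the coefficient $c_\alpha^1$ of $\varepsilon^{1/2}\|u\|_{H^1}\|u\|_{H^2}^2$ indeed arises purely from the product and difference estimates and so does not depend on $\alpha$.
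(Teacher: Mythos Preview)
Your treatment of $J_1$ and $J_2$ matches the paper's and is fine. The gap is in $J_3$. Your plan is to integrate by parts ``generically'' to write
\[
  (-\nu\Delta u,\Phi)_{L^2(\Omega_\varepsilon)}=\nu\int_{\Omega_\varepsilon}\nabla u:\nabla\Phi\,dx+\text{boundary},\qquad \Phi=\omega\times u^a,
\]
and then estimate the piece $\int_{\Omega_\varepsilon}\nabla u:\bigl[(\nabla\omega)\times u^a\bigr]\,dx$ via \eqref{E:Prod_Ua}. But this term is pointwise bounded by $|\nabla u|\,|\nabla^2u|\,|u^a|$, and the only way to apply \eqref{E:Prod_Ua} without needing $u\in H^3$ is to pair $|u^a|$ with $|\nabla u|$, giving
\[
  \bigl\||u^a|\,|\nabla u|\bigr\|_{L^2(\Omega_\varepsilon)}\|\nabla^2u\|_{L^2(\Omega_\varepsilon)}
  \le c\varepsilon^{-1/2}\|u\|_{L^2}^{1/2}\|u\|_{H^1}\|u\|_{H^2}^{3/2}.
\]
Any Young splitting of this that produces $\alpha\|u\|_{H^2}^2$ leaves a remainder of order $\varepsilon^{-2}\|u\|_{L^2}^2\|u\|_{H^1}^4$ (or worse), which does \emph{not} fit into the right-hand side of \eqref{E:Tri_Est}; the target allows only $\|u\|_{L^2}^2\|u\|_{H^1}^4$ and $\varepsilon^{-1}\|u\|_{L^2}^2\|u\|_{H^1}^2$. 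So the scaling is genuinely wrong, not just bookkeeping.

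The paper avoids this by exploiting the curl structure rather than moving a full gradient. Since $\mathrm{div}\,u=0$ one has $-\Delta u=\mathrm{curl}\,\omega$, and the integration-by-parts formula \eqref{E:IbP_Curl} (with the auxiliary field $G(u)$ absorbing the boundary contribution through the slip conditions) converts $J_3$ into terms involving $G(u)$ plus $\nu(\omega,\mathrm{curl}\,\Phi)_{L^2(\Omega_\varepsilon)}$. The vector identity
\[
  \mathrm{curl}(\omega\times u^a)=(u^a\cdot\nabla)\omega-(\omega\cdot\nabla)u^a+(\mathrm{div}\,u^a)\omega
\]
then lets one integrate by parts once more in $(\omega,(u^a\cdot\nabla)\omega)_{L^2(\Omega_\varepsilon)}=-\tfrac12(\mathrm{div}\,u^a,|\omega|^2)_{L^2(\Omega_\varepsilon)}$, using $u^a\cdot n_\varepsilon=0$ from Lemma~\ref{L:ExImp_Bo}. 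This is the crucial cancellation: it eliminates the $|\nabla u|\,|\nabla^2u|\,|u^a|$ term entirely and leaves only $(\mathrm{div}\,u^a,|\omega|^2)$ and $(\omega,(\omega\cdot\nabla)u^a)$, which are handled by the weighted-divergence estimates \eqref{E:ADiv_Tan_Lp}, \eqref{E:ADiv_Tan_W1p}, by Lemma~\ref{L:Tan_Curl_Ua} for $\overline{P}\,\mathrm{curl}\,u^a$, and by \eqref{E:Prod_Grad_Ua}. These are the ``long calculations''; your proposed boundary-term analysis and direct estimate of $\nabla\Phi$ are not what is actually needed.
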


To prove Lemma~\ref{L:Tri_Est} we give three auxiliary lemmas.
As in the previous sections, we denote by $\bar{\eta}=\eta\circ\pi$ the constant extension of a function $\eta$ on $\Gamma$.
Let $n_\varepsilon^0$ and $n_\varepsilon^1$ be the vector fields on $\Gamma$ given by \eqref{E:Def_NB} and
\begin{align*}
  W_\varepsilon^i(x) := -\{I_3-\bar{n}_\varepsilon^i(x)\otimes\bar{n}_\varepsilon^i(x)\}\nabla\bar{n}_\varepsilon^i(x), \quad x\in N,\,i=0,1.
\end{align*}
Also, for $x\in N$ let
\begin{align*}
  \begin{aligned}
    \tilde{n}_1(x) &:= \frac{1}{\varepsilon\bar{g}(x)}\bigl\{\bigl(d(x)-\varepsilon\bar{g}_0(x)\bigr)\bar{n}_\varepsilon^1(x)-\bigl(\varepsilon\bar{g}_1(x)-d(x)\bigr)\bar{n}_\varepsilon^0(x)\bigr\}, \\
    \tilde{n}_2(x) &:= \frac{1}{\varepsilon\bar{g}(x)}\left\{\bigl(d(x)-\varepsilon\bar{g}_0(x)\bigr)\frac{\gamma_\varepsilon^1}{\nu}\bar{n}_\varepsilon^1(x)+\bigl(\varepsilon\bar{g}_1(x)-d(x)\bigr)\frac{\gamma_\varepsilon^0}{\nu}\bar{n}_\varepsilon^0(x)\right\}, \\
    \widetilde{W}(x) &:= \frac{1}{\varepsilon\bar{g}(x)}\bigl\{\bigl(d(x)-\varepsilon\bar{g}_0(x)\bigr)W_\varepsilon^1(x)-\bigl(\varepsilon\bar{g}_1(x)-d(x)\bigr)W_\varepsilon^0(x)\bigr\}.
  \end{aligned}
\end{align*}
For a vector field $u$ on $\Omega_\varepsilon$ we define
\begin{align} \label{E:Def_Gu}
  G(u)(x) := 2\tilde{n}_1(x)\times\Bigl[\widetilde{W}(x)u(x)\Bigr]+\tilde{n}_2(x)\times u(x), \quad x\in\Omega_\varepsilon.
\end{align}

\begin{lemma}[{\cite{Miu_NSCTD_01}*{Lemma~7.2}}] \label{L:G_Bound}
  Suppose that the inequalities \eqref{E:Fric_Upper} are valid.
  Then there exists a constant $c>0$ independent of $\varepsilon$ such that
  \begin{align} \label{E:G_Bound}
    |G(u)| \leq c|u|, \quad |\nabla G(u)| \leq c(|u|+|\nabla u|) \quad\text{in}\quad \Omega_\varepsilon
  \end{align}
  for all $u\in C^1(\Omega_\varepsilon)^3$, where $G(u)$ is the vector field on $\Omega_\varepsilon$ given by \eqref{E:Def_Gu}.
\end{lemma}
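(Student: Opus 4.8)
The plan is to unwind the definition \eqref{E:Def_Gu} of $G(u)$ and estimate the three building blocks $\tilde{n}_1$, $\tilde{n}_2$, $\widetilde{W}$ together with their first derivatives, then combine everything via the product rule. First I would record the elementary pointwise bounds $|a\times b|\le|a|\,|b|$ and $|\nabla(a\times b)|\le|\nabla a|\,|b|+|a|\,|\nabla b|$, so that the whole statement reduces to showing
\begin{align*}
  |\tilde{n}_1|\le c,\quad |\nabla\tilde{n}_1|\le c,\quad |\tilde{n}_2|\le c,\quad |\nabla\tilde{n}_2|\le c,\quad |\widetilde{W}|\le c,\quad |\nabla\widetilde{W}|\le c \quad\text{in}\quad \Omega_\varepsilon,
\end{align*}
with constants independent of $\varepsilon$. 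Here the inequalities \eqref{E:Fric_Upper}, i.e. $\gamma_\varepsilon^0,\gamma_\varepsilon^1\le c\varepsilon$, are used precisely to control $\tilde{n}_2$: the factor $\gamma_\varepsilon^i/\nu$ contributes a power of $\varepsilon$ that cancels the $1/(\varepsilon\bar g)$ prefactor.

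The key observation is that each of $\tilde{n}_1$, $\tilde{n}_2$, $\widetilde{W}$ has the schematic form $(\varepsilon\bar g)^{-1}$ times a convex-type combination of two terms with coefficients $d-\varepsilon\bar g_0$ and $\varepsilon\bar g_1-d$. By \eqref{Pf_EAB:Width} (which reads $0\le d-\varepsilon\bar g_0\le\varepsilon\bar g$ and $0\le\varepsilon\bar g_1-d\le\varepsilon\bar g$ in $\Omega_\varepsilon$) these coefficients are $O(\varepsilon)$, so together with \eqref{E:G_Inf} the prefactor $1/(\varepsilon\bar g)$ is compensated, and the $L^\infty$ bounds follow from the boundedness of $\bar n_\varepsilon^i$, $W_\varepsilon^i$ on $\Omega_\varepsilon$ and, for $\tilde n_2$, from \eqref{E:Fric_Upper}. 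For the gradient bounds I would differentiate each expression using $\nabla d=\bar n$ in $N$ (so $\nabla(d-\varepsilon\bar g_0)=\bar n-\varepsilon\overline{\nabla_\Gamma g_0}$, etc., using \eqref{E:ConDer_Bound} to control $\nabla\bar g_i$); the only delicate point is that differentiating the $1/(\varepsilon\bar g)$ prefactor produces a $\nabla\bar g/(\varepsilon\bar g^2)$ term carrying a dangerous $\varepsilon^{-1}$, but this is multiplied by the $O(\varepsilon)$ combination of coefficients, so it stays bounded. One also needs $|\nabla\bar n_\varepsilon^i|\le c$ and $|\nabla W_\varepsilon^i|\le c$ in $\Omega_\varepsilon$, which follow from \eqref{E:Comp_N_Re}, \eqref{E:NorG_Bound} and the $C^4$-regularity of $n$ on $\Gamma$ (equivalently, these are constant extensions of fixed smooth vector fields on $\Gamma$, since $\bar n_\varepsilon^i = n_\varepsilon^i\circ\pi$ with $n_\varepsilon^i$ defined in \eqref{E:Def_NB} and $W_\varepsilon^i$ built from it).

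Assembling: for $|G(u)|$ I would simply write $|G(u)|\le 2|\tilde n_1|\,|\widetilde W|\,|u|+|\tilde n_2|\,|u|\le c|u|$. For $|\nabla G(u)|$ I would apply the product rule to \eqref{E:Def_Gu}, obtaining terms of the form $|\nabla\tilde n_1|\,|\widetilde W|\,|u|$, $|\tilde n_1|\,|\nabla\widetilde W|\,|u|$, $|\tilde n_1|\,|\widetilde W|\,|\nabla u|$, $|\nabla\tilde n_2|\,|u|$, and $|\tilde n_2|\,|\nabla u|$, each of which is bounded by $c(|u|+|\nabla u|)$ using the six pointwise bounds above. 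I expect the main (modest) obstacle to be the bookkeeping in the gradient estimates of $\tilde{n}_1$, $\tilde{n}_2$, $\widetilde{W}$ — specifically verifying that every appearance of $\varepsilon^{-1}$ coming from the $1/(\varepsilon\bar g)$ prefactor is genuinely absorbed by the $O(\varepsilon)$ factors $d-\varepsilon\bar g_0$ and $\varepsilon\bar g_1-d$; this is routine but requires care, and is entirely analogous to the proof of Lemma~\ref{L:ExAux_Bound} for $\Psi_\varepsilon$, so I would structure the argument in parallel to that proof.
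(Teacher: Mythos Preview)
The paper does not prove this lemma here; it defers to~\cite{Miu_NSCTD_01}. Your overall plan is sound, and for $|G(u)|$ it works exactly as you say. But your treatment of the gradient bounds for $\tilde n_1$ and $\widetilde W$ has a gap: you claim ``the only delicate point'' is the $\varepsilon^{-1}$ from differentiating the prefactor $1/(\varepsilon\bar g)$. There is a second one. When you differentiate the bracket in $\tilde n_1$, the product rule on the coefficients gives, via $\nabla d=\bar n$,
\begin{align*}
  \frac{1}{\varepsilon\bar g}\Bigl[\bar n\otimes\bar n_\varepsilon^1-(-\bar n)\otimes\bar n_\varepsilon^0\Bigr]
  = \frac{1}{\varepsilon\bar g}\,\bar n\otimes(\bar n_\varepsilon^1+\bar n_\varepsilon^0),
\end{align*}
plus lower-order terms. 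Individually $\bar n\otimes\bar n_\varepsilon^i$ is only $O(1)$, so after dividing by $\varepsilon\bar g$ this looks like $O(\varepsilon^{-1})$. What saves you is the sign structure of $\tilde n_1$ together with $n_\varepsilon^i=(-1)^{i+1}n+O(\varepsilon)$ from \eqref{E:Def_NB} (equivalently \eqref{E:Comp_N}), which gives $|\bar n_\varepsilon^1+\bar n_\varepsilon^0|\le c\varepsilon$ and hence makes the term $O(1)$. The same issue and the same cure occur for $\widetilde W$: one needs $|W_\varepsilon^1+W_\varepsilon^0|\le c\varepsilon$, which follows from $n_\varepsilon^i=(-1)^{i+1}n+O(\varepsilon)$ (hence $\nabla\bar n_\varepsilon^i=(-1)^i\overline W+O(\varepsilon)$ via \eqref{E:ConDer_Dom}) and the definition of $W_\varepsilon^i$. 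For $\tilde n_2$ no such cancellation is needed because the extra factor $\gamma_\varepsilon^i/\nu=O(\varepsilon)$ already absorbs the $\varepsilon^{-1}$.

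Your appeal to the proof of Lemma~\ref{L:ExAux_Bound} as a template is therefore a bit misleading: in $\Psi_\varepsilon$ the prefactor is $1/\bar g$, not $1/(\varepsilon\bar g)$, so the $\bar n\otimes(\bar\tau_\varepsilon^1-\bar\tau_\varepsilon^0)$ term in \eqref{Pf_EAB:Grad} is automatically $O(1)$ with no cancellation required. Here the extra $\varepsilon^{-1}$ forces you to exploit that the inner and outer normals $n_\varepsilon^0$, $n_\varepsilon^1$ point in nearly opposite directions. Once you add this observation, the rest of your argument goes through.
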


\begin{lemma}[{\cite{Miu_NSCTD_01}*{Lemma~7.3}}] \label{L:IbP_Curl}
  The integration by parts formula
  \begin{multline} \label{E:IbP_Curl}
    \int_{\Omega_\varepsilon}\mathrm{curl}\,\mathrm{curl}\,u\cdot\Phi\,dx \\
    = -\int_{\Omega_\varepsilon}\mathrm{curl}\,G(u)\cdot\Phi\,dx+\int_{\Omega_\varepsilon}\{\mathrm{curl}\,u+G(u)\}\cdot\mathrm{curl}\,\Phi\,dx
  \end{multline}
  holds for all $u\in H^2(\Omega_\varepsilon)^3$ satisfying \eqref{E:Bo_Slip} and $\Phi\in L^2(\Omega_\varepsilon)^3$ with $\mathrm{curl}\,\Phi\in L^2(\Omega_\varepsilon)^3$, where $G(u)$ is the vector field on $\Omega_\varepsilon$ given by \eqref{E:Def_Gu}.
\end{lemma}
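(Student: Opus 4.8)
The plan is to recast \eqref{E:IbP_Curl} as a single integration by parts for the $\mathrm{curl}$ operator whose boundary term is forced to vanish by the slip conditions. Set $v:=\mathrm{curl}\,u+G(u)$. By Lemma~\ref{L:G_Bound}, $u\in H^2(\Omega_\varepsilon)^3$ implies $G(u)\in H^1(\Omega_\varepsilon)^3$, so $v\in H^1(\Omega_\varepsilon)^3$ and $\mathrm{curl}\,v=\mathrm{curl}\,\mathrm{curl}\,u+\mathrm{curl}\,G(u)\in L^2(\Omega_\varepsilon)^3$; in particular the left-hand side of \eqref{E:IbP_Curl} makes sense. With this notation \eqref{E:IbP_Curl} is exactly
\[
  \int_{\Omega_\varepsilon}\mathrm{curl}\,v\cdot\Phi\,dx=\int_{\Omega_\varepsilon}v\cdot\mathrm{curl}\,\Phi\,dx,
\]
so it suffices to prove this identity for every $\Phi\in L^2(\Omega_\varepsilon)^3$ with $\mathrm{curl}\,\Phi\in L^2(\Omega_\varepsilon)^3$.

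First I would reduce to smooth $\Phi$. Since $\Omega_\varepsilon$ has smooth boundary, $C^\infty(\overline{\Omega}_\varepsilon)^3$ is dense in the space of such $\Phi$ for the graph norm $\Phi\mapsto(\|\Phi\|_{L^2(\Omega_\varepsilon)}^2+\|\mathrm{curl}\,\Phi\|_{L^2(\Omega_\varepsilon)}^2)^{1/2}$, and both sides of the displayed identity depend continuously on $\Phi$ in that norm (the left side through $\|\Phi\|_{L^2(\Omega_\varepsilon)}$, the right side through $\|\mathrm{curl}\,\Phi\|_{L^2(\Omega_\varepsilon)}$, using $v,\mathrm{curl}\,v\in L^2(\Omega_\varepsilon)^3$). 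For $\Phi\in C^\infty(\overline{\Omega}_\varepsilon)^3$, integrating $\mathrm{div}(v\times\Phi)=\Phi\cdot\mathrm{curl}\,v-v\cdot\mathrm{curl}\,\Phi$ over $\Omega_\varepsilon$ and applying the divergence theorem gives
\[
  \int_{\Omega_\varepsilon}\mathrm{curl}\,v\cdot\Phi\,dx-\int_{\Omega_\varepsilon}v\cdot\mathrm{curl}\,\Phi\,dx=\int_{\Gamma_\varepsilon}(n_\varepsilon\times v)\cdot\Phi\,d\mathcal{H}^2,
\]
so the whole statement comes down to the boundary identity $n_\varepsilon\times v=n_\varepsilon\times\mathrm{curl}\,u+n_\varepsilon\times G(u)=0$ on $\Gamma_\varepsilon$.

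The boundary computation is the core of the argument. On one side, since $u$ satisfies $u\cdot n_\varepsilon=0$ on $\Gamma_\varepsilon$, the identity $2P_\varepsilon D(u)n_\varepsilon-\mathrm{curl}\,u\times n_\varepsilon=2W_\varepsilon u$ on $\Gamma_\varepsilon$ recalled in Remark~\ref{R:Ex_Assump} (see~\cite{Miu_NSCTD_01}*{Lemma~B.10}), together with the second condition of \eqref{E:Bo_Slip} in the form $2\nu P_\varepsilon D(u)n_\varepsilon=-\gamma_\varepsilon u$, yields $n_\varepsilon\times\mathrm{curl}\,u=-(\mathrm{curl}\,u\times n_\varepsilon)=2W_\varepsilon u+\frac{\gamma_\varepsilon}{\nu}u$ on $\Gamma_\varepsilon$. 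On the other side, I would evaluate $G(u)$ on each $\Gamma_\varepsilon^i$ by substituting $d=\varepsilon g_i$ into the definitions of $\tilde n_1$, $\tilde n_2$, $\widetilde W$: using \eqref{E:Nor_Bo} (which also gives $W_\varepsilon^i=W_\varepsilon$ on $\Gamma_\varepsilon^i$) one obtains $\tilde n_1=(-1)^{i+1}n_\varepsilon$, $\widetilde W=(-1)^{i+1}W_\varepsilon$, and $\tilde n_2=\frac{\gamma_\varepsilon}{\nu}n_\varepsilon$ there, so the signs $(-1)^{i+1}$ cancel in the first term of \eqref{E:Def_Gu} and $G(u)=2\,n_\varepsilon\times(W_\varepsilon u)+\frac{\gamma_\varepsilon}{\nu}\,n_\varepsilon\times u$ on $\Gamma_\varepsilon$, independently of $i$. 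Then the identity $a\times(a\times b)=(a\cdot b)a-|a|^2b$ with $a=n_\varepsilon$, $|n_\varepsilon|=1$, combined with $W_\varepsilon n_\varepsilon=0$, $P_\varepsilon W_\varepsilon=W_\varepsilon$, and $u\cdot n_\varepsilon=0$, gives $n_\varepsilon\times G(u)=-2W_\varepsilon u-\frac{\gamma_\varepsilon}{\nu}u$ on $\Gamma_\varepsilon$. Adding the two expressions shows $n_\varepsilon\times v=0$ on $\Gamma_\varepsilon$, which finishes the proof.

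The step I expect to be the main obstacle is this last one: tracking the orientation signs $(-1)^{i+1}$ that distinguish the inner and outer boundaries $\Gamma_\varepsilon^0$ and $\Gamma_\varepsilon^1$ in $n_\varepsilon^i$, $\tilde n_1$, $\widetilde W$, and in the affine interpolation in $d$, and verifying that the resulting restriction of $G(u)$ to $\Gamma_\varepsilon$ is genuinely $i$-independent and equals $-n_\varepsilon\times\mathrm{curl}\,u$. Once that is checked, the double cross-product algebra and the relations $W_\varepsilon n_\varepsilon=0$, $P_\varepsilon W_\varepsilon=W_\varepsilon$ (which hold just as in the surface case) are routine, and the density reduction together with the classical $\mathrm{curl}$ integration by parts are straightforward given $G(u)\in H^1(\Omega_\varepsilon)^3$ from Lemma~\ref{L:G_Bound}.
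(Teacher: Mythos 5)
Your proposal is correct and takes essentially the same route as the cited proof in Part I: the field $G(u)$ is built precisely so that $v=\mathrm{curl}\,u+G(u)$ has vanishing tangential trace on $\Gamma_\varepsilon$ under \eqref{E:Bo_Slip}, and your boundary computation (restricting $\tilde{n}_1$, $\tilde{n}_2$, $\widetilde{W}$ to $\Gamma_\varepsilon^i$ so the signs $(-1)^{i+1}$ cancel, combined with $n_\varepsilon\times\mathrm{curl}\,u=2W_\varepsilon u+\tfrac{\gamma_\varepsilon}{\nu}u$ from the identity recalled in Remark~\ref{R:Ex_Assump}) is exactly the intended argument. The remaining steps, namely $v\in H^1(\Omega_\varepsilon)^3$ via Lemma~\ref{L:G_Bound}, the divergence-theorem identity with boundary term $\int_{\Gamma_\varepsilon}(n_\varepsilon\times v)\cdot\Phi\,d\mathcal{H}^2$, and the density of smooth fields in the graph norm of $\mathrm{curl}$, are carried out correctly.
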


For the proofs of Lemmas~\ref{L:G_Bound} and~\ref{L:IbP_Curl} we refer to our first paper~\cite{Miu_NSCTD_01}.

We also present a useful estimate for the curl of the average part $u^a$.

\begin{lemma} \label{L:Tan_Curl_Ua}
  For $u\in C^1(\Omega_\varepsilon)^3$ let $u^a$ be given by \eqref{E:Def_ExAve}.
  Then
  \begin{align} \label{E:Tan_Curl_Ua}
    \left|\overline{P}\,\mathrm{curl}\,u^a\right| \leq c\left(\left|\overline{Mu}\right|+\varepsilon\left|\overline{\nabla_\Gamma Mu}\right|\right) \quad\text{in}\quad \Omega_\varepsilon,
  \end{align}
  where $c>0$ is a constant independent of $\varepsilon$ and $u$.
\end{lemma}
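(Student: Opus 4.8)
The plan is to use the explicit form $u^a = E_\varepsilon v$ with $v := M_\tau u$, which is a tangential vector field on $\Gamma$ of class $C^1$ because $Mu\in C^1(\Gamma)^3$ by Lemma~\ref{L:Ave_Der} (applied componentwise) and $P\in C^4(\Gamma)^{3\times3}$. It is convenient to associate to every $3\times3$ matrix $A$ the vector $\mathcal{C}(A):=(A_{23}-A_{32},\,A_{31}-A_{13},\,A_{12}-A_{21})^T$, so that $\mathrm{curl}\,w=\mathcal{C}(\nabla w)$ for any vector field $w$, $\mathcal{C}(a\otimes b)=a\times b$, and $\mathcal{C}$ annihilates symmetric matrices. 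The starting point is \eqref{E:ExImp_Grad}, which reads $\nabla u^a=\nabla E_\varepsilon v=\overline{\nabla_\Gamma v}+\bar g^{-1}\bigl(\bar v\cdot\overline{\nabla_\Gamma g}\bigr)\overline{Q}+R$ in $\Omega_\varepsilon$ with $|R|\leq c\varepsilon\bigl(|\bar v|+|\overline{\nabla_\Gamma v}|\bigr)$. Since $\overline{Q}=\bar n\otimes\bar n$ is symmetric, applying $\mathcal{C}$ gives $\mathrm{curl}\,u^a=\mathcal{C}(\overline{\nabla_\Gamma v})+\mathcal{C}(R)$ with $|\mathcal{C}(R)|\leq c\varepsilon\bigl(|\bar v|+|\overline{\nabla_\Gamma v}|\bigr)$ in $\Omega_\varepsilon$.

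The heart of the matter is the bound $\bigl|\overline{P}\,\mathcal{C}(\overline{\nabla_\Gamma v})\bigr|\leq c|\bar v|$. First I would record the algebraic identity, valid for any tangential vector field $v$ on $\Gamma$,
\begin{align*}
  \nabla_\Gamma v = P(\nabla_\Gamma v)P+(Wv)\otimes n \quad\text{on}\quad \Gamma,
\end{align*}
which follows from $0=\nabla_\Gamma(v\cdot n)=(\nabla_\Gamma v)n+(\nabla_\Gamma n)v=(\nabla_\Gamma v)n-Wv$ together with $P\nabla_\Gamma v=\nabla_\Gamma v$ (a consequence of $\nabla_\Gamma v=P\nabla\tilde v$) and $Wn=0$. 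Constant-extending this to $\Omega_\varepsilon$ and applying $\mathcal{C}$ with $\mathcal{C}(a\otimes b)=a\times b$ gives $\mathcal{C}(\overline{\nabla_\Gamma v})=\mathcal{C}\bigl(\overline{P}(\overline{\nabla_\Gamma v})\overline{P}\bigr)+(\overline{W}\bar v)\times\bar n$. The vector $(\overline{W}\bar v)\times\bar n$ is tangential and of modulus $\leq c|\bar v|$ by the boundedness of $W$, so it remains only to show $\overline{P}\,\mathcal{C}\bigl(\overline{P}A\,\overline{P}\bigr)=0$ for every matrix field $A$. As $\mathcal{C}(\overline{P}A\,\overline{P})$ depends only on the antisymmetric part of $\overline{P}A\,\overline{P}$, which equals $\overline{P}B\,\overline{P}$ with $B$ the antisymmetric part of $A$, this reduces to the pointwise linear-algebra fact: for a unit vector $n$, $P=I_3-n\otimes n$, and an antisymmetric $3\times3$ matrix $B$, the vector $\mathcal{C}(PBP)$ is parallel to $n$. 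I would prove this by writing $Ba=\omega\times a$ for its axial vector $\omega$, computing $PBP=B-n\otimes(n\times\omega)-(\omega\times n)\otimes n$ (the term $(n\otimes n)B(n\otimes n)$ vanishing since $n^{T}Bn=0$), and evaluating $\mathcal{C}(PBP)$ via $\mathcal{C}(B)=-2\omega$, $\mathcal{C}(a\otimes b)=a\times b$, and the triple-product identity $a\times(a\times b)=(a\cdot b)a-|a|^2b$; the cross terms collapse, leaving $\mathcal{C}(PBP)=-2(n\cdot\omega)n$.

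Assembling the pieces yields $\bigl|\overline{P}\,\mathrm{curl}\,u^a\bigr|\leq c\bigl(|\bar v|+\varepsilon|\overline{\nabla_\Gamma v}|\bigr)$ in $\Omega_\varepsilon$. Finally, since $v=M_\tau u=P(Mu)$ with $P\in C^4(\Gamma)^{3\times3}$, one has $|\bar v|\leq|\overline{Mu}|$ and $|\overline{\nabla_\Gamma v}|\leq c\bigl(|\overline{Mu}|+|\overline{\nabla_\Gamma Mu}|\bigr)$; inserting these and using $\varepsilon\leq1$ to absorb $\varepsilon|\overline{Mu}|$ into $|\overline{Mu}|$ gives \eqref{E:Tan_Curl_Ua}.

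I expect the only genuine obstacle to be the linear-algebra identity $\mathcal{C}(PBP)\parallel n$ for antisymmetric $B$: this is precisely what forces the tangential component of $\mathrm{curl}\,u^a$ to be controlled by $|Mu|$ rather than by $|\nabla_\Gamma Mu|$, and it requires careful but elementary bookkeeping with cross products. Everything else is routine manipulation of identities already established in Section~\ref{S:Pre} and Section~\ref{SS:Tool_IE}, in particular Lemma~\ref{L:ExImp_Div}.
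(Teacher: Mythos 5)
Your argument is correct, and it takes a genuinely different route from the paper. The paper localizes: it covers $\Gamma$ by finitely many patches carrying an orthonormal tangent frame $\{\tau_1,\tau_2,n\}$, expands $\mathrm{curl}\,u^a$ in that moving frame via Lemma~\ref{L:Curl_Exp}, and then estimates the frame components directly from the explicit form \eqref{E:Def_ExAve} of $u^a$, splitting it into the tangential part $\overline{M_\tau u}$ (where $u^a_\tau\cdot\bar n=0$ converts the dangerous first-order terms into zeroth-order ones) and the normal part $\bigl(\overline{M_\tau u}\cdot\Psi_\varepsilon\bigr)\bar n$ (handled with \eqref{E:ExAux_Bound} and \eqref{E:ExAux_TNDer}). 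You instead work globally and frame-free: you feed the already established gradient expansion \eqref{E:ExImp_Grad} into the antisymmetrization map $\mathcal{C}$, observe that the $\overline{Q}$-term dies because it is symmetric, and reduce the whole point to the identity $\nabla_\Gamma v=P(\nabla_\Gamma v)P+(Wv)\otimes n$ for tangential $v$ together with the pointwise linear-algebra fact that $\mathcal{C}(PBP)$ is parallel to $n$ for antisymmetric $B$ (your computation $\mathcal{C}(PBP)=-2(n\cdot\omega)n$ checks out, as do the conventions $\mathrm{curl}\,w=\mathcal{C}(\nabla w)$ and $\mathcal{C}(a\otimes b)=a\times b$ with the paper's definition of $\nabla w$). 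The final reduction from $v=M_\tau u=PMu$ to $Mu$, absorbing $\varepsilon\vert\overline{Mu}\vert$ via $\varepsilon\leq1$, is also fine. What your approach buys is the elimination of the covering/frame construction and of any renewed computation with $\Psi_\varepsilon$, isolating the geometric content as a clean algebraic statement; what it costs is the need to know $M_\tau u\in C^1(\Gamma,T\Gamma)$ in order to invoke Lemma~\ref{L:ExImp_Div}, which, as you note, follows from Lemma~\ref{L:Ave_Der} and the regularity of $P$ (and rests on the same implicit regularity of $Mu$ that the paper's own proof uses).
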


The proof of Lemma~\ref{L:Tan_Curl_Ua} is given in Appendix~\ref{S:Ap_Proofs}.

Now let us prove Lemma~\ref{L:Tri_Est}.
The main tools for the proof are the estimates for the Stokes and average operators given in Sections~\ref{S:St_Op} and~\ref{S:Ave}.

\begin{proof}[Proof of Lemma~\ref{L:Tri_Est}]
  The proof is basically the same as that of \cite{Ho10}*{Proposition~6.1}, but we require further calculations.

  Let $u\in D(A_\varepsilon)$.
  First note that $u\in H^2(\Omega_\varepsilon)^3$ and it satisfies $\mathrm{div}\,u=0$ in $\Omega_\varepsilon$ and the slip boundary conditions \eqref{E:Bo_Slip} by \eqref{E:Dom_St}, and thus we can apply all the lemmas in the previous sections to $u$.
  Let $u^a$ be the average part of $u$ given by \eqref{E:Def_ExAve}, $u^r:=u-u^a$ the residual part, and $\omega:=\mathrm{curl}\,u$.
  Since
  \begin{align*}
    (u\cdot\nabla)u = \omega\times u+\frac{1}{2}\nabla(|u|^2), \quad A_\varepsilon u \in \mathcal{H}_\varepsilon \subset L_\sigma^2(\Omega_\varepsilon), \quad \nabla(|u|^2) \in L_\sigma^2(\Omega_\varepsilon)^\perp,
  \end{align*}
  we have $(\nabla(|u|^2),A_\varepsilon u)_{L^2(\Omega_\varepsilon)}=0$ and thus
  \begin{align*}
    \bigl((u\cdot\nabla)u,A_\varepsilon u\bigr)_{L^2(\Omega_\varepsilon)} = (\omega\times u,A_\varepsilon u)_{L^2(\Omega_\varepsilon)} = J_1+J_2+J_3,
  \end{align*}
  where
  \begin{align*}
    J_1 &:= (\omega\times u^r,A_\varepsilon u)_{L^2(\Omega_\varepsilon)}, \\
    J_2 &:= (\omega\times u^a,A_\varepsilon u+\nu\Delta u)_{L^2(\Omega_\varepsilon)}, \\
    J_3 &:= (\omega\times u^a,-\nu\Delta u)_{L^2(\Omega_\varepsilon)}.
  \end{align*}
  Let us estimate $J_1$, $J_2$, and $J_3$ separately.
  By \eqref{E:Stokes_H2} and \eqref{E:Linf_Ur},
  \begin{align*}
    |J_1| &\leq \|u^r\|_{L^\infty(\Omega_\varepsilon)}\|\omega\|_{L^2(\Omega_\varepsilon)}\|A_\varepsilon u\|_{L^2(\Omega_\varepsilon)} \\
    &\leq c\left(\varepsilon^{1/2}\|u\|_{H^2(\Omega_\varepsilon)}+\|u\|_{L^2(\Omega_\varepsilon)}^{1/2}\|u\|_{H^2(\Omega_\varepsilon)}^{1/2}\right)\|u\|_{H^1(\Omega_\varepsilon)}\|u\|_{H^2(\Omega_\varepsilon)} \\
    &= c\varepsilon^{1/2}\|u\|_{H^1(\Omega_\varepsilon)}\|u\|_{H^2(\Omega_\varepsilon)}^2+c\|u\|_{L^2(\Omega_\varepsilon)}^{1/2}\|u\|_{H^1(\Omega_\varepsilon)}\|u\|_{H^2(\Omega_\varepsilon)}^{3/2}.
  \end{align*}
  To the last term we apply Young's inequality $ab\leq \alpha a^{4/3}+c_\alpha b^4$ to get
  \begin{align} \label{Pf_TE:Est_I1}
    |J_1| \leq \left(\alpha+c\varepsilon^{1/2}\|u\|_{H^1(\Omega_\varepsilon)}\right)\|u\|_{H^2(\Omega_\varepsilon)}^2+c_\alpha\|u\|_{L^2(\Omega_\varepsilon)}^2\|u\|_{H^1(\Omega_\varepsilon)}^4.
  \end{align}
  Next we deal with $J_2$.
  By \eqref{E:Prod_Ua} we have
  \begin{align} \label{Pf_TE:L2_Phi}
    \begin{aligned}
      \|\omega\times u^a\|_{L^2(\Omega_\varepsilon)} &\leq c\varepsilon^{-1/2}\|\omega\|_{L^2(\Omega_\varepsilon)}^{1/2}\|\omega\|_{H^1(\Omega_\varepsilon)}^{1/2}\|u\|_{L^2(\Omega_\varepsilon)}^{1/2}\|u\|_{H^1(\Omega_\varepsilon)}^{1/2} \\
      &\leq c\varepsilon^{-1/2}\|u\|_{L^2(\Omega_\varepsilon)}^{1/2}\|u\|_{H^1(\Omega_\varepsilon)}\|u\|_{H^2(\Omega_\varepsilon)}^{1/2}.
    \end{aligned}
  \end{align}
  From this inequality, \eqref{E:Comp_Sto_Lap}, and \eqref{E:St_Inter} it follows that
  \begin{align*}
    |J_2| &\leq \|\omega\times u^a\|_{L^2(\Omega_\varepsilon)}\|A_\varepsilon u+\nu\Delta u\|_{L^2(\Omega_\varepsilon)} \\
    &\leq c\varepsilon^{-1/2}\|u\|_{L^2(\Omega_\varepsilon)}^{1/2}\|u\|_{H^1(\Omega_\varepsilon)}^2\|u\|_{H^2(\Omega_\varepsilon)}^{1/2} \\
    &\leq c\varepsilon^{-1/2}\|u\|_{L^2(\Omega_\varepsilon)}\|u\|_{H^1(\Omega_\varepsilon)}\|u\|_{H^2(\Omega_\varepsilon)}.
  \end{align*}
   Applying Young's inequality $ab\leq \alpha a^2+c_\alpha b^2$ to the last line we further get
  \begin{align} \label{Pf_TE:Est_I2}
      |J_2| \leq \alpha\|u\|_{H^2(\Omega_\varepsilon)}^2+c_\alpha\varepsilon^{-1}\|u\|_{L^2(\Omega_\varepsilon)}^2\|u\|_{H^1(\Omega_\varepsilon)}^2.
  \end{align}
  The estimate for $J_3$ is more complicated.
  Let $\Phi:=\omega\times u^a$.
  Since $\omega\in H^1(\Omega_\varepsilon)^3$ and $u^a\in H^2(\Omega_\varepsilon)^3$, we have $\Phi\in H^1(\Omega_\varepsilon)^3$ by the Sobolev embeddings (see~\cite{AdFo03})
  \begin{align*}
    H^1(\Omega_\varepsilon)\hookrightarrow L^4(\Omega_\varepsilon), \quad H^2(\Omega_\varepsilon)\hookrightarrow L^\infty(\Omega_\varepsilon).
  \end{align*}
  Also, since $-\Delta u=\mathrm{curl}\,\mathrm{curl}\,u=\mathrm{curl}\,\omega$ by $\mathrm{div}\,u=0$ in $\Omega_\varepsilon$,
  \begin{align*}
    J_3 &= -\nu(\Delta u,\Phi)_{L^2(\Omega_\varepsilon)} = \nu(\mathrm{curl}\,\omega,\Phi)_{L^2(\Omega_\varepsilon)} \\
    &= -\nu(\mathrm{curl}\,G(u),\Phi)_{L^2(\Omega_\varepsilon)}+\nu(\omega+G(u),\mathrm{curl}\,\Phi)_{L^2(\Omega_\varepsilon)} = J_3^1+J_3^2+J_3^3
  \end{align*}
  by \eqref{E:IbP_Curl}.
  Here $G(u)$ is given by \eqref{E:Def_Gu} and
  \begin{align*}
    J_3^1 &:= -\nu(\mathrm{curl}\,G(u),\Phi)_{L^2(\Omega_\varepsilon)}, \\
    J_3^2 &:= \nu(G(u),\mathrm{curl}\,\Phi)_{L^2(\Omega_\varepsilon)}, \\
    J_3^3 &:= \nu(\omega,\mathrm{curl}\,\Phi)_{L^2(\Omega_\varepsilon)}.
  \end{align*}
  Noting that $\Phi=\omega\times u^a$, we apply \eqref{E:G_Bound} and \eqref{Pf_TE:L2_Phi} to $J_3^1$ to deduce that
  \begin{align*}
    |J_3^1| \leq c\|\nabla G(u)\|_{L^2(\Omega_\varepsilon)}\|\Phi\|_{L^2(\Omega_\varepsilon)} \leq c\varepsilon^{-1/2}\|u\|_{L^2(\Omega_\varepsilon)}^{1/2}\|u\|_{H^1(\Omega_\varepsilon)}^2\|u\|_{H^2(\Omega_\varepsilon)}^{1/2}.
  \end{align*}
  Then using \eqref{E:St_Inter} and Young's inequality $ab\leq \alpha a^2+c_\alpha b^2$ we get
  \begin{align} \label{Pf_TE:Est_J1}
    \begin{aligned}
    |J_3^1| &\leq c\varepsilon^{-1/2}\|u\|_{L^2(\Omega_\varepsilon)}\|u\|_{H^1(\Omega_\varepsilon)}\|u\|_{H^2(\Omega_\varepsilon)} \\
    &\leq \alpha\|u\|_{H^2(\Omega_\varepsilon)}^2+c_\alpha\varepsilon^{-1}\|u\|_{L^2(\Omega_\varepsilon)}^2\|u\|_{H^1(\Omega_\varepsilon)}^2.
    \end{aligned}
  \end{align}
  Let us estimate $J_3^2$.
  The curl of $\Phi=\omega\times u^a$ is bounded by
  \begin{align*}
    |\mathrm{curl}\,\Phi| \leq c(|\nabla\omega||u^a|+|\omega||\nabla u^a|) \leq c(|u^a||\nabla^2u|+|\nabla u^a||\nabla u|) \quad\text{in}\quad \Omega_\varepsilon.
  \end{align*}
  By this inequality, \eqref{E:G_Bound}, and H\"{o}lder's inequality we get
  \begin{align*}
    |J_3^2| &\leq c\int_{\Omega_\varepsilon}|u|(|u^a||\nabla^2u|+|\nabla u^a||\nabla u|)\,dx \\
    &\leq c\left(\|\,|u^a|\,|u|\,\|_{L^2(\Omega_\varepsilon)}\|\nabla^2u\|_{L^2(\Omega_\varepsilon)}+\|\,|\nabla u^a|\,|u|\,\|_{L^2(\Omega_\varepsilon)}\|\nabla u\|_{L^2(\Omega_\varepsilon)}\right).
  \end{align*}
  To the last line we apply \eqref{E:Prod_Ua} and \eqref{E:Prod_Grad_Ua} to obtain
  \begin{align*}
    |J_3^2| &\leq c\varepsilon^{-1/2}\left(\|u\|_{L^2(\Omega_\varepsilon)}\|u\|_{H^1(\Omega_\varepsilon)}\|u\|_{H^2(\Omega_\varepsilon)}+\|u\|_{L^2(\Omega_\varepsilon)}^{1/2}\|u\|_{H^1(\Omega_\varepsilon)}^2\|u\|_{H^2(\Omega_\varepsilon)}^{1/2}\right) \\
    &\leq c\varepsilon^{-1/2}\|u\|_{L^2(\Omega_\varepsilon)}\|u\|_{H^1(\Omega_\varepsilon)}\|u\|_{H^2(\Omega_\varepsilon)},
  \end{align*}
  where the second inequality follows from \eqref{E:St_Inter}.
  Hence Young's inequality yields
  \begin{align} \label{Pf_TE:Est_J2}
    |J_3^2| \leq \alpha\|u\|_{H^2(\Omega_\varepsilon)}^2+c_\alpha\varepsilon^{-1}\|u\|_{L^2(\Omega_\varepsilon)}^2\|u\|_{H^1(\Omega_\varepsilon)}^2.
  \end{align}
  To estimate $J_3^3=\nu(\omega,\mathrm{curl}\,\Phi)_{L^2(\Omega_\varepsilon)}$ we observe by $\Phi=\omega\times u^a$ that
  \begin{align*}
    \mathrm{curl}\,\Phi = (u^a\cdot\nabla)\omega-(\omega\cdot\nabla)u^a+(\mathrm{div}\,u^a)\omega-(\mathrm{div}\,\omega)u^a \quad\text{in}\quad \Omega_\varepsilon.
  \end{align*}
  Moreover, since $u^a$ satisfies $u^a\cdot n_\varepsilon=0$ on $\Gamma_\varepsilon$ by Lemma~\ref{L:ExImp_Bo} and \eqref{E:Def_ExAve}, we have
  \begin{align*}
    \int_{\Omega_\varepsilon}\omega\cdot(u^a\cdot\nabla)\omega\,dx = -\frac{1}{2}\int_{\Omega_\varepsilon}(\mathrm{div}\,u^a)|\omega|^2\,dx
  \end{align*}
  by integration by parts.
  By these equalities and $\mathrm{div}\,\omega=\mathrm{div}\,\mathrm{curl}\,u=0$ in $\Omega_\varepsilon$ we get
  \begin{align} \label{Pf_TE:J3_Exp}
    \begin{aligned}
      J_3^3 & = \nu(\omega,(u^a\cdot\nabla)\omega-(\omega\cdot\nabla)u^a+(\mathrm{div}\,u^a)\omega)_{L^2(\Omega_\varepsilon)} \\
      & = \frac{\nu}{2}(\mathrm{div}\,u^a,|\omega|^2)_{L^2(\Omega_\varepsilon)}-\nu(\omega,(\omega\cdot\nabla)u^a)_{L^2(\Omega_\varepsilon)}.
    \end{aligned}
  \end{align}
  Noting that $u^a=E_\varepsilon M_\tau u$ by the definition \eqref{E:Def_ExAve}, we write
  \begin{multline*}
    (\mathrm{div}\,u^a,|\omega|^2)_{L^2(\Omega_\varepsilon)} = \int_{\Omega_\varepsilon}\frac{1}{\bar{g}}\left(\overline{\mathrm{div}_\Gamma(gM_\tau u)}\right)|\omega|^2\,dx \\
    +\int_{\Omega_\varepsilon}\left(\mathrm{div}(E_\varepsilon M_\tau u)-\frac{1}{\bar{g}}\overline{\mathrm{div}_\Gamma(gM_\tau u)}\right)|\omega|^2\,dx
  \end{multline*}
  and apply \eqref{E:G_Inf}, \eqref{E:ExImp_Div}, and H\"{o}lder's inequality to the right-hand side to get
  \begin{align*}
    |(\mathrm{div}\,u^a,|\omega|^2)_{L^2(\Omega_\varepsilon)}| \leq c(K_1+\varepsilon K_2+\varepsilon K_3)\|\omega\|_{L^2(\Omega_\varepsilon)},
  \end{align*}
  where
  \begin{align*}
    K_1 &:= \left\|\,\left|\overline{\mathrm{div}_\Gamma(gM_\tau u)}\right|\,|\omega|\,\right\|_{L^2(\Omega_\varepsilon)}, \\
    K_2 &:= \left\|\,\left|\overline{M_\tau u}\right|\,|\omega|\,\right\|_{L^2(\Omega_\varepsilon)}, \\
    K_3 &:= \left\|\,\left|\overline{\nabla_\Gamma M_\tau u}\right|\,|\omega|\,\right\|_{L^2(\Omega_\varepsilon)}.
  \end{align*}
  To $K_1$ we apply \eqref{E:Prod_Surf} and use \eqref{E:ADiv_Tan_Lp} and \eqref{E:ADiv_Tan_W1p}.
  Then we have
  \begin{align*}
    K_1 &\leq c\|\mathrm{div}_\Gamma(gM_\tau u)\|_{L^2(\Gamma)}^{1/2}\|\mathrm{div}_\Gamma(gM_\tau u)\|_{H^1(\Gamma)}^{1/2}\|\omega\|_{L^2(\Omega_\varepsilon)}^{1/2}\|\omega\|_{H^1(\Omega_\varepsilon)}^{1/2} \\
    &\leq c\varepsilon^{1/2}\|u\|_{H^1(\Omega_\varepsilon)}\|u\|_{H^2(\Omega_\varepsilon)} \leq c\varepsilon^{1/2}\|u\|_{H^2(\Omega_\varepsilon)}^2.
  \end{align*}
  Also, by $M_\tau u=PMu$ on $\Gamma$, $P\in C^4(\Gamma)^{3\times3}$, \eqref{E:Ave_Lp_Surf}, and \eqref{E:Ave_Wmp_Surf} we see that
  \begin{align*}
    \|M_\tau u\|_{H^k(\Gamma)} &\leq c\|Mu\|_{H^k(\Gamma)} \leq c\varepsilon^{-1/2}\|u\|_{H^k(\Omega_\varepsilon)}, \\
    \|\nabla_\Gamma M_\tau u\|_{H^k(\Gamma)} &\leq c\|Mu\|_{H^{k+1}(\Gamma)} \leq c\varepsilon^{-1/2}\|u\|_{H^{k+1}(\Omega_\varepsilon)}
  \end{align*}
  for $k=0,1$ (with $H^0=L^2$).
  Using \eqref{E:Prod_Surf} and these inequalities we obtain
  \begin{align*}
    K_2 &\leq c\varepsilon^{-1/2}\|u\|_{L^2(\Omega_\varepsilon)}^{1/2}\|u\|_{H^1(\Omega_\varepsilon)}\|u\|_{H^2(\Omega_\varepsilon)}^{1/2} \leq c\varepsilon^{-1/2}\|u\|_{H^2(\Omega_\varepsilon)}^2, \\
    K_3 &\leq c\varepsilon^{-1/2}\|u\|_{H^1(\Omega_\varepsilon)}\|u\|_{H^2(\Omega_\varepsilon)} \leq c\varepsilon^{-1/2}\|u\|_{H^2(\Omega_\varepsilon)}^2.
  \end{align*}
  From these inequalities and $\|\omega\|_{L^2(\Omega_\varepsilon)}\leq c\|u\|_{H^1(\Omega_\varepsilon)}$ we deduce that
  \begin{align} \label{Pf_TE:DivUa_O}
    \begin{aligned}
      |(\mathrm{div}\,u^a,|\omega|^2)_{L^2(\Omega_\varepsilon)}| &\leq c(K_1+\varepsilon K_2+\varepsilon K_3)\|\omega\|_{L^2(\Omega_\varepsilon)} \\
      &\leq c\varepsilon^{1/2}\|u\|_{H^1(\Omega_\varepsilon)}\|u\|_{H^2(\Omega_\varepsilon)}^2.
    \end{aligned}
  \end{align}
  Let us estimate $(\omega,(\omega\cdot\nabla)u^a)_{L^2(\Omega_\varepsilon)}$.
  By $\omega=\mathrm{curl}\,u^r+\mathrm{curl}\,u^a$ we have
  \begin{align*}
    (\omega,(\omega\cdot\nabla)u^a)_{L^2(\Omega_\varepsilon)} = (\omega,(\mathrm{curl}\,u^r\cdot\nabla)u^a)_{L^2(\Omega_\varepsilon)}+(\omega,(\mathrm{curl}\,u^a\cdot\nabla)u^a)_{L^2(\Omega_\varepsilon)}.
  \end{align*}
  The first term on the right-hand side is bounded by
  \begin{align*}
    |(\omega,(\mathrm{curl}\,u^r\cdot\nabla)u^a)_{L^2(\Omega_\varepsilon)}| \leq c\|\nabla u^r\|_{L^2(\Omega_\varepsilon)}\|\,|\nabla u^a|\,|\omega|\,\|_{L^2(\Omega_\varepsilon)}.
  \end{align*}
  To the right-hand side we apply \eqref{E:Po_Grad_Ur} and
  \begin{align} \label{Pf_TE:GUa_O}
    \begin{aligned}
      \|\,|\nabla u^a|\,|\omega|\,\|_{L^2(\Omega_\varepsilon)} &\leq c\varepsilon^{-1/2}\|\omega\|_{L^2(\Omega_\varepsilon)}^{1/2}\|\omega\|_{H^1(\Omega_\varepsilon)}^{1/2}\|u\|_{H^1(\Omega_\varepsilon)}^{1/2}\|u\|_{H^2(\Omega_\varepsilon)}^{1/2} \\
      &\leq c\varepsilon^{-1/2}\|u\|_{H^1(\Omega_\varepsilon)}\|u\|_{H^2(\Omega_\varepsilon)}
    \end{aligned}
  \end{align}
  by \eqref{E:Prod_Grad_Ua}.
  Then we get
  \begin{multline} \label{Pf_TE:OUrUa}
    |(\omega,(\mathrm{curl}\,u^r\cdot\nabla)u^a)_{L^2(\Omega_\varepsilon)}| \\
    \leq c\left(\varepsilon^{1/2}\|u\|_{H^1(\Omega_\varepsilon)}\|u\|_{H^2(\Omega_\varepsilon)}^2+\varepsilon^{-1/2}\|u\|_{L^2(\Omega_\varepsilon)}\|u\|_{H^1(\Omega_\varepsilon)}\|u\|_{H^2(\Omega_\varepsilon)}\right).
  \end{multline}
  Also, noting that
  \begin{align*}
    \mathrm{curl}\,u^a = \overline{P}\,\mathrm{curl}\,u^a+(\mathrm{curl}\,u^a\cdot\bar{n})\bar{n}, \quad (\bar{n}\cdot\nabla)u^a = \partial_nu^a \quad\text{in}\quad \Omega_\varepsilon
  \end{align*}
  we decompose $(\omega,(\mathrm{curl}\,u^a\cdot\nabla)u^a)_{L^2(\Omega_\varepsilon)}$ into the sum of
  \begin{align*}
    L_1 := \left(\omega,\bigl((\overline{P}\,\mathrm{curl}\,u^a)\cdot\nabla\bigr)u^a\right)_{L^2(\Omega_\varepsilon)}, \quad L_2 := (\omega,(\mathrm{curl}\,u^a\cdot\bar{n})\partial_nu^a)_{L^2(\Omega_\varepsilon)}.
  \end{align*}
  To $L_1$ we apply \eqref{E:Tan_Curl_Ua} and H\"{o}lder's inequality to get
  \begin{align*}
    |L_1| &\leq c\int_{\Omega_\varepsilon}|\omega|\left(\left|\overline{Mu}\right|+\varepsilon\left|\overline{\nabla_\Gamma Mu}\right|\right)|\nabla u^a|\,dx \\
    &\leq c\|\,|\nabla u^a|\,|\omega|\,\|_{L^2(\Omega_\varepsilon)}\left(\left\|\overline{Mu}\right\|_{L^2(\Omega_\varepsilon)}+\varepsilon\left\|\overline{\nabla_\Gamma Mu}\right\|_{L^2(\Omega_\varepsilon)}\right).
  \end{align*}
  Hence from \eqref{E:Con_Lp}, \eqref{E:Ave_Lp_Surf}, \eqref{E:Ave_Wmp_Surf}, \eqref{Pf_TE:GUa_O}, and $\|u\|_{H^1(\Omega_\varepsilon)}\leq\|u\|_{H^2(\Omega_\varepsilon)}$ it follows that
  \begin{align*}
    |L_1| &\leq c\varepsilon^{-1/2}\|u\|_{H^1(\Omega_\varepsilon)}\|u\|_{H^2(\Omega_\varepsilon)}\left(\|u\|_{L^2(\Omega_\varepsilon)}+\varepsilon\|u\|_{H^1(\Omega_\varepsilon)}\right) \\
    &\leq c\left(\varepsilon^{1/2}\|u\|_{H^1(\Omega_\varepsilon)}\|u\|_{H^2(\Omega_\varepsilon)}^2+\varepsilon^{-1/2}\|u\|_{L^2(\Omega_\varepsilon)}\|u\|_{H^1(\Omega_\varepsilon)}\|u\|_{H^2(\Omega_\varepsilon)}\right).
  \end{align*}
  To estimate $L_2$ we see by the definition \eqref{E:Def_ExAve} of $u^a$, \eqref{E:NorDer_Con}, and \eqref{E:ExAux_Bound} that
  \begin{align*}
    |\partial_nu^a| = \left|\overline{M_\tau u}\cdot\partial_n\Psi_\varepsilon\right| \leq c\left|\overline{M_\tau u}\right| = c\left|\overline{PMu}\right| \leq c\left|\overline{Mu}\right| \quad\text{in}\quad \Omega_\varepsilon.
  \end{align*}
  By this inequality, $|\mathrm{curl}\,u^a\cdot\bar{n}|\leq c|\nabla u^a|$ in $\Omega_\varepsilon$, \eqref{E:Ave_Lp_Dom}, and \eqref{Pf_TE:GUa_O},
  \begin{align*}
    |L_2| &\leq c\int_{\Omega_\varepsilon}|\omega||\nabla u^a|\left|\overline{Mu}\right|\,dx \leq c\|\,|\nabla u^a|\,|\omega|\,\|_{L^2(\Omega_\varepsilon)}\left\|\overline{Mu}\right\|_{L^2(\Omega_\varepsilon)} \\
    &\leq c\varepsilon^{-1/2}\|u\|_{L^2(\Omega_\varepsilon)}\|u\|_{H^1(\Omega_\varepsilon)}\|u\|_{H^2(\Omega_\varepsilon)}.
  \end{align*}
  Applying the above estimates to $(\omega,(\mathrm{curl}\,u^a\cdot\nabla)u^a)_{L^2(\Omega_\varepsilon)}=L_1+L_2$ we obtain
  \begin{multline*}
    |(\omega,(\mathrm{curl}\,u^a\cdot\nabla)u^a)_{L^2(\Omega_\varepsilon)}| \\
    \leq c\left(\varepsilon^{1/2}\|u\|_{H^1(\Omega_\varepsilon)}\|u\|_{H^2(\Omega_\varepsilon)}^2+\varepsilon^{-1/2}\|u\|_{L^2(\Omega_\varepsilon)}\|u\|_{H^1(\Omega_\varepsilon)}\|u\|_{H^2(\Omega_\varepsilon)}\right).
  \end{multline*}
  From this inequality and \eqref{Pf_TE:OUrUa} we deduce that
  \begin{align*}
    &|(\omega,(\omega\cdot\nabla)u^a)_{L^2(\Omega_\varepsilon)}| \\
    &\qquad \leq |(\omega,(\mathrm{curl}\,u^r\cdot\nabla)u^a)_{L^2(\Omega_\varepsilon)}|+|(\omega,(\mathrm{curl}\,u^a\cdot\nabla)u^a)_{L^2(\Omega_\varepsilon)}| \\
    &\qquad \leq c\left(\varepsilon^{1/2}\|u\|_{H^1(\Omega_\varepsilon)}\|u\|_{H^2(\Omega_\varepsilon)}^2+\varepsilon^{-1/2}\|u\|_{L^2(\Omega_\varepsilon)}\|u\|_{H^1(\Omega_\varepsilon)}\|u\|_{H^2(\Omega_\varepsilon)}\right).
  \end{align*}
  Using Young's inequality $ab\leq \alpha a^2+c_\alpha b^2$ to the last term we further get
  \begin{multline*}
    |(\omega,(\omega\cdot\nabla)u^a)_{L^2(\Omega_\varepsilon)}| \\
    \leq \left(\alpha+c\varepsilon^{1/2}\|u\|_{H^1(\Omega_\varepsilon)}\right)\|u\|_{H^2(\Omega_\varepsilon)}^2+c_\alpha\varepsilon^{-1}\|u\|_{L^2(\Omega_\varepsilon)}^2\|u\|_{H^1(\Omega_\varepsilon)}^2.
  \end{multline*}
  We apply this inequality and \eqref{Pf_TE:DivUa_O} to \eqref{Pf_TE:J3_Exp} to show that
  \begin{align} \label{Pf_TE:Est_J3}
    \begin{aligned}
      |J_3^3| &\leq c\left(|(\mathrm{div}\,u^a,|\omega|^2)_{L^2(\Omega_\varepsilon)}|+|(\omega,(\omega\cdot\nabla)u^a)_{L^2(\Omega_\varepsilon)}|\right) \\
      &\leq c\left(\alpha+\varepsilon^{1/2}\|u\|_{H^1(\Omega_\varepsilon)}\right)\|u\|_{H^2(\Omega_\varepsilon)}^2+c_\alpha\varepsilon^{-1}\|u\|_{L^2(\Omega_\varepsilon)}^2\|u\|_{H^1(\Omega_\varepsilon)}^2.
    \end{aligned}
  \end{align}
  Since $J_3=J_3^1+J_3^2+J_3^3$, we see by \eqref{Pf_TE:Est_J1}, \eqref{Pf_TE:Est_J2}, and \eqref{Pf_TE:Est_J3} that
  \begin{align*}
    |J_3| \leq c\left(\alpha+\varepsilon^{1/2}\|u\|_{H^1(\Omega_\varepsilon)}\right)\|u\|_{H^2(\Omega_\varepsilon)}^2+c_\alpha\varepsilon^{-1}\|u\|_{L^2(\Omega_\varepsilon)}^2\|u\|_{H^1(\Omega_\varepsilon)}^2
  \end{align*}
  and this inequality combined with \eqref{Pf_TE:Est_I1} and \eqref{Pf_TE:Est_I2} yields
  \begin{align*}
    \left|\bigl((u\cdot\nabla)u,A_\varepsilon u\bigr)_{L^2(\Omega_\varepsilon)}\right| &\leq |J_1|+|J_2|+|J_3| \\
    &\leq \left(c_1\alpha+c_2\varepsilon^{1/2}\|u\|_{H^1(\Omega_\varepsilon)}\right)\|u\|_{H^2(\Omega_\varepsilon)}^2 \\
    &\qquad +c_\alpha\left(\|u\|_{L^2(\Omega_\varepsilon)}^2\|u\|_{H^1(\Omega_\varepsilon)}^4+\varepsilon^{-1}\|u\|_{L^2(\Omega_\varepsilon)}^2\|u\|_{H^1(\Omega_\varepsilon)}^2\right).
  \end{align*}
  Here $c_\alpha>0$ is a constant depending only on $\alpha$ and $c_1,c_2>0$ are constants independent of $\varepsilon$ and $\alpha$.
  Replacing $c_1\alpha$ by $\alpha$ in the above inequality we obtain \eqref{E:Tri_Est}.
\end{proof}

Finally, we fix $\alpha$ and write \eqref{E:Tri_Est} in terms of the Stokes operator $A_\varepsilon$.

\begin{lemma} \label{L:Tri_Est_A}
  There exist constants $d_1,d_2>0$ such that
  \begin{multline} \label{E:Tri_Est_A}
    \left|\bigl((u\cdot\nabla)u,A_\varepsilon u\bigr)_{L^2(\Omega_\varepsilon)}\right| \leq \left(\frac{1}{4}+d_1\varepsilon^{1/2}\|A_\varepsilon^{1/2}u\|_{L^2(\Omega_\varepsilon)}\right)\|A_\varepsilon u\|_{L^2(\Omega_\varepsilon)}^2 \\
    +d_2\left(\|u\|_{L^2(\Omega_\varepsilon)}^2\|A_\varepsilon^{1/2}u\|_{L^2(\Omega_\varepsilon)}^4+\varepsilon^{-1}\|u\|_{L^2(\Omega_\varepsilon)}^2\|A_\varepsilon^{1/2}u\|_{L^2(\Omega_\varepsilon)}^2\right)
  \end{multline}
  for all $\varepsilon\in(0,\varepsilon_0]$ and $u\in D(A_\varepsilon)$.
\end{lemma}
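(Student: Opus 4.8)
The plan is to obtain \eqref{E:Tri_Est_A} directly from Lemma~\ref{L:Tri_Est} by converting every Sobolev norm appearing on the right-hand side of \eqref{E:Tri_Est} into a norm built from the Stokes operator, using the uniform norm equivalences summarized in Section~\ref{S:St_Op}. Concretely, for $u\in D(A_\varepsilon)$ I would invoke Lemma~\ref{L:Stokes_H2} to get $\|u\|_{H^2(\Omega_\varepsilon)}\leq c_\ast\|A_\varepsilon u\|_{L^2(\Omega_\varepsilon)}$ and Lemma~\ref{L:Stokes_H1} to get $\|u\|_{H^1(\Omega_\varepsilon)}\leq c\|A_\varepsilon^{1/2}u\|_{L^2(\Omega_\varepsilon)}$, with $c_\ast,c>0$ independent of $\varepsilon$ and $u$.

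The first step is to fix the free parameter $\alpha$ in Lemma~\ref{L:Tri_Est}. I would choose $\alpha:=1/(4c_\ast^2)$, which is a fixed number independent of $\varepsilon$; this freezes the constants $c_\alpha^1,c_\alpha^2$ of Lemma~\ref{L:Tri_Est} to fixed values. Applying Lemma~\ref{L:Tri_Est} with this $\alpha$ and then substituting the two norm equivalences, the leading term $(\alpha+c_\alpha^1\varepsilon^{1/2}\|u\|_{H^1(\Omega_\varepsilon)})\|u\|_{H^2(\Omega_\varepsilon)}^2$ is bounded by $(c_\ast^2\alpha+c_\alpha^1 c_\ast^2 c\,\varepsilon^{1/2}\|A_\varepsilon^{1/2}u\|_{L^2(\Omega_\varepsilon)})\|A_\varepsilon u\|_{L^2(\Omega_\varepsilon)}^2$, whose first coefficient equals exactly $1/4$ by the choice of $\alpha$; one sets $d_1:=c_\alpha^1 c_\ast^2 c$. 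Likewise, the lower-order group $c_\alpha^2\bigl(\|u\|_{L^2(\Omega_\varepsilon)}^2\|u\|_{H^1(\Omega_\varepsilon)}^4+\varepsilon^{-1}\|u\|_{L^2(\Omega_\varepsilon)}^2\|u\|_{H^1(\Omega_\varepsilon)}^2\bigr)$ is bounded by $d_2\bigl(\|u\|_{L^2(\Omega_\varepsilon)}^2\|A_\varepsilon^{1/2}u\|_{L^2(\Omega_\varepsilon)}^4+\varepsilon^{-1}\|u\|_{L^2(\Omega_\varepsilon)}^2\|A_\varepsilon^{1/2}u\|_{L^2(\Omega_\varepsilon)}^2\bigr)$ with $d_2:=c_\alpha^2\max\{c^4,c^2\}$. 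Summing the two estimates yields \eqref{E:Tri_Est_A} with $d_1,d_2$ independent of $\varepsilon$ and $u$.

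The only point that deserves care — and essentially the sole difficulty — is the logical order of the constant bookkeeping: to absorb the factor $c_\ast^2$ into the coefficient $1/4$ by choosing $\alpha=1/(4c_\ast^2)$ one must know that $c_\alpha^1$ in \eqref{E:Tri_Est} does not itself depend on $\alpha$. This is exactly what the parenthetical remark following Lemma~\ref{L:Tri_Est} asserts, so fixing $\alpha$ affects only $c_\alpha^2$ (hence $d_2$) and not the coefficient $d_1$ of the $\|A_\varepsilon u\|_{L^2(\Omega_\varepsilon)}^2$ term, and the argument closes without circularity. No further estimates are needed; this lemma is a pure reformulation of Lemma~\ref{L:Tri_Est} through Lemmas~\ref{L:Stokes_H1} and~\ref{L:Stokes_H2}.
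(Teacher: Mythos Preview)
Your proposal is correct and follows essentially the same route as the paper: apply the uniform norm equivalences \eqref{E:Stokes_H1} and \eqref{E:Stokes_H2} to the right-hand side of \eqref{E:Tri_Est}, then fix $\alpha$ so that the resulting coefficient in front of $\|A_\varepsilon u\|_{L^2(\Omega_\varepsilon)}^2$ becomes $1/4$. Your explicit observation that the choice of $\alpha$ is legitimate because $c_\alpha^1$ is actually independent of $\alpha$ (the parenthetical remark after Lemma~\ref{L:Tri_Est}) is exactly the point that makes the bookkeeping close, and the paper relies on it implicitly.
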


\begin{proof}
  Applying \eqref{E:Stokes_H1} and \eqref{E:Stokes_H2} to the right-hand side of \eqref{E:Tri_Est} we get
  \begin{multline*}
    \left|\bigl((u\cdot\nabla)u,A_\varepsilon u\bigr)_{L^2(\Omega_\varepsilon)}\right| \leq \left(c\alpha+d_\alpha^1\varepsilon^{1/2}\|A_\varepsilon^{1/2}u\|_{L^2(\Omega_\varepsilon)}\right)\|A_\varepsilon u\|_{L^2(\Omega_\varepsilon)}^2 \\
    +d_\alpha^2\left(\|u\|_{L^2(\Omega_\varepsilon)}^2\|A_\varepsilon^{1/2}u\|_{L^2(\Omega_\varepsilon)}^4+\varepsilon^{-1}\|u\|_{L^2(\Omega_\varepsilon)}^2\|A_\varepsilon^{1/2}u\|_{L^2(\Omega_\varepsilon)}^2\right)
  \end{multline*}
  with positive constants $c$, $d_\alpha^1$, and $d_\alpha^2$ independent of $\varepsilon$.
  We take $\alpha=1/4c$ in the above inequality to obtain \eqref{E:Tri_Est_A}.
\end{proof}

\section{Global existence and estimates of a strong solution} \label{S:GE}
Based on the results in the previous sections we prove Theorems~\ref{T:GE} and~\ref{T:UE}.
As in Section~\ref{S:Tri} we impose Assumptions~\ref{Assump_1} and~\ref{Assump_2} and fix the constant $\varepsilon_0$ given in Lemma~\ref{L:Uni_aeps}.
For $\varepsilon\in(0,\varepsilon_0]$ let $\mathcal{H}_\varepsilon$ and $\mathcal{V}_\varepsilon$ be the function spaces given by \eqref{E:Def_Heps} and $A_\varepsilon$ the Stokes operator on $\mathcal{H}_\varepsilon$.
We also write $\bar{\eta}=\eta\circ\pi$ for the constant extension of a function of $\eta$ on $\Gamma$ in the normal direction of $\Gamma$.

First we recall the well-known result on the local-in-time existence of a strong solution to the Navier--Stokes equations (see e.g.~\cites{BoFa13,CoFo88,So01,Te79}).

\begin{theorem} \label{T:LE}
  For a fixed $\varepsilon\in(0,\varepsilon_0]$ let
  \begin{align*}
    u_0^\varepsilon\in \mathcal{V}_\varepsilon, \quad f^\varepsilon\in L^\infty(0,\infty;L^2(\Omega_\varepsilon)^3).
  \end{align*}
  When the condition (A3) of Assumption~\ref{Assump_2} is imposed, suppose that $f^\varepsilon(t)\in\mathcal{R}_g^\perp$ for a.a. $t\in(0,\infty)$.
  Then there exists $T_0\in(0,\infty)$ depending on $\Omega_\varepsilon$, $\nu$, $u_0^\varepsilon$, and $f^\varepsilon$ such that the problem \eqref{E:NS_CTD} admits a strong solution $u^\varepsilon$ on $[0,T_0)$ satisfying
  \begin{align*}
    u^\varepsilon \in C([0,T];\mathcal{V}_\varepsilon)\cap L^2(0,T;D(A_\varepsilon))\cap H^1(0,T;\mathcal{H}_\varepsilon) \quad\text{for all}\quad T\in(0,T_0).
  \end{align*}
  If $u^\varepsilon$ is maximally defined on the time interval $[0,T_{\max})$ and $T_{\max}$ is finite, then
  \begin{align*}
    \lim_{t\to T_{\max}^-}\|A_\varepsilon^{1/2}u^\varepsilon(t)\|_{L^2(\Omega_\varepsilon)} = \infty.
  \end{align*}
\end{theorem}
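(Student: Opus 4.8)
\textbf{Proof plan for Theorem~\ref{T:LE}.}
The statement is a standard local-in-time existence and blow-up-criterion result, so the plan is to recall the Fujita--Kato type fixed-point argument adapted to the abstract evolution equation \eqref{E:NS_Abst} on $\mathcal{H}_\varepsilon$ with the Stokes operator $A_\varepsilon$, which is positive self-adjoint on $\mathcal{H}_\varepsilon$ with domain $D(A_\varepsilon)\subset H^2(\Omega_\varepsilon)^3$ and square root $A_\varepsilon^{1/2}$ satisfying $D(A_\varepsilon^{1/2})=\mathcal{V}_\varepsilon$ with the norm equivalences \eqref{E:Stokes_H1}--\eqref{E:Stokes_H2}. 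First I would fix $\varepsilon\in(0,\varepsilon_0]$ and regard \eqref{E:NS_CTD} in the mild (integral) form
\begin{align*}
  u^\varepsilon(t) = e^{-tA_\varepsilon}u_0^\varepsilon+\int_0^te^{-(t-s)A_\varepsilon}\bigl(\mathbb{P}_\varepsilon f^\varepsilon(s)-\mathbb{P}_\varepsilon(u^\varepsilon(s)\cdot\nabla)u^\varepsilon(s)\bigr)\,ds,
\end{align*}
and set up a contraction mapping in the Banach space
\begin{align*}
  X_{T} := \Bigl\{u\in C([0,T];\mathcal{V}_\varepsilon) : \sup_{0\le t\le T}\|A_\varepsilon^{1/2}u(t)\|_{L^2(\Omega_\varepsilon)}+\Bigl(\int_0^T\|A_\varepsilon u(s)\|_{L^2(\Omega_\varepsilon)}^2\,ds\Bigr)^{1/2}\le R\Bigr\}
\end{align*}
for suitable $R>0$ and small $T=T_0>0$ depending on $\Omega_\varepsilon,\nu,u_0^\varepsilon,f^\varepsilon$. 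The key analytic input is the bilinear estimate for the convection term: using $\|\mathbb{P}_\varepsilon(u\cdot\nabla)v\|_{L^2(\Omega_\varepsilon)}\le\|u\|_{L^\infty(\Omega_\varepsilon)}\|\nabla v\|_{L^2(\Omega_\varepsilon)}$ together with the Agmon-type bound $\|u\|_{L^\infty(\Omega_\varepsilon)}\le c\|u\|_{H^1(\Omega_\varepsilon)}^{1/2}\|u\|_{H^2(\Omega_\varepsilon)}^{1/2}$ (a consequence of \eqref{E:Agmon} and \eqref{E:St_Inter}, with $\varepsilon$ fixed the constant is harmless) and the norm equivalences \eqref{E:Stokes_H1}--\eqref{E:Stokes_H2}, one controls $\|A_\varepsilon^{-1/2}\mathbb{P}_\varepsilon(u\cdot\nabla)v\|$ and $\|e^{-(t-s)A_\varepsilon}\|_{L^2\to D(A_\varepsilon^{1/2})}\le c(t-s)^{-1/2}$ so that the Duhamel integral maps $X_T$ into itself and is a contraction for $T$ small; standard maximal-regularity/analytic-semigroup estimates then give $u^\varepsilon\in C([0,T];\mathcal{V}_\varepsilon)\cap L^2(0,T;D(A_\varepsilon))\cap H^1(0,T;\mathcal{H}_\varepsilon)$.

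Next I would patch the local solutions to a maximal solution on $[0,T_{\max})$ by the usual continuation argument: as long as $\limsup_{t\to T}\|A_\varepsilon^{1/2}u^\varepsilon(t)\|_{L^2(\Omega_\varepsilon)}<\infty$, the existence time in the fixed-point step can be chosen uniformly from that bound, so the solution extends past $T$; hence if $T_{\max}<\infty$ then necessarily $\limsup_{t\to T_{\max}^-}\|A_\varepsilon^{1/2}u^\varepsilon(t)\|_{L^2(\Omega_\varepsilon)}=\infty$. To upgrade $\limsup$ to $\lim$ one uses the energy identity: pairing the equation with $A_\varepsilon u^\varepsilon$ gives
\begin{align*}
  \frac{1}{2}\frac{d}{dt}\|A_\varepsilon^{1/2}u^\varepsilon\|_{L^2(\Omega_\varepsilon)}^2+\|A_\varepsilon u^\varepsilon\|_{L^2(\Omega_\varepsilon)}^2 = (\mathbb{P}_\varepsilon f^\varepsilon,A_\varepsilon u^\varepsilon)_{L^2(\Omega_\varepsilon)}-\bigl((u^\varepsilon\cdot\nabla)u^\varepsilon,A_\varepsilon u^\varepsilon\bigr)_{L^2(\Omega_\varepsilon)},
\end{align*}
and a Gronwall argument on any interval where $\|A_\varepsilon^{1/2}u^\varepsilon\|_{L^2(\Omega_\varepsilon)}$ stays bounded shows it cannot oscillate, so the blow-up is a genuine limit. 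For the condition (A3) case, I would note that the abstract solution automatically lies in $\mathcal{H}_\varepsilon=L_\sigma^2(\Omega_\varepsilon)\cap\mathcal{R}_g^\perp$, and the hypothesis $f^\varepsilon(t)\in\mathcal{R}_g^\perp$ is exactly what makes the recovered PDE \eqref{E:NS_Abst_L2} have vanishing correction term $w(t)$, as in Remark~\ref{R:Ext_Orth}; this ensures $u^\varepsilon$ really solves \eqref{E:NS_CTD}.

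The main obstacle is the convection-term estimate in the fixed-point space: one needs a product estimate $\|(u\cdot\nabla)v\|_{L^2(\Omega_\varepsilon)}$ in terms of $A_\varepsilon^{1/2}$- and $A_\varepsilon$-norms that is strong enough for the contraction yet integrable in the time singularity of the semigroup; here the three-dimensionality of $\Omega_\varepsilon$ forces the use of the Agmon inequality (hence a $1/2$-power split between $H^1$ and $H^2$), which is delicate but, with $\varepsilon$ fixed, not sensitive to the thin geometry — so the $\varepsilon$-explicit refinements of Section~\ref{S:Ave} are not needed for this theorem, only for the global result. Everything else is the textbook Fujita--Kato machinery, and I would cite \cites{BoFa13,CoFo88,So01,Te79} for the details rather than reproduce them.
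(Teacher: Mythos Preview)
Your proposal is a reasonable sketch of the standard Fujita--Kato argument, but note that the paper does not actually prove Theorem~\ref{T:LE}: it is stated as a well-known result with the sentence ``First we recall the well-known result on the local-in-time existence of a strong solution to the Navier--Stokes equations (see e.g.~\cites{BoFa13,CoFo88,So01,Te79})'' and no proof is given. So your approach is neither the same as nor different from the paper's proof---there is none to compare against---and your final remark that you would ``cite \cites{BoFa13,CoFo88,So01,Te79} for the details rather than reproduce them'' is in fact exactly what the paper does.
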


Note that the assumption $f^\varepsilon(t)\in\mathcal{R}_g^\perp$ for a.a. $t\in(0,\infty)$ is required to recover the original problem \eqref{E:NS_CTD} properly from its abstract form (see Remark~\ref{R:Ext_Orth}).

To establish the global-in-time existence of the strong solution $u^\varepsilon$ we show that the $L^2(\Omega_\varepsilon)$-norm of $A_\varepsilon^{1/2}u^\varepsilon(t)$ is bounded uniformly in $t$.
We argue by a standard energy method and use the uniform Gronwall inequality (see~\cite{SeYo02}*{Lemma~D.3}).

\begin{lemma}[Uniform Gronwall inequality] \label{L:Uni_Gronwall}
  Let $z$, $\xi$, and $\zeta$ be nonnegative functions in $L_{loc}^1([0,T);\mathbb{R})$, $T\in(0,\infty]$.
  Suppose that $z\in C(0,T;\mathbb{R})$ and
  \begin{align*}
    \frac{dz}{dt}(t) \leq \xi(t)z(t)+\zeta(t) \quad\text{for a.a.}\quad t\in(0,T).
  \end{align*}
  Then $z\in L_{loc}^\infty(0,T;\mathbb{R})$ and
  \begin{align*}
    z(t_2) \leq \left(\frac{1}{t_2-t_1}\int_{t_1}^{t_2}z(s)\,ds+\int_{t_1}^{t_2}\zeta(s)\,ds\right)\exp\left(\int_{t_1}^{t_2}\xi(s)\,ds\right)
  \end{align*}
  for all $t_1,t_2\in(0,T)$ with $t_1<t_2$.
\end{lemma}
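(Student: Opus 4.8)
The plan is to reduce the statement to the classical (scalar) Gronwall inequality applied with a \emph{variable} initial time $s\in(t_1,t_2)$, and then to average the resulting bound over $s$; this averaging is the point of the ``uniform'' Gronwall lemma, since it produces control of $z(t_2)$ by the integral $\int_{t_1}^{t_2}z$ rather than by a pointwise value of $z$. Throughout I will use that $z$ is locally absolutely continuous on $(0,T)$, so that the differential inequality may be integrated over subintervals; this is automatic in the intended applications, where $z$ arises from an energy identity, and otherwise one reads the hypothesis in its integrated form.

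First I would fix $t_1<t_2$ in $(0,T)$ and, for $s\in[t_1,t_2)$, introduce the integrating factor $F_s(t):=\exp\!\bigl(\int_s^t\xi(\tau)\,d\tau\bigr)$, which satisfies $F_s'=\xi F_s$ and $F_s(s)=1$. Then $\frac{d}{dt}\bigl(z(t)/F_s(t)\bigr)=\bigl(z'(t)-\xi(t)z(t)\bigr)/F_s(t)\le\zeta(t)/F_s(t)$ for a.a. $t\in(s,t_2)$, and integrating over $(s,t_2)$ and multiplying by $F_s(t_2)$ gives
\begin{align*}
  z(t_2) \le z(s)\exp\!\left(\int_s^{t_2}\xi(\tau)\,d\tau\right)+\int_s^{t_2}\zeta(\tau)\exp\!\left(\int_\tau^{t_2}\xi(\sigma)\,d\sigma\right)d\tau.
\end{align*}
Since $\xi\ge0$ and $t_1\le s\le\tau\le t_2$, both exponential factors are bounded by $\exp\!\bigl(\int_{t_1}^{t_2}\xi\bigr)$; since $\zeta\ge0$, the $\zeta$-integral may be enlarged to $(t_1,t_2)$. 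Hence
\begin{align*}
  z(t_2)\le\left(z(s)+\int_{t_1}^{t_2}\zeta(\tau)\,d\tau\right)\exp\!\left(\int_{t_1}^{t_2}\xi(\tau)\,d\tau\right)\qquad\text{for all } s\in[t_1,t_2).
\end{align*}

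Finally I would integrate this inequality in $s$ over $(t_1,t_2)$ and divide by $t_2-t_1$: the left-hand side does not depend on $s$, so it is unchanged, while $z(s)$ is replaced by its mean over $(t_1,t_2)$, which is exactly the asserted estimate. The local boundedness $z\in L^\infty_{loc}(0,T)$ then follows since the right-hand side is finite for every choice $t_1<t_2$ (because $\xi,\zeta\in L^1_{loc}$ and $z\in L^1_{loc}$), and in fact it is already immediate from $z\in C(0,T)$. All steps here are elementary; the only point that genuinely needs care is the justification of the fundamental theorem of calculus when integrating the differential inequality, i.e. the absolute continuity of $z$ on compact subintervals of $(0,T)$.
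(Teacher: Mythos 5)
Your argument is correct and is the standard proof of the uniform Gronwall lemma: apply the ordinary Gronwall/integrating-factor estimate with variable starting point $s\in(t_1,t_2)$, bound the exponentials using $\xi\geq 0$, enlarge the $\zeta$-integral using $\zeta\geq 0$, and average in $s$. The paper itself gives no proof — it simply cites \cite{SeYo02}*{Lemma~D.3} — and the proof in that reference is essentially the one you wrote, so there is nothing to compare beyond noting that your (correctly flagged) caveat about reading the differential inequality for a locally absolutely continuous $z$, or in integrated form, is exactly how the hypothesis is meant in the applications in Sections~\ref{S:GE}, where $z(t)=\|A_\varepsilon^{1/2}u^\varepsilon(t)\|_{L^2(\Omega_\varepsilon)}^2$ comes from an energy identity.
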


We also use an estimate for the duality product between a vector field on $\Omega_\varepsilon$ and the constant extension of a tangential vector field on $\Gamma$.

\begin{lemma} \label{L:Con_Dual}
  There exists a constant $c>0$ independent of $\varepsilon$ such that
  \begin{align} \label{E:Con_Dual}
    \left|(\bar{v},u)_{L^2(\Omega_\varepsilon)}\right| \leq c\varepsilon^{1/2}\|v\|_{H^{-1}(\Gamma,T\Gamma)}\|u\|_{H^1(\Omega_\varepsilon)}
  \end{align}
  for all $v\in L^2(\Gamma,T\Gamma)$ and $u\in H^1(\Omega_\varepsilon)^3$.
\end{lemma}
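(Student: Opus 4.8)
The plan is to reduce the estimate \eqref{E:Con_Dual} to the definition of the duality product $[\cdot,\cdot]_{T\Gamma}$ between $H^{-1}(\Gamma,T\Gamma)$ and $H^1(\Gamma,T\Gamma)$ together with the average operator $M_\tau$. The first step is to rewrite the $L^2(\Omega_\varepsilon)$-inner product of $\bar v$ with $u$ in terms of a quantity on $\Gamma$. Since $v$ is tangential on $\Gamma$, its constant extension $\bar v$ satisfies $\bar v = \overline{P}\,\bar v$ in $\Omega_\varepsilon$, so $\bar v\cdot u = \bar v\cdot(\overline{P}u) = \bar v\cdot u_\tau$ pointwise. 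Using the change of variables formula \eqref{E:CoV_Dom} together with $\bar v^\sharp(y,r)=v(y)$ and the definition \eqref{E:Def_Tan_Ave} of $M_\tau u$, I would write
\begin{align*}
  (\bar v,u)_{L^2(\Omega_\varepsilon)} = \int_\Gamma v(y)\cdot\left(\int_{\varepsilon g_0(y)}^{\varepsilon g_1(y)} u_\tau^\sharp(y,r)J(y,r)\,dr\right)d\mathcal{H}^2(y).
\end{align*}
Replacing $J$ by $1$ produces exactly $\varepsilon(v, gM_\tau u)_{L^2(\Gamma)}$ up to the pointwise factor $g$, and by \eqref{E:Jac_Diff_02} the error from $J-1$ is $O(\varepsilon)$ relative to the obvious bound. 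So the heart of the estimate is to bound $\varepsilon\bigl|(v, gM_\tau u)_{L^2(\Gamma)}\bigr|$ and a remainder term.

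The second step is to handle the main term $\varepsilon(v, gM_\tau u)_{L^2(\Gamma)}$. Since $g$ is a fixed $C^4$ function on $\Gamma$ and $M_\tau u\in H^1(\Gamma,T\Gamma)$ (it is tangential by construction and in $H^1(\Gamma)$ by Lemma~\ref{L:AveT_Wmp}), the product $gM_\tau u$ lies in $H^1(\Gamma,T\Gamma)$ with $\|gM_\tau u\|_{H^1(\Gamma)}\le c\|M_\tau u\|_{H^1(\Gamma)}$. Hence by the definition of the duality pairing,
\begin{align*}
  \bigl|(v, gM_\tau u)_{L^2(\Gamma)}\bigr| = \bigl|[v, gM_\tau u]_{T\Gamma}\bigr| \le \|v\|_{H^{-1}(\Gamma,T\Gamma)}\|gM_\tau u\|_{H^1(\Gamma)} \le c\|v\|_{H^{-1}(\Gamma,T\Gamma)}\|M_\tau u\|_{H^1(\Gamma)}.
\end{align*}
Now applying \eqref{E:AveT_Wmp_Surf} with $m=1$, $p=2$ gives $\|M_\tau u\|_{H^1(\Gamma)}\le c\varepsilon^{-1/2}\|u\|_{H^1(\Omega_\varepsilon)}$, so $\varepsilon\bigl|(v, gM_\tau u)_{L^2(\Gamma)}\bigr|\le c\varepsilon^{1/2}\|v\|_{H^{-1}(\Gamma,T\Gamma)}\|u\|_{H^1(\Omega_\varepsilon)}$, which is precisely the desired bound.

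The third step is the remainder term coming from $J-1$. Here I cannot use the duality pairing directly (the error need not pair nicely against $v$), so I would instead bound it crudely: by \eqref{E:Jac_Diff_02} and Hölder's inequality the remainder is controlled by $c\varepsilon\cdot\varepsilon\|v\|_{L^2(\Gamma)}\|M_\tau u\|_{L^2(\Gamma)}$, and using $\|v\|_{H^{-1}(\Gamma,T\Gamma)}$ in place of $\|v\|_{L^2(\Gamma)}$ is not automatic. A cleaner route is to avoid splitting off $J$ at all: write $(\bar v, u)_{L^2(\Omega_\varepsilon)} = \varepsilon(v, g\,M_\tau^J u)_{L^2(\Gamma)}$ where $M_\tau^J u(y):=\frac{1}{\varepsilon g(y)}\int_{\varepsilon g_0(y)}^{\varepsilon g_1(y)} P(y)u^\sharp(y,r)J(y,r)\,dr$ is the Jacobian-weighted average; one checks exactly as in Lemmas~\ref{L:AveT_Lp} and~\ref{L:AveT_Wmp} (the only change is the bounded, $C^3$ weight $J$, harmless for the $W^{1,2}$ estimate via \eqref{E:Jac_Bound_02}) that $\|g\,M_\tau^J u\|_{H^1(\Gamma)}\le c\varepsilon^{-1/2}\|u\|_{H^1(\Omega_\varepsilon)}$ and that $g\,M_\tau^J u$ is tangential. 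Then the single application of the duality pairing $|[v, g M_\tau^J u]_{T\Gamma}|\le\|v\|_{H^{-1}(\Gamma,T\Gamma)}\|gM_\tau^J u\|_{H^1(\Gamma)}$ yields \eqref{E:Con_Dual} at once. The main obstacle is the bookkeeping of which objects genuinely lie in $H^1(\Gamma,T\Gamma)$ (tangency plus $H^1$-regularity) so that the duality pairing is legitimate; once that is in place, everything is a direct consequence of the averaging estimates of Section~\ref{S:Ave} and the change of variables formula, with no delicate analysis required.
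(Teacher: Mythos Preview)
Your proposal is correct and, once you settle on the ``cleaner route,'' it is essentially the paper's own argument in different notation: the paper defines $\eta(y):=\int_{\varepsilon g_0}^{\varepsilon g_1}u^\sharp J\,dr$, so your $\varepsilon g\,M_\tau^J u$ is exactly $P\eta$, and the $H^1(\Gamma)$ bound $\|gM_\tau^J u\|_{H^1(\Gamma)}\le c\varepsilon^{-1/2}\|u\|_{H^1(\Omega_\varepsilon)}$ you propose to obtain by adapting Lemmas~\ref{L:AveT_Lp}--\ref{L:AveT_Wmp} is precisely the estimate $\|P\eta\|_{H^1(\Gamma)}\le c\varepsilon^{1/2}\|u\|_{H^1(\Omega_\varepsilon)}$ that the paper proves by a direct computation of $\nabla_\Gamma\eta$ (invoking the same calculation as in Lemma~\ref{L:Ave_Der}). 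Your observation that the $J-1$ splitting cannot be closed against $\|v\|_{H^{-1}}$ and must be avoided by keeping the Jacobian inside the average is exactly the point.
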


\begin{proof}
  We use the notation \eqref{E:Pull_Dom} and define
  \begin{align*}
    \eta(y) := \int_{\varepsilon g_0(y)}^{\varepsilon g_1(y)}u^\sharp(y,r)J(y,r)\,dr, \quad y\in\Gamma.
  \end{align*}
  In what follows, we suppress the arguments of functions.
  Let us show $\eta\in H^1(\Gamma)^3$.
  By \eqref{E:Jac_Bound_02}, H\"{o}lder's inequality, and \eqref{E:CoV_Equiv},
  \begin{align} \label{Pf_CDu:Eta_L2}
    \|\eta\|_{L^2(\Gamma)}^2 \leq \int_\Gamma\varepsilon g\left(\int_{\varepsilon g_0}^{\varepsilon g_1}|u^\sharp|^2\,dr\right)d\mathcal{H}^2 \leq c\varepsilon\|u\|_{L^2(\Omega_\varepsilon)}^2.
  \end{align}
  Also, by the same calculations as in the proof of Lemma~\ref{L:Ave_Der} we have
  \begin{align*}
    \nabla_\Gamma\eta = \int_{\varepsilon g_0}^{\varepsilon g_1}\left\{\frac{\partial}{\partial r}\Bigl(J\psi_\varepsilon^\sharp\otimes u^\sharp \Bigr)+J(B\nabla u)^\sharp+\nabla_\Gamma J\otimes u^\sharp\right\}\,dr \quad\text{on}\quad \Gamma,
  \end{align*}
  where $B$ and $\psi_\varepsilon$ are given by \eqref{E:Ave_Der_Aux}.
  By this equality, \eqref{E:Jac_Bound_02}, \eqref{E:ADA_Bound}, and \eqref{E:ADA_Grad_Bound},
  \begin{align*}
    |\nabla_\Gamma\eta| \leq c\int_{\varepsilon g_0}^{\varepsilon g_1}(|u^\sharp|+|(\nabla u)^\sharp|)\,dr \quad\text{on}\quad \Gamma.
  \end{align*}
  Hence H\"{o}lder's inequality and \eqref{E:CoV_Equiv} imply that
  \begin{align} \label{Pf_Cdu:Eta_H1}
    \|\nabla_\Gamma\eta\|_{L^2(\Gamma)}^2 \leq c\int_\Gamma\varepsilon g\left(\int_{\varepsilon g_0}^{\varepsilon g_1}(|u^\sharp|^2+|(\nabla u)^\sharp|^2)\,dr\right)d\mathcal{H}^2 \leq c\varepsilon\|u\|_{H^1(\Omega_\varepsilon)}^2.
  \end{align}
  From \eqref{Pf_CDu:Eta_L2} and \eqref{Pf_Cdu:Eta_H1} we deduce that $\eta\in H^1(\Gamma)^3$ and
  \begin{align} \label{Pf_Cdu:PEta}
    \|P\eta\|_{H^1(\Gamma)} \leq c\|\eta\|_{H^1(\Gamma)} \leq c\varepsilon^{1/2}\|u\|_{H^1(\Omega_\varepsilon)}.
  \end{align}
  Now we observe by \eqref{E:CoV_Dom} and $v\in L^2(\Gamma,T\Gamma)$ that
  \begin{align*}
    (\bar{v},u)_{L^2(\Omega_\varepsilon)} = \int_\Gamma v\cdot\left(\int_{\varepsilon g_0}^{\varepsilon g_1}u^\sharp J\,dr\right)d\mathcal{H}^2 = (v,\eta)_{L^2(\Gamma)} = (v,P\eta)_{L^2(\Gamma)}.
  \end{align*}
  Moreover, since $(v,P\eta)_{L^2(\Gamma)} = [v,P\eta]_{T\Gamma}$ by $P\eta\in H^1(\Gamma,T\Gamma)$ (see Section~\ref{SS:Pre_Surf}),
  \begin{align*}
    \left|(\bar{v},u)_{L^2(\Omega_\varepsilon)}\right| = \bigl|[v,P\eta]_{T\Gamma}\bigr| \leq \|v\|_{H^{-1}(\Gamma,T\Gamma)}\|P\eta\|_{H^1(\Gamma)}.
  \end{align*}
  Applying \eqref{Pf_Cdu:PEta} to this inequality we obtain \eqref{E:Con_Dual}.
\end{proof}

Now we are ready to establish the global existence of a strong solution to \eqref{E:NS_CTD}.

\begin{proof}[Proof of Theorem~\ref{T:GE}]
  We follow the idea of the proofs of~\cite{Ho10}*{Theorem~7.4} and~\cite{HoSe10}*{Theorem~3.1}.
  Under Assumptions~\ref{Assump_1} and~\ref{Assump_2}, let $\varepsilon_0$ and $d_1$ be the constants given in Lemmas~\ref{L:Uni_aeps} and~\ref{L:Tri_Est_A}, and
  \begin{align} \label{Pf_GE:C0}
    c_0 := \min\left\{1, \frac{C_1^2}{4C_2}, \frac{C_1^2}{4C_3}\right\}, \quad C_1 := \frac{1}{4d_1},
  \end{align}
  where $C_2$ and $C_3$ are positive constants fixed later.
  For $\varepsilon\in(0,\varepsilon_0]$ let
  \begin{align*}
    u_0^\varepsilon\in \mathcal{V}_\varepsilon, \quad f^\varepsilon\in L^\infty(0,\infty;L^2(\Omega_\varepsilon)^3)
  \end{align*}
  satisfy \eqref{E:GE_Data} (and $f^\varepsilon(t)\in\mathcal{R}_g^\perp$ for a.a. $t\in(0,\infty)$ when the condition (A3) of Assumption~\ref{Assump_2} is imposed) and $u^\varepsilon$ be the strong solution to \eqref{E:NS_CTD} on the maximal time interval $[0,T_{\max})$ given in Theorem~\ref{T:LE}.
  In what follows, we write $c$ for a general positive constant independent of $\varepsilon$, $c_0$, and $T_{\max}$.
  First note that
  \begin{align} \label{Pf_GE:Ini_StL2}
    \|A_\varepsilon^{1/2}u_0^\varepsilon\|_{L^2(\Omega_\varepsilon)}^2 \leq C_2\|u_0^\varepsilon\|_{H^1(\Omega_\varepsilon)}^2 \leq C_2c_0\varepsilon^{-1}
  \end{align}
  by \eqref{E:GE_Data} and \eqref{E:Stokes_H1} with a constant $C_2>0$ independent of $\varepsilon$ and $c_0$.
  Also,
  \begin{align} \label{Pf_GE:Ini_Data}
    \begin{aligned}
      \|u_0^\varepsilon\|_{L^2(\Omega_\varepsilon)}^2 &\leq c\left(\left\|u_0^\varepsilon-\overline{M_\tau u_0^\varepsilon}\right\|_{L^2(\Omega_\varepsilon)}^2+\left\|\overline{M_\tau u_0^\varepsilon}\right\|_{L^2(\Omega_\varepsilon)}^2\right) \\
      &\leq c\left(\varepsilon^2\|u_0^\varepsilon\|_{H^1(\Omega_\varepsilon)}^2+\varepsilon\|M_\tau u_0^\varepsilon\|_{L^2(\Gamma)}^2\right)
    \end{aligned}
  \end{align}
  by \eqref{E:Con_Lp} and \eqref{E:AveT_Diff_Dom}, and thus it follows from \eqref{E:GE_Data} that
  \begin{align} \label{Pf_GE:Ini_L2}
    \|u_0^\varepsilon\|_{L^2(\Omega_\varepsilon)}^2 \leq cc_0.
  \end{align}
  To prove $T_{\max}=\infty$ we first derive estimates for
  \begin{align*}
    \|u^\varepsilon(t)\|_{L^2(\Omega_\varepsilon)}^2, \quad \int_t^{\min\{t+1,T_{\max}\}}\|A_\varepsilon^{1/2}u^\varepsilon(s)\|_{L^2(\Omega_\varepsilon)}^2\,ds, \quad t\in[0,T_{\max})
  \end{align*}
  with constants explicitly depending on $\varepsilon$.
  Taking the $L^2(\Omega_\varepsilon)$-inner product of
  \begin{align} \label{Pf_GE:Ab_NS}
    \partial_tu^\varepsilon+A_\varepsilon u^\varepsilon+\mathbb{P}_\varepsilon(u^\varepsilon\cdot\nabla)u^\varepsilon = \mathbb{P}_\varepsilon f^\varepsilon \quad\text{on}\quad (0,T_{\max})
  \end{align}
  with $u^\varepsilon$ and using \eqref{E:L2in_Ahalf} and
  \begin{align*}
    (\mathbb{P}_\varepsilon(u^\varepsilon\cdot\nabla)u^\varepsilon,u^\varepsilon)_{L^2(\Omega_\varepsilon)} &= \bigl((u^\varepsilon\cdot\nabla)u^\varepsilon,u^\varepsilon\bigr)_{L^2(\Omega_\varepsilon)} = 0
  \end{align*}
  by integration by parts, $\mathrm{div}\,u^\varepsilon=0$ in $\Omega_\varepsilon$, and $u^\varepsilon\cdot n_\varepsilon=0$ on $\Gamma_\varepsilon$ we get
  \begin{align} \label{Pf_GE:L2_Inner}
    \frac{1}{2}\frac{d}{dt}\|u^\varepsilon\|_{L^2(\Omega_\varepsilon)}^2+\|A_\varepsilon^{1/2}u^\varepsilon\|_{L^2(\Omega_\varepsilon)}^2 = (\mathbb{P}_\varepsilon f^\varepsilon,u^\varepsilon)_{L^2(\Omega_\varepsilon)} \quad\text{on}\quad (0,T_{\max}).
  \end{align}
  We split the right-hand side into
  \begin{align*}
    J_1 := \Bigl(\mathbb{P}_\varepsilon f^\varepsilon,u^\varepsilon-\overline{M_\tau u^\varepsilon}\Bigr)_{L^2(\Omega_\varepsilon)}, \quad J_2 := \Bigl(\mathbb{P}_\varepsilon f^\varepsilon,\overline{M_\tau u^\varepsilon}\Bigr)_{L^2(\Omega_\varepsilon)}
  \end{align*}
  and estimate them separately.
  By \eqref{E:AveT_Diff_Dom} we have
  \begin{align*}
    |J_1| \leq \|\mathbb{P}_\varepsilon f^\varepsilon\|_{L^2(\Omega_\varepsilon)}\left\|u^\varepsilon-\overline{M_\tau u^\varepsilon}\right\|_{L^2(\Omega_\varepsilon)} \leq c\varepsilon\|\mathbb{P}_\varepsilon f^\varepsilon\|_{L^2(\Omega_\varepsilon)}\|u^\varepsilon\|_{H^1(\Omega_\varepsilon)}.
  \end{align*}
  To $J_2$ we use \eqref{E:AveT_Inner} and \eqref{E:Con_Dual} to get
  \begin{align*}
    |J_2| &\leq \left|\Bigl(\overline{M_\tau\mathbb{P}_\varepsilon f^\varepsilon},u^\varepsilon\Bigr)_{L^2(\Omega_\varepsilon)}\right|+\left|\Bigl(\mathbb{P}_\varepsilon f^\varepsilon,\overline{M_\tau u^\varepsilon}\Bigr)_{L^2(\Omega_\varepsilon)}-\Bigl(\overline{M_\tau\mathbb{P}_\varepsilon f^\varepsilon},u^\varepsilon\Bigr)_{L^2(\Omega_\varepsilon)}\right| \\
    &\leq c\left(\varepsilon^{1/2}\|M_\tau\mathbb{P}_\varepsilon f^\varepsilon\|_{H^{-1}(\Gamma,T\Gamma)}\|u^\varepsilon\|_{H^1(\Omega_\varepsilon)}+\varepsilon\|\mathbb{P}_\varepsilon f^\varepsilon\|_{L^2(\Omega_\varepsilon)}\|u^\varepsilon\|_{L^2(\Omega_\varepsilon)}\right).
  \end{align*}
  We apply these estimates to $(\mathbb{P}_\varepsilon f^\varepsilon,u^\varepsilon)_{L^2(\Omega_\varepsilon)}=J_1+J_2$ and use
  \begin{align} \label{Pf_GE:L2_Po}
    \|u^\varepsilon\|_{L^2(\Omega_\varepsilon)} \leq \|u^\varepsilon\|_{H^1(\Omega_\varepsilon)} \leq c\|A_\varepsilon^{1/2}u^\varepsilon\|_{L^2(\Omega_\varepsilon)},
  \end{align}
  where the second inequality is due to \eqref{E:Stokes_H1}, and Young's inequality to obtain
  \begin{multline*}
    |(\mathbb{P}_\varepsilon f^\varepsilon,u^\varepsilon)_{L^2(\Omega_\varepsilon)}| \\
    \leq \frac{1}{2}\|A_\varepsilon^{1/2}u^\varepsilon\|_{L^2(\Omega_\varepsilon)}^2+c\left(\varepsilon^2\|\mathbb{P}_\varepsilon f^\varepsilon\|_{L^2(\Omega_\varepsilon)}^2+\varepsilon\|M_\tau\mathbb{P}_\varepsilon f^\varepsilon\|_{H^{-1}(\Gamma,T\Gamma)}^2\right).
  \end{multline*}
  From this inequality and \eqref{Pf_GE:L2_Inner} we deduce that
  \begin{multline} \label{Pf_GE:L2_InEst1}
    \frac{d}{dt}\|u^\varepsilon\|_{L^2(\Omega_\varepsilon)}^2+\|A_\varepsilon^{1/2}u^\varepsilon\|_{L^2(\Omega_\varepsilon)}^2 \\
    \leq c\left(\varepsilon^2\|\mathbb{P}_\varepsilon f^\varepsilon\|_{L^2(\Omega_\varepsilon)}^2+\varepsilon\|M_\tau\mathbb{P}_\varepsilon f^\varepsilon\|_{H^{-1}(\Gamma,T\Gamma)}^2\right) \quad\text{on}\quad (0,T_{\max}).
  \end{multline}
  By \eqref{Pf_GE:L2_Po} we further get
  \begin{multline*}
    \frac{d}{dt}\|u^\varepsilon\|_{L^2(\Omega_\varepsilon)}^2+\frac{1}{a_1}\|u^\varepsilon\|_{L^2(\Omega_\varepsilon)}^2 \\
    \leq c\left(\varepsilon^2\|\mathbb{P}_\varepsilon f^\varepsilon\|_{L^2(\Omega_\varepsilon)}^2+\varepsilon\|M_\tau\mathbb{P}_\varepsilon f^\varepsilon\|_{H^{-1}(\Gamma,T\Gamma)}^2\right) \quad\text{on}\quad (0,T_{\max})
  \end{multline*}
  with a constant $a_1>0$ independent of $\varepsilon$, $c_0$, and $T_{\max}$.
  For each $t\in(0,T_{\max})$ we multiply both sides of this inequality at $s\in(0,t)$ by $e^{(s-t)/a_1}$ and integrate them over $(0,t)$.
  Then we have
  \begin{multline} \label{Pf_GE:LinfL2}
    \|u^\varepsilon(t)\|_{L^2(\Omega_\varepsilon)}^2 \leq e^{-t/a_1}\|u_0^\varepsilon\|_{L^2(\Omega_\varepsilon)}^2 \\
    +ca_1(1-e^{-t/a_1})\left(\varepsilon^2\|\mathbb{P}_\varepsilon f^\varepsilon\|_{L^\infty(0,\infty;L^2(\Omega_\varepsilon))}^2+\varepsilon\|M_\tau\mathbb{P}_\varepsilon f^\varepsilon\|_{L^\infty(0,\infty;H^{-1}(\Gamma,T\Gamma))}^2\right).
  \end{multline}
  Also, integrating \eqref{Pf_GE:L2_InEst1} over $(t,t_\ast)$ with $t_\ast:=\min\{t+1,T_{\max}\}$ we deduce that
  \begin{multline} \label{Pf_GE:L2H1}
    \int_t^{t_\ast}\|A_\varepsilon^{1/2}u^\varepsilon(s)\|_{L^2(\Omega_\varepsilon)}^2\,ds \leq \|u_0^\varepsilon\|_{L^2(\Omega_\varepsilon)}^2 \\
    +c\left(\varepsilon^2\|\mathbb{P}_\varepsilon f^\varepsilon\|_{L^\infty(0,\infty;L^2(\Omega_\varepsilon))}^2+\varepsilon\|M_\tau\mathbb{P}_\varepsilon f^\varepsilon\|_{L^\infty(0,\infty;H^{-1}(\Gamma,T\Gamma))}^2\right).
  \end{multline}
  Hence we apply \eqref{E:GE_Data} and \eqref{Pf_GE:Ini_L2} to the right-hand sides of \eqref{Pf_GE:LinfL2} and \eqref{Pf_GE:L2H1} to get
  \begin{align} \label{Pf_GE:L2_Ener}
    \|u^\varepsilon(t)\|_{L^2(\Omega_\varepsilon)}^2+\int_t^{t_\ast}\|A_\varepsilon^{1/2}u^\varepsilon(s)\|_{L^2(\Omega_\varepsilon)}^2\,ds \leq cc_0 \quad\text{for all}\quad t\in[0,T_{\max}).
  \end{align}
  Next we show that $\|A_\varepsilon^{1/2}u^\varepsilon(t)\|_{L^2(\Omega_\varepsilon)}$ is uniformly bounded in $t\in[0,T_{\max})$ (note that it is continuous on $[0,T_{\max})$ by $u^\varepsilon\in C([0,T_{\max});\mathcal{V}_\varepsilon)$).
  Our goal is to prove
  \begin{align} \label{Pf_GE:Goal}
    \varepsilon^{1/2}\|A_\varepsilon^{1/2}u^\varepsilon(t)\|_{L^2(\Omega_\varepsilon)} < C_1 = \frac{1}{4d_1} \quad\text{for all}\quad t\in[0,T_{\max}).
  \end{align}
  If \eqref{Pf_GE:Goal} is valid, then Theorem~\ref{T:LE} implies that $T_{\max}=\infty$, i.e. the strong solution $u^\varepsilon$ exists on the whole time interval $[0,\infty)$.
  First note that \eqref{Pf_GE:Goal} is valid at $t=0$ by \eqref{Pf_GE:C0} and \eqref{Pf_GE:Ini_StL2}.
  Let us prove \eqref{Pf_GE:Goal} for all $t\in(0,T_{\max})$ by contradiction.
  Assume to the contrary that there exists $T\in(0,T_{\max})$ such that
  \begin{align}
    \varepsilon^{1/2}\|A_\varepsilon^{1/2}u^\varepsilon(t)\|_{L^2(\Omega_\varepsilon)} &< C_1 \quad\text{for all}\quad t\in[0,T), \label{Pf_GE:Cont_Ine} \\
    \varepsilon^{1/2}\|A_\varepsilon^{1/2}u^\varepsilon(T)\|_{L^2(\Omega_\varepsilon)} &= C_1. \label{Pf_GE:Cont_Eq}
  \end{align}
  We consider \eqref{Pf_GE:Ab_NS} on $(0,T]$ and take its $L^2(\Omega_\varepsilon)$-inner product with $A_\varepsilon u^\varepsilon$ to get
  \begin{multline} \label{PF_GE:H1_Inner}
    \frac{1}{2}\frac{d}{dt}\|A_\varepsilon^{1/2}u^\varepsilon\|_{L^2(\Omega_\varepsilon)}^2+\|A_\varepsilon u^\varepsilon\|_{L^2(\Omega_\varepsilon)}^2 \\
    \leq \left|\bigl((u^\varepsilon\cdot\nabla)u^\varepsilon,A_\varepsilon u^\varepsilon\bigr)_{L^2(\Omega_\varepsilon)}\right|+|(\mathbb{P}_\varepsilon f^\varepsilon,A_\varepsilon u^\varepsilon)_{L^2(\Omega_\varepsilon)}| \quad\text{on}\quad (0,T]
  \end{multline}
  by \eqref{E:L2in_Ahalf}.
  By \eqref{E:Tri_Est_A} and \eqref{Pf_GE:Cont_Ine}--\eqref{Pf_GE:Cont_Eq} with $C_1=1/4d_1$ we have
  \begin{multline*}
    \left|\bigl((u^\varepsilon\cdot\nabla)u^\varepsilon,A_\varepsilon u^\varepsilon\bigr)_{L^2(\Omega_\varepsilon)}\right| \leq \frac{1}{2}\|A_\varepsilon u^\varepsilon\|_{L^2(\Omega_\varepsilon)}^2 \\
    +d_2\left(\|u^\varepsilon\|_{L^2(\Omega_\varepsilon)}^2\|A_\varepsilon^{1/2}u^\varepsilon\|_{L^2(\Omega_\varepsilon)}^4+\varepsilon^{-1}\|u^\varepsilon\|_{L^2(\Omega_\varepsilon)}^2\|A_\varepsilon^{1/2}u^\varepsilon\|_{L^2(\Omega_\varepsilon)}^2\right)
  \end{multline*}
  on $(0,T]$.
  Also, Young's inequality yields
  \begin{align*}
    |(\mathbb{P}_\varepsilon f^\varepsilon,A_\varepsilon u^\varepsilon)_{L^2(\Omega_\varepsilon)}| \leq \frac{1}{4}\|A_\varepsilon u^\varepsilon\|_{L^2(\Omega_\varepsilon)}^2+\|\mathbb{P}_\varepsilon f^\varepsilon\|_{L^2(\Omega_\varepsilon)}^2.
  \end{align*}
  Applying these inequalities to the right-hand side of \eqref{PF_GE:H1_Inner} we obtain
  \begin{align} \label{Pf_GE:H1_InEst1}
    \frac{d}{dt}\|A_\varepsilon^{1/2}u^\varepsilon\|_{L^2(\Omega_\varepsilon)}^2+\frac{1}{2}\|A_\varepsilon u^\varepsilon\|_{L^2(\Omega_\varepsilon)}^2 \leq \xi\|A_\varepsilon^{1/2}u^\varepsilon\|_{L^2(\Omega_\varepsilon)}^2+\zeta \quad\text{on}\quad (0,T],
  \end{align}
  where the functions $\xi$ and $\zeta$ are given by
  \begin{align} \label{Pf_GE:Def_Xi}
    \begin{aligned}
      \xi(t) &:= 2d_2\|u^\varepsilon(t)\|_{L^2(\Omega_\varepsilon)}^2\|A_\varepsilon^{1/2}u^\varepsilon(t)\|_{L^2(\Omega_\varepsilon)}^2, \\
     \zeta(t) &:= 2\left(d_2\varepsilon^{-1}\|u^\varepsilon(t)\|_{L^2(\Omega_\varepsilon)}^2\|A_\varepsilon^{1/2}u^\varepsilon(t)\|_{L^2(\Omega_\varepsilon)}^2+\|\mathbb{P}_\varepsilon f^\varepsilon(t)\|_{L^2(\Omega_\varepsilon)}^2\right)
    \end{aligned}
  \end{align}
  for $t\in(0,T]$.
  By \eqref{Pf_GE:L2_Ener}, \eqref{Pf_GE:Cont_Ine}, and \eqref{Pf_GE:Cont_Eq} we see that
  \begin{align*}
    \xi \leq cc_0\varepsilon^{-1}, \quad \zeta \leq c\left(c_0\varepsilon^{-1}\|A_\varepsilon^{1/2}u^\varepsilon\|_{L^2(\Omega_\varepsilon)}^2+\|\mathbb{P}_\varepsilon f^\varepsilon\|_{L^2(\Omega_\varepsilon)}^2\right) \quad\text{on}\quad (0,T].
  \end{align*}
  Applying these inequalities to \eqref{Pf_GE:H1_InEst1} we have
  \begin{multline} \label{Pf_GE:H1_InEst2}
    \frac{d}{dt}\|A_\varepsilon^{1/2}u^\varepsilon\|_{L^2(\Omega_\varepsilon)}^2+\frac{1}{2}\|A_\varepsilon u^\varepsilon\|_{L^2(\Omega_\varepsilon)}^2 \\
    \leq c\left(c_0\varepsilon^{-1}\|A_\varepsilon^{1/2}u^\varepsilon\|_{L^2(\Omega_\varepsilon)}^2+\|\mathbb{P}_\varepsilon f^\varepsilon\|_{L^2(\Omega_\varepsilon)}^2\right) \quad\text{on}\quad (0,T].
  \end{multline}
  From \eqref{E:Stokes_Po} and \eqref{Pf_GE:H1_InEst2} we further deduce that
  \begin{align*}
    \frac{d}{dt}\|A_\varepsilon^{1/2}u^\varepsilon\|_{L^2(\Omega_\varepsilon)}^2+\frac{1}{a_2}\|A_\varepsilon^{1/2}u^\varepsilon\|_{L^2(\Omega_\varepsilon)}^2 \leq c\left(c_0\varepsilon^{-1}\|A_\varepsilon^{1/2}u^\varepsilon\|_{L^2(\Omega_\varepsilon)}^2+\|\mathbb{P}_\varepsilon f^\varepsilon\|_{L^2(\Omega_\varepsilon)}^2\right)
  \end{align*}
  on $(0,T]$ with a constant $a_2>0$ independent of $\varepsilon$, $c_0$, and $T$.
  For $t\in(0,T]$ we multiply both sides of the above inequality at $s\in(0,t)$ by $e^{(s-t)/a_2}$ and integrate them over $(0,t)$.
  Then we have
  \begin{multline} \label{Pf_GE:LinfH1_Int}
    \|A_\varepsilon^{1/2}u^\varepsilon(t)\|_{L^2(\Omega_\varepsilon)}^2 \\
    \leq e^{-t/a_2}\|A_\varepsilon^{1/2}u_0^\varepsilon\|_{L^2(\Omega_\varepsilon)}^2+cc_0\varepsilon^{-1}\int_0^te^{(s-t)/a_2}\|A_\varepsilon^{1/2}u^\varepsilon(s)\|_{L^2(\Omega_\varepsilon)}^2\,ds \\
    +ca_2(1-e^{-t/a_2})\|\mathbb{P}_\varepsilon f^\varepsilon\|_{L^\infty(0,\infty;L^2(\Omega_\varepsilon))}^2.
  \end{multline}
  When $t\leq T_\ast:=\min\{1,T\}$ we apply \eqref{E:GE_Data}, \eqref{Pf_GE:Ini_StL2}, and \eqref{Pf_GE:L2_Ener} to \eqref{Pf_GE:LinfH1_Int} to get
  \begin{align} \label{Pf_GE:LinfH1_1}
    \|A_\varepsilon^{1/2}u^\varepsilon(t)\|_{L^2(\Omega_\varepsilon)}^2 \leq cc_0(1+c_0)\varepsilon^{-1} \leq cc_0\varepsilon^{-1} \quad\text{for all}\quad t\in(0,T_\ast].
  \end{align}
  Note that $c_0\leq1$ by \eqref{Pf_GE:C0}.
  Next we assume $T\geq 1$ and derive an estimate similar to \eqref{Pf_GE:LinfH1_1} for $t\in[1,T]$.
  Since
  \begin{align*}
    \frac{d}{dt}\|A_\varepsilon^{1/2}u^\varepsilon\|_{L^2(\Omega_\varepsilon)}^2 \leq \xi\|A_\varepsilon^{1/2}u^\varepsilon\|_{L^2(\Omega_\varepsilon)}^2+\zeta \quad\text{on}\quad (0,T]
  \end{align*}
  by \eqref{Pf_GE:H1_InEst1}, we can use Lemma~\ref{L:Uni_Gronwall} with $z(t)=\|A_\varepsilon^{1/2}u^\varepsilon(t)\|_{L^2(\Omega_\varepsilon)}^2$ to obtain
  \begin{multline} \label{Pf_GE:Uni_Gron}
    \|A_\varepsilon^{1/2}u^\varepsilon(t)\|_{L^2(\Omega_\varepsilon)}^2 \\
    \leq \left(\int_{t-1}^t\|A_\varepsilon^{1/2}u^\varepsilon(s)\|_{L^2(\Omega_\varepsilon)}^2\,ds+\int_{t-1}^t\zeta(s)\,ds\right)\exp\left(\int_{t-1}^t\xi(s)\,ds\right)
  \end{multline}
  for all $t\in[1,T]$.
  Moreover, the functions $\xi$ and $\zeta$ given by \eqref{Pf_GE:Def_Xi} satisfy
  \begin{align*}
    \int_{t-1}^t\xi(s)\,ds &\leq cc_0\int_{t-1}^t\|A_\varepsilon^{1/2}u^\varepsilon(s)\|_{L^2(\Omega_\varepsilon)}^2\,ds \leq c, \\
    \int_{t-1}^t\zeta(s)\,ds &\leq c\left(c_0\varepsilon^{-1}\int_{t-1}^t\|A_\varepsilon^{1/2}u^\varepsilon(s)\|_{L^2(\Omega_\varepsilon)}^2\,ds+\|\mathbb{P}_\varepsilon f^\varepsilon\|_{L^\infty(0,\infty;L^2(\Omega_\varepsilon))}^2\right) \\
    &\leq cc_0\varepsilon^{-1}
  \end{align*}
  by \eqref{E:GE_Data}, \eqref{Pf_GE:L2_Ener}, and $c_0\leq1$.
  Using these inequalities and \eqref{Pf_GE:L2_Ener} to \eqref{Pf_GE:Uni_Gron} we have
  \begin{align} \label{Pf_GE:LinfH1_2}
    \|A_\varepsilon^{1/2}u^\varepsilon(t)\|_{L^2(\Omega_\varepsilon)}^2 \leq cc_0\varepsilon^{-1} \quad\text{for all}\quad t\in[1,T].
  \end{align}
  Now we combine \eqref{Pf_GE:LinfH1_1} and \eqref{Pf_GE:LinfH1_2} to observe that
  \begin{align*}
    \|A_\varepsilon^{1/2}u^\varepsilon(t)\|_{L^2(\Omega_\varepsilon)}^2 \leq C_3c_0\varepsilon^{-1} \quad\text{for all}\quad t\in(0,T]
  \end{align*}
  with a constant $C_3>0$ independent of $\varepsilon$, $c_0$, and $T$.
  Hence if we define the constant $c_0$ by \eqref{Pf_GE:C0}, then by setting $t=T$ in the above inequality we get
  \begin{align*}
    \|A_\varepsilon^{1/2}u^\varepsilon(T)\|_{L^2(\Omega_\varepsilon)}^2 \leq \frac{C_1^2\varepsilon^{-1}}{4}, \quad\text{i.e.}\quad \varepsilon^{1/2}\|A_\varepsilon^{1/2}u^\varepsilon(T)\|_{L^2(\Omega_\varepsilon)} \leq \frac{C_1}{2} < C_1,
  \end{align*}
  which contradicts with \eqref{Pf_GE:Cont_Eq}.
  Therefore, \eqref{Pf_GE:Goal} is valid for all $t\in[0,T_{\max})$ and we conclude by Theorem~\ref{T:LE} that $T_{\max}=\infty$, i.e. the strong solution $u^\varepsilon$ to \eqref{E:NS_CTD} exists on the whole time interval $[0,\infty)$.
\end{proof}

Using the inequalities given in the proof of Theorem~\ref{T:GE}, we can also show the estimates \eqref{E:UE_L2} and \eqref{E:UE_H1} for a strong solution to \eqref{E:NS_CTD}.

\begin{proof}[Proof of Theorem~\ref{T:UE}]
  Let $\varepsilon_0$ and $c_0$ be the constants given in Lemma~\ref{L:Uni_aeps} and Theorem~\ref{T:GE}.
  Since $\alpha$ and $\beta$ are positive we can take $\varepsilon_1\in(0,\varepsilon_0]$ such that
  \begin{align*}
    c_1\varepsilon^\alpha\leq c_0, \quad c_2\varepsilon^\beta\leq c_0 \quad\text{for all}\quad \varepsilon\in(0,\varepsilon_1].
  \end{align*}
  Hence for $\varepsilon\in(0,\varepsilon_1]$ if $u_0^\varepsilon$ and $f^\varepsilon$ satisfy \eqref{E:UE_Data} then the inequality \eqref{E:GE_Data} holds and Theorem~\ref{T:GE} gives the existence of a global strong solution $u^\varepsilon$ to \eqref{E:NS_CTD}.

  Let us derive the estimates \eqref{E:UE_L2} and \eqref{E:UE_H1} for the strong solution $u^\varepsilon$.
  Hereafter we denote by $c$ a general positive constant independent of $\varepsilon$.
  First note that
  \begin{align} \label{Pf_UE:Ini_L2}
    \|u_0^\varepsilon\|_{L^2(\Omega_\varepsilon)}^2 \leq c(\varepsilon^{1+\alpha}+\varepsilon^\beta)
  \end{align}
  by \eqref{E:UE_Data} and \eqref{Pf_GE:Ini_Data}.
  We apply this inequality and \eqref{E:UE_Data} to \eqref{Pf_GE:LinfL2} to get
  \begin{align} \label{Pf_UE:U_LinfL2}
    \|u^\varepsilon(t)\|_{L^2(\Omega_\varepsilon)}^2 \leq c(\varepsilon^{1+\alpha}+\varepsilon^\beta) \quad\text{for all}\quad t\geq 0.
  \end{align}
  Also, integrating \eqref{Pf_GE:L2_InEst1} over $[0,t]$ and using \eqref{E:UE_Data} and \eqref{Pf_UE:Ini_L2} we have
  \begin{align} \label{Pf_UE:U_L2H1}
    \int_0^t\|A_\varepsilon^{1/2}u^\varepsilon(s)\|_{L^2(\Omega_\varepsilon)}^2\,ds \leq c(\varepsilon^{1+\alpha}+\varepsilon^\beta)(1+t) \quad\text{for all}\quad t\geq 0.
  \end{align}
  Combining \eqref{Pf_UE:U_LinfL2} and \eqref{Pf_UE:U_L2H1} with \eqref{E:Stokes_H1} we obtain \eqref{E:UE_L2}.

  Next let us prove \eqref{E:UE_H1}.
  From \eqref{E:UE_Data} and \eqref{E:Stokes_H1} it follows that
  \begin{align} \label{Pf_UE:Ini_L2_StL2}
    \|A_\varepsilon^{1/2}u_0^\varepsilon\|_{L^2(\Omega_\varepsilon)}^2 \leq c\varepsilon^{-1+\alpha}.
  \end{align}
  Also, we use \eqref{E:UE_Data} and \eqref{Pf_UE:Ini_L2} to \eqref{Pf_GE:L2H1} to deduce that
  \begin{align} \label{Pf_UE:L2H1_Short}
    \int_t^{t+1}\|A_\varepsilon^{1/2}u^\varepsilon(s)\|_{L^2(\Omega_\varepsilon)}^2\,ds \leq c(\varepsilon^{1+\alpha}+\varepsilon^\beta) \quad\text{for all}\quad t\geq 0.
  \end{align}
  Note that $t_\ast=\min\{t+1,T_{\max}\}=t+1$ in \eqref{Pf_GE:L2H1} by $T_{\max}=\infty$.
  Since \eqref{Pf_GE:Goal} and \eqref{Pf_UE:U_LinfL2} are valid for all $t\geq 0$, we can derive \eqref{Pf_GE:LinfH1_Int} for all $t\geq 0$ as in the proof of Theorem~\ref{T:GE}.
  When $t\in[0,1]$, we apply \eqref{E:UE_Data}, \eqref{Pf_UE:U_L2H1}, and \eqref{Pf_UE:Ini_L2_StL2} to \eqref{Pf_GE:LinfH1_Int} to get
  \begin{align} \label{Pf_UE:U_LinfH1_1}
    \|A_\varepsilon^{1/2}u^\varepsilon(t)\|_{L^2(\Omega_\varepsilon)}^2 \leq c(\varepsilon^{-1+\alpha}+\varepsilon^{-1+\beta}) \quad\text{for all}\quad t\in[0,1].
  \end{align}
  Let $t\geq1$.
  In \eqref{Pf_GE:Uni_Gron} the functions $\xi$ and $\zeta$ given by \eqref{Pf_GE:Def_Xi} satisfy
  \begin{align*}
    \int_{t-1}^t\xi(s)\,ds &\leq c\int_{t-1}^t\|A_\varepsilon^{1/2}u^\varepsilon(s)\|_{L^2(\Omega_\varepsilon)}^2\,ds \leq c, \\
    \int_{t-1}^t\zeta(s)\,ds &\leq c\left(\varepsilon^{-1}\int_{t-1}^t\|A_\varepsilon^{1/2}u^\varepsilon(s)\|_{L^2(\Omega_\varepsilon)}^2\,ds+\|\mathbb{P}_\varepsilon f^\varepsilon\|_{L^\infty(0,\infty;L^2(\Omega_\varepsilon))}^2\right) \\
    &\leq c(\varepsilon^{-1+\alpha}+\varepsilon^{-1+\beta})
  \end{align*}
  by \eqref{E:UE_Data}, \eqref{Pf_GE:L2_Ener}, and \eqref{Pf_UE:L2H1_Short}.
  Applying these estimates and \eqref{Pf_UE:L2H1_Short} to \eqref{Pf_GE:Uni_Gron} we get
  \begin{align} \label{Pf_UE:U_LinfH1_2}
    \|A_\varepsilon^{1/2}u^\varepsilon(t)\|_{L^2(\Omega_\varepsilon)}^2 \leq c(\varepsilon^{-1+\alpha}+\varepsilon^{-1+\beta}) \quad\text{for all}\quad t\geq 1.
  \end{align}
  By \eqref{E:Stokes_H1}, \eqref{Pf_UE:U_LinfH1_1}, and \eqref{Pf_UE:U_LinfH1_2} we obtain the first inequality of \eqref{E:UE_H1}.
  To prove the second one we see that \eqref{Pf_GE:H1_InEst2} holds on $(0,\infty)$ since \eqref{Pf_GE:L2_Ener} and \eqref{Pf_GE:Goal} are valid on $(0,\infty)$.
  Thus we integrate \eqref{Pf_GE:H1_InEst2} over $(0,t)$ and use \eqref{E:UE_Data}, \eqref{Pf_UE:U_L2H1}, and \eqref{Pf_UE:Ini_L2_StL2} to get
  \begin{align*}
    \int_0^t\|A_\varepsilon u^\varepsilon(s)\|_{L^2(\Omega_\varepsilon)}^2\,ds \leq c(\varepsilon^{-1+\alpha}+\varepsilon^{-1+\beta})(1+t) \quad\text{for all}\quad t\geq 0.
  \end{align*}
  This inequality combined with \eqref{E:Stokes_H2} yields the second inequality of \eqref{E:UE_H1}.
\end{proof}

\begin{appendix}
\section{Notations on vectors and matrices} \label{S:Ap_Vec}
In this appendix we fix notations on vectors and matrices.
For $m\in\mathbb{N}$ we consider a vector $a\in\mathbb{R}^m$ as a column vector
\begin{align*}
  a =
  \begin{pmatrix}
    a_1 \\ \vdots \\ a_m
  \end{pmatrix}
  = (a_1, \cdots, a_m)^T
\end{align*}
and denote the $i$-th component of $a$ by $a_i$ or sometimes by $a^i$ or $[a]_i$ for $i=1,\dots,m$.
A matrix $A\in\mathbb{R}^{l\times m}$ with $l,m\in\mathbb{N}$ is expressed as
\begin{align*}
  A = (A_{ij})_{i,j} =
  \begin{pmatrix}
    A_{11} & \cdots & A_{1m} \\
    \vdots & & \vdots \\
    A_{l1} & \cdots & A_{lm}
  \end{pmatrix}
\end{align*}
and the $(i,j)$-entry of $A$ is denoted by $A_{ij}$ or sometimes by $[A]_{ij}$ for $i=1,\dots,l$ and $j=1,\dots m$.
We denote the transpose of $A$ by $A^T$ and, when $l=m$, the symmetric part of $A$ by $A_S:=(A+A^T)/2$.
Also, we write $I_m$ for the $m\times m$ identity matrix.
The tensor product of $a\in\mathbb{R}^l$ and $b\in\mathbb{R}^m$ is defined as
\begin{align*}
  a\otimes b := (a_ib_j)_{i,j} =
  \begin{pmatrix}
    a_1b_1 & \cdots & a_1b_m \\
    \vdots & & \vdots \\
    a_lb_1 & \cdots & a_lb_m
  \end{pmatrix}, \quad
  a =
  \begin{pmatrix}
    a_1 \\ \vdots \\ a_l
  \end{pmatrix}, \quad
  b =
  \begin{pmatrix}
    b_1 \\ \vdots \\ b_m
  \end{pmatrix}.
\end{align*}
For three-dimensional vector fields $u=(u_1,u_2,u_3)^T$ and $\varphi$ on an open set in $\mathbb{R}^3$ let
\begin{gather*}
  \nabla u :=
  \begin{pmatrix}
    \partial_1u_1 & \partial_1u_2 & \partial_1u_3 \\
    \partial_2u_1 & \partial_2u_2 & \partial_2u_3 \\
    \partial_3u_1 & \partial_3u_2 & \partial_3u_3
  \end{pmatrix}, \quad
  |\nabla^2u|^2 := \sum_{i,j,k=1}^3|\partial_i\partial_ju_k|^2 \quad\left(\partial_i := \frac{\partial}{\partial x_i}\right), \\
  (\varphi\cdot\nabla)u :=
  \begin{pmatrix}
    \varphi\cdot\nabla u_1 \\
    \varphi\cdot\nabla u_2 \\
    \varphi\cdot\nabla u_3
  \end{pmatrix}
  = (\nabla u)^T\varphi.
\end{gather*}
Also, we define the inner product of $3\times 3$ matrices $A$ and $B$ and the norm of $A$ by
\begin{align*}
  A: B := \mathrm{tr}[A^TB] = \sum_{i=1}^3AE_i\cdot BE_i, \quad |A| := \sqrt{A:A},
\end{align*}
where $\{E_1,E_2,E_3\}$ is an orthonormal basis of $\mathbb{R}^3$.
Note that $A:B$ does not depend on a choice of $\{E_1,E_2,E_3\}$.
In particular, taking the standard basis of $\mathbb{R}^3$ we get
\begin{align*}
  A:B = \sum_{i,j=1}^3A_{ij}B_{ij} = B:A = A^T:B^T, \quad AB:C = A:CB^T = B:A^TC
\end{align*}
for $A,B,C\in\mathbb{R}^{3\times3}$.
Also, for $a,b\in\mathbb{R}^3$ we have $|a\otimes b|=|a||b|$.

\section{Proofs of auxiliary lemmas} \label{S:Ap_Proofs}
The purpose of this appendix is to present the proofs of Lemmas~\ref{L:La_Surf},~\ref{L:Agmon}, and~\ref{L:Tan_Curl_Ua}.
First we prove Lemma~\ref{L:La_Surf} after giving two auxiliary statements.
Recall that $\Gamma$ is a two-dimensional closed surface in $\mathbb{R}^3$ of class $C^5$.

\begin{lemma}[{\cite{Miu_NSCTD_01}*{Lemma~B.4}}] \label{L:Lp_Loc}
  Let $U$ be an open set in $\mathbb{R}^2$, $\mu\colon U\to\Gamma$ a $C^5$ local parametrization of $\Gamma$, and $\mathcal{K}$ a compact subset of $U$.
  For $p\in[1,\infty]$ if $\eta\in L^p(\Gamma)$ is supported in $\mu(\mathcal{K})$, then $\eta^\flat:=\eta\circ\mu\in L^p(U)$ and
  \begin{align} \label{E:Lp_Loc}
    c^{-1}\|\eta^\flat\|_{L^p(U)} \leq \|\eta\|_{L^p(\Gamma)} \leq c\|\eta^\flat\|_{L^p(U)}.
  \end{align}
  If in addition $\eta\in W^{1,p}(\Gamma)$, then $\eta^\flat\in W^{1,p}(U)$ and
  \begin{align} \label{E:W1p_Loc}
    c^{-1}\|\nabla_s\eta^\flat\|_{L^p(U)} \leq \|\nabla_\Gamma\eta\|_{L^p(\Gamma)} \leq c\|\nabla_s\eta^\flat\|_{L^p(U)},
  \end{align}
  where $\nabla_s\eta^\flat=(\partial_{s_1}\eta^\flat,\partial_{s_2}\eta^\flat)^T$ is the gradient of $\eta^\flat$ in $s\in\mathbb{R}^2$.
\end{lemma}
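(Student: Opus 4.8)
\textbf{Proof proposal for Lemma~\ref{L:Lp_Loc}.} The statement is a standard change-of-coordinates comparison for Lebesgue and first-order Sobolev norms under a fixed $C^5$ local parametrization, localized to a compact piece $\mu(\mathcal{K})$. The plan is to use the change of variables formula for the surface integral, together with the fact that on the compact set $\mathcal{K}$ the relevant Jacobian factors and the metric coefficients are bounded above and below by positive constants.

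First I would recall that for a $C^5$ parametrization $\mu\colon U\to\Gamma$ the surface measure pulls back as $d\mathcal{H}^2 = \sqrt{\det(g_{ij})}\,ds$, where $g_{ij} = \partial_{s_i}\mu\cdot\partial_{s_j}\mu$ are the coefficients of the first fundamental form. Since $\mu$ is a $C^5$ immersion (in particular $\partial_{s_1}\mu$ and $\partial_{s_2}\mu$ are linearly independent and continuous), the function $\sqrt{\det(g_{ij})}$ is continuous and strictly positive on $U$, hence bounded between two positive constants on the compact set $\mathcal{K}$. For $\eta\in L^p(\Gamma)$ supported in $\mu(\mathcal{K})$, writing $\eta^\flat=\eta\circ\mu$, the identity
\begin{align*}
  \int_\Gamma |\eta|^p\,d\mathcal{H}^2 = \int_{\mathcal{K}} |\eta^\flat|^p \sqrt{\det(g_{ij})}\,ds = \int_U |\eta^\flat|^p \sqrt{\det(g_{ij})}\,ds
\end{align*}
together with the two-sided bound on $\sqrt{\det(g_{ij})}$ over $\mathcal{K}$ gives \eqref{E:Lp_Loc} for $p\in[1,\infty)$; the case $p=\infty$ follows because $\mu$ is a homeomorphism onto its image, so essential suprema are preserved. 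For the gradient estimate \eqref{E:W1p_Loc}, I would use the relation between the tangential gradient and the parametrized gradient: if $\eta\in W^{1,p}(\Gamma)$ then $\eta^\flat\in W^{1,p}(U)$ and $\partial_{s_i}\eta^\flat = \partial_{s_i}\mu\cdot\nabla_\Gamma\eta$ (this is how $\nabla_\Gamma$ is characterized intrinsically; cf.\ the remark after \eqref{E:Def_TGr} that $\nabla_\Gamma\eta$ agrees with the Riemannian gradient in local coordinates). Conversely $\nabla_\Gamma\eta = \sum_{i,j} g^{ij}(\partial_{s_i}\eta^\flat)\,\partial_{s_j}\mu$ with $(g^{ij})$ the inverse matrix of $(g_{ij})$, whose entries are again continuous and bounded on $\mathcal{K}$ since $\det(g_{ij})$ is bounded below there. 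Thus $|\nabla_s\eta^\flat|$ and $|\overline{\nabla_\Gamma\eta}\circ\mu|$ are pointwise comparable on $\mathcal{K}$ up to constants depending only on $\mu$ and $\mathcal{K}$, and combining this with \eqref{E:Lp_Loc} applied to each component of $\nabla_\Gamma\eta$ yields \eqref{E:W1p_Loc}.

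I do not expect a genuine obstacle here; the only point that needs a little care is ensuring the constants depend only on $\mu$ and $\mathcal{K}$ (not on $\eta$), which is exactly where compactness of $\mathcal{K}$ and continuity of the metric coefficients are used, and handling $p=\infty$ separately since the change of variables formula for integrals does not directly apply. Since the lemma is quoted from \cite{Miu_NSCTD_01}, I would in fact just cite that reference and, if a self-contained argument is wanted, present the computation above in a few lines.
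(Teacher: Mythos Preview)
Your proposal is correct and matches what the paper does: the paper omits the proof entirely and simply cites \cite{Miu_NSCTD_01}*{Lemma~B.4}, which is exactly what you suggest at the end. The self-contained argument you sketch (change of variables with the surface-measure Jacobian $\sqrt{\det(g_{ij})}$ bounded above and below on $\mathcal{K}$, and the pointwise comparability of $|\nabla_s\eta^\flat|$ and $|\nabla_\Gamma\eta\circ\mu|$ via the metric and its inverse) is the standard one and would be accepted if details were requested.
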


We omit the proof of Lemma~\ref{L:Lp_Loc} since it is given in our first paper~\cite{Miu_NSCTD_01}.

\begin{lemma} \label{L:La_R2}
  Let $U$ be an open set in $\mathbb{R}^2$.
  Then
  \begin{align} \label{E:La_R2}
    \|\varphi\|_{L^4(U)} \leq \sqrt{2}\|\varphi\|_{L^2(U)}^{1/2}\|\nabla_s\varphi\|_{L^2(U)}^{1/2}
  \end{align}
  for all $\varphi\in H_0^1(U)$.
\end{lemma}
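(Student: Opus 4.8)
The plan is to prove the standard two-dimensional Ladyzhenskaya inequality by the slicing argument of Ladyzhenskaya, first reducing to smooth compactly supported functions and then passing to the limit.

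First I would observe that it suffices to establish \eqref{E:La_R2} for $\varphi\in C_c^\infty(U)$. Indeed, if $\varphi_n\in C_c^\infty(U)$ with $\varphi_n\to\varphi$ in $H_0^1(U)$, then \eqref{E:La_R2} applied to $\varphi_n-\varphi_m$ shows that $(\varphi_n)_n$ is Cauchy in $L^4(U)$, hence converges in $L^4(U)$ to a limit which must coincide with $\varphi$ a.e.; passing to the limit in \eqref{E:La_R2} then gives the inequality for $\varphi$. For $\varphi\in C_c^\infty(U)$, extend it by zero to all of $\mathbb{R}^2$. The extension (still denoted $\varphi$) lies in $C_c^\infty(\mathbb{R}^2)$, its gradient is the zero-extension of $\nabla_s\varphi$, and the $L^2$- and $L^4$-norms over $U$ and over $\mathbb{R}^2$ agree, so it is enough to work on the whole plane.

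Next, writing $s=(s_1,s_2)\in\mathbb{R}^2$ and using the fundamental theorem of calculus together with $\varphi(-\infty,s_2)=0$, I would estimate
\begin{align*}
  \varphi(s_1,s_2)^2 = \int_{-\infty}^{s_1}2\varphi(t,s_2)\,\partial_{s_1}\varphi(t,s_2)\,dt \leq 2\int_{\mathbb{R}}|\varphi(t,s_2)|\,|\partial_{s_1}\varphi(t,s_2)|\,dt =: f_1(s_2)
\end{align*}
and, interchanging the roles of the two variables,
\begin{align*}
  \varphi(s_1,s_2)^2 \leq 2\int_{\mathbb{R}}|\varphi(s_1,t)|\,|\partial_{s_2}\varphi(s_1,t)|\,dt =: f_2(s_1).
\end{align*}
Multiplying these and integrating, Tonelli's theorem gives
\begin{align*}
  \int_{\mathbb{R}^2}\varphi^4\,ds \leq \int_{\mathbb{R}}\int_{\mathbb{R}}f_1(s_2)f_2(s_1)\,ds_1\,ds_2 = \left(\int_{\mathbb{R}}f_1(s_2)\,ds_2\right)\left(\int_{\mathbb{R}}f_2(s_1)\,ds_1\right),
\end{align*}
and each factor is bounded, by Tonelli and the Cauchy--Schwarz inequality, by $2\|\varphi\|_{L^2(\mathbb{R}^2)}\|\partial_{s_i}\varphi\|_{L^2(\mathbb{R}^2)}$. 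Since $\|\partial_{s_i}\varphi\|_{L^2(\mathbb{R}^2)}\leq\|\nabla_s\varphi\|_{L^2(\mathbb{R}^2)}$ for $i=1,2$, this yields $\|\varphi\|_{L^4(\mathbb{R}^2)}^4\leq 4\|\varphi\|_{L^2(\mathbb{R}^2)}^2\|\nabla_s\varphi\|_{L^2(\mathbb{R}^2)}^2$; taking fourth roots and returning to $U$ gives \eqref{E:La_R2}.

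There is essentially no obstacle here, as the argument is entirely classical; the only points needing (minor) care are the zero-extension step and the density reduction to $C_c^\infty(U)$, for which the inequality itself supplies the required $L^4$-convergence of the approximating sequence.
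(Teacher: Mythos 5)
Your proof is correct and follows essentially the same route as the paper: reduce to $C_c^\infty(U)$ by density, extend by zero, apply the fundamental theorem of calculus in each variable, multiply the two slice bounds, and finish with Tonelli and the Cauchy--Schwarz inequality (the paper merely applies Cauchy--Schwarz inside each slice before the outer integration, which is an immaterial reordering). The constant $\sqrt{2}$ comes out identically, so there is nothing to add.
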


The inequality \eqref{E:La_R2} is the well-known Ladyzhenskaya inequality on $\mathbb{R}^2$ (see~\cite{La69}*{Chapter~1, Section~1.1, Lemma~1}).
We give its proof for the readers' convenience.

\begin{proof}
  By a density argument, it is sufficient to prove \eqref{E:La_R2} for all $\varphi\in C_c^\infty(U)$.
  We extend $\varphi$ to $\mathbb{R}^2$ by setting it to zero outside $U$.
  Then
  \begin{align*}
    |\varphi(s_1,s_2)|^2 = \int_{-\infty}^{s_1}\frac{\partial}{\partial t_1}\bigl(|\varphi(t_1,s_2)|^2\bigr)\,dt_1 = 2\int_{-\infty}^{s_1}\varphi(t_1,s_2)\partial_{t_1}\varphi(t_1,s_2)\,dt_1
  \end{align*}
  for each $s=(s_1,s_2)\in\mathbb{R}^2$.
  Thus H\"{o}lder's inequality implies
  \begin{align*}
    |\varphi(s_1,s_2)|^2 \leq 2\left(\int_{-\infty}^\infty|\varphi(t_1,s_2)|^2\,dt_1\right)^{1/2}\left(\int_{-\infty}^\infty|\partial_{t_1}\varphi(t_1,s_2)|^2\,dt_1\right)^{1/2}.
  \end{align*}
  Similarly, we obtain
  \begin{align*}
    |\varphi(s_1,s_2)|^2 \leq 2\left(\int_{-\infty}^\infty|\varphi(s_1,t_2)|^2\,dt_2\right)^{1/2}\left(\int_{-\infty}^\infty|\partial_{t_2}\varphi(s_1,t_2)|^2\,dt_2\right)^{1/2}.
  \end{align*}
  From the above two inequalities we deduce that
  \begin{multline} \label{Pf_LaR2:Ineq}
    \int_{\mathbb{R}^2}|\varphi(s_1,s_2)|^4\,ds_1\,ds_2 \\
    \leq 4\int_{-\infty}^\infty\left(\int_{-\infty}^\infty|\varphi(t_1,s_2)|^2\,dt_1\right)^{1/2}\left(\int_{-\infty}^\infty|\partial_{t_1}\varphi(t_1,s_2)|^2\,dt_1\right)^{1/2}ds_2 \\
    \times \int_{-\infty}^\infty\left(\int_{-\infty}^\infty|\varphi(s_1,t_2)|^2\,dt_2\right)^{1/2}\left(\int_{-\infty}^\infty|\partial_{t_2}\varphi(s_1,t_2)|^2\,dt_2\right)^{1/2}ds_1.
  \end{multline}
  We again use H\"{o}lder's inequality to get
  \begin{multline*}
    \int_{-\infty}^\infty\left(\int_{-\infty}^\infty|\varphi(t_1,s_2)|^2\,dt_1\right)^{1/2}\left(\int_{-\infty}^\infty|\partial_{t_1}\varphi(t_1,s_2)|^2\,dt_1\right)^{1/2}ds_2 \\
    \begin{aligned}
      &\leq \left(\int_{-\infty}^\infty\int_{-\infty}^\infty|\varphi(t_1,s_2)|^2\,dt_1ds_2\right)^{1/2}\left(\int_{-\infty}^\infty\int_{-\infty}^\infty|\partial_{t_1}\varphi(t_1,s_2)|^2\,dt_1ds_2\right)^{1/2} \\
      &\leq \|\varphi\|_{L^2(\mathbb{R}^2)}\|\nabla_s\varphi\|_{L^2(\mathbb{R}^2)}
    \end{aligned}
  \end{multline*}
  and a similar inequality for the last line of \eqref{Pf_LaR2:Ineq}.
  By these inequalities and \eqref{Pf_LaR2:Ineq},
  \begin{align*}
    \|\varphi\|_{L^4(\mathbb{R}^2)}^4 \leq 4\|\varphi\|_{L^2(\mathbb{R}^2)}^2\|\nabla_s\varphi\|_{L^2(\mathbb{R}^2)}^2.
  \end{align*}
  Since $\varphi\in C_c^\infty(U)$ is compactly supported in $U$, this inequality implies \eqref{E:La_R2}.
\end{proof}

\begin{proof}[Proof of Lemma~\ref{L:La_Surf}]
  Since $\Gamma$ is compact, by a standard localization argument with a partition of unity on $\Gamma$ we may assume that there exist an open set $U$ in $\mathbb{R}^2$, a local parametrization $\mu\colon U\to\Gamma$ of $\Gamma$, and a compact subset $\mathcal{K}$ of $U$ such that $\eta$ is supported in $\mu(\mathcal{K})$.
  Then $\eta^\flat:=\eta\circ\mu$ is supported in $\mathcal{K}$.
  Moreover, since $\eta\in H^1(\Gamma)$, we have $\eta^\flat\in H_0^1(U)$ by Lemma~\ref{L:Lp_Loc} and thus we can use \eqref{E:La_R2} to $\eta^\flat$ to get
  \begin{align*}
    \|\eta^\flat\|_{L^4(U)} \leq \sqrt{2}\|\eta^\flat\|_{L^2(U)}^{1/2}\|\nabla_s\eta^\flat\|_{L^2(U)}^{1/2}.
  \end{align*}
  Applying \eqref{E:Lp_Loc} and \eqref{E:W1p_Loc} to this inequality we obtain \eqref{E:La_Surf}.
\end{proof}

Next we present the proof of Lemma~\ref{L:Agmon}.

\begin{proof}[Proof of Lemma~\ref{L:Agmon}]
  To prove \eqref{E:Agmon} we use the anisotropic Agmon inequality
  \begin{align} \label{Pf_A:Ans_Agm}
    \|\Phi\|_{L^\infty(V)} \leq c\|\Phi\|_{L^2(V)}^{1/4}\prod_{i=1}^3\left(\|\Phi\|_{L^2(V)}+\|\partial_i\Phi\|_{L^2(V)}+\|\partial_i^2\Phi\|_{L^2(V)}\right)^{1/4}
  \end{align}
  for $V=(0,1)^3$ and $\Phi\in H^2(V)$ (see~\cite{TeZi96}*{Proposition~2.2}).
  For this purpose, we use a partition of unity on $\Gamma$ to localize a function on $\Omega_\varepsilon$.

  Since $\Gamma$ is compact and without boundary, we can take a finite number of bounded open sets in $\mathbb{R}^2$ and local parametrizations of $\Gamma$
  \begin{align*}
    U_k \subset \mathbb{R}^2, \quad \mu_k\colon U_k\to\Gamma, \quad k=1,\dots,k_0
  \end{align*}
  such that $\{\mu_k(U_k)\}_{k=1}^{k_0}$ is an open covering of $\Gamma$.
  Let $\{\eta_k\}_{k=1}^{k_0}$ be a partition of unity on $\Gamma$ subordinate to $\{\mu_k(U_k)\}_{k=1}^{k_0}$.
  We may assume that $\eta_k$ is supported in $\mu_k(\mathcal{K}_k)$ with some compact subset $\mathcal{K}_k$ of $U_k$ for each $k=1,\dots,k_0$.
  Let $\bar{\eta}_k:=\eta_k\circ\pi$ be the constant extension of $\eta_k$ and
  \begin{multline*}
    \zeta_k(s) := \mu_k(s')+\varepsilon\{(1-s_3)g_0(\mu_k(s'))+s_3g_1(\mu_k(s'))\}n(\mu_k(s')), \\
    s = (s',s_3) \in V_k := U_k\times(0,1),\, k=1,\dots,k_0.
  \end{multline*}
  Then $\{\zeta_k(V_k)\}_{k=1}^{k_0}$ is an open covering of $\Omega_\varepsilon$ and $\{\bar{\eta}_k\}_{k=1}^{k_0}$ is a partition of unity on $\Omega_\varepsilon$ subordinate to $\{\zeta_k(V_k)\}_{k=1}^{k_0}$.
  For $\varphi\in H^2(\Omega_\varepsilon)$ we define
  \begin{align*}
    \varphi_k := \bar{\eta}_k\varphi \quad\text{on}\quad \Omega_\varepsilon, \, k=1,\dots,k_0.
  \end{align*}
  Then $\varphi_k$ is supported in $\zeta_k(\mathcal{K}_k\times(0,1))\subset\zeta_k(V_k)$ and
  \begin{align*}
    \partial_n\varphi_k = \bar{\eta}_k\partial_n\varphi, \quad \partial_n^2\varphi_k = \bar{\eta}_k\partial_n^2\varphi \quad\text{in}\quad \Omega_\varepsilon, \, k=1,\dots,k_0
  \end{align*}
  by \eqref{E:NorDer_Con}.
  Therefore, if we prove
  \begin{multline} \label{Pf_A:Agm_Loc}
    \|\varphi_k\|_{L^\infty(\zeta_k(V_k))} \leq c\varepsilon^{-1/2}\|\varphi_k\|_{L^2(\zeta_k(V_k))}^{1/4}\|\varphi_k\|_{H^2(\zeta_k(V_k))}^{1/2} \\
    \times\left(\|\varphi_k\|_{L^2(\zeta_k(V_k))}+\varepsilon\|\partial_n\varphi_k\|_{L^2(\zeta_k(V_k))}+\varepsilon^2\|\partial_n^2\varphi_k\|_{L^2(\zeta_k(V_k))}\right)^{1/4}
  \end{multline}
  for all $k=1,\dots,k_0$, then we obtain \eqref{E:Agmon} for $\varphi$.

  Let us show \eqref{Pf_A:Agm_Loc}.
  In what follows, we fix and suppress the index $k$.
  Hence we assume that $\varphi\in H^2(\Omega_\varepsilon)$ is supported in $\zeta(\mathcal{K}\times(0,1))\subset\zeta(V)$ with some compact subset $\mathcal{K}$ of $U$.
  Taking $U$ small and scaling it, we may further assume that
  \begin{align*}
    U = (0,1)^2, \quad V = U\times(0,1) = (0,1)^3.
  \end{align*}
  The local parametrization $\zeta$ of $\Omega_\varepsilon$ is of the form
  \begin{align} \label{Pf_A:Def_LP}
    \zeta(s) = \mu(s')+h_\varepsilon(s)n(\mu(s')), \quad s = (s',s_3) \in V = U\times(0,1),
  \end{align}
  where $\mu\colon U\to\Gamma$ is a $C^5$ local parametrization of $\Gamma$ and
  \begin{align} \label{Pf_A:Def_heps}
    h_\varepsilon(s) := \varepsilon\{(1-s_3)g_0(\mu(s'))+s_3g_1(\mu(s'))\}.
  \end{align}
  Since $g_0$, $g_1$, and $n$ are of class $C^4$ on $\Gamma$, $\mathcal{K}$ is compact in $U$, and $h_\varepsilon$ is an affine function of $s_3$, there exists a constant $c>0$ independent of $\varepsilon$ such that
  \begin{align} \label{Pf_A:GrZ_Bd}
    |\partial_{s_i}\zeta(s)| \leq c, \quad |\partial_{s_i}\partial_{s_j}\zeta(s)| \leq c, \quad s\in\mathcal{K}\times(0,1),\,i,j=1,2,3.
  \end{align}
  Let $\nabla_s\zeta$ be the gradient matrix of $\zeta$ in $s\in\mathbb{R}^3$, $J$ the function given by \eqref{E:Def_Jac}, and $\theta$ the Riemannian metric of $\Gamma$ given by
  \begin{align} \label{Pf_A:Riem}
    \begin{gathered}
      \theta(s') := \nabla_{s'}\mu(s')\{\nabla_{s'}\mu(s')\}^T, \quad s'\in U, \\
      \nabla_{s'}\mu :=
      \begin{pmatrix}
        \partial_{s_1}\mu_1 & \partial_{s_1}\mu_2 & \partial_{s_1}\mu_3 \\
        \partial_{s_2}\mu_1 & \partial_{s_2}\mu_2 & \partial_{s_2}\mu_3
      \end{pmatrix}.
    \end{gathered}
  \end{align}
  Then
  \begin{align} \label{Pf_A:Det_Z}
    \det\nabla_s\zeta(s) = \varepsilon g(\mu(s'))J(\mu(s'),h_\varepsilon(s))\sqrt{\det\theta(s')}, \quad s=(s',s_3)\in V,
  \end{align}
  which we prove at the end of the proof.
  Moreover, since $\det\theta$ is continuous and strictly positive on $U$ and $\mathcal{K}$ is compact in $U$, we have
  \begin{align*}
    \det\theta(s') \geq c, \quad s'\in\mathcal{K}.
  \end{align*}
  Applying this inequality, \eqref{E:G_Inf}, and \eqref{E:Jac_Bound_02} to \eqref{Pf_A:Det_Z} we obtain
  \begin{align} \label{Pf_A:DetZ_Bd}
    \det\nabla_s\zeta(s) \geq c\varepsilon, \quad s\in \mathcal{K}\times(0,1).
  \end{align}
  Now let $\Phi:=\varphi\circ\zeta$ on $V$.
  Then
  \begin{align} \label{Pf_A:Linf}
    \|\Phi\|_{L^\infty(V)} = \|\varphi\|_{L^\infty(\zeta(V))}
  \end{align}
  and $\Phi$ is supported in $\mathcal{K}\times(0,1)$ since $\varphi$ is supported in $\zeta(\mathcal{K}\times(0,1))$.
  Also, since
  \begin{align} \label{Pf_A:Change}
    \int_{\zeta(V)}\varphi(x)\,dx = \int_V\Phi(s)\det\nabla_s\zeta(s)\,ds,
  \end{align}
  we observe by \eqref{Pf_A:DetZ_Bd} that
  \begin{align} \label{Pf_A:L2}
    \|\Phi\|_{L^2(V)} \leq c\varepsilon^{-1/2}\|\varphi\|_{L^2(\zeta(V))}.
  \end{align}
  We differentiate $\Phi(s)=\varphi(\zeta(s))$ in $s\in V$.
  Then
  \begin{align*}
    \partial_{s_i}\Phi(s) &= \partial_{s_i}\zeta(s)\cdot\nabla\varphi(\zeta(s)), \\
    \partial_{s_i}^2\Phi(s) &= \partial_{s_i}^2\zeta(s)\cdot\nabla\varphi(\zeta(s))+\partial_{s_i}\zeta(s)\cdot\nabla^2\varphi(\zeta(s))\partial_{s_i}\zeta(s)
  \end{align*}
  for $s\in V$ and $i=1,2$, and
  \begin{align*}
    \partial_{s_3}\Phi(s) = \varepsilon g(\mu(s'))\partial_n\varphi(\zeta(s)), \quad \partial_{s_3}^2\Phi(s) = \varepsilon^2g(\mu(s'))^2\partial_n^2\varphi(\zeta(s))
  \end{align*}
  for $s=(s',s_3)\in V$.
  Hence \eqref{Pf_A:GrZ_Bd} and the boundedness of $g$ on $\Gamma$ imply that
  \begin{align*}
    |\partial_{s_i}\Phi(s)| &\leq c|\nabla\varphi(\zeta(s))|, \\
    |\partial_{s_i}^2\Phi(s)| &\leq c(|\nabla\varphi(\zeta(s))|+|\nabla^2\varphi(\zeta(s))|), \\
    |\partial_{s_3}^k\Phi(s)| &\leq c\varepsilon^k|\partial_n^k\varphi(\zeta(s))|,
  \end{align*}
  for $i,k=1,2$ and $s\in\mathcal{K}\times(0,1)$.
  Since $\Phi$ is supported in $\mathcal{K}\times(0,1)$, we deduce from these inequalities and \eqref{Pf_A:Change} that
  \begin{align} \label{Pf_A:Hk}
    \begin{aligned}
      \|\partial_{s_i}^k\Phi\|_{L^2(V)} &\leq c\varepsilon^{-1/2}\|\varphi\|_{H^k(\zeta(V))}, \\
      \|\partial_{s_3}^k\Phi\|_{L^2(V)} &\leq c\varepsilon^{k-1/2}\|\partial_n^k\varphi\|_{L^2(\zeta(V))}
    \end{aligned}
  \end{align}
  for $i,k=1,2$ and thus $\Phi\in H^2(V)$.
  Hence we can apply \eqref{Pf_A:Ans_Agm} to $\Phi=\varphi\circ\zeta$ and use \eqref{Pf_A:Linf}, \eqref{Pf_A:L2}, and \eqref{Pf_A:Hk} to obtain \eqref{Pf_A:Agm_Loc}.

  It remains to show the formula \eqref{Pf_A:Det_Z}.
  In what follows, we use the notation
  \begin{align*}
    \eta^\flat(s') := \eta(\mu(s')), \quad s'\in U
  \end{align*}
  for a function $\eta$ on $\Gamma$.
  Note that, since $\eta(\mu(s'))=\bar{\eta}(\mu(s'))$ by $\mu(s')\in\Gamma$,
  \begin{align} \label{Pf_A:Gr_Flat}
    \partial_{s_i}\eta^\flat(s') = \partial_{s_i}\mu(s')\cdot\nabla\bar{\eta}(\mu(s')) = \partial_{s_i}\mu(s')\cdot\nabla_\Gamma\eta(\mu(s'))
  \end{align}
  for $i=1,2$ by \eqref{E:ConDer_Surf}.
  By \eqref{Pf_A:Def_LP} and \eqref{Pf_A:Def_heps} we have
  \begin{align*}
    \zeta(s) = \mu(s')+\varepsilon\{(1-s_3)g_0^\flat(s')+s_3g_1^\flat(s')\}n^\flat(s'), \quad s = (s',s_3) \in V = U\times(0,1).
  \end{align*}
  We differentiate $\zeta(s)$ and apply \eqref{Pf_A:Gr_Flat} and $-\nabla_\Gamma n=W=W^T$ on $\Gamma$ to get
  \begin{align} \label{Pf_A:Grad_Z}
    \begin{aligned}
      \partial_{s_i}\zeta(s) &= \{I_3-h_\varepsilon(s)W^\flat(s')\}\partial_{s_i}\mu(s')+\eta_\varepsilon^i(s)n^\flat(s'), \quad i=1,2, \\
      \partial_{s_3}\zeta(s) &= \varepsilon g^\flat(s')n^\flat(s')
    \end{aligned}
  \end{align}
  for $s=(s',s_3)\in V$, where $h_\varepsilon(s)$ is given by \eqref{Pf_A:Def_heps} and
  \begin{align*}
    \eta_\varepsilon^i(s) := \varepsilon\partial_{s_i}\mu(s')\cdot\{(1-s_3)(\nabla_\Gamma g_0)^\flat(s')+s_3(\nabla_\Gamma g_1)^\flat(s')\}, \quad i=1,2.
  \end{align*}
  From now on, we fix and suppress the arguments $s'$ and $s$.
  By \eqref{Pf_A:Grad_Z} we have
  \begin{align*}
    \nabla_s\zeta =
    \begin{pmatrix}
      \partial_{s_1}\zeta_1 & \partial_{s_1}\zeta_2 & \partial_{s_1}\zeta_3 \\
      \partial_{s_2}\zeta_1 & \partial_{s_2}\zeta_2 & \partial_{s_2}\zeta_3 \\
      \partial_{s_3}\zeta_1 & \partial_{s_3}\zeta_2 & \partial_{s_3}\zeta_3
    \end{pmatrix} =
    \begin{pmatrix}
      \nabla_{s'}\mu(I_3-h_\varepsilon W^\flat)^T+\eta_\varepsilon\otimes n^\flat \\
      \varepsilon g^\flat(n^\flat)^T
    \end{pmatrix}.
  \end{align*}
  Here we consider $n^\flat\in\mathbb{R}^3$ and $\eta_\varepsilon:=(\eta_\varepsilon^1,\eta_\varepsilon^2)^T\in\mathbb{R}^2$ as column vectors.
  Since $\partial_{s_1}\mu$ and $\partial_{s_2}\mu$ are tangent to $\Gamma$ at $\mu(s')$ we have $(\nabla_{s'}\mu)n^\flat=0$.
  Moreover,
  \begin{align*}
    W^\flat n^\flat=0, \quad (\eta_\varepsilon\otimes n^\flat)n^\flat=|n^\flat|^2\eta_\varepsilon=\eta_\varepsilon, \quad (\eta_\varepsilon\otimes n^\flat)(n^\flat\otimes \eta_\varepsilon) = \eta_\varepsilon\otimes\eta_\varepsilon.
  \end{align*}
  From these equalities and the symmetry of the matrix $W^\flat$ it follows that
  \begin{align*}
    \nabla_s\zeta(\nabla_s\zeta)^T =
    \begin{pmatrix}
      \nabla_{s'}\mu(I_3-h_\varepsilon W^\flat)^2(\nabla_{s'}\mu)^T+\eta_\varepsilon\otimes\eta_\varepsilon & \varepsilon g^\flat\eta_\varepsilon \\
      \varepsilon g^\flat\eta_\varepsilon^T & \varepsilon^2(g^\flat)^2
    \end{pmatrix}.
  \end{align*}
  Hence by elementary row operations we have
  \begin{align*}
    \det[\nabla_s\zeta(\nabla_s\zeta)^T] &= \det
    \begin{pmatrix}
      \nabla_{s'}\mu(I_3-h_\varepsilon W^\flat)^2(\nabla_{s'}\mu)^T+\eta_\varepsilon\otimes\eta_\varepsilon & \varepsilon g^\flat\eta_\varepsilon \\
      \varepsilon g^\flat\eta_\varepsilon^T & \varepsilon^2(g^\flat)^2
    \end{pmatrix} \\
    &= \det
    \begin{pmatrix}
      \nabla_{s'}\mu(I_3-h_\varepsilon W^\flat)^2(\nabla_{s'}\mu)^T & 0 \\
      \varepsilon g^\flat\eta_\varepsilon^T & \varepsilon^2 (g^\flat)^2
    \end{pmatrix} \\
    &= \varepsilon^2 (g^\flat)^2\det[\nabla_{s'}\mu(I_3-h_\varepsilon W^\flat)^2(\nabla_{s'}\mu)^T].
  \end{align*}
  Since $\det[\nabla_s\zeta(\nabla_s\zeta)^T]=(\det\nabla_s\zeta)^2$, the above equality implies that
  \begin{align} \label{Pf_A:DZ_Sq}
    (\det\nabla_s\zeta)^2 = \varepsilon^2(g^\flat)^2\det[\nabla_{s'}\mu(I_3-h_\varepsilon W^\flat)^2(\nabla_{s'}\mu)^T].
  \end{align}
  To compute the right-hand side we define $3\times 3$ matrices
    \begin{align*}
    A :=
    \begin{pmatrix}
      \nabla_{s'}\mu \\
      (n^\flat)^T
    \end{pmatrix}, \quad
    A_h :=
    \begin{pmatrix}
      \nabla_{s'}\mu(I_3-h_\varepsilon W^\flat) \\
      (n^\flat)^T
    \end{pmatrix}.
  \end{align*}
  Then by $(\nabla_{s'}\mu)n^\flat=0$, $W^\flat n^\flat=0$, the symmetry of $W^\flat$, and \eqref{Pf_A:Riem} we have
  \begin{gather*}
    A_h = A(I_3-h_\varepsilon W^\flat), \\
    AA^T =
    \begin{pmatrix}
      \theta & 0 \\
      0 & 1
    \end{pmatrix}, \quad
    A_hA_h^T =
    \begin{pmatrix}
      \nabla_{s'}\mu(I_3-h^\flat W^\flat)^2(\nabla_{s'}\mu)^T & 0 \\
      0 & 1
    \end{pmatrix}.
  \end{gather*}
  Noting that $A$ and $I_3-h_\varepsilon W^\flat$ are $3\times3$ matrices, we use these equalities to get
  \begin{align*}
    \begin{aligned}
      \det[\nabla_{s'}\mu(I_3-h_\varepsilon W^\flat)^2(\nabla_{s'}\mu)^T] &= \det[A_hA_h^T] = \det[A(I_3-h_\varepsilon W^\flat)^2A^T] \\
      &= \det[(I_3-h_\varepsilon W^\flat)^2]\det[AA^T] \\
      &= J(\mu,h_\varepsilon)^2\det\theta,
    \end{aligned}
  \end{align*}
  where the last equality follows from $\det(I_3-h_\varepsilon W^\flat)=J(\mu,h_\varepsilon)$.
  From this equality and \eqref{Pf_A:DZ_Sq} we deduce that
  \begin{align*}
    (\det\nabla_s\zeta)^2 = \varepsilon^2(g^\flat)^2J(\mu,h_\varepsilon)^2\det\theta.
  \end{align*}
  This equality yields \eqref{Pf_A:Det_Z} since $g^\flat$ and $J(\mu,h_\varepsilon)$ are positive by \eqref{E:G_Inf} and \eqref{E:Jac_Bound_02}.
\end{proof}

Finally, let us prove Lemma~\ref{L:Tan_Curl_Ua}.
To this end, we give an auxiliary result.

\begin{lemma} \label{L:Curl_Exp}
  Let $E_1$, $E_2$, and $E_3$ be vector fields on an open subset $U$ of $\mathbb{R}^3$ such that $\{E_1(x),E_2(x),E_3(x)\}$ is an orthonormal basis of $\mathbb{R}^3$ for each $x\in\mathbb{R}^3$ and
  \begin{align*}
    E_1\times E_2 = E_3, \quad E_2\times E_3 = E_1, \quad E_3\times E_1 = E_2 \quad\text{in}\quad U.
  \end{align*}
  Then for $u\in C^1(U)^3$ we have
  \begin{multline} \label{E:Curl_Exp}
    \mathrm{curl}\,u = \{(E_2\cdot\nabla)u\cdot E_3-(E_3\cdot\nabla)u\cdot E_2\}E_1 \\
    +\{(E_3\cdot\nabla)u\cdot E_1-(E_1\cdot\nabla)u\cdot E_3\}E_2 \\
    +\{(E_1\cdot\nabla)u\cdot E_2-(E_2\cdot\nabla)u\cdot E_1\}E_3 \quad\text{in}\quad U.
  \end{multline}
\end{lemma}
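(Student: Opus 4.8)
The plan is to compute the three components of $\mathrm{curl}\,u$ in the moving frame $\{E_1,E_2,E_3\}$ and to observe that, because each operator $(E_i\cdot\nabla)u$ differentiates only $u$ and not the frame, the whole computation is pointwise and purely algebraic; in particular no derivatives of $E_1,E_2,E_3$ ever enter. First I would note that, since $\{E_1(x),E_2(x),E_3(x)\}$ is an orthonormal basis of $\mathbb{R}^3$ at each $x\in U$, one has $\mathrm{curl}\,u=\sum_{i=1}^3(\mathrm{curl}\,u\cdot E_i)E_i$ in $U$, so it suffices to identify each scalar $\mathrm{curl}\,u\cdot E_i$ with the corresponding coefficient on the right-hand side of \eqref{E:Curl_Exp}.

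Next I would fix $x\in U$ and, for the purpose of the component identity, treat the vectors $a:=E_2(x)$ and $b:=E_3(x)$ as constants. Writing $(a\cdot\nabla)u\cdot b-(b\cdot\nabla)u\cdot a$ in coordinates gives $\sum_{j,k}(a_jb_k-b_ja_k)\,\partial_ju_k$, i.e. the contraction of the Jacobian $\nabla u$ with the skew-symmetric matrix $a\otimes b-b\otimes a$. The key algebraic step is the Levi-Civita identity $a_jb_k-b_ja_k=\sum_{l}\varepsilon_{jkl}(a\times b)_l$, valid for arbitrary $a,b\in\mathbb{R}^3$, which turns the contraction into $\sum_{j,k,l}\varepsilon_{jkl}(a\times b)_l\,\partial_ju_k=\sum_l(a\times b)_l(\mathrm{curl}\,u)_l=(a\times b)\cdot\mathrm{curl}\,u$, using $(\mathrm{curl}\,u)_l=\sum_{j,k}\varepsilon_{ljk}\partial_ju_k$ and the cyclic symmetry $\varepsilon_{jkl}=\varepsilon_{ljk}$. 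Taking $a=E_2(x)$, $b=E_3(x)$ and invoking $E_2\times E_3=E_1$ yields $(E_2\cdot\nabla)u\cdot E_3-(E_3\cdot\nabla)u\cdot E_2=E_1\cdot\mathrm{curl}\,u$ at $x$, and since $x$ was arbitrary this holds throughout $U$.

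The remaining two coefficients follow by the same argument after the cyclic substitutions $(E_1,E_2,E_3)\mapsto(E_2,E_3,E_1)\mapsto(E_3,E_1,E_2)$, which are permissible because the hypotheses $E_i\times E_j=E_k$ are invariant under cyclic permutation; this gives $E_2\cdot\mathrm{curl}\,u=(E_3\cdot\nabla)u\cdot E_1-(E_1\cdot\nabla)u\cdot E_3$ and $E_3\cdot\mathrm{curl}\,u=(E_1\cdot\nabla)u\cdot E_2-(E_2\cdot\nabla)u\cdot E_1$. Substituting these three expressions into the frame expansion $\mathrm{curl}\,u=\sum_i(\mathrm{curl}\,u\cdot E_i)E_i$ produces exactly \eqref{E:Curl_Exp}. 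I do not anticipate any genuine obstacle: the only point requiring a moment's care is the claim that the non-constancy of the frame is harmless, which is immediate once one observes that $(E_i\cdot\nabla)u$ acts on $u$ alone, so the fixed-vector identity may legitimately be applied with $a,b$ equal to the frame vectors evaluated at the point in question.
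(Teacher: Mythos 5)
Your proof is correct and follows essentially the same route as the paper: expand $\mathrm{curl}\,u$ in the orthonormal frame and identify each coefficient $\mathrm{curl}\,u\cdot E_i$ by pointwise algebra, noting that the derivatives act only on $u$ so the non-constancy of the frame is irrelevant. The only difference is cosmetic: you verify the component identity via the Levi-Civita contraction $a_jb_k-a_kb_j=\sum_l\varepsilon_{jkl}(a\times b)_l$, whereas the paper uses the scalar triple product together with $a\times\mathrm{curl}\,u=(\nabla u)a-(\nabla u)^Ta$.
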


\begin{proof}
  By the assumption, $\mathrm{curl}\,u=\sum_{i=1}^3(\mathrm{curl}\,u\cdot E_i)E_i$.
  Since $E_1=E_2\times E_3$,
  \begin{align*}
    \mathrm{curl}\,u\cdot E_1 &= \mathrm{curl}\,u\cdot(E_2\times E_3) = E_2\cdot(E_3\times\mathrm{curl}\,u) \\
    &= E_2\cdot\{(\nabla u)E_3-(\nabla u)^TE_3\} \\
    &= (\nabla u)^TE_2\cdot E_3-(\nabla u)^TE_3\cdot E_2 \\
    &= (E_2\cdot\nabla)u\cdot E_3-(E_3\cdot\nabla)u\cdot E_2.
  \end{align*}
  Calculating $\mathrm{curl}\,u\cdot E_i$, $i=2,3$ in the same way we obtain \eqref{E:Curl_Exp}.
\end{proof}

\begin{proof}[Proof of Lemma~\ref{L:Tan_Curl_Ua}]
  Let $u\in C^1(\Omega_\varepsilon)^3$ and $u^a$ be given by \eqref{E:Def_ExAve}.
  Since the surface $\Gamma$ is compact and without boundary, we can take a finite number of relatively open subsets $O_k$ of $\Gamma$ and pairs of tangential vector fields $\{\tau_1^k,\tau_2^k\}$ on $O_k$, $k=1,\dots,k_0$ such that $\Gamma=\bigcup_{k=1}^{k_0}O_k$, the triplet $\{\tau_1^k,\tau_2^k,n\}$ forms an orthonormal basis of $\mathbb{R}^3$ on $O_k$, and
  \begin{align*}
    \tau_1^k\times\tau_2^k = n, \quad \tau_2^k\times n = \tau_1^k, \quad n\times\tau_1^k = \tau_2^k \quad\text{on}\quad O_k
  \end{align*}
  for each $k=1,\dots,k_0$.
  Then since $\Omega_\varepsilon=\bigcup_{k=1}^{k_0}U_k$ with
  \begin{align*}
    U_k:=\{y+rn(y)\mid y\in O_k,\,r\in(\varepsilon g_0(y),\varepsilon g_1(y))\}, \quad k=1,\dots,k_0,
  \end{align*}
  it is sufficient to show \eqref{E:Tan_Curl_Ua} in $U_k$ for each $k=1,\dots,k_0$.

  From now on, we fix and suppress the index $k$ and carry out calculations in $U$ unless otherwise stated.
  We apply \eqref{E:Curl_Exp} to $u^a$ with
  \begin{align*}
    E_1 := \bar{\tau}_1, \quad E_2 := \bar{\tau}_2, \quad E_3 := \bar{n}
  \end{align*}
  and use $P\tau_i=\tau_i$ for $i=1,2$ and $Pn=0$ on $O$ to get
  \begin{align*}
    \overline{P}\,\mathrm{curl}\,u^a = \{(\bar{\tau}_2\cdot\nabla)u^a\cdot\bar{n}-(\bar{n}\cdot\nabla)u^a\cdot\bar{\tau}_2\}\bar{\tau}_1+\{(\bar{n}\cdot\nabla)u^a\cdot\bar{\tau}_1-(\bar{\tau}_1\cdot\nabla)u^a\cdot\bar{n}\}\bar{\tau}_2.
  \end{align*}
  By this equality, $(\bar{n}\cdot\nabla)u^a=\partial_nu^a$, and $|\bar{\tau}_1|=|\bar{\tau}_2|=|\bar{n}|=1$ we get
  \begin{align} \label{Pf_TCUa:Est}
    \left|\overline{P}\,\mathrm{curl}\,u^a\right| \leq c\left(|\partial_nu^a|+|(\bar{\tau}_1\cdot\nabla)u^a\cdot\bar{n}|+|(\bar{\tau}_2\cdot\nabla)u^a\cdot\bar{n}|\right).
  \end{align}
  Let us estimate each term on the right-hand side.
  By \eqref{E:NorDer_Con} and \eqref{E:Def_ExAve} we have
  \begin{align*}
    \partial_nu^a = \partial_n\Bigl[\overline{M_\tau u}+\Bigl(\overline{M_\tau u}\cdot\Psi_\varepsilon\Bigr)\bar{n}\Bigr] = \Bigl(\overline{M_\tau u}\cdot\partial_n\Psi_\varepsilon\Bigr)\bar{n}.
  \end{align*}
  Hence it follows from \eqref{E:ExAux_Bound} that
  \begin{align} \label{Pf_TCUa:DN}
    |\partial_nu^a| = \left|\overline{M_\tau u}\cdot\partial_n\Psi_\varepsilon\right| \leq c\left|\overline{M_\tau u}\right| = c\left|\overline{PMu}\right| \leq c\left|\overline{Mu}\right|.
  \end{align}
  To estimate the other terms we set
  \begin{align} \label{Pf_TCUa:Def_TN}
    u_\tau^a := \overline{P}u^a = \overline{M_\tau u}, \quad u_n^a := (u^a\cdot\bar{n})\bar{n} = \Bigl(\overline{M_\tau u}\cdot\Psi_\varepsilon\Bigr)\bar{n}
  \end{align}
  so that $u^a=u_\tau^a+u_n^a$.
  Let $i=1,2$.
  Since $u_\tau^a\cdot\bar{n}=0$ in $U$, we have
  \begin{align*}
    (\bar{\tau}_i\cdot\nabla)u_\tau^a\cdot\bar{n} = (\bar{\tau}_i\cdot\nabla)(u_\tau^a\cdot\bar{n})-u_\tau^a\cdot(\bar{\tau}_i\cdot\nabla)\bar{n} = -u_\tau^a\cdot(\bar{\tau}_i\cdot\nabla)\bar{n}.
  \end{align*}
  Hence by \eqref{E:NorG_Bound} and $|\bar{\tau}_i|=1$ we get
  \begin{align} \label{Pf_TCUa:Utaua}
    |(\bar{\tau}_i\cdot\nabla)u_\tau^a\cdot\bar{n}| \leq c|u_\tau^a| \leq c\left|\overline{Mu}\right|, \quad i=1,2.
  \end{align}
  Next we deal with $(\bar{\tau}_i\cdot\nabla)u_n^a\cdot\bar{n}$.
  Since $\tau_i=P\tau_i$, $P=P^T$, and $|\tau_i|=1$ on $O$,
  \begin{align} \label{Pf_TCUa:Der_Uan}
    |(\bar{\tau}_i\cdot\nabla)u_n^a| = \left|(\nabla u_n^a)^T\overline{P}\bar{\tau}_i\right| = \left|\Bigl(\overline{P}\nabla u_n^a\Bigr)^T\bar{\tau}_i\right| \leq \left|\overline{P}\nabla u_n^a\right|.
  \end{align}
  Moreover, by the definition \eqref{Pf_TCUa:Def_TN} of $u_n^a$ we have
  \begin{align*}
    \overline{P}\nabla u_n^a = \left[\left\{\overline{P}\nabla\Bigl(\overline{M_\tau u}\Bigr)\right\}\Psi_\varepsilon+\Bigl(\overline{P}\nabla\Psi_\varepsilon\Bigr)\overline{M_\tau u}\right]\otimes\bar{n}+\Bigl(\overline{M_\tau u}\cdot\Psi_\varepsilon\Bigr)\overline{P}\nabla\bar{n}
  \end{align*}
  and thus we deduce from \eqref{E:ConDer_Bound}, \eqref{E:NorG_Bound}, \eqref{E:ExAux_Bound}, and \eqref{E:ExAux_TNDer} that
  \begin{align} \label{Pf_TCUa:PGr_Uan}
    \left|\overline{P}\nabla u_n^a\right| \leq c\varepsilon\left(\left|\overline{M_\tau u}\right|+\left|\overline{\nabla_\Gamma M_\tau u}\right|\right) \leq c\varepsilon\left(\left|\overline{Mu}\right|+\left|\overline{\nabla_\Gamma Mu}\right|\right).
  \end{align}
  In the last inequality we also used $M_\tau u=PMu$ on $\Gamma$ and the $C^4$-regularity of $P$ on $\Gamma$.
  By \eqref{Pf_TCUa:Der_Uan} and \eqref{Pf_TCUa:PGr_Uan} we observe that
  \begin{align} \label{Pf_TCUa:Uan}
    |(\bar{\tau}_i\cdot\nabla)u_n^a\cdot\bar{n}| \leq |(\bar{\tau}_i\cdot\nabla)u_n^a| \leq c\varepsilon\left(\left|\overline{Mu}\right|+\left|\overline{\nabla_\Gamma Mu}\right|\right), \quad i=1,2.
  \end{align}
  Noting that $u^a=u_\tau^a+u_n^a$, we conclude by \eqref{Pf_TCUa:Est}, \eqref{Pf_TCUa:DN}, \eqref{Pf_TCUa:Utaua}, and \eqref{Pf_TCUa:Uan} that the inequality \eqref{E:Tan_Curl_Ua} holds in $U$.
\end{proof}

\end{appendix}

\section*{Acknowledgments}
This work is an expanded version of a part of the doctoral thesis of the author~\cite{Miu_DT} completed under the supervision of Professor Yoshikazu Giga at the University of Tokyo.
The author is grateful to him for his valuable comments on this work and also would like to thank Mr. Yuuki Shimizu for fruitful discussions on Killing vector fields on surfaces.

The work of the author was supported by Grant-in-Aid for JSPS Fellows No. 16J02664 and No. 19J00693, and by the Program for Leading Graduate Schools, MEXT, Japan.

\begin{bibdiv}
\begin{biblist}

\bib{AdFo03}{book}{
   author={Adams, Robert A.},
   author={Fournier, John J. F.},
   title={Sobolev spaces},
   series={Pure and Applied Mathematics (Amsterdam)},
   volume={140},
   edition={2},
   publisher={Elsevier/Academic Press, Amsterdam},
   date={2003},
   pages={xiv+305},
}

\bib{AmRe14}{article}{
   author={Amrouche, Ch\'{e}rif},
   author={Rejaiba, Ahmed},
   title={$L^p$-theory for Stokes and Navier-Stokes equations with Navier boundary condition},
   journal={J. Differential Equations},
   volume={256},
   date={2014},
   number={4},
   pages={1515--1547},
}

\bib{Au98}{book}{
   author={Aubin, Thierry},
   title={Some nonlinear problems in Riemannian geometry},
   series={Springer Monographs in Mathematics},
   publisher={Springer-Verlag, Berlin},
   date={1998},
   pages={xviii+395},
}

\bib{Be04}{article}{
   author={Beir\~ao Da Veiga, H.},
   title={Regularity for Stokes and generalized Stokes systems under nonhomogeneous slip-type boundary conditions},
   journal={Adv. Differential Equations},
   volume={9},
   date={2004},
   number={9-10},
   pages={1079--1114},
}

\bib{BoFa13}{book}{
   author={Boyer, Franck},
   author={Fabrie, Pierre},
   title={Mathematical tools for the study of the incompressible
   Navier-Stokes equations and related models},
   series={Applied Mathematical Sciences},
   volume={183},
   publisher={Springer, New York},
   date={2013},
   pages={xiv+525},
}

\bib{CoFo88}{book}{
   author={Constantin, Peter},
   author={Foias, Ciprian},
   title={Navier-Stokes equations},
   series={Chicago Lectures in Mathematics},
   publisher={University of Chicago Press, Chicago, IL},
   date={1988},
   pages={x+190},
}

\bib{EbMa70}{article}{
   author={Ebin, David G.},
   author={Marsden, Jerrold},
   title={Groups of diffeomorphisms and the motion of an incompressible fluid. },
   journal={Ann. of Math. (2)},
   volume={92},
   date={1970},
   pages={102--163},
}

\bib{GiTr01}{book}{
   author={Gilbarg, David},
   author={Trudinger, Neil S.},
   title={Elliptic partial differential equations of second order},
   series={Classics in Mathematics},
   note={Reprint of the 1998 edition},
   publisher={Springer-Verlag, Berlin},
   date={2001},
   pages={xiv+517},
}

\bib{HaRa92a}{article}{
   author={Hale, Jack K.},
   author={Raugel, Genevi\`eve},
   title={A damped hyperbolic equation on thin domains},
   journal={Trans. Amer. Math. Soc.},
   volume={329},
   date={1992},
   number={1},
   pages={185--219},
}

\bib{HaRa92b}{article}{
   author={Hale, Jack K.},
   author={Raugel, Genevi\`eve},
   title={Reaction-diffusion equation on thin domains},
   journal={J. Math. Pures Appl. (9)},
   volume={71},
   date={1992},
   number={1},
   pages={33--95},
}

\bib{Hi16}{article}{
   author={Higaki, Mitsuo},
   title={Navier wall law for nonstationary viscous incompressible flows},
   journal={J. Differential Equations},
   volume={260},
   date={2016},
   number={10},
   pages={7358--7396},
}

\bib{Ho10}{article}{
   author={Hoang, Luan Thach},
   title={Incompressible fluids in thin domains with Navier friction
   boundary conditions (I)},
   journal={J. Math. Fluid Mech.},
   volume={12},
   date={2010},
   number={3},
   pages={435--472},
}

\bib{HoSe10}{article}{
   author={Hoang, Luan T.},
   author={Sell, George R.},
   title={Navier-Stokes equations with Navier boundary conditions for an oceanic model},
   journal={J. Dynam. Differential Equations},
   volume={22},
   date={2010},
   number={3},
   pages={563--616},
}

\bib{Ho13}{article}{
   author={Hoang, Luan Thach},
   title={Incompressible fluids in thin domains with Navier friction
   boundary conditions (II)},
   journal={J. Math. Fluid Mech.},
   volume={15},
   date={2013},
   number={2},
   pages={361--395},
}

\bib{Hu07}{article}{
   author={Hu, Changbing},
   title={Navier-Stokes equations in 3D thin domains with Navier friction
   boundary condition},
   journal={J. Differential Equations},
   volume={236},
   date={2007},
   number={1},
   pages={133--163},
}

\bib{If99}{article}{
   author={Iftimie, Drago\c s},
   title={The 3D Navier-Stokes equations seen as a perturbation of the 2D
   Navier-Stokes equations},
   language={English, with English and French summaries},
   journal={Bull. Soc. Math. France},
   volume={127},
   date={1999},
   number={4},
   pages={473--517},
}

\bib{IfRa01}{article}{
   author={Iftimie, Drago\c s},
   author={Raugel, Genevi\`eve},
   title={Some results on the Navier-Stokes equations in thin 3D domains},
   note={Special issue in celebration of Jack K. Hale's 70th birthday, Part
   4 (Atlanta, GA/Lisbon, 1998)},
   journal={J. Differential Equations},
   volume={169},
   date={2001},
   number={2},
   pages={281--331},
}

\bib{IfRaSe07}{article}{
   author={Iftimie, Drago\c s},
   author={Raugel, Genevi\`eve},
   author={Sell, George R.},
   title={Navier-Stokes equations in thin 3D domains with Navier boundary conditions},
   journal={Indiana Univ. Math. J.},
   volume={56},
   date={2007},
   number={3},
   pages={1083--1156},
}

\bib{JaMi01}{article}{
   author={J\"ager, Willi},
   author={Mikeli\'c, Andro},
   title={On the roughness-induced effective boundary conditions for an
   incompressible viscous flow},
   journal={J. Differential Equations},
   volume={170},
   date={2001},
   number={1},
   pages={96--122},
}

\bib{JiKu16}{article}{
   author={Jimbo, Shuichi},
   author={Kurata, Kazuhiro},
   title={Asymptotic behavior of eigenvalues of the Laplacian on a thin
   domain under the mixed boundary condition},
   journal={Indiana Univ. Math. J.},
   volume={65},
   date={2016},
   number={3},
   pages={867--898},
}

\bib{Jo11}{book}{
   author={Jost, J\"urgen},
   title={Riemannian geometry and geometric analysis},
   series={Universitext},
   edition={6},
   publisher={Springer, Heidelberg},
   date={2011},
   pages={xiv+611},
}

\bib{Kr14}{article}{
   author={Krej\v ci\v r\'\i k, David},
   title={Spectrum of the Laplacian in narrow tubular neighbourhoods of
   hypersurfaces with combined Dirichlet and Neumann boundary conditions},
   journal={Math. Bohem.},
   volume={139},
   date={2014},
   number={2},
   pages={185--193},
}

\bib{La69}{book}{
   author={Ladyzhenskaya, O. A.},
   title={The mathematical theory of viscous incompressible flow},
   series={Second English edition, revised and enlarged. Translated from the
   Russian by Richard A. Silverman and John Chu. Mathematics and its
   Applications, Vol. 2},
   publisher={Gordon and Breach, Science Publishers, New York-London-Paris},
   date={1969},
   pages={xviii+224},
}

\bib{Lee18}{book}{
   author={Lee, John M.},
   title={Introduction to Riemannian manifolds},
   series={Graduate Texts in Mathematics},
   volume={176},
   note={Second edition of [MR1468735]},
   publisher={Springer, Cham},
   date={2018},
   pages={xiii+437},
}

\bib{LiTeWa92a}{article}{
   author={Lions, Jacques-Louis},
   author={Temam, Roger},
   author={Wang, Shou Hong},
   title={New formulations of the primitive equations of atmosphere and
   applications},
   journal={Nonlinearity},
   volume={5},
   date={1992},
   number={2},
   pages={237--288},
}

\bib{LiTeWa92b}{article}{
   author={Lions, Jacques-Louis},
   author={Temam, Roger},
   author={Wang, Shou Hong},
   title={On the equations of the large-scale ocean},
   journal={Nonlinearity},
   volume={5},
   date={1992},
   number={5},
   pages={1007--1053},
}

\bib{LiTeWa95}{article}{
   author={Lions, Jacques-Louis},
   author={Temam, Roger},
   author={Wang, Shou Hong},
   title={Mathematical theory for the coupled atmosphere-ocean models. (CAO III)},
   journal={J. Math. Pures Appl. (9)},
   volume={74},
   date={1995},
   number={2},
   pages={105--163},
}

\bib{MitMon09}{article}{
   author={Mitrea, Marius},
   author={Monniaux, Sylvie},
   title={The nonlinear Hodge-Navier-Stokes equations in Lipschitz domains},
   journal={Differential Integral Equations},
   volume={22},
   date={2009},
   number={3-4},
   pages={339--356},
}

\bib{MitYa02}{article}{
   author={Mitsumatsu, Yoshihiko},
   author={Yano, Yasuhisa},
   title={Geometry of an incompressible fluid on a Riemannian manifold},
   language={Japanese},
   note={Geometric mechanics (Japanese) (Kyoto, 2002)},
   journal={S\={u}rikaisekikenky\={u}sho K\={o}ky\={u}roku},
   number={1260},
   date={2002},
   pages={33--47},
}

\bib{Miu_DT}{thesis}{
  author={Miura, Tatsu-Hiko},
  title={Mathematical analysis of evolution equations in curved thin domains or on moving surfaces},
  type={Doctoral thesis},
  organization={University of Tokyo},
  date={2018},
}

\bib{Miu_NSCTD_01}{article}{
   author = {Miura, Tatsu-Hiko},
   title = {Navier--Stokes equations in a curved thin domain, Part I: uniform estimates for the Stokes operator},
   status = {preprint},
}

\bib{Miu_NSCTD_03}{article}{
   author = {Miura, Tatsu-Hiko},
   title = {Navier--Stokes equations in a curved thin domain, Part III: thin-film limit},
   status = {preprint},
}

\bib{MoTeZi97}{article}{
   author={Moise, I.},
   author={Temam, R.},
   author={Ziane, M.},
   title={Asymptotic analysis of the Navier-Stokes equations in thin
   domains},
   note={Dedicated to Olga Ladyzhenskaya},
   journal={Topol. Methods Nonlinear Anal.},
   volume={10},
   date={1997},
   number={2},
   pages={249--282},
}

\bib{Mo99}{article}{
   author={Montgomery-Smith, Stephen},
   title={Global regularity of the Navier-Stokes equation on thin
   three-dimensional domains with periodic boundary conditions},
   journal={Electron. J. Differential Equations},
   date={1999},
   pages={No. 11, 19},
}

\bib{Na1823}{article}{
   author={Navier, C. L. M. H.},
   title={M\'{e}moire sur les lois du mouvement des fluides},
   journal={Mem. Acad. R. Sci. Inst. France},
   volume={6},
   year={1823},
   pages={389--440},
}

\bib{Pe06}{book}{
   author={Petersen, Peter},
   title={Riemannian geometry},
   series={Graduate Texts in Mathematics},
   volume={171},
   edition={2},
   publisher={Springer, New York},
   date={2006},
   pages={xvi+401},
}

\bib{PrRiRy02}{article}{
   author={Prizzi, M.},
   author={Rinaldi, M.},
   author={Rybakowski, K. P.},
   title={Curved thin domains and parabolic equations},
   journal={Studia Math.},
   volume={151},
   date={2002},
   number={2},
   pages={109--140},
}

\bib{PrRy03}{article}{
   author={Prizzi, M.},
   author={Rybakowski, K. P.},
   title={On inertial manifolds for reaction-diffusion equations on
   genuinely high-dimensional thin domains},
   journal={Studia Math.},
   volume={154},
   date={2003},
   number={3},
   pages={253--275},
}

\bib{RaSe93}{article}{
   author={Raugel, Genevi\`eve},
   author={Sell, George R.},
   title={Navier-Stokes equations on thin $3$D domains. I. Global attractors and global regularity of solutions},
   journal={J. Amer. Math. Soc.},
   volume={6},
   date={1993},
   number={3},
   pages={503--568},
}

\bib{Ra95}{article}{
   author={Raugel, Genevi\`eve},
   title={Dynamics of partial differential equations on thin domains},
   conference={
      title={Dynamical systems},
      address={Montecatini Terme},
      date={1994},
   },
   book={
      series={Lecture Notes in Math.},
      volume={1609},
      publisher={Springer, Berlin},
   },
   date={1995},
   pages={208--315},
}

\bib{Sch96}{article}{
   author={Schatzman, Michelle},
   title={On the eigenvalues of the Laplace operator on a thin set with
   Neumann boundary conditions},
   journal={Appl. Anal.},
   volume={61},
   date={1996},
   number={3-4},
   pages={293--306},
}

\bib{SeYo02}{book}{
   author={Sell, George R.},
   author={You, Yuncheng},
   title={Dynamics of evolutionary equations},
   series={Applied Mathematical Sciences},
   volume={143},
   publisher={Springer-Verlag, New York},
   date={2002},
   pages={xiv+670},
}

\bib{Sh_18pre}{article}{
   author = {Shimizu, Yuuki},
   title = {Green's function for the Laplace--Beltrami operator on surfaces with a non-trivial Killing vector field and its application to potential flows},
   journal = {arXiv:1810.09523 [math.DG]},
}

\bib{SoSc73}{article}{
  author={Solonnikov, V. A.},
  author={\v{S}\v{c}adilov, V. E.},
  title={On a boundary value problem for a stationary system of {N}avier-{S}tokes equations},
  journal={Proc. Steklov Inst. Math.},
  volume={125},
  date={1973},
  pages={186--199},
}

\bib{So01}{book}{
   author={Sohr, Hermann},
   title={The Navier-Stokes equations},
   series={Modern Birkh\"auser Classics},
   note={An elementary functional analytic approach;
   [2013 reprint of the 2001 original] [MR1928881]},
   publisher={Birkh\"auser/Springer Basel AG, Basel},
   date={2001},
   pages={x+367},
}

\bib{Sp79}{book}{
   author={Spivak, Michael},
   title={A comprehensive introduction to differential geometry. Vol. V},
   edition={2},
   publisher={Publish or Perish, Inc., Wilmington, Del.},
   date={1979},
   pages={viii+661},
}

\bib{Ta92}{article}{
   author={Taylor, Michael E.},
   title={Analysis on Morrey spaces and applications to Navier-Stokes and
   other evolution equations},
   journal={Comm. Partial Differential Equations},
   volume={17},
   date={1992},
   number={9-10},
   pages={1407--1456},
}

\bib{Te79}{book}{
   author={Temam, Roger},
   title={Navier-Stokes equations},
   series={Studies in Mathematics and its Applications},
   volume={2},
   edition={Revised edition},
   note={Theory and numerical analysis;
   With an appendix by F. Thomasset},
   publisher={North-Holland Publishing Co., Amsterdam-New York},
   date={1979},
   pages={x+519},
}

\bib{TeZi96}{article}{
   author={Temam, R.},
   author={Ziane, M.},
   title={Navier-Stokes equations in three-dimensional thin domains with various boundary conditions},
   journal={Adv. Differential Equations},
   volume={1},
   date={1996},
   number={4},
   pages={499--546},
}

\bib{TeZi97}{article}{
   author={Temam, R.},
   author={Ziane, M.},
   title={Navier-Stokes equations in thin spherical domains},
   conference={
      title={Optimization methods in partial differential equations},
      address={South Hadley, MA},
      date={1996},
   },
   book={
      series={Contemp. Math.},
      volume={209},
      publisher={Amer. Math. Soc., Providence, RI},
   },
   date={1997},
   pages={281--314},
}

\bib{Ve87}{article}{
   author={Verf\"{u}rth, R\"{u}diger},
   title={Finite element approximation of incompressible Navier-Stokes
   equations with slip boundary condition},
   journal={Numer. Math.},
   volume={50},
   date={1987},
   number={6},
   pages={697--721},
}

\bib{Yac18}{article}{
   author={Yachimura, Toshiaki},
   title={Two-phase eigenvalue problem on thin domains with Neumann boundary condition},
   journal={Differential Integral Equations},
   volume={31},
   date={2018},
   number={9-10},
   pages={735--760},
}

\bib{Yan90}{article}{
   author={Yanagida, Eiji},
   title={Existence of stable stationary solutions of scalar reaction-diffusion equations in thin tubular domains},
   journal={Appl. Anal.},
   volume={36},
   date={1990},
   number={3-4},
   pages={171--188},
}

\end{biblist}
\end{bibdiv}

\end{document}